\patchcmd{\subsection}{-.5em}{.5em}{}{}
\renewcommand{\tocsection}[3]{
  \indentlabel{\@ifnotempty{#2}{\ignorespaces#1 #2\quad}}\bfseries#3}
\renewcommand{\tocsubsection}[3]{
  \indentlabel{\@ifnotempty{#2}{\ignorespaces#1 #2\quad}}#3}
\newcommand\@dotsep{4.5}
\def\@tocline#1#2#3#4#5#6#7{\relax
  \ifnum #1>\c@tocdepth
  \else
    \par \addpenalty\@secpenalty\addvspace{#2}
    \begingroup \hyphenpenalty\@M
    \@ifempty{#4}{
      \@tempdima\csname r@tocindent\number#1\endcsname\relax
    }{
      \@tempdima#4\relax
    }
    \parindent\z@ \leftskip#3\relax \advance\leftskip\@tempdima\relax
    \rightskip\@pnumwidth plus1em \parfillskip-\@pnumwidth
    #5\leavevmode\hskip-\@tempdima{#6}\nobreak
    \leaders\hbox{$\m@th\mkern \@dotsep mu\hbox{.}\mkern \@dotsep mu$}\hfill
    \nobreak
    \hbox to\@pnumwidth{\@tocpagenum{\ifnum#1=1\bfseries\fi#7}}\par
    \nobreak
    \endgroup
  \fi}
\renewcommand\csname r@tocindent0\endcsname{0pt}
\def\l@subsection{\@tocline{2}{0pt}{2.5pc}{5pc}{}}
\newcounter{results}[section]
\theoremstyle{plain}
\newtheorem{theorem}[results]{Theorem}
\newtheorem{lemma}[results]{Lemma}
\newtheorem{proposition}[results]{Proposition}
\newtheorem{corollary}[results]{Corollary}
\theoremstyle{remark}
\newtheorem{remark}[results]{Remark}
\newtheorem{example}[results]{Example}
\theoremstyle{definition}
\newtheorem{definition}[results]{Definition}
\numberwithin{equation}{section}
\NewDocumentCommand{\makeabbrev}{mmm}
 {
  \yoruk_makeabbrev:nnn { #1 } { #2 } { #3 }
 }
\makeabbrev{\textbf}{tbf#1}{a,b,c,d,e,f,g,h,i,j,k,l,m,n,o,p,q,r,s,t,u,v,w,x,y,z,A,B,C,D,E,F,G,H,I,J,K,L,M,N,O,P,Q,R,S,T,U,V,W,X,Y,Z}
\makeabbrev{\textbf}{bf#1}{a,b,c,d,e,f,g,h,i,j,k,l,m,n,o,p,q,r,s,t,u,v,w,x,y,z,A,B,C,D,E,F,G,H,I,J,K,L,M,N,O,P,Q,R,S,T,U,V,W,X,Y,Z}
\makeabbrev{\textsf}{tsf#1}{a,b,c,d,e,f,g,h,i,j,k,l,m,n,o,p,q,r,s,t,u,v,w,x,y,z,A,B,C,D,E,F,G,H,I,J,K,L,M,N,O,P,Q,R,S,T,U,V,W,X,Y,Z}
\makeabbrev{\mathsf}{mss#1}{a,b,c,d,e,f,g,h,i,j,k,l,m,n,o,p,q,r,s,t,u,v,w,x,y,z,A,B,C,D,E,F,G,H,I,J,K,L,M,N,O,P,Q,R,S,T,U,V,W,X,Y,Z}
\makeabbrev{\mathfrak}{mf#1}{a,b,c,d,e,f,g,h,i,j,k,l,m,n,o,p,q,r,s,t,u,v,w,x,y,z,A,B,C,D,E,F,G,H,I,J,K,L,M,N,O,P,Q,R,S,T,U,V,W,X,Y,Z}
\makeabbrev{\mathrm}{mrm#1}{a,b,c,d,e,f,g,h,i,j,k,l,m,n,o,p,q,r,s,t,u,v,w,x,y,z,A,B,C,D,E,F,G,H,I,J,K,L,M,N,O,P,Q,R,S,T,U,V,W,X,Y,Z}
\makeabbrev{\mathbf}{mbf#1}{a,b,c,d,e,f,g,h,i,j,k,l,m,n,o,p,q,r,s,t,u,v,w,x,y,z,A,B,C,D,E,F,G,H,I,J,K,L,M,N,O,P,Q,R,S,T,U,V,W,X,Y,Z}
\makeabbrev{\mathcal}{mc#1}{A,B,C,D,E,F,G,H,I,J,K,L,M,N,O,P,Q,R,S,T,U,V,W,X,Y,Z}
\makeabbrev{\mathbb}{mbb#1}{A,B,C,D,E,F,G,H,I,J,K,L,M,N,O,P,Q,R,S,T,U,V,W,X,Y,Z}
\makeabbrev{\mathscr}{ms#1}{A,B,C,D,E,F,G,H,I,J,K,L,M,N,O,P,Q,R,S,T,U,V,W,X,Y,Z}
\makeabbrev{\mathrm}{#1}{
id,ran,rk,diag,stab,ann,pr,ev,End,Hom,sgn,im,op,can,fin,ext,red,tot,Aut,Ad,ad,hor,ver,
%
rot,usc,lsc,LocLip,bSymLip,osc,loc,uloc,spec,coz,z,ul,
%
Opt,Adm,Cpl,Geo,GeoOpt,GeoAdm,GeoCpl,reg,aut,
%
bd,co,Ric,Exp,dExp,seg,Seg,cut,fcut,Cut,SDiff,Iso,Isom,cl,Homeo,Diff,Der,vol,dvol,inj,relint,Graph,sub,eucl,
%
var,law,Gam,pa,so,iso,fs,inv,pqi,mix,inn,
TestF,
}
\makeabbrev{\mathsf}{#1}{CD,BE,MCP,Ent,wMTW,MTW,RCD,QCD,EVI,Irr,SC,wFe,VA,UP,Curv,Alex,CAT}
\makeabbrev{\textsc}{#1}{df,fv,tv}
\renewcommand{\paragraph}[1]{\medskip\emph{#1}.\quad}
\newcommand{\eqdef}{\coloneqq }
\newcommand{\set}[1]{\left\{#1\right\}}							
\newcommand{\ceiling}[1]{\left\lceil#1\right\rceil}					
\newcommand{\floor}[1]{\left\lfloor#1\right\rfloor}					
\newcommand{\tparen}[1]{\big({#1}\big)}
\newcommand{\comma}{\,\,\mathrm{,}\;\,}
\newcommand{\fstop}{\,\,\mathrm{.}}
\renewcommand{\div}{\mathrm{div}}
\newcommand{\sfH}{\mathsf H} 
\newcommand{\N}{\mathbb{N}}
\newcommand{\R}{\mathbb{R}}
\newcommand{\vv}{{\mbox{\boldmath$v$}}}
\newcommand{\aalpha}{{\mbox{\boldmath$\alpha$}}}
\newcommand{\ggamma}{{\mbox{\boldmath$\gamma$}}}
\newcommand{\eeta}{{\mbox{\boldmath$\eta$}}}
\newcommand{\ppi}{{\mbox{\boldmath$\pi$}}}
\newcommand{\ssigma}{{\mbox{\boldmath$\sigma$}}}
\newcommand{\sfc}{{\sf c}}
\newcommand{\sfd}{{\sf d}}
\newcommand{\sfe}{{\sf e}}
\newcommand{\sfr}{{\sf r}}
\newcommand{\sfx}{{\sf x}}
\newcommand{\rmC}{{\mathrm C}}
\newcommand{\rmB}{{\mathrm B}}
\newcommand{\Kliminf}{K\kern-3pt-\kern-2pt\mathop{\rm lim\,inf}\limits}  
\newcommand{\supp}{\mathop{\rm supp}\nolimits}   
\newcommand{\argmin}{\mathop{\rm argmin}\limits}   
\newcommand{\LSC}{\operatorname{LSC}}          
\renewcommand{\d}{{ \mathrm d}}
\newcommand{\eps}{\varepsilon}  
\newcommand{\nchi}{{\raise.3ex\hbox{$\chi$}}}
\newcommand{\weakto}{\rightharpoonup}
\newcommand{\prob}{\mathcal P}
\newcommand{\de}{{\,\rm d}}
\newcommand{\geo}{\rm Geo}
\newcommand{\nc}{\normalcolor}
\newcommand{\AC}{\mathrm{AC}}
\newcommand{\J}I
\newcommand{\CE}{\mathsf{C\kern-1pt E}}
\newcommand{\NE}{\mathsf{N\kern-2.5pt E}}
\newcommand{\wCE}{\mathsf{wC\kern-1pt E}}
\newcommand{\pCE}{\mathsf{pC\kern-1pt E}}
\newcommand{\mres}{\mathbin{\vrule height 1.6ex depth 0pt width
0.13ex\vrule height 0.13ex depth 0pt width 1.3ex}}
\newcommand{\cce}[1]{\ensuremath{\operatorname{\overline{\mathrm{co}}}_2\left(#1\right)}}
\newcommand{\W}{\mathbb W}
\newcommand{\meas}{\ensuremath{\mathcal{M}}} 
\newcommand{\pc}{\f{C}[X, X]} 
\newcommand{\HK}{{\mathsf H\!\!\mathsf K}}
\newcommand{\f}[1]{\mathfrak{#1}} 
\newcommand{\restricts}[2] {
	#1 
	\raisebox{-.3ex}{$|$}_{#2}	
}
\title[The inf-convolution structure of the HK distance]{The infimal convolution structure of the \\ Hellinger--Kantorovich distance}
\author{Nicolò De Ponti}
\address[Nicolò De Ponti]{Politecnico di Milano, Dipartimento di Matematica, Piazza Leonardo Da Vinci 32, I-20133 Milano, Italy}
\email{nicolo.deponti@polimi.it}
\author{Giacomo Enrico Sodini}
\address[Giacomo Enrico Sodini]{Faculty of Mathematics, University of Vienna, Oskar-Morgenstern-Platz 1, A-1090 Vienna, Austria}
\email{giacomo.sodini@univie.ac.at}
\author{Luca Tamanini}
\address[Luca Tamanini]{Dipartimento di Matematica e Fisica ``Niccol\`o Tartaglia'', Università Cattolica del Sacro Cuore, I-25133 Brescia, Italy}
\email{luca.tamanini@unicatt.it}
\subjclass{ Primary: 49Q22, 28A33  }
 \keywords{Unbalanced Optimal Transport, Hellinger--Kantorovich distance, infimal convolution}
\begin{document}

\begin{abstract} We show that the Hellinger--Kantorovich distance can be expressed as the metric infimal convolution of the Hellinger and the Wasserstein distances, as conjectured by Liero, Mielke, and Savaré. To prove it, we study with the tools of Unbalanced Optimal Transport the so called Marginal Entropy-Transport problem that arises as a single minimization step in the definition of infimal convolution. Careful estimates and results when the number of minimization steps diverges are also provided, both in the specific case of the Hellinger--Kantorovich setting and in the general one of abstract distances.
\end{abstract}

\maketitle
\tableofcontents
\thispagestyle{empty}

\newcommand{\He}{\mathsf{He}}
\renewcommand{\W}{\mathsf{W}}

\section{Introduction}
Given a complete and separable metric space $(X, \sfd)$, the \emph{$L^2$-Kantorovich--Rubinstein} (also: \emph{Wasserstein}) \emph{distance}~$\W$ between two probability measures $\mu_0, \mu_1 \in \mcP(X)$ is given by the following minimization problem
\begin{align}\label{eq:WassersteinIntro}
\W_{2}^2(\mu_0, \mu_1) \eqdef  \inf_{\ggamma \in \Gamma(\mu_0, \mu_1)} \int_{X^2} \sfd^2 \de \ggamma \comma
\end{align}
where $\Gamma(\mu_0, \mu_1) \subset \mcP(X^2)$ is the set of \emph{couplings} between $\mu_0$ and $\mu_1$, i.e.~probabilities on the product space with marginals $\mu_0$ and $\mu_1$. Problem \eqref{eq:WassersteinIntro} belongs to the class of optimal transport problems, with cost function given by $\sfd^2$: the clever choice of the cost, connected to the underlying distance on the space $X$, ensures that, when restricted to the set of probabilities with finite second moment $\mcP_2(X)$, the Wasserstein distance $\W_{2}$ is complete and separable, length (resp.~geodesic) if $\sfd$ is a length (resp.~geodesic) distance. Moreover, the map $x \mapsto \delta_x$ becomes an isometric embedding of $(X,\sfd)$ into $(\mcP_2(X),\W_{2})$. All these remarkable properties make $\W_{2}$ the ideal candidate for metrizing the set $\mcP_2(X)$ and studying its analytical and geometrical properties. 
We refer to the classic monographs \cite{AGS08, Villani03, Villani09, santambrogio, Rachev-Ruschendorf98I, Rachev-Ruschendorf98II} and the more recent \cite{ABS21, fg21} for a thorough picture of optimal transport and the Wasserstein distance.

The extension of the optimal transport problem to the \emph{unbalanced setting} (i.e.,~the scenario where $\mu_0$ and $\mu_1$ may have different total non-negative masses) has been investigated in several works. For instance, the adaptation of the dynamical approach from balanced optimal transport~\cite{BenamouBrenier00} to the unbalanced case is discussed in \cite{49eee, RosPic, Rospicprop, ChiPeySchVia16}. Static extensions related to the Kantorovich formulation have been studied in \cite{CaffarelliMcCann06TR, FigalliGigli10}, while \cite{11uu} introduces the concept of optimal partial transport. A broad framework for unbalanced optimal transport problems is presented in \cite{SS24}, where methods from \cite{SS20} are employed to expand upon ideas initially explored in \cite{LMS18, CPSV18}.

\smallskip

We concentrate here on the distance \(\HK\), which was simultaneously introduced in \cite{LMS18, CPSV18, msg} and is referred to as the \emph{Hellinger–Kantorovich} or \emph{Wasserstein–Fisher–Rao} distance. This metric can be defined in multiple ways (see Section \ref{sec:hk} for one such approach) and is widely recognized as the natural extension of the Wasserstein distance to \(\mcM_+(X)\), the space of non-negative, finite Radon measures on \(X\).  

The metric space \((\mcM_+(X), \HK)\) is complete and separable, with a topology corresponding to the weak convergence of measures. It retains the geodesic and length-space properties of the underlying space while allowing the map \(\f{C}[X] \ni [x,r] \mapsto r \delta_x \in \mcM_+(X)\) to be an isometric embedding. Here, the \emph{geometric cone} \(\f{C}[X]\) is equipped with the standard cone distance  
\[
\sfd_{\f{C}}([x,r],[y,s]) \coloneqq \tparen{ r^2+s^2-2rs\cos(\sfd(x,y) \wedge \pi) }^{\frac{1}{2}},
\]
which serves as an appropriate setting for unbalanced optimal transport. This cone structure is simply the quotient space \(X \times [0,+\infty)\), where all points of the form \((x,0)\) are identified as a single vertex, representing the null measure. Further details can be found in Section \ref{sec:goecone}.  

A particularly intuitive way to understand the Hellinger–Kantorovich distance is through its \emph{dynamical formulation} in the Euclidean space \(\mathbb{R}^d\). Given \(\mu_0, \mu_1 \in \mcM_+(\mathbb{R}^d)\), we have  
\begin{equation} \label{eq:hkintro}
\HK^2(\mu_0, \mu_1) = \inf \set{ \int_0^1 \int_{\mathbb{R}^d} \left [ |\vv_t|^2 + \tfrac{1}{4}|w_t|^2\right ] \de \mu_t \de t } ,
\end{equation}
where the infimum is taken over all triplets \((\mu_\cdot, \vv_\cdot, w_\cdot)\) satisfying the boundary conditions \(\mu_t |_{t=i}=\mu_i\) for \(i=0,1\) and solving the \emph{continuity equation with reaction}  
\[
\partial_t \mu_t + \div (\vv_t \mu_t) = w_t \mu_t  \fstop  
\]
This formulation, which generalizes the classical Benamou–Brenier theorem~\cite{BenamouBrenier00}, highlights how the \(\HK\) geometry incorporates both transport of mass via \(\vv\) and a growth/shrinking effect represented by \(w\).
Indeed, on the one hand, we have that for the Wasserstein distance it holds
\begin{equation} \label{eq:bb}
\W_2^2(\mu_0, \mu_1) = \inf \set{ \int_0^1 \int_{\R^d}  |\vv_t|^2 \de \mu_t \de t }, \quad \mu_0, \mu_1 \in \prob_2(X) \,,
\end{equation}
where the infimum is taken among all pairs $(\mu_\cdot, \vv_\cdot)$ with $\mu_t |_{t=i}=\mu_i$ for~$i=0,1$, and solving
\[
\partial_t \mu_t + \div (\vv_t \mu_t) =0 \fstop
\]
On the other hand, a similar dynamical formulation holds also for a third classical distance on measures: the Hellinger distance (see also Section \ref{sec:he}):
\begin{equation} \label{eq:he}
\He^2_2(\mu_0, \mu_1)= \inf \set{ \int_0^1 \int_{\R^d} \tfrac{1}{4}|w_t|^2 \de \mu_t \de t }, \quad \mu_0, \mu_1 \in \meas_+(\R^d),
\end{equation}
where the infimum is taken among all pairs $(\mu_\cdot, w_\cdot)$ with $\mu_t |_{t=i}=\mu_i$ for~$i=0,1$, and solving
\begin{equation}\label{eq: defconedist_intro}
 \partial_t \mu_t = w_t \mu_t \fstop   
\end{equation}
It has been suggested in \cite[Remark 8.19]{LMS18} that the interplay between formulas \eqref{eq:hkintro}, \eqref{eq:bb}, \eqref{eq:he} can be described by saying that \textbf{the Hellinger--Kantorovich distance is the infimal convolution of the Hellinger and the Wasserstein distances}. This idea comes from the fact that, if $M$ is a Riemannian manifold and $\mathsf{g}_1, \mathsf{g}_2$ are two metric tensors on $M$, we can define the inf-convolution of the Riemannian distances associated to $\mathsf{g}_1$ and $\mathsf{g}_2$ as
\begin{equation}
    \begin{aligned}
    \sfd_{\nabla}^2(x_0, x_1) \coloneqq \inf \Bigl \{ &\int_0^1 \left [ |v_1(s)|_{\mathsf{g}_1(x(s))}^2 + |v_2(s)|_{\mathsf{g}_2(x(s))}^2 \right ] \de s \, : \\ &\qquad  x \in \rmC^1([0,1]; M), \, x(i)=x_i,\, i=0,1,  \\
    &\qquad \dot{x}(s)=v_1(s)+v_2(s) \text{ for a.e.~} s \in (0,1)\Bigr \}, \quad x_0, x_1 \in M.    
    \end{aligned}
\end{equation}
In order to give a \emph{purely metric} counterpart of this construction, in the same remark, the authors present a metric notion of infimal convolution between two distances $\sfd_1$ and $\sfd_2$ on a space $X$: for points $z_0,z_1 \in X$ they set
\begin{equation}\label{eq:hkinf}
(\sfd_1 \nabla \sfd_2)^2(z_0,z_1) \coloneqq \liminf_{N \to + \infty} \inf \left \{ N \sum_{i=1}^N \left ( \sfd_1^2(x^N_{i-1}, y^N_i) + \sfd_2^2(y_i^N, x_i^N) \right ) : x_i^N, y_i^N \in \mathscr{P}(z_0, z_1; N) \right \}\,,
\end{equation}
where $\mathscr{P}(z_0, z_1; N)$ is the set of $2N+1$ points of the form
\begin{equation}\label{eq:partition}
 \{z_0=x_0^N, y_1^N, x_1^N, y^N_2, \dots, y_{N-1}^N, x_{N-1}^N, y^N_N, x_N^N=z_1 \} \subset X.
\end{equation}
They finally conjecture that, applying the above construction to $\sfd_1 \coloneqq \He_2$ and $\sfd_2\coloneqq \W_{2}$, the resulting distance $\sfd_1 \nabla \sfd_2$ should be the Hellinger--Kantorovich distance $\HK$. 

The above ansatz, or at least the use of the term \emph{inf-convolution} to describe the Hellinger--Kantorovich distance, has appeared in a variety of papers \cite{LMS23, GM17, cit1, cit2, cit3, cit4, cit5, cit6, cit7, cit8, cit9} but, up to our knowledge, it has never been proved rigorously. The aim of the present paper is precisely to prove this conjecture, in the following sense:

\begin{theorem}\label{thm:main-intro} Let $(X, \sfd)$ be a complete, separable, and geodesic metric space. Then
\[ \HK(\mu_0, \mu_1) = (\He_2 \nabla \W_{2})(\mu_0, \mu_1) \quad \text{ for every } \mu_0, \mu_1 \in \meas_+(X).\]
\end{theorem}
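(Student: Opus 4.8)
The strategy is to establish the two inequalities separately and to reduce both to the single-step \emph{Marginal Entropy--Transport} functional
\[
\mssM_N(\mu_0,\mu_1)\eqdef\inf\set{N\sum_{i=1}^N\tparen{\He_2^2(x_{i-1}^N,y_i^N)+\W_2^2(y_i^N,x_i^N)}:x_i^N,y_i^N\in\mathscr P(\mu_0,\mu_1;N)},
\]
so that $(\He_2\nabla\W_2)^2(\mu_0,\mu_1)=\liminf_{N\to\infty}\mssM_N(\mu_0,\mu_1)$. For the inequality $(\He_2\nabla\W_2)^2\le\HK^2$, the plan is to start from a near-optimal \emph{dynamical} plan for $\HK$, i.e.\ a solution $(\mu_\cdot,\vv_\cdot,w_\cdot)$ of the continuity equation with reaction nearly attaining \eqref{eq:hkintro}, and to discretise it in time on the mesh $\{i/N\}_{i=0}^N$. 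On each interval $[(i-1)/N,i/N]$ one alternates a pure-reaction substep, producing the intermediate point $y_i^N$ (controlled by $\He_2^2$ via \eqref{eq:he}), with a pure-transport substep from $y_i^N$ to $\mu_{i/N}=x_i^N$ (controlled by $\W_2^2$ via \eqref{eq:bb}); the splitting error between doing reaction and transport successively versus simultaneously is $O(1/N)$ per step by a Trotter--Kato/Lie-splitting estimate, hence $o(1)$ in the sum after multiplication by $N$. Since $\HK$ is geodesic on $\meas_+(X)$, one may even take $(\mu_\cdot,\vv_\cdot,w_\cdot)$ to be an exact $\HK$-geodesic, which makes the per-step costs comparable to $\tfrac1N\HK^2$ and keeps all error terms summable.

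For the reverse inequality $\HK^2\le(\He_2\nabla\W_2)^2$ one uses that $\HK$ is itself characterised by a metric-type infimal-convolution/relaxation inequality: by the \emph{sub-additivity along cone geodesics} and the gluing lemma for Entropy--Transport plans (Section~\ref{sec:hk}), for \emph{any} intermediate point $\nu$ one has $\HK^2(\mu_0,\mu_1)\le\liminf\tparen{(1+\tfrac cN)(\He_2^2(\mu_0,\nu)+\W_2^2(\nu,\mu_1))}$ after a single subdivision, and more generally $\HK^2(\mu_0,\mu_1)\le N\sum_{i=1}^N\HK^2(p_{i-1},p_i)$ for any chain $p_0=\mu_0,\dots,p_N=\mu_1$ refined enough, combined with the pointwise bounds $\HK\le\He_2$ and $\HK\le\W_2$ (both clear from the dynamical formulation, as setting $w\equiv0$ or $\vv\equiv0$ only shrinks the admissible set). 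Feeding in a near-optimal configuration $\{x_i^N,y_i^N\}\in\mathscr P(\mu_0,\mu_1;N)$ for $\mssM_N$ gives
\[
\HK^2(\mu_0,\mu_1)\le N\sum_{i=1}^N\HK^2(x_{i-1}^N,x_i^N)\le N\sum_{i=1}^N\tparen{\HK(x_{i-1}^N,y_i^N)+\HK(y_i^N,x_i^N)}^2,
\]
and then $(a+b)^2\le(1+\eps)a^2+(1+\eps^{-1})b^2$ together with the fact that the ``small'' term is itself controlled (the mismatch between $a^2+b^2$ and $(a+b)^2$ is absorbed because consecutive increments are $O(1/N)$) lets one pass to the liminf and recover $\mssM_N+o(1)$; taking the liminf in $N$ closes the chain.

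The main obstacle is controlling the \emph{cross terms}: in the metric inf-convolution \eqref{eq:hkinf} one sums $\sfd_1^2+\sfd_2^2$ of \emph{consecutive} legs, whereas $\HK$ — and the cone geometry — naturally sees the \emph{single} quantity $\sfd_{\f C}^2$ along one geodesic, so turning ``reaction then transport'' into ``reaction-and-transport simultaneously'' requires a quantitative commutator estimate that is uniform in the (possibly singular) measures $\mu_t$. The cleanest route is to lift everything to the cone $\f C[X]$, where $\HK$ becomes an honest $\W_2$-type distance (the embedding $[x,r]\mapsto r\delta_x$ and the cone distance $\sfd_{\f C}$ do the bookkeeping), the Hellinger part becomes motion along the radial fibre and the Wasserstein part motion along the base, and the infimal convolution of ``radial'' and ``base'' Riemannian metrics on the cone is exactly the cone metric — this is the Riemannian heuristic made rigorous via the geodesic interpolation and the $\liminf$ in \eqref{eq:hkinf}. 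I expect the bulk of the technical work (and the role of the ``careful estimates'' advertised in the abstract) to be precisely in showing that the discretisation error in this lift is $o(1/N)$ per step with constants independent of the masses involved.
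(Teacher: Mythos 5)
Your plan for the inequality $\He_2\nabla\W_2\le\HK$ is morally aligned with the paper (both discretize in time a dynamical/lagrangian representation of $\HK$ and compare to the one-step $\W\He$ functional), but you invoke a Trotter--Kato/Lie-splitting estimate of order $O(1/N)$ per step without indicating how to obtain it uniformly in the (possibly singular) measures $\mu_t$; this is not a small matter, and the paper does something structurally different: it lifts $\HK$ to a probability $\ppi$ on cone geodesics via Theorem~\ref{thm:toolbox}, discretizes the action $\mathcal A_2$ curve-by-curve, and controls the discrete error by bounding the ratio $r_\f{y}(\lceil tN\rceil/N)/r_\f{y}(t)$, which forces the replacement $\ppi\rightsquigarrow\ppi_\eps$ on the set of geodesics with nearly vanishing or wildly varying radius. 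No splitting estimate in the PDE sense is used or needed.

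The real problem is in your argument for $\HK\le\He_2\nabla\W_2$. Your chain
\[
\HK^2(\mu_0,\mu_1)\le N\sum_{i=1}^N\bigl(\HK(x_{i-1}^N,y_i^N)+\HK(y_i^N,x_i^N)\bigr)^2
\le N\sum_{i=1}^N\bigl((1+\eps)\HK^2(x_{i-1}^N,y_i^N)+(1+\eps^{-1})\HK^2(y_i^N,x_i^N)\bigr)
\]
cannot give the result, because there is no reason either of the two sums $N\sum_i\HK^2(x_{i-1}^N,y_i^N)$ or $N\sum_i\HK^2(y_i^N,x_i^N)$ is $o(1)$: along an optimal $N$-path both the Hellinger and the Wasserstein legs carry comparable action, so each sum is of order $1$, the choice $\eps=1$ is optimal, and you are stuck with $\HK^2\le 2(\He_2\nabla\W_2)^2$. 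The factor of $2$ from the cross term $2\,\HK(x_{i-1},y_i)\,\HK(y_i,x_i)$ is exactly the loss that Corollary~\ref{cor: infconvsamesemidist} quantifies for $\varrho\nabla\varrho=\varrho/\sqrt2$; it is not an artifact of rough constants, it is built into any argument that only uses the abstract triangle/Cauchy--Schwarz inequalities together with $\HK\le\He_2$ and $\HK\le\W_2$. Eliminating it requires exploiting the \emph{infinitesimal orthogonality} of the Hellinger (radial) and Wasserstein (angular) directions on the cone. This is what the paper does in the proof of Theorem~\ref{thm:main1}: it lifts the minimizing $N$-paths to plans on cone curves via geodesic interpolation, and the key pointwise bound \eqref{eq:caimano},
\[
\sfH_{\W\He}(\f{y}_0,\f{y}_1)\ \ge\ \Bigl(1\wedge\tfrac{r_1}{r_0}\Bigr)\,\sfd_\f{C}^2(\f{y}_0,\f{y}_1),
\]
recovers the sharp constant in the fine-mesh limit because the radii along a single step become comparable (once degenerate curves near $\f{o}$ have been handled separately via the splitting $\ppi_\f{o},\ppi_g,\ppi_b$ and the surgery map $G_N$ of Theorem~\ref{thm:thmdinicolo}). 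Your proposal has no analogue of this; without it the argument closes only up to a spurious factor of~$2$.
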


A first step to see how in general the Hellinger and Wasserstein distances combine to produce the Hellinger--Kantorovich distance has appeared in \cite[Pages 1109-1111]{GM17} and we report it here since, in part, it is the starting point of our analysis. Suppose $\mu_0=r_0 \delta_{x_0}$ and $\mu_1 = r_1 \delta_{x_1}$ with $|r_0-r_1| \ll 1$ and $\sfd(x_0,x_1) \ll 1$ for some $r_0,r_1>0$ and $x_0,x_1 \in X$. Forcing to interpolate between $\mu_0$ and $\mu_1$ with another Dirac measure of the form $\tilde{\mu}_0=r_1 \delta_{x_0}$ and using a Taylor expansion, we have that 
\begin{align} \label{eq:infocnveq}
\He_2^2&(\mu_0,\tilde{\mu}_0)=|\sqrt{r_1}-\sqrt{r_0}|^2\,, \qquad \W_{2}^2(\tilde{\mu}_0, \mu_1) = r_1 \sfd^2(x_0, x_1)\,, \\
 \HK^2(\mu_0, \mu_1)&=\sfd^2_{\f{C}}([x_0,\sqrt{r_0}],[x_1,\sqrt{r_1}])=|\sqrt{r_1}-\sqrt{r_0}|^2 + 4\sqrt{r_0r_1} \sin^2 \left ( \frac{\sfd(x_0,x_1) \wedge \pi}{2} \right ) \\
 &= \He_2^2(\mu, \tilde{\mu}_0) + \W_{2}^2(\tilde{\mu}_0, \mu_1) + \mathcal{O} \left (  \sfd^2(x_0,x_1) |\sqrt{r_1}-\sqrt{r_0}| \right ).
\end{align}
Roughly speaking, this means that the two reaction and transport processes from $\mu_0$ to $\tilde{\mu}_0$ and from $\tilde{\mu}_0$ to $\mu_1$ can be considered as occurring simultaneously and independently at the infinitesimal level, following the words of \cite{GM17}, where it is also stated “justifying or quantifying the above discussion for general measures is an interesting question”. 

To answer this question, we start our analysis with the study of the problem
\begin{equation}\label{eq: WHe intro}
\W\He(\mu_0, \mu_1) \coloneqq \inf_{\nu \in \mathcal{M}_+(X)} \left \{ \He_2^2(\mu_0, \nu) + \W_{2}^2(\nu, \mu_1)\right \}, \quad \mu_0, \mu_1 \in \mathcal{M}_+(X),
\end{equation}
which corresponds to one elementary step in the construction of the inf-convolution defined in \eqref{eq:hkinf}, and fits in the category of \emph{marginal Entropy-Transport problems} analyzed in \cite[Section 3.3. E8]{LMS18}. We recast the above problem as a general \emph{Unbalanced Optimal Transport problem} (see Section \ref{sec:uot}) for a suitable cost function $\mathsf{H}: \f{C}[X]^2\to [0,+\infty]$ which can be already inferred by \eqref{eq:infocnveq}, namely
\begin{equation}\label{eq:hintro}
\mathsf{H}([x_0,r_0],[x_1,r_1]) = |\sqrt{r_0}-\sqrt{r_1}|^2 + r_1 \sfd^2(x_0, x_1) \nchi_{(0,+\infty)}(r_0), \quad [x_0,r_0], [x_1,r_1] \in \f{C}[X].   
\end{equation}
This function encodes the fact that both a change in density and a movement in space are allowed, with the latter being constrained to happen precisely at the same mass-level of the target point. The expression of $\sfH$ in \eqref{eq:hintro} resembles very much a discretized (and unbalanced) version of the metric speed of a curve in the cone $\f{C}[X]$: if $\f{y}$ is an absolutely continuous curve with values in the geometric cone, it holds
\begin{equation}\label{eq:speed-cone-intro}
|\f{y}'(t)|_{\sfd_{\f{C}}}^2 = |r_{\f{y}}'(t)|^2 + |r_{\f{y}}(t)|^2|x_{\f{y}}'(t)|_\sfd^2 \quad \text{ for a.e.~} t \in (0,1),
\end{equation}
where $|\f{y}'(t)|_{\sfd_{\f{C}}}^2$ is the metric speed of the curve $\f{y}$ w.r.t.~the cone distance $\sfd_{\f{C}}$, $r_\eta$ and $x_\eta$ are the projections of the curve on $[0,+\infty)$ and $X$, respectively, and  $|x_{\f{y}}'(t)|_\sfd^2$ is the metric speed of $x_{\f{y}}$ w.r.t.~$\sfd$. We are going to use this fact together with a useful characterization of the Hellinger--Kantorovich distance (see Theorem \ref{thm:dynhk}): if $(X, \sfd)$ is a complete, separable, and geodesic metric space, then, for every $\mu_0, \mu_1 \in \meas_+(X)$, it holds
\begin{equation}\label{eq:dynamic_HK-intro}
\HK^2(\mu_0,\mu_1) = \min  \int \mathcal{A}_2(\f{y}) \de\ppi(\f{y}) \quad \text{ where } \mathcal{A}_2(\f{y}) = {\int_0^1 |\f{y}'_t|^2_{\sfd_{\f{C}}} 
\de t}\,,
\end{equation}
where the minimum runs over all the plans $\ppi \in \prob(\AC^2([0,1];(\f{C}[X],\sfd_{\f{C}})))$ suitably connecting $\mu_0$ to $\mu_1$.

To exploit the form of $\sfH$ in \eqref{eq:hintro} and the characterization of $\HK$ in \eqref{eq:dynamic_HK-intro}, we consider the discretized action of a curve $\f{y} \in \AC^2([0,1];(\f{C}[X],\sfd_{\f{C}}))$ defined as
\[ \mathcal{A}^N_2(\f{y})\coloneqq N \sum_{i=1}^N \mathsf{H}(\f{y}((i-1)/N), \f{y}(i/N))\,,\]
and we use it in the following strategy to prove separately the two inequalities that give Theorem \ref{thm:main-intro}.
\begin{enumerate}
\item $\He_2 \nabla \W_2 \le \HK$: we consider an optimal plan $\ppi$ as in \eqref{eq:dynamic_HK-intro} and, for every $N \in \N$, we use it to induce an admissible partition as in \eqref{eq:partition} in a way to get that 
\[ (\He_2 \nabla \W_2)^2(\mu_0, \mu_1) \le \liminf_{N \to + \infty} N \sum_{i=1}^N \W\He (\mu_{i-1}^N, \mu_i^N) \le \liminf_{N \to + \infty}  \int \mathcal{A}^N_2 \de \ppi. \]
We will show that the limit inferior in the right-most term above can be estimated from above with the integral of the action of curves on the cone w.r.t.~$\ppi$, and thus recover the result. This requires comparing $\mathcal{A}^N_2$ with the time-integral of the squared metric derivative as in \eqref{eq:speed-cone-intro} and involves, in particular, the careful study of the properties of the support of $\ppi$ and of its interaction with $\mathcal{A}^N_2$.
\item $\He_2 \nabla \W_2 \ge \HK$: we consider a minimizing sequence of partitions as in \eqref{eq:hkinf} (denoting $x_i^N\coloneqq\mu_i^N$ and $y_i^N\coloneqq\sigma_i^N$) and, upon replacing each $\sigma_i^N$ with $\nu_i^N \in \argmin \W\He(\mu_{i-1}^N, \mu_i^N)$ for $i=1, \dots, N$, we suitably interpolate these points with a plan $\ppi^N \in \prob(\rmC([0,1]; (\f{C}[X], \sfd_{\f{C}})))$ which is ``piecewise" optimal for the primal formulation of the unbalanced optimal transport problem (see Theorem \ref{thm:omnibus}) related to $\mathsf{H}$,
so that 
\[ (\He_2 \nabla \W_2)^2(\mu_0, \mu_1) = \lim_{N \to + \infty}   \int \mathcal{A}^N_2 \de \pi^N.\]
Then we prove that the above limit can be bounded from below by the integral of the action of a curve on the cone w.r.t.~some admissible plan as in \eqref{eq:dynamic_HK-intro}, thus recovering the desired inequality. This is technically challenging since the lack of compactness of the underlying metric space and the fact that both $\mathcal{A}^N_2$ and $\ppi^N$ depend on $N$ make it difficult to infer the tightness of the family of plans $(\ppi^N)_N$ and also the behavior of the above limit. These difficulties are overcome by a compact approximation procedure and a fine study of the properties of $\mathcal{A}^N_2$.
\end{enumerate}
We refer the interested reader to the beginning of Sections \ref{sec:ineq1} and \ref{sec:ineq2} for a more detailed description of the strategy adopted for the proofs of the two inequalities.

\smallskip

The study carried out to prove Theorem \ref{thm:main-intro} has revealed the mathematical richness of this problem. Apart from the analysis strictly needed for the proof of the conjecture, in the paper we also investigate the abstract properties of the infimal convolution between general distances. Among other results, we show that this construction leads to a symmetric, non-negative function, though it can be degenerate (i.e., being null between different points) and may fail to satisfy the triangle inequality. A more structured behavior emerges in the case the inf-convolution is performed between (Hilbertian) norms, where the problem aligns naturally with the infimal convolution in convex analysis \cite{Rockafellar70}. This raises the compelling question of whether there exist other purely metric examples, similar to the Hellinger--Wasserstein case, where this construction yields meaningful distances.

In another direction, we also thoroughly examined the properties of the single-step minimization problem \eqref{eq: WHe intro} between the Wasserstein and Hellinger distances.  This problem corresponds to an iteration step of the associated Minimizing Movement scheme, making it interesting to explore how our analysis could be connected to the theory of gradient flows \cite{AGS08}. Notice that a peculiarity of our situation is that the Hellinger distance corresponds to a \emph{non-superlinear} entropy functional, and thus deviates from the conventional framework usually adopted in the huge literature originated by the JKO-approach \cite{Jordan-Kinderlehrer-Otto98}.

Finally, throughout the whole manuscript we restricted ourselves to the study of the inf-convolution between the 2-Hellinger and 2-Wasserstein distances, but it is natural to wonder whether a result analogous to Theorem \ref{thm:main-intro} holds for any arbitrary exponent $p \in (1,+\infty)$. Namely, if also the $p$-Hellinger--Kantorovich distance, introduced in \cite[Section I.2]{Chizat17} and far less studied than the $p=2$ counterpart, can be regarded as the ($p$-version of the) inf-convolution of $\He_p$ and $\W_{p}$. A positive answer in this direction would shed some light on the infinitesimal structure of $p$-Hellinger--Kantorovich distances. However, let us mention that the strategy sketched above for the case $p=2$ is not directly applicable when $p \neq 2$: in fact, in the former case we strongly leveraged the fine properties of $\HK$-geodesics, established in \cite{LMS18,LMS23} and not yet available for $p \neq 2$. Nonetheless, we believe that many of our techniques could be adapted to this latter case.

\medskip
\paragraph{\em\bfseries Plan of the paper} In \textbf{Section \ref{sec:prel}} we report a few preliminaries related to metric spaces and measures therein, with a particular focus on the geometric cone. In \textbf{Section \ref{sec:gencost}} we discuss the abstract properties of the infimal convolution construction. \textbf{Section \ref{sec:tdist}} is devoted to the presentation of the general framework of Unbalanced Optimal Transport and, in particular, to the definition of the Hellinger, Wasserstein, and Hellinger--Kantorovich distances, and to the proof of a few properties of these distances. In \textbf{Section \ref{sec: Whe}} we investigate the marginal Entropy-Transport problem between the Hellinger and the Wasserstein distances.  \textbf{Section \ref{sec:ineq1}} and \textbf{Section \ref{sec:ineq2}} contain the proofs of the two inequalities mentioned above, leading to the proof of Theorem \ref{thm:main-intro}. Finally, in \textbf{Appendix \ref{sec:measappendix}} we discuss the measurability of the geodesic selection map, and in \textbf{Appendix \ref{app:radius}} we report some computations related to geodesics on the metric cone.

\medskip
\paragraph{\em\bfseries Acknowledgments} The authors warmly thank G.~Savaré for proposing the problem and for his many valuable suggestions. LT is also thankful to T.~Titkos for useful discussions. NDP and LT acknowledge financial support by the INdAM-GNAMPA Project 2024 ``Mancanza di regolarità e spazi non lisci: studio di autofunzioni e autovalori'', CUP $\#E53C23001670001\#$. NDP is supported by the INdAM-GNAMPA Project ``Proprietà qualitative e regolarizzanti di equazioni ellittiche e paraboliche'', codice CUP $\#E5324001950001\#$.

\section{Preliminaries}\label{sec:prel}

In this section we collect some basic notions related to metric and measure spaces we are going to use in the paper, focusing then on the geometric structure of the cone on a metric space, and curves and measures therein.

\subsection{Metric spaces}
An \emph{extended metric space} is a pair $(X,\sfd)$ where $\sfd:X\times X\to [0,+\infty]$ is a symmetric function (called \emph{extended distance}) satisfying the triangle inequality and such that $\sfd(x,y)=0$ if and only if $x=y$. An extended metric space is called a metric space when $\sfd$ takes only finite values, and in such a case we refer to $\sfd$ as a \emph{distance}. A curve in a (possibly extended) metric space is a continuous map $\gamma:I\to X$, where $I\subset \mathbb{R}$ is an interval, and a curve is called \emph{absolutely continuous} if there exists $m\in L^1(I)$ such that
\begin{equation}\label{def:AC curve}
\sfd(\gamma(t_0),\gamma(t_1))\le \int_{t_0}^{t_1}m(t) \de t\quad \textrm{for }t_0,t_1\in I, t_0<t_1.
\end{equation}
We denote by $\rmC(I;(X,\sfd))$ (resp.~$\AC(I; (X,\sfd))$) the class of all curves (resp.~absolutely continuous curves) from $I$ to $(X,\sfd)$ endowed with the topology of uniform convergence.
The metric derivative of an absolutely continuous curve is the Borel function
\begin{equation}\label{eq:metrd}
|\gamma'(t)|_{\sfd}\coloneqq \limsup_{h\to 0}\frac{\sfd(\gamma(t+h),\gamma(t))}{|h|}, \quad t \in I.
\end{equation}
The class of all absolutely continuous curves $\gamma: I\to X$ with $|\gamma'(\cdot)|_{\sfd}\in L^2(I)$ will be denoted by $\AC^2(I; (X,\sfd))$; when $I$ is not compact, we also consider the local space $\AC^2_{loc}(I ; (X,\sfd))$, consisting of curves in $\rmC(I; (X, \sfd))$ whose restriction to every compact subinterval $J \subset I$ belongs to $\AC^2(J; (X, \sfd))$. If $(X, \sfd)$ is a complete and separable metric space, then $\AC^2([0,1]; (X,\sfd))$ is a Borel set in the space $\rmC([0,1]; (X,\sfd))$.
A curve $\gamma:[0, 1]\to X$ taking values in the extended metric space $(X,\sfd)$ is a (minimal, constant-speed) \emph{geodesic} if
\[
\sfd(\gamma(t_0),\gamma(t_1))=|t_0-t_1|\sfd(\gamma(0),\gamma(1)) \qquad \forall\, t_0,t_1\in [0,1].
\]
We denote by $\geo((X, \sfd))$ the space of minimal, constant-speed geodesics in the space $(X, \sfd)$, which is a closed subset of $\rmC([0, 1]; (X,\sfd))$. The extended metric space $(X,\sfd)$ is a geodesic space if for every $x,y\in X$ there exists a geodesic connecting them, while it is called \emph{length space} if the distance
between arbitrary pairs of points can be obtained as the infimum of the length of
the absolutely continuous curves $\gamma:[0,1]\to X$ connecting them, where the latter is defined as 
\[
\ell_\sfd(\gamma)\coloneqq \int_0^1|\gamma'(t)|_{\sfd}\,\de t\,.
\]
The length extended distance induced by the extended distance $\sfd$ on $X$ is the function $\sfd_\ell:X\times X\to [0,+\infty]$ defined as
\begin{equation}\label{eq:lenghtd}
\sfd_\ell(x,y)\coloneqq \inf \{ \ell_\sfd(\gamma) \ : \ \gamma \in \AC([0,1]; (X,\sfd)), \gamma(0)=x, \gamma(1)=y\},    
\end{equation}
and it can be proved that $(X,\sfd_\ell)$ is an extended metric space. 

We also introduce the \emph{evaluation map} $\sfe_t: \rmC(I;(X,\sfd))\to X$, $t\in I$, which is the map defined as $\sfe_t(\gamma)\coloneqq \gamma(t).$

\subsection{Measures}
Let $(X,\tau)$ be a Hausdorff topological space.  We denote by $\meas_+(X)$ the space of non-negative finite Radon measures on $X$, while $\mathcal{B}(X)$ is the Borel $\sigma$-algebra of $X$. By $\prob(X)\subset \meas_+(X)$ we denote the space of probability measures. 
We endow $\meas_{+}(X)$ with the narrow topology, i.e.~the coarsest topology such that all the maps $\mu\mapsto \int_X \varphi \de\mu$ are lower semicontinuous for every $\varphi\in \LSC_b(X)$, the space of bounded lower semicontinuous functions.  

We recall that whenever $(X,\tau)$ is a Polish space, then $\meas_+(X)$ coincides with the set of all non-negative and finite Borel measures and the narrow topology coincides with the weak topology induced by the duality with continuous and bounded functions. We write $\mu_n\rightharpoonup\mu$ to denote the weak convergence of $\mu_n$ to $\mu.$

The \emph{support} of a measure $\mu\in \meas_+(X)$ is the set 
\[
\supp(\mu)\coloneqq  \{x\in X \ | \ \forall B\in \tau \ : \ x\in B \Rightarrow \mu(B)>0\},
\]
and we say that $\mu$ is \emph{concentrated} on $A\in \mathcal{B}(X)$ if $\mu(B)=\mu(A\cap B)$ for every $B\in \mathcal{B}(X)$.

If $A\subset X$ is a Borel set and $\mu\in \meas_+(X)$, we denote by $\mu\mres A \in \meas_+(X)$ the restriction of $\mu$ to $A$.

If $\mu\in \meas_+(X)$ and $Y$ is another Hausdorff topological space, a map $T : X\to Y$ is Lusin $\mu$-measurable if for every $\varepsilon>0$ there exists a
compact set $K_{\varepsilon}\subset X$ such that $\mu(X \setminus K_{\varepsilon})\le \varepsilon$ and the restriction of $T$ to $K_{\varepsilon}$
is continuous. We denote by $T_\sharp \mu \in \meas_{+}(Y)$ the \emph{push-forward} measure defined by $T_\sharp \mu(B) \coloneqq  \mu(T^{-1}(B))$ for every $B\in \mathcal{B}(Y)$.  Recall that Borel measurable maps are Lusin $\mu$-measurable for every $\mu \in \meas_+(X)$, whenever $Y$ is Polish. 

For every $\mu, \nu \in \meas_+(X)$ there exist $\sigma \in L^1_+(X, \mu)$ and $\nu^\perp$ such that
\begin{equation}\label{eq:lebdec}
\nu= \sigma \mu + \nu^\perp, \quad \nu^\perp \perp \mu \,.  
\end{equation}

We call such a decomposition the \emph{Lebesgue decomposition} of the measures $\nu$ w.r.t.~$\mu$.

A \emph{Dirac measure} centered at a point $x\in X$ is the measure $\delta_x\in \meas_{+}(X)$ defined as 
\begin{equation}\label{eq:dirac}
    \delta_x(A)\coloneqq \begin{cases}
    1 \quad \textrm{if} \ x\in A,\\
    0 \quad \textrm{if} \ x\notin A,
\end{cases}\qquad A\in \mathcal{B}(X).
\end{equation}

\subsection{The geometric cone}\label{sec:goecone}

We define the \emph{geometric cone} $\f{C}[X]$ on $X$ as the set of pairs $(x,r)$, $x \in X$, $r \ge 0$, identifying all the pairs with $r=0$; more precisely, we set
\[ 
\f{C}[X]\coloneqq  X \times [0,+\infty) /\f{R}, \quad (x, r) \f{R} (y,s) \text{ if } \left [ ( x=y \text{ and } r=s) \text{ or } (r=s=0)\right ] .
\]
Points in the cone are usually denoted by gothic letters $\f{y}$ or equivalence classes $[x,r]$, $x \in X$, $r \ge 0$. The \emph{vertex} of the cone is the point $\f{o}$ corresponding to the equivalence class $[x,0]$, $x \in X$. The quotient map sending a point $(x,r) \in X \times [0,+\infty)$ to its equivalence class $[x,r]$ is denoted by $\f{p}$. On the other hand, the projections on $[0,+\infty)$ and $X$ are simply defined as 
\[
\sfr:\f{C}[X]\to [0,+\infty), \qquad    \sfr([x,r]) \coloneqq  r\,,
\]
and 
\[
\sfx:\f{C}[X]\to X, \qquad \sfx([x,r]) \coloneqq \begin{cases}
    x \ \textrm{ if }  r>0 \\
    \bar{x} \  \textrm{ if } r=0\,,
\end{cases} 
\]
where $\bar{x} \in X$ is some fixed point. We omit the dependence of $\sfx$ on $\bar{x}$ since in the constructions where $\sfx$ is involved this will be irrelevant. If $0 < \eps < R < +\infty$, we also introduce the sets
\begin{subequations}
\begin{equation}\label{eq:cone_r}
\f{C}_R[X] \coloneqq  \{ [x,r] \in \f{C}[X] \,:\, 0 \le r \le R \}\,,
\end{equation}
\begin{equation}\label{eq:cepsr}
\f{C}_{R, \eps}[X] \coloneqq  \{ [x,r] \in \f{C}[X] \,:\, \eps \le r \le R \}\,.   
\end{equation}
\end{subequations}

On the cone $\f{C}[X]$ there is a natural notion of topology: for a point $[x,r]$ with $x \in X$ and $r >0$ a fundamental system of neighbourhoods is given by the image through $\f{p}$ of a fundamental system of neighbourhoods at $(x,r)$ in $X \times [0,+\infty)$. A fundamental system of neighbourhoods at $\f{o}$ is given by the family of sets $\{ \f{C}_R[X], \, R>0\} \,.$ Equivalently, the topology of the geometric cone is induced, for every $a \in (0, \pi]$, by the distance $\sfd_{a,\f{C}}: \f{C}[X]\times \f{C}[X] \to [0, +\infty)$ defined as
\begin{align}\label{ss22:eq:distcone} \sfd_{a,\f{C}}([x,r],[y,s]) &\eqdef  \tparen{ r^2+s^2-2rs\cos(\sfd(x,y) \wedge a) }^{\frac{1}{2}}\\
\label{eq:cos_to_sin}
&\,=\left(|r-s|^2+4rs\sin^2\left(\frac{\sfd(x,y) \wedge a}{2}\right) \right)^{\frac{1}{2}}, \qquad [x,r], [y,s] \in \f{C}[X] \,.
\end{align}
The geometric cone $\f{C}[X]$ serves as a model for weighted Dirac measures in $X$, i.e.~measures of the form $r\delta_x$ where $x \in X$ and $r \ge 0$, see also \eqref{eq:dirac}; in fact, it can be checked that $\f{C}[X]$ is homeomorphic to this space when endowed with the weak topology (see \cite[Lemma 2.4]{SS24} for the precise statement). 
Separability and completeness of the space $(X, \sfd)$ are inherited by $(\f{C}[X], \sfd_{a, \f{C}})$. Whenever $(X, \sfd \wedge a)$ is a length (resp.~geodesic) space, also $(\f{C}[X], \sfd_{a, \f{C}})$ is a length (resp.~geodesic) space (e.g.~see~\cite[Theorem 3.6.17]{Burago-Burago-Ivanov01}). Moreover, notice that the canonical choice $\sfd_{\f{C}}\coloneqq  \sfd_{\pi, \f{C}}$ gives that $(\f{C}[X], \sfd_{\f{C}})$ is  a length (resp.~geodesic) space whenever $(X,\sfd)$ is a length (resp.~geodesic) space (\cite[Theorem 3.6.17]{Burago-Burago-Ivanov01}).

\subsection{Curves on the cone}\label{sec:curves_cone}

For a curve $\f{y} \in \rmC([0,1]; (\f{C}[X],\sfd_{\f{C}}))$ we write 
\[ 
r_{\f{y}}(t)\coloneqq \sfr(\f{y}(t)), \quad  x_{\f{y}}(t)\coloneqq \sfx(\f{y}(t)), \quad t \in [0,1]\,,
\]
and we denote by $r'_{\f{y}}(t)$ the derivative of $r_{\f{y}}$ at time $t$ and by $|x_{\f{y}}'(t)|_\sfd$ the $\sfd$-metric derivative (cf.~\eqref{eq:metrd}) of $x_{\f{y}}$ at time $t$,  $t\in [0,1]$. By \cite[Lemma 8.1]{LMS18} we have that
\begin{equation}\label{eq:AC-cone}
\f{y} \in \AC^2([0,1]; (\f{C}[X],\sfd_{\f{C}})) \quad \Longleftrightarrow \quad 
\left\{
\begin{array}{ll}
r_\f{y} & \in \AC^2([0,1];\R_+) \\
x_\f{y} & \in \AC^2_{loc}(r_\f{y}^{-1}(0,+\infty);(X,\sfd)),
\end{array}
\right.
\end{equation}
and moreover it holds
\begin{equation}\label{eq:speed-cone}
|\f{y}'(t)|_{\sfd_{\f{C}}}^2 = |r_{\f{y}}'(t)|^2 + |r_{\f{y}}(t)|^2|x_{\f{y}}'(t)|_\sfd^2 \quad \text{ for a.e.~} t \in (0,1). 
\end{equation}
Let us assume that $(X, \sfd)$ is a geodesic metric space, so that ($\f{C}[X]$, $\sfd_{\f{C}})$ is too. We now describe the structure of geodesics in the geometric cone:
\begin{enumerate}
    \item \label{item:geod_trivial} In the trivial case $\f{y}_0=\f{y}_1$, then a geodesic connecting $\f{y}_0$ to $\f{y}_1$ is given by $\f{y}(t)=\f{y}_0$, $t \in [0,1]$.
    \item \label{item:geod_cusp} If $\f{y}_0=\f{o}$ and $\f{y}_1\ne \f{o}$, then 
    \[ \f{y}(t)\coloneqq [\sfx(\f{y}_1), t \sfr(\f{y}_1)], \quad t \in [0,1]\]
    is a geodesic connecting $\f{y}_0$ to $\f{y}_1$; the case $\f{y}_1=\f{o}$ and $\f{y}_0 \ne \f{o}$ is analogous. 
    \item \label{item:geod_farapart} If both $\f{y}_0, \f{y}_1 \ne \f{o}$ and $\sfd(\sfx(\f{y}_0), \sfx(\f{y}_1)) \ge \pi$, then
    \[ \f{y}(t)\coloneqq \begin{cases} \left[\sfx(\f{y}_0), \sfr(\f{y}_0)-\big(\sfr(\f{y}_0)+\sfr(\f{y}_1)\big)t\right] \quad &\text{ if } 0 \le t \le \frac{\sfr(\f{y}_0)}{\sfr(\f{y}_0)+\sfr(\f{y}_1)}, \\
    \left[\sfx(\f{y}_1), \big(\sfr(\f{y}_0)+\sfr(\f{y}_1)\big)t-\sfr(\f{y}_0)\right] \quad &\text{ if } \frac{\sfr(\f{y}_0)}{\sfr(\f{y}_0)+\sfr(\f{y}_1)} \le t \le 1,
    \end{cases}\]
    is a geodesic connecting $\f{y}_0$ to $\f{y}_1$. 
    \item \label{item:geod_normal} If both $\f{y}_0, \f{y}_1 \ne \f{o}$, $\f{y}_0 \neq \f{y}_1$, and $0\le \sfd(\sfx(\f{y}_0), \sfx(\f{y}_1)) < \pi$, then a geodesic connecting $\f{y}_0$ to $\f{y}_1$ is given by
    \[ \f{y}(t) = [x(\theta(t)), r(t)] \quad t \in [0,1],\]
    where $x:[0,\sfd(\sfx(\f{y}_0), \sfx(\f{y}_1))]\to X$ is a constant speed geodesic in $X$ connecting $\sfx(\f{y}_0)$ to $\sfx(\f{y}_1)$ and $\theta:[0,1] \to [0,\sfd(\sfx(\f{y}_0), \sfx(\f{y}_1))]$ and $r:[0,1] \to \R_+$ are given by
    \begin{align*}
        r^2(t)&\coloneqq (1-t)^2 \sfr^2(\f{y}_0) + t^2 \sfr^2(\f{y}_1) +2t(1-t)\sfr(\f{y}_0)\sfr(\f{y}_1)\cos(\sfd(\sfx(\f{y}_0), \sfx(\f{y}_1))) &&\quad t \in [0,1],\\
        \cos(\theta(t))&\coloneqq \frac{(1-t)\sfr(\f{y}_0)+ t\sfr(\f{y}_1)\cos(\sfd(\sfx(\f{y}_0),\sfx(\f{y}_1)))}{r(t)} &&\quad t \in [0,1].
    \end{align*}
\end{enumerate}

\begin{remark}[Minimal radius of non-degenerate geodesics]\label{rem:geocone} 
In case $\f{y}_0=[x_0,r_0], \f{y}_1=[x_1,r_1]$ are both different from $\f{o}$ and $0\le d\coloneqq \sfd(\sfx(\f{y}_0), \sfx(\f{y}_1)) < \pi$, the radius $r_\f{y}$ of the geodesic $\f{y}$ connecting $\f{y}_0$ to $\f{y}_1$ is bounded from below by a strictly positive constant depending on $\f{y}_0$ and $\f{y}_1$. This is trivial if $\f{y}_0 = \f{y}_1$, as by Case \eqref{item:geod_trivial} above $r_\f{y}(t) = r_0$. If instead $\f{y}_0 \neq \f{y}_1$, then it is just a matter of computations (postponed to Appendix \ref{app:radius} in order not to burden the presentation) to show that $r_\f{y}$ attains its minimum at
\begin{equation}\label{eq:tmin}
t_{\min}(\f{y}_0, \f{y}_1) \coloneqq  
\begin{cases} 
0 \quad & \text{ if } \cos(d) \ge \frac{r_0}{r_1}, \\
\displaystyle{\frac{r_0^2-r_0r_1 \cos(d)}{\sfd^2_{\f{C}}(\f{y}_0, \f{y}_1)}} \quad & \text{ if } \cos(d) < \frac{r_0}{r_1} \wedge \frac{r_1}{r_0},\\
1 \quad & \text{ if } \cos(d) \ge \frac{r_1}{r_0},
\end{cases}
\end{equation}
with value

\begin{equation}\label{eq:rmin}
r_{\min}(\f{y}_0, \f{y}_1)\coloneqq r_{\f{y}}(t_{\min}(\f{y}_0, \f{y}_1)) = 
\begin{cases} 
r_0\wedge r_1 \quad & \text{ if } \cos(d) \ge \frac{r_0}{r_1}\wedge\frac{r_1}{r_0}, \\
\displaystyle{\frac{r_0r_1 \sin(d)}{\sfd_{\f{C}}(\f{y}_0, \f{y}_1)}} \quad & \text{ if } \cos(d) < \frac{r_0}{r_1} \wedge \frac{r_1}{r_0}.
\end{cases}
\end{equation}
We deduce that
\[ r_{\min}(\f{y}_0, \f{y}_1) >0 \quad \text{ if } r_0 r_1>0 \text{ and } 0 \le d <\pi.\]
We can also infer (see Appendix \ref{app:radius}) that 
\begin{equation}\label{eq:rmin_nicer}
r_{\min}(\f{y}_0, \f{y}_1) \ge \frac{1}{\sqrt{2}}(r_0\wedge r_1)  \quad \text{ if } r_0 r_1>0 \text{ and } 0\le d\le \pi/2.
\end{equation}
\end{remark}

\subsection{Measures and functions on the cone}\label{sec:fcone}
We define the set of measures on the cone with finite $q$-radial moment, $q \in [1,+\infty)$, as
\begin{equation*}
\mathcal{\f{M}}_+^q(\f{C}[X]) \eqdef  \left \{ \alpha \in \meas_+(\f{C}[X]) : \int_{\f{C}[X]} \sfr^q \de \alpha < + \infty  \right \}, 
\end{equation*}
and the map
\begin{equation*}
\f{h}^q : \mathcal{\f{M}}_+^q(\f{C}[X]) \to \meas_+(X), \quad \f{h}^q(\alpha) \coloneqq \sfx_\sharp (\sfr^q \alpha).
\end{equation*}
Note that the map $\f{h}^q$ does not depend on the point $\bar{x}$ in the definition of $\sfx$.\\
For $N \in \N_{\ge 1}$, we consider the product cone $\f{C}[X]^{N+1}$ endowed with the product topology. On the product cone we can consider the projections on the components $\pi^{i} : \f{C}[X]^{N+1} \to \f{C}[X]$ sending $([x_0, r_0], \dots,  [x_N, r_N])$ to $[x_i, r_i]$, the projections on the $N+1$ copies of $[0,+\infty)$, and on the $N+1$ copies of $X$ defined as $\sfr_i \eqdef  \sfr \circ \pi^{i}$ and $\sfx_i \eqdef  \sfx \circ \pi^{i}$, for $i=0, \dots, N$. Note that $\sfx_i$ depends on the choice of points $\bar{x}_i \in X$, but this will be irrelevant. We introduce the set of measures with finite $q$-the radial moment, $q\in[1,+\infty)$, in the product cone as
\begin{equation*}
\mathcal{\f{M}}_+^q(\f{C}[X]^{N+1}) \eqdef  \left \{ \aalpha \in \meas_+(\f{C}[X]^{N+1}) : \int (\sfr_0^q + \dots +\sfr_N^q ) \d \aalpha < + \infty  \right \}, 
\end{equation*}
and the maps
\begin{equation*}
\f{h}_i^q : \mathcal{\f{M}}_+^q( \f{C}[X]^{N+1} \nc) \to \meas_+(X_i), \quad \f{h}_i^q(\aalpha) = (\sfx_i)_\sharp (\sfr_i^q \aalpha) \quad i=0,\dots, N.
\end{equation*}
Note that the map $\f{h}_i^q$ does not depend on the point $\bar{x}_i \in X$ in the definition of $\sfx_i$.\\
Finally, when $N=1$, we set $\pc\coloneqq \f{C}[X]^2$ and we define, for every $(\mu_0, \mu_1) \in \meas_+(X) \times \meas_+(X)$, the set 
\begin{equation}\label{eq:hommarg}
\f{H}^q(\mu_0, \mu_1) \eqdef  \left \{ \aalpha \in  \mathcal{\f{M}}_+^q(\pc) : \f{h}_i^q(\aalpha) = \mu_i, \, i=0,1\right \}.
\end{equation}
If $\aalpha \in \f{H}^q(\mu_0, \mu_1)$, we say that $\mu_0$ and $\mu_1$ are the $q$-homogeneous marginals of $\aalpha$.\\
\quad \\
\noindent We say that a function $\mathsf H: \f{C}[X]^{N+1} \to [0,+\infty]$, $N \in \N_{\ge 1}$, is
\begin{enumerate}
\item radially $q$-homogeneous, $q \in [1,+\infty)$, if 
\[ \mathsf H([x_0,\lambda r_0], \dots, [x_N, \lambda r_N])= \lambda^q\mathsf H([x_0,r_0], \dots, [x_N,r_N]) \]
 for every $[x_0,r_0],\dots, [x_N, r_N] \in \f{C}[X],\, \lambda > 0$;
\item radially $q$-convex, if the map
\[ (r_0,\dots, r_N) \mapsto \mathsf H([x_0,r_0^{1/q}],\dots, [x_N,r_N^{1/q}]) \]
is convex in $[0,+\infty)^{N+1}$ for every $x_0, \dots, x_N \in X$.
\end{enumerate}

In the sequel, we will always assume to work with functions $\mathsf H$ proper and lower semicontinuous. Under these assumptions, if $\mathsf H$ is $q$-homogeneous, it satisfies $\mathsf H(\f{o}, \dots, \f{o})=0$.

Finally, let us also provide a simplified definition of dilation tailored to our purposes (see \cite{LMS18} for the general definition): given $N \in \N_{\geq 1}$, $q \in [1,+\infty)$, and a constant $\vartheta > 0$, the $(\vartheta,q)$-dilation of $\aalpha \in \mathcal{M}_+(\f{C}[X]^{N+1})$ is defined as
\begin{equation}\label{eq:dilation}
\mathcal{M}_+(\f{C}[X]^{N+1})\ni{\rm dil}_{\vartheta,q}(\aalpha) \coloneqq ({\rm prd}_\vartheta)_\sharp(\vartheta^q \aalpha) \,, 
\end{equation}
where
\[
{\rm prd}_\vartheta([x_0,r_0],\dots,[x_N,r_N]) \coloneqq ([x_0,r_0/\vartheta],\dots,[x_N,r_N/\vartheta]) \,.
\]
The marginal constraints have a natural scaling invariance w.r.t.\ dilation maps, in the sense that for every $\aalpha \in \mathcal{M}_+(\f{C}[X]^{N+1})$ it holds
\begin{equation}\label{eq:dil_invariance}
\f{h}_i^q({\rm dil}_{\vartheta,q}(\aalpha)) = \f{h}_i^q(\aalpha) \quad \textrm{ for every } i = 0,\dots,N\,,
\end{equation}
see \cite[Lemma 2.8]{SS24}. It is also immediate to see that, whenever $\sfH: \f{C}[X]^{N+1} \to [0,+\infty]$ is radially $q$-homogeneous, $N \in \N_{\ge 1}$, $q \in [1,+\infty)$, then
\begin{equation}\label{eq:hqhom}
\int \sfH \de \aalpha = \int \sfH \de \left ( {\rm dil}_{\vartheta,q}(\aalpha) \right ).
\end{equation}

\section{Inf-convolution between general costs}\label{sec:gencost}

In this section we study the inf-convolution generated by two general cost functions, giving sufficient conditions for the resulting cost to enjoy relevant properties. Since the main aim of this paper is to prove that, for specific choices of the involved costs, one gets the Hellinger--Kantorovich distance as inf-convolution, we focus in particular on the properties satisfied by a distance. We will show how, in general, finiteness, non-degeneracy and the triangle inequality may fail, even if one starts from very regular cost functions. 
This also highlights how nicely Hellinger and Wasserstein distances interact to produce the $\HK$ distance, and motivates the investigation of other possible well-behaved examples.

In order to avoid possible misunderstandings, let us state precisely the terminology we shall use throughout the whole section. Given a non-empty set $U$,
\begin{itemize}
    \item a \emph{cost} is a non-negative function $\sfc : U \times U \to [0,+\infty)$; if $\sfc$ is allowed to take the value $+\infty$, then we will refer to it as an \emph{extended cost}; an extended cost is \emph{null on the diagonal} if $\sfc(z,z)=0$ for all $z\in U$; a cost is \emph{symmetric} if $\sfc(z_0,z_1) = \sfc(z_1,z_0)$ for all $z_0,z_1\in U$.
    \item an \emph{(extended) semi-distance} satisfies all axioms of an (extended) distance but non-degeneracy; that is, for an extended semi-distance $\varrho$ it may happen $\varrho(z_0,z_1)=0$ for some $z_0 \neq z_1 \in U$;
    \item an \emph{(extended) semi-norm} satisfies all axioms of an (extended) norm but non-degeneracy; that is, for an extended semi-norm $\|\cdot\|$ on a vector space $V$ it may happen $\|v\|=0$ for some $0 \neq v \in V$.
\end{itemize}

Let $U$ be a set and let $\sfc_1$ and $\sfc_2$ be extended costs on $U$; let $z_0, z_1 \in U$ be fixed and let $N \in \N_{\ge 1}$. We say that $P=(x_0, x_1, \dots, x_N; y_1, \dots, y_N) \in U^{N+1} \times U^N$ is an $N$-path from $z_0$ to $z_1$ if $x_0=z_0$ and $x_N=z_1$. The collection of $N$-paths from $z_0$ to $z_1$ is denoted by $\mathscr{P}(z_0,z_1; N)$. The energy of an element $P \in \mathscr{P}(z_0,z_1; N)$ between the costs $\sfc_1$ and $\sfc_2$ is defined as 
\begin{equation}\label{eq:energy}
\mathcal{E}_N (P) \coloneqq N \sum_{i=1}^N \left ( \sfc_1^2(x_{i-1}, y_i) + \sfc_2^2(y_i, x_i) \right ).   
\end{equation}
Notice that $\mathcal{E}_N$ depends on $\sfc_1$ and $\sfc_2$ even if this is not explicit in the notation. Sometimes, we represent an $N$-path $P$ as in the diagram below, with the arrows giving a natural orientation to the points in the path: 
\[
\begin{tikzcd}[row sep= .2in, column sep = .3in]
P: &[-2em] & x_0 \arrow{r}{\sfc_1} & y_1 \arrow{r}{\sfc_2} & x_1 &[-2em] \dots &[-2em] x_{N-1} \arrow{r}{\sfc_1} & y_N \arrow{r}{\sfc_2} & x_N &
\end{tikzcd}
\]

\begin{definition}\label{def: inf conv}
The infimal convolution (inf-convolution, for short) between $\sfc_1$ and $\sfc_2$ is defined as the quantity
\[ 
(\sfc_1 \nabla \sfc_2)(z_0, z_1) \coloneqq \left ( \liminf_{N \to + \infty} \inf \left \{ \mathcal{E}_N(P) : P \in \mathscr{P}(z_0,z_1;N) \right \} \right )^{1/2}, \quad z_0, z_1 \in U \,.
\]
\end{definition}

For reasons that will be apparent in Section \ref{subsect:mt vs conv}, we will sometimes refer to the previous definition as the “\emph{metric} infimal convolution”. Let us also stress that, more precisely, \eqref{eq:energy} and $\sfc_1 \nabla \sfc_2$ should be referred to as 2-energy and 2-infimal convolution, respectively; the choice of the exponent 2 is motivated by the main example of infimal convolution discussed in this paper: the inf-convolution between the squared 2-Hellinger and 2-Wasserstein distances. However, a priori other choices of exponent are possible, so that more generally (and with obvious adaptations) one could define the $p$-infimal convolution between two extended costs.

First of all, let us discuss some basic properties of the infimal convolution.

\begin{proposition}\label{prop: infconvprop}
Let $\sfc_1,\sfc_2$ be extended costs on $U$. Then the inf-convolution $\sfc_1\nabla \sfc_2$ satisfies the following properties:
\begin{itemize}
    \item[(i)] Non-negativity: $(\sfc_1 \nabla \sfc_2)(z_0,z_1) \geq 0$ for all $z_0,z_1 \in U$.
    \item[(ii)] Monotonicity: if $\sfc_1'$ and $\sfc_2'$ are extended costs on $U$ such that $\sfc_i \le \sfc_i'$ for $i=1,2$, then $\sfc_1 \nabla \sfc_2 \le \sfc_1' \nabla\sfc_2'$. 
\end{itemize}
If $\sfc_1,\sfc_2$ are null on the diagonal, then it also holds:
\begin{itemize}
    \item[(iii)] $(\sfc_1 \nabla \sfc_2)(z,z)=0$ for any $z \in U$.
    \item[(iv)] Symmetry w.r.t.\ costs: $\sfc_1 \nabla \sfc_2 = \sfc_2 \nabla \sfc_1$.
\end{itemize}
If $\sfc_1,\sfc_2$ are null on the diagonal and symmetric, then it also holds:
\begin{itemize}
 \item[(v)] Symmetry w.r.t.\ arguments: $(\sfc_1 \nabla \sfc_2)(z_0,z_1) = (\sfc_1 \nabla \sfc_2)(z_1,z_0)$ for all $z_0,z_1 \in U$.
\end{itemize}
\end{proposition}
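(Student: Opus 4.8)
The strategy is to verify the five properties by inspecting directly the definition of $\mathcal{E}_N$ in \eqref{eq:energy} and of $\sfc_1 \nabla \sfc_2$ in Definition \ref{def: inf conv}. Properties (i)--(ii) require no hypotheses beyond $\sfc_1,\sfc_2$ being extended costs. For (i), observe that each summand $\sfc_1^2(x_{i-1},y_i)+\sfc_2^2(y_i,x_i)$ is non-negative by definition of extended cost, so $\mathcal{E}_N(P)\ge 0$ for every $N$-path $P$; taking infima over $\mathscr{P}(z_0,z_1;N)$ and then $\liminf$ preserves non-negativity, and finally the square root is well-defined and non-negative. For (ii), if $\sfc_i\le\sfc_i'$ pointwise then $\sfc_i^2\le(\sfc_i')^2$ (both sides are $\ge 0$, with the usual convention $(+\infty)^2=+\infty$), hence $\mathcal{E}_N(P)\le\mathcal{E}_N'(P)$ for the same path $P$ with the primed costs; passing to $\inf$ over the common index set $\mathscr{P}(z_0,z_1;N)$, then $\liminf$, then square root, all of which are monotone operations, yields $\sfc_1\nabla\sfc_2\le\sfc_1'\nabla\sfc_2'$.

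For (iii)--(iv) we additionally use that $\sfc_1,\sfc_2$ are null on the diagonal. For (iii), fix $z\in U$ and for each $N$ take the constant $N$-path $P_N=(z,\dots,z;z,\dots,z)$: every summand vanishes since $\sfc_1(z,z)=\sfc_2(z,z)=0$, so $\mathcal{E}_N(P_N)=0$, hence $\inf\{\mathcal{E}_N(P):P\in\mathscr{P}(z,z;N)\}=0$ for every $N$ (it is also $\ge 0$ by (i)); the $\liminf$ is then $0$ and so is its square root. For (iv), given an $N$-path $P=(x_0,\dots,x_N;y_1,\dots,y_N)$ from $z_0$ to $z_1$ for the pair $(\sfc_1,\sfc_2)$, I would produce from it a $(2N)$-path (or, more cheaply, exploit an interleaving) that realizes for $(\sfc_2,\sfc_1)$ essentially the same energy: concretely, insert the intermediate points so that a single block ``$x_{i-1}\xrightarrow{\sfc_1}y_i\xrightarrow{\sfc_2}x_i$'' is replayed as two blocks ``$x_{i-1}\xrightarrow{\sfc_2}x_{i-1}\xrightarrow{\sfc_1}\cdots$'' using the diagonal-nullity of the inserted $\sfc_2$-step, and symmetrically. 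A cleaner route: note the quantity $\inf_P \mathcal{E}_N(P)$ for $(\sfc_1,\sfc_2)$ and $z_0\to z_1$ equals the same infimum for $(\sfc_2,\sfc_1)$ and $z_1\to z_0$ by reversing a path, i.e.\ $P\mapsto (x_N,\dots,x_0;y_N,\dots,y_1)$, which swaps the roles of $\sfc_1$ and $\sfc_2$ while reversing endpoints; combining this with (v)'s endpoint-symmetry argument (see below) when the costs are symmetric gives (iv) directly, but without symmetry one argues purely at the level of the unordered interleaving of the two cost-lists, which is manifestly invariant under exchanging $\sfc_1\leftrightarrow\sfc_2$ once one allows zero-length diagonal steps. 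I would spell this out with the insertion trick to keep it self-contained and exponent-count correct.

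For (v), assume in addition that $\sfc_1,\sfc_2$ are symmetric. Given an $N$-path $P=(x_0,\dots,x_N;y_1,\dots,y_N)$ from $z_0$ to $z_1$, consider the reversed path $\bar P\coloneqq(x_N,x_{N-1},\dots,x_0;\,y_N,y_{N-1},\dots,y_1)\in\mathscr{P}(z_1,z_0;N)$. Its energy is $N\sum_{i=1}^N\bigl(\sfc_1^2(x_i,y_i)+\sfc_2^2(y_i,x_{i-1})\bigr)$, which by symmetry of $\sfc_1$ and $\sfc_2$ equals $N\sum_{i=1}^N\bigl(\sfc_1^2(y_i,x_i)+\sfc_2^2(x_{i-1},y_i)\bigr)=\mathcal{E}_N(P)$ after reindexing. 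Hence the map $P\mapsto\bar P$ is a bijection $\mathscr{P}(z_0,z_1;N)\to\mathscr{P}(z_1,z_0;N)$ preserving energy, so the two infima coincide for every $N$; taking $\liminf$ and square roots gives $(\sfc_1\nabla\sfc_2)(z_0,z_1)=(\sfc_1\nabla\sfc_2)(z_1,z_0)$.

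The only genuinely delicate point is (iv): one must be careful that swapping $\sfc_1$ and $\sfc_2$ without symmetry does not naively match path-for-path, since a block ``$\sfc_1$ then $\sfc_2$'' is not the same as ``$\sfc_2$ then $\sfc_1$''. The obstacle is purely bookkeeping — inserting diagonal steps of zero cost to realign the alternating pattern — and it does not affect the $\liminf$ because the number of steps is controlled and the added energy is exactly zero; I expect this to be the main (though still routine) technical step, and everything else is a direct monotonicity/non-negativity argument.
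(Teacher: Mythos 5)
Parts (i)--(iii) are correct and match the paper's proof, and your (iv), though described elliptically, lands on the same device the paper uses (inserting a costless initial and final point so that an $N$-path for $(\sfc_1,\sfc_2)$ becomes an $(N+1)$-path for $(\sfc_2,\sfc_1)$).

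There is, however, a genuine error in your (v). You claim that the raw reversal $\bar P = (x_N,\dots,x_0;\,y_N,\dots,y_1) \in \mathscr{P}(z_1,z_0;N)$ has the same $(\sfc_1,\sfc_2)$-energy as $P$. Your own computation gives $\mathcal{E}_N(\bar P) = N\sum_{i=1}^N\bigl(\sfc_1^2(x_i,y_i)+\sfc_2^2(y_i,x_{i-1})\bigr)$, which by symmetry of $\sfc_1,\sfc_2$ equals $N\sum_{i=1}^N\bigl(\sfc_1^2(y_i,x_i)+\sfc_2^2(x_{i-1},y_i)\bigr)$. But this is the $(\sfc_2,\sfc_1)$-energy of $P$, not $\mathcal{E}_N(P) = N\sum_{i=1}^N\bigl(\sfc_1^2(x_{i-1},y_i)+\sfc_2^2(y_i,x_i)\bigr)$: no reindexing repairs it, because reversal has swapped the roles of $\sfc_1$ and $\sfc_2$ relative to the alternating block structure (the first $\sfc_1$-arrow of $P$ becomes the last arrow of $\bar P$, which in the fixed $(\sfc_1,\sfc_2)$-pattern is a $\sfc_2$-arrow). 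Already with $N=1$, $P=(z_0,z_1;y)$ gives $\mathcal{E}_1(P)=\sfc_1^2(z_0,y)+\sfc_2^2(y,z_1)$ while $\mathcal{E}_1(\bar P)=\sfc_1^2(z_1,y)+\sfc_2^2(y,z_0)$, and these need not coincide unless $\sfc_1=\sfc_2$. The fix is precisely the one you identified but then set aside: either combine your reversal argument with (iv), obtaining $(\sfc_1\nabla\sfc_2)(z_0,z_1)=(\sfc_2\nabla\sfc_1)(z_1,z_0)=(\sfc_1\nabla\sfc_2)(z_1,z_0)$, or, as the paper does, first shift $P$ to the $(N+1)$-path $P_{N+1}$ of (iv) (whose $(\sfc_2,\sfc_1)$-energy is $\tfrac{N+1}{N}\mathcal{E}_N(P)$) and then reverse $P_{N+1}$, which returns a genuine $(\sfc_1,\sfc_2)$ $(N+1)$-path $\tilde P_{N+1}$ from $z_1$ to $z_0$ with $\mathcal{E}_{N+1}(\tilde P_{N+1})=\tfrac{N+1}{N}\mathcal{E}_N(P)$. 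Either way the claimed path-for-path equality of energies under raw reversal is false.
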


\begin{proof}
(i) Non-negativity is clear since for every $z_0,z_1 \in U$, $N\in \N$ and every $P \in \mathscr{P}(z_0,z_1;N)$ it holds $\mathcal{E}_N(P)\ge 0$.
\\(ii) The property follows since for every $z_0,z_1 \in U$, $N\in \N$ and every $P \in \mathscr{P}(z_0,z_1;N)$ it holds $\mathcal{E}_N(P)\leq \mathcal{E}'_N(P)$, where $\mathcal{E}'$ denotes the energy computed between the extended costs $\sfc_1',\sfc_2'$.
\\(iii) From (i) we know that $\sfc_1 \nabla \sfc_2(z,z) \ge 0.$ By considering for every $N\in \N$ the $N$-path $P_N = (z,z,\dots,z)\in U^{N+1}\times U^N$ we have that $\mathcal{E}_N(P_N)=0$ and thus $(\sfc_1 \nabla \sfc_2)(z,z)=0.$
\\(iv) Given $P_N \in \mathscr{P}(z_0,z_1;N)$, the idea is to add `costless' initial and final points to $P_N$ getting the $(N+1)$-path $P_{N+1}$ in the diagram below:
\[
\begin{tikzcd}[row sep= .2in, column sep = .3in]
P_N: &[-2em] & x_0 \arrow{r}{\sfc_1} & y_1 \arrow{r}{\sfc_2} & x_1 &[-2em] \dots &[-2em] x_{N-1} \arrow{r}{\sfc_1} & y_N \arrow{r}{\sfc_2} & x_N & \\
P_{N+1}: & x_0 \arrow{r}{\sfc_1} & x_0 \arrow{r}{\sfc_2} & y_1 \arrow{r}{\sfc_1} & x_1 & \dots & x_{N-1} \arrow{r}{\sfc_2} & y_N \arrow{r}{\sfc_1} & x_N \arrow{r}{\sfc_2} & x_N
\end{tikzcd}
\]
and observe that
\[
\mathcal{E}^{1,2}_N(P_N) = \frac{N}{N+1}\mathcal{E}^{2,1}_{N+1}(P_{N+1}) \geq \frac{N}{N+1}\inf\left\{\mathcal{E}^{2,1}_{N+1}(P) \,:\, P \in \mathscr{P}(z_0,z_1;N+1)\right\},
\]
where $\mathcal{E}^{1,2}_N, \mathcal{E}^{2,1}_N$ denote the $N$-energies associated with $(\sfc_1,\sfc_2)$ and $(\sfc_2,\sfc_1)$, respectively. This implies $(\sfc_1 \nabla \sfc_2)(z_0,z_1) \geq (\sfc_2 \nabla \sfc_1)(z_0,z_1)$ and switching the roles of $\sfc_1$, $\sfc_2$ the opposite inequality follows.
\\(v) The argument is similar to (iv). Indeed, for a given $N$-path $P_N \in \mathscr{P}(z_0,z_1;N)$ between two fixed points $z_0,z_1 \in U$, we define the path $P_{N+1}$ as in the previous point (iv) and then its `reverse' path $\tilde{P}_{N+1}$ 
\[
\begin{tikzcd}[row sep= .2in, column sep = .3in]
\tilde{P}_{N+1}: & \tilde{x}_{N+1} & \arrow{l}{\sfc_2} \tilde{y}_{N+1} & \arrow{l}{\sfc_1} \tilde{x}_N & \arrow{l}{\sfc_2} \tilde{y}_N & \dots & \tilde{y}_2 & \arrow{l}{\sfc_1} \tilde{x}_1 & \arrow{l}{\sfc_2} \tilde{y}_1 & \arrow{l}{\sfc_1} \tilde{x}_0\,.
\end{tikzcd}
\]
More explicitly, given the $N$-path $P_N = (x_0,\dots,x_N;y_1,\dots,y_N)$ with $x_0=z_0$ and $x_N = z_1$, we define $\tilde{P}_{N+1} \in \mathscr{P}(z_1,z_0;N+1)$ by setting $\tilde{x}_0 \coloneqq x_N = z_1$, $\tilde{x}_{N+1} \coloneqq x_0 = z_0$, $\tilde{x}_i \coloneqq y_{N+1-i}$ for $i=1,\dots,N$, and $\tilde{y}_i \coloneqq x_{N+1-i}$ for $i=1,\dots,N+1$. As a consequence, 
\[
\mathcal{E}_N(P_N) = \frac{N}{N+1}\mathcal{E}_{N+1}(\tilde{P}_{N+1}) \geq \frac{N}{N+1}\inf\left\{\mathcal{E}_{N+1}(P) \,:\, P \in \mathscr{P}(z_1,z_0;N+1)\right\}
\]
whence $(\sfc_1 \nabla \sfc_2)(z_0,z_1) \geq (\sfc_1 \nabla \sfc_2)(z_1,z_0)$. By swapping the roles of $z_0$ and $z_1$, the reverse inequality follows.
\end{proof}

In order for the inf-convolution to be a distance, it remains to discuss sufficient conditions to get finiteness, the triangle inequality, and non-degeneracy. Let us start from a sufficient condition to have a finite inf-convolution.

\begin{proposition}\label{prop: uppboundinfconv}
Let $\varrho_1,\varrho_2$ be two extended distances on $U$. Then 
\[\varrho_1\nabla \varrho_2\le \min\{\varrho_{1,\ell},\varrho_{2,\ell}\}\,,\]
where $\varrho_{i,\ell}$ denotes the length (possibly extended) distance induced by $\varrho_i$, see \eqref{eq:lenghtd}. In particular, $\varrho_1\nabla \varrho_2$ is finite-valued whenever $\varrho_i$ is a length distance, $i=1$ or $i=2$.
\end{proposition}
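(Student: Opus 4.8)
The plan is to bound the inf-convolution from above by testing it against \emph{degenerate} $N$-paths along which one of the two costs never contributes, the other being controlled by sampling a near-length-minimizing curve at arc-length-balanced times. I would first establish $\varrho_1\nabla\varrho_2\le\varrho_{1,\ell}$; the companion bound $\varrho_1\nabla\varrho_2\le\varrho_{2,\ell}$ then follows analogously (killing the $\varrho_1$-contribution instead of the $\varrho_2$-one), or at once from Proposition~\ref{prop: infconvprop}(iv), since $\varrho_1,\varrho_2$ are null on the diagonal and hence $\varrho_1\nabla\varrho_2=\varrho_2\nabla\varrho_1$. The two estimates then combine into the claimed bound by the minimum.

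So I would fix $z_0,z_1\in U$ and assume $\varrho_{1,\ell}(z_0,z_1)<+\infty$, there being nothing to prove otherwise. Given $\varepsilon>0$, by \eqref{eq:lenghtd} one can pick $\gamma\in\AC([0,1];(U,\varrho_1))$ joining $z_0$ to $z_1$ with $L\coloneqq\ell_{\varrho_1}(\gamma)\le\varrho_{1,\ell}(z_0,z_1)+\varepsilon$; the degenerate case $L=0$ forces $z_0=z_1$ (by \eqref{def:AC curve}) and is settled by Proposition~\ref{prop: infconvprop}(iii), so assume $L>0$. Since $t\mapsto\sigma(t)\coloneqq\ell_{\varrho_1}(\gamma|_{[0,t]})$ is continuous and nondecreasing with $\sigma(0)=0$ and $\sigma(1)=L$, for every $N\in\N_{\ge 1}$ I can choose $0=t_0\le t_1\le\dots\le t_N$ with $\sigma(t_i)=iL/N$ (note $\gamma$ is constant on $\{\sigma=L\}$, whence $\gamma(t_N)=z_1$), and form the $N$-path $P=(x_0,\dots,x_N;y_1,\dots,y_N)\in\mathscr{P}(z_0,z_1;N)$ by setting $x_i\coloneqq\gamma(t_i)$ and $y_i\coloneqq x_i$.

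Along $P$ every $\varrho_2$-term vanishes ($\varrho_2$ being null on the diagonal), while $\varrho_1(x_{i-1},x_i)\le\ell_{\varrho_1}(\gamma|_{[t_{i-1},t_i]})=\sigma(t_i)-\sigma(t_{i-1})=L/N$; hence
\[
\mathcal{E}_N(P)=N\sum_{i=1}^N\varrho_1^2(x_{i-1},x_i)\le N\cdot N\cdot\Bigl(\tfrac{L}{N}\Bigr)^{2}=L^2\le\bigl(\varrho_{1,\ell}(z_0,z_1)+\varepsilon\bigr)^2 .
\]
Taking the $\liminf$ over $N$ and then letting $\varepsilon\downarrow 0$ gives $(\varrho_1\nabla\varrho_2)(z_0,z_1)\le\varrho_{1,\ell}(z_0,z_1)$, which is the desired inequality. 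For the final assertion, if $\varrho_i$ is a length distance then $\varrho_{i,\ell}=\varrho_i$ is finite-valued, so finiteness of $\varrho_1\nabla\varrho_2$ follows immediately from the bound just obtained. I do not expect a genuine obstacle here: the only point deserving attention is to choose the $t_i$'s so as to equidistribute the \emph{length} of $\gamma$ (a uniform partition of $[0,1]$ would only yield the weaker bound $\mathcal{E}_N(P)\le\int_0^1|\gamma'|_{\varrho_1}^2\,\mathrm{d}t$, generally strictly larger than $L^2$), together with the degenerate-path device that eliminates the $\varrho_2$-contribution.
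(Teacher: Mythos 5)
Your proposal is correct and takes essentially the same route as the paper's proof: reduce by the symmetry property to the one-sided bound, kill the $\varrho_2$-contribution by setting $y_i = x_i$, and sample a near-length-minimizing curve so that consecutive samples are at $\varrho_1$-distance $\le L/N$. The only cosmetic difference is that the paper normalizes $\gamma$ to constant speed (and also invokes monotonicity to replace $\varrho_1$ by $\varrho_{1,\ell}$ at the outset) and then samples at uniform times $i/N$, whereas you sample directly at points equidistributing the arc-length function $\sigma$; these are the same device.
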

\begin{proof}
Using property (iv) of Proposition \ref{prop: infconvprop} it is enough to show $\varrho_1\nabla \varrho_2\le \varrho_{1,\ell}$.
Since $\varrho_1\leq \varrho_{1,\ell}$, using property (ii) of Proposition \ref{prop: infconvprop} we can also assume without loss of generality that $\varrho_1$ is an extended length distance. So, let $\varrho_1$ be an extended length distance, $z_0,z_1 \in U$ and let us prove $\varrho_1\nabla \varrho_2(z_0,z_1) \le \varrho_1(z_0,z_1)$. If $\varrho_1(z_0,z_1) = +\infty$ there is nothing to prove, so let us suppose $\varrho_1(z_0,z_1)<+\infty$, let $\varepsilon>0$ and let $\gamma:[0,1]\to U$ be a constant-speed rectifiable curve joining $z_0$ to $z_1$ and such that $\ell_{\rho_1}(\gamma) < \rho_1(z_0,z_1) + \varepsilon$. Since $\gamma$ is of constant-speed, notice that for every $0\le s<t\le 1$
\begin{equation}\label{eq:constrect}
    \frac{\varrho_1(\gamma(s),\gamma(t))}{t-s}\leq \frac{\ell_{\rho_1}(\gamma|_{[s,t]})}{t-s}=\ell_{\rho_1}(\gamma) \,.
\end{equation}
Let $N\in \N$ and let $(x_0, x_1, \dots, x_N)$ be $N+1$ points defined as $x_i\coloneqq \gamma(\frac{i}{N})$. Notice that $\tilde{P}\coloneqq (x_0, x_1, \dots, x_N; x_1, \dots x_N) \in U^{N+1} \times U^N$ is an $N$-path from $z_0$ to $z_1$. We thus have
\begin{align*} 
&\inf \left \{ \mathcal{E}_N(P) : P \in \mathscr{P}(z_0,z_1;N) \right \}\le \mathcal{E}_N(\tilde{P})=N\sum_{i=1}^N \varrho_1^2(x_{i-1}, x_i)\\
\leq &\sum_{i=1}^N \frac{\varrho_1^2(\gamma(\frac{i-1}{N}),\gamma(\frac{i}{N}))}{\frac{i}{N}-\frac{i-1}{N}}\le \ell_{\rho_1}(\gamma)\sum_{i=1}^N\varrho_1(x_{i-1}, x_i)\le \ell_{\rho_1}^2(\gamma)<(\varrho_1(z_0,z_1) + \varepsilon)^2\,,
\end{align*}
where we have used \eqref{eq:constrect} and the definition of length of a curve. The result follows by sending $\varepsilon\downarrow 0$ and by taking the limit inferior with respect to $N$. 
\end{proof}

However, even starting from finite distances, the resulting inf-convolution may be infinite, as the following simple example shows.

\begin{example}[Inf-convolution of distances may take the value $+\infty$]
Consider a space $U$ with at least $2$ distinct points and the discrete metrics on $U$, i.e.
$$
\varrho_1(z_0,z_1) = \varrho_2(z_0,z_1) =
\begin{cases}
    0 \qquad &\textrm{ if } z_0 = z_1 \,,\\
    1 &\textrm{ if } z_0 \neq z_1 \,.
\end{cases}
$$
Then $(\varrho_1 \nabla \varrho_2)(z_0,z_1)=+\infty$ for every $z_0 \neq z_1 \in U$. To see this, it is sufficient to notice that for every $N\in \N$ and every $N$-path $P=(x_0, x_1, \dots, x_N; y_1, \dots y_N) \in U^{N+1} \times U^N$ with $x_{i-1}\neq y_i$ or $y_i\neq x_i$ for some $i\in 1,\dots,N$, we have $\mathcal{E}_N(P)\geq N$. Since every $N$-path $P$ connecting two distinct points must be of this form, the proof is concluded.

\end{example}

We give now a sufficient condition for the inf-convolution of general costs to enjoy the triangle inequality. 

\begin{proposition}\label{prop: triangineqholds}
Let $\sfc_1,\sfc_2$ be extended costs on $U$. Assume that for every $z_0,z_1 \in U$, the infimal convolution $(\sfc_1 \nabla \sfc_2)(z_0, z_1)$ is realized as a limit, that is
\[
\text{ for every $z_0, z_1 \in U$ the limit } \lim_{N \to + \infty} \inf \left \{ \mathcal{E}_N(P) : P \in \mathscr{P}(z_0,z_1;N) \right \} \text{ exists} \,.
\]
Then $\sfc_1 \nabla \sfc_2$ satisfies the triangle inequality.
\end{proposition}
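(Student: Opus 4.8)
The plan is to prove the triangle inequality for $\sfc_1 \nabla \sfc_2$ by \emph{concatenating near-optimal paths}, the point being that the energy $\mathcal{E}_N$ concatenates with explicit weights and that its quadratic normalization turns the optimal split into the sharp inequality. Fix $z_0, z_1, z_2 \in U$ and set $a \coloneqq (\sfc_1 \nabla \sfc_2)(z_0,z_1)$ and $b \coloneqq (\sfc_1 \nabla \sfc_2)(z_1,z_2)$. If $a = +\infty$ or $b = +\infty$ the bound $(\sfc_1\nabla\sfc_2)(z_0,z_2) \le a+b$ is trivial, so assume $a,b<+\infty$. For $z,z'\in U$ write $F_N(z,z') \coloneqq \inf\{\mathcal{E}_N(P) : P \in \mathscr{P}(z,z';N)\}$; by the standing hypothesis $F_N(z_0,z_1)\to a^2$, $F_N(z_1,z_2)\to b^2$, and $F_N(z_0,z_2)$ converges as $N\to+\infty$, so it suffices to show $\lim_N F_N(z_0,z_2) \le (a+b)^2$.

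The elementary ingredient is a \emph{concatenation lemma}: given $M,K\ge 1$, an $M$-path $P=(x_0,\dots,x_M;y_1,\dots,y_M)\in\mathscr{P}(z_0,z_1;M)$ and a $K$-path $Q=(x_0',\dots,x_K';y_1',\dots,y_K')\in\mathscr{P}(z_1,z_2;K)$, the juxtaposition
\[
R \coloneqq (x_0,\dots,x_M,x_1',\dots,x_K';\,y_1,\dots,y_M,y_1',\dots,y_K')
\]
is an $(M+K)$-path from $z_0$ to $z_2$ (here one uses $x_M = z_1 = x_0'$), and splitting the sum defining $\mathcal{E}_{M+K}(R)$ into its first $M$ blocks and its last $K$ blocks gives at once
\[
\mathcal{E}_{M+K}(R) = (M+K)\left(\frac{\mathcal{E}_M(P)}{M} + \frac{\mathcal{E}_K(Q)}{K}\right).
\]

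Now fix $\lambda\in(0,1)$. For $N$ large set $M_N \coloneqq \lceil \lambda N\rceil$ and $K_N \coloneqq N - M_N$, so that $M_N,K_N\ge 1$, $M_N,K_N\to+\infty$, $N/M_N\to 1/\lambda$ and $N/K_N\to 1/(1-\lambda)$. Pick $P_N\in\mathscr{P}(z_0,z_1;M_N)$ and $Q_N\in\mathscr{P}(z_1,z_2;K_N)$ with $\mathcal{E}_{M_N}(P_N)\le F_{M_N}(z_0,z_1) + 1/N$ and $\mathcal{E}_{K_N}(Q_N)\le F_{K_N}(z_1,z_2) + 1/N$ (possible since $a,b<+\infty$ forces these infima to be finite for $N$ large); then $\mathcal{E}_{M_N}(P_N)\to a^2$ and $\mathcal{E}_{K_N}(Q_N)\to b^2$, precisely because the limits defining the inf-convolution exist along the sequences $(M_N)$ and $(K_N)$. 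Concatenating $P_N$ with $Q_N$ via the lemma produces an element of $\mathscr{P}(z_0,z_2;N)$, whence
\[
F_N(z_0,z_2) \le \frac{N}{M_N}\,\mathcal{E}_{M_N}(P_N) + \frac{N}{K_N}\,\mathcal{E}_{K_N}(Q_N) ,
\]
and the right-hand side converges to $\tfrac{a^2}{\lambda} + \tfrac{b^2}{1-\lambda}$ as $N\to+\infty$. Since $F_N(z_0,z_2)$ converges, this gives $(\sfc_1\nabla\sfc_2)^2(z_0,z_2) \le \tfrac{a^2}{\lambda} + \tfrac{b^2}{1-\lambda}$ for every $\lambda\in(0,1)$. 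Taking the infimum over $\lambda$ and using the Cauchy--Schwarz inequality $\tfrac{a^2}{\lambda} + \tfrac{b^2}{1-\lambda} \ge (a+b)^2$ (equality at $\lambda = a/(a+b)$ when $a,b>0$, and the value $(a+b)^2$ reached in the limit $\lambda\to 0^+$ or $\lambda\to 1^-$ when $a=0$ or $b=0$), we conclude $(\sfc_1\nabla\sfc_2)(z_0,z_2)\le a+b$.

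I expect no deep obstacle: the whole construction is essentially forced once one spots that the energy concatenates with weights $\tfrac{M+K}{M},\tfrac{M+K}{K}$ and that the optimal split $\lambda = a/(a+b)$ makes Cauchy--Schwarz sharp. The only points demanding care are the bookkeeping of the edge cases $a,b\in\{0,+\infty\}$, and the observation that the standing hypothesis ``$(\sfc_1\nabla\sfc_2)$ is realized as a limit'' is genuinely used: it is what guarantees $\mathcal{E}_{M_N}(P_N)\to a^2$ along the particular sequence $M_N = \lceil\lambda N\rceil$, whereas with only a $\liminf$ at one's disposal one would be left with an estimate too weak to pass to the limit.
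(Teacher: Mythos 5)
Your proof is correct and takes essentially the same approach as the paper. Both arguments concatenate near-optimal $M$-paths and $K$-paths (using the identity $\mathcal{E}_{M+K}(R) = (M+K)\bigl(\tfrac{\mathcal{E}_M(P)}{M} + \tfrac{\mathcal{E}_K(Q)}{K}\bigr)$) with the split proportion chosen to make the Cauchy--Schwarz estimate sharp, and both genuinely use the hypothesis that the $\liminf$ is a full limit to pass to the limit along the forced subsequences $M_N,K_N$; the only cosmetic difference is that the paper picks sequences $N_{1,k},N_{2,k}$ with ratio tending to $\tfrac{1-t}{t}$ whereas you take $M_N = \lceil\lambda N\rceil$, $K_N = N-M_N$, and you are a bit more explicit about the edge cases $a=0$ or $b=0$.
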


\begin{proof}
We will prove that for every $t\in (0,1)$ and for every $z_0,z_1,z_2\in U$ we have
\[
(\sfc_1 \nabla \sfc_2)^2(z_0,z_2) \le \frac{(\sfc_1 \nabla \sfc_2)^2(z_0,z_1)}{t} + \frac{(\sfc_1 \nabla \sfc_2)^2(z_1,z_2)}{1-t} \,,
\]
then obtaining the triangle inequality by optimizing with respect to $t\in (0,1).$\\
Let $t \in (0,1)$ and let us take $\{N_{1,k},P_{1,k}\}_{k=1}^\infty,\{N_{2,k},P_{2,k}\}_{k=1}^\infty$ be such that $P_{i,k}\in \mathscr{P}(z_{i-1},z_{i}; N_{i,k})$, $i=1,2$, 
\[
\lim_{k\to \infty}\mathcal{E}_{N_{i,k}}(P_{i,k}) = (\sfc_1 \nabla \sfc_2)^2(z_{i-1},z_i) \,, \quad i=1,2 \qquad \textrm{and} \qquad \lim_{k \to + \infty} \frac{N_{2,k}}{N_{1,k}} = \frac{1-t}{t} \,.
\]
Notice that if $P_{1,k}=(x_{0},x_{1},\dots,x_{N_{1,k}};y_{1},\dots,y_{N_{1,k}})$ and $P_{2,k}=(x'_{0},x'_{1},\dots,x'_{N_{2,k}}; y'_{1},\dots,y'_{N_{2,k}})$ then $\tilde{P}_k \coloneqq (x_{0},x_{1},\dots,x_{N_{1,k}},x'_{0},x'_{1},\dots,x'_{N_{2,k}}; y_{1},\dots,y_{N_{1,k}},y'_{1},\dots,y'_{N_{2,k}}) \in \mathscr{P}(z_0,z_2; N_{1,k}+N_{2,k})$,
so that 
\begin{align*}
    \inf\{\mathcal{E}_{N_{1,k}+N_{2,k}}(P):\mathscr{P}(z_0,z_2; N_{1,k}+N_{2,k})\}\le \mathcal{E}_{N_{1,k}+N_{2,k}}(\tilde{P}_k)\\
    =\frac{N_{1,k}+N_{2,k}}{N_{1,k}}\mathcal{E}_{N_{1,k}}(P_{1,k})+\frac{N_{1,k}+N_{2,k}}{N_{2,k}}\mathcal{E}_{N_{2,k}}(P_{2,k})
\end{align*}
and the conclusion follows by taking the limit as $k \to + \infty$ on both sides.
\end{proof}

We provide here an involved counter-example showing that, in general, even starting from distances, the triangle inequality may fail. In view of Proposition \ref{prop: triangineqholds}, it also provides an example where the inf-convolution is not realized as a limit.

\begin{example}[Triangle inequality may fail] We construct in steps a space $X$ and distances $\varrho_1$ and $\varrho_2$ such that the triangle inequality for the inf-convolutions fails.
\\\noindent\textbf{1st step.} (Definition of the framework) For every $n\in \N_{\ge 1}$, let $A_n\coloneqq \{a_{i,n}\}_{i=0}^{2n}$, $B_n\coloneqq \{b_{i,n}\}_{i=0}^{2n}$, and 
\[
\tilde{Y} \coloneqq \bigsqcup_{n\in\N_{\geq 1}}A_{n} \sqcup \bigsqcup_{n'\in\N_{\geq 1}}B_{n'} \,.
\]
We define $Y \coloneqq \tilde{Y}/\f{R}$, where the equivalence relation $\f{R}$ is defined as follows: given $x,y \in \tilde{Y}$, we declare $x \,\f{R}\, y$ provided
\begin{itemize}
    \item $x = a_{0,n}$ and $y = a_{0,m}$, for some $n,m\in \N_{\ge 1}$; the equivalence class of all these points will be denoted by $z_0$;
    \item $x = a_{2n,n}$ and $y = b_{0,n'}$ or vice versa, for some $n,n'\in \N_{\ge 1}$; the equivalence class of all these points will be denoted by $z_1$;
    \item $x = b_{2n',n'}$ and $y = b_{2m',m'}$, for some $n',m'\in \N_{\ge 1}$; the equivalence class of all these points will be denoted by $z_2$;
    \item $x=y$.
\end{itemize}
When it does not create confusion, we still denote by $a_{i,n}$ and $b_{j,n'}$ the equivalence classes $[a_{i,n}]$ and $[b_{j,n'}]\in Y$ of the elements $a_{i,n}$ and $b_{j,n'}\in \tilde{Y}$. 

We endow $Y$ with a graph structure by declaring the neighbors of each point (recall that the neighborhood relationship $\sim$ is symmetric):
\begin{itemize}
    \item $z_0 \sim a_{1,n}$, $n\in \N_{\geq 1}$;
    \item $z_1 \sim a_{2n-1,n}$ and $z_1 \sim b_{1,n'}$, $n,n'\in \N_{\geq 1}$;
    \item $z_2 \sim b_{2n'-1,n'}$, $n'\in \N_{\geq 1}$;
    \item $a_{i-1,n} \sim a_{i,n}$, $n\in \N_{\geq 2}$, $i=2,\dots,2n-1$;
    \item $b_{i-1,n'} \sim b_{i,n'}$, $n' \in \N_{\geq 2}$, $i=2, \dots,2n'-1$.
\end{itemize}

\smallskip
\begin{center}
\begin{tikzpicture}
\begin{scope}[every node/.style={}]
    \node (A) at (0,0) {$z_0$};
    \node (B) at (7,0) {$z_1$};
    \node (C) at (14,0) {$z_2$};
    \node (D) at (3.5,0) {$a_{1,1}$};
    \node (E) at (10.5,0) {$b_{1,1}$};
    \node (F) at (1.75,-1) {$a_{1,2}$};
    \node (G) at (3.5,-1) {$a_{2,2}$};
    \node (H) at (5.25,-1) {$a_{3,2}$};
    \node (I) at (8.75,-1) {$b_{1,2}$};
    \node (J) at (10.5,-1) {$b_{2,2}$};
    \node (K) at (12.25,-1) {$b_{3,2}$};
    \node (L) at (3.5,-1.7) {$\vdots$};
    \node (M) at (10.5,-1.7) {$\vdots$};
    \node (N) at (3.5,-2.5) {$a_{n,n}$};
    \node (O) at (10.5,-2.5) {$b_{n',n'}$};
    \node (P) at (3.5,-3.5) {$\vdots$};
    \node (Q) at (10.5,-3.5) {$\vdots$};
    \node (R) at (1,-2.5) {$a_{1,n}$};
    \node (S) at (6,-2.5) {$a_{2n-1,n}$};
    \node (T) at (8,-2.5) {$b_{1,n'}$};
    \node (U) at (13,-2.5) {$b_{2n'-1,n'}$};
\end{scope}

\begin{scope}[>={Stealth[black]},
              every node/.style={fill=white,circle},
              every edge/.style={draw=}]
    \path [-] (A) edge (D);
    \path [-] (D) edge (B);
    \path [-] (B) edge (E);
    \path [-] (E) edge (C);
    \path [-] (A) edge (F);
    \path [-] (F) edge (G);
    \path [-] (G) edge (H);
    \path [-] (H) edge (B);
    \path [-] (B) edge (I);
    \path [-] (I) edge (J);
    \path [-] (J) edge (K);
    \path [-] (K) edge (C);
    \path [-] (A) edge (R);
    \path [-] (S) edge (B);
    \draw[dashed] (R) -- (N);
    \draw[dashed] (N) -- (S);
    \path [-] (B) edge (T);
    \draw[dashed] (T) -- (O);
    \draw[dashed] (O) -- (U);
    \path [-] (U) edge (C);
\end{scope}
\end{tikzpicture}

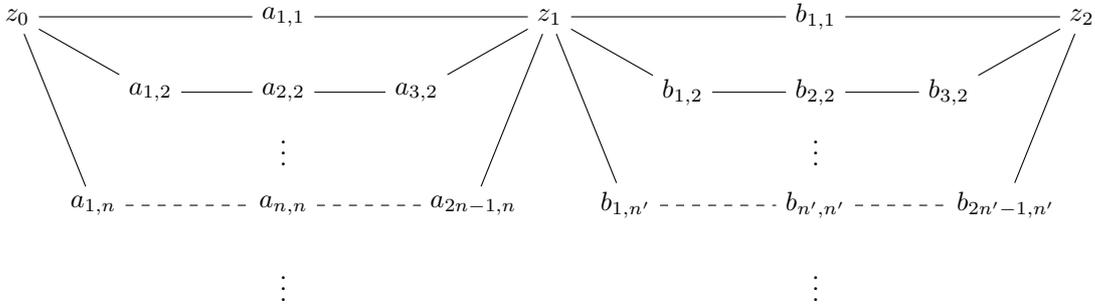
\captionof{figure}{A representation of the space $Y$.}
\end{center}

\medskip

\noindent\textbf{2nd step.} (Definition of the distances)
We now define two distances $\varrho_1,\varrho_2$ on $Y$ starting from costs $\tilde{\varrho}_1$ and $\tilde{\varrho}_2$.

We let $\tilde{\varrho}_u(y,y)=0$ for every $y \in Y, u=1,2$. We then define, for every $n\in \N_{\ge 1}$ and $i=1,\dots,2n$,

\[
\tilde{\varrho}_1(a_{i-1,n},a_{i,n})\coloneqq \begin{cases}
    \frac{1}{\sqrt{2}n} \quad &\textrm{if }i \textrm{ odd}\\
    1 \quad &\textrm{if }i \textrm{ even}
\end{cases} \,,
\qquad\qquad
\tilde{\varrho}_2(a_{i-1,n},a_{i,n})\coloneqq \begin{cases}
    1 \quad &\textrm{if }i \textrm{ odd}\\
    \frac{1}{\sqrt{2}n} \quad &\textrm{if }i \textrm{ even}
\end{cases} \,.
\]
In an analogous fashion, for every $n'\in \N$ and $i=1,\dots,2n'$ we define
\[
\tilde{\varrho}_1(b_{i-1,n'},b_{i,n'})\coloneqq \begin{cases}
    \frac{1}{\sqrt{2}n'} \quad &\textrm{if }i \textrm{ odd}\\
    1 \quad &\textrm{if }i \textrm{ even}
\end{cases} \,,
\qquad\qquad
\tilde{\varrho}_2(b_{i-1,n'},b_{i,n'})\coloneqq \begin{cases}
    1 \quad &\textrm{if }i \textrm{ odd}\\
    \frac{1}{\sqrt{2}n'} \quad &\textrm{if }i \textrm{ even}
\end{cases} \,.
\]
The distances $\varrho_1,\varrho_2$ between two arbitrary points in $Y$ are then defined by length minimization. Namely, for all $x,y \in Y$ and $u=1,2$,
\begin{equation}\label{def:chaindistance}
\varrho_u(x,y) \coloneqq \inf\left\{\sum_{j=1}^{m} \tilde{\varrho}_u(y_{j-1},y_{j}) \,:\, m \in \N_{\ge 1},\, y_0=x,\,y_m = y,\,y_{j-1} \sim y_{j} \,\forall j=1,\dots,m\right\} \,.
\end{equation}
We call \emph{chain} a collection of points $\{y_j\}_{j=0}^m$ connecting $x$ and $y$ as in the previous definition, and notice that every $x,y\in Y$ are connected by a chain and the infimum above is a minimum.

\medskip

\noindent\textbf{3rd step.} (Explicit value of $\varrho_u$) 
Let $x,y\in Y$ be such that all the following conditions are violated
\begin{enumerate}[label=\textbf{C.\arabic*}]
    \item \label{C.1} $x= y$;
    \item \label{C.2} $x\sim y$;
    \item \label{C.3} $x=a_{1,n}$ and $y= a_{1,m}$ for some $n \ne m\in \N_{\ge 1}$;
    \item \label{C.4}$x= a_{2n-1,n}$ and $y= a_{2m-1,m}$ for some $n \ne m\in \N_{\ge 1}$;
    \item \label{C.5}$x= b_{1,n'}$ and $y= b_{1,m'}$ for some $n' \ne m'\in \N_{\ge 1}$;
    \item \label{C.6} $x= b_{2n'-1,n'}$ and $y= b_{2m'-1,m'}$ for some $n' \ne m'\in \N_{\ge 1}$.
\end{enumerate}
We call \emph{distant} two points $x,y$ as above, and we claim that
\begin{equation}\label{claim dist trian}
\textrm{if }x,y\in Y \ \textrm{are distant, then }  \varrho_u(x,y)>1, \ u=1,2.
\end{equation}
To show this, first of all note that, if $x=a_{2n-1,n}$ and $y=b_{1,n'}$  (or the other way around) for some $n, n' \in \N_{\ge 1}$, then $x, y$ are distant and, for every chain $\{y_j\}_{j=0}^m$ connecting them, there are indexes $i^*, j^* \in \{1, \dots, m\}$ such that $\{y_{i^*-1}, y_{i^*}\} = \{x, z_1\}$ and $\{y_{j^*-1}, y_{j^*}\} = \{z_1, y\}$, so that $\varrho_u(x,y)>1$. 
If instead $x$ and $y$ do not have this form and are distant, for every chain $\{y_j\}_{j=0}^m$ connecting them, there exists an index $j^*\in\{1,\dots,m-1\}$ such that $\{y_{j^*-1}, y_{j^*}, y_{j^*+1}\}=\{a_{i-1,n},a_{i,n},a_{i+1,n}\}$ for some $n\in \N_{\ge 1}$ and $i\in \{1,\dots,2n-1\}$, or $\{y_{j^*-1}, y_{j^*}, y_{j^*+1}\}=\{b_{i-1,n'},b_{i,n'},b_{i+1,n'}\}$ for some $n'\in \N_{\ge 1}$ and $i\in \{1,\dots,2n'-1\}$. This proves \eqref{claim dist trian}.

Now we show that
\begin{equation}\label{eq dist for neigh_a}
{\varrho}_u(a_{i-1,n},a_{i,n})=\tilde\varrho_u(a_{i-1,n},a_{i,n}) \qquad \forall n\in\N_{\ge 1}, \ i=1,\dots,2n, \ u=1,2,
\end{equation}
and 
\begin{equation}\label{eq dist for neigh_b}
{\varrho}_u(b_{i-1,n'},b_{i,n'})=\tilde\varrho_u(b_{i-1,n'},b_{i,n'}) \qquad \forall n'\in\N_{\ge 1}, \ i=1,\dots,2n', \ u=1,2.
\end{equation}
We actually show the equality $\tilde{\varrho}_1(a_{i-1,n},a_{i,n})=\varrho_1(a_{i-1,n},a_{i,n})$, the other cases being analogous. By the definition of $\varrho_1,$ it is clear that $\tilde{\varrho}_1(a_{i-1,n},a_{i,n})\ge \varrho_1(a_{i-1,n},a_{i,n})$. For the converse inequality, notice that for every chain $\{y_j\}_{j=0}^m$ connecting $a_{i-1,n}$ to $a_{i,n}$ we must have an index $j^*\in\{1,\dots,m\}$ such that $y_{j^*-1}=a_{i-1,n}$ and $y_{j^*}=a_{i,n}$ or indexes $j^*,\overline{j}^*\in\{0,\dots,m\}$ such that $y_{j^*}=z_0$ and $y_{\overline{j}^*}=z_1$. In the first case the inequality is immediate, while in the second case we have
$\sum_{j=1}^m\tilde{\varrho}_1(y_{j-1},y_j) \ge \varrho_1(z_0, z_1) >1 \ge\tilde{\varrho}_1(a_{i-1,n},a_{i,n})$, where we used that $z_0, z_1$ are distant.

\medskip

\noindent\textbf{4th step. }(Bound on $\varrho_1\nabla\varrho_2(z_0,z_1)$ and $\varrho_1\nabla\varrho_2(z_1,z_2)$) We introduce the space $X$ as the subset of $Y$ defined as
\[
X \coloneqq \left(\bigsqcup_{k\in\N_{\geq 1}}A_{(2k)!} \sqcup \bigsqcup_{k'\in\N_{\geq 1}}B_{(2k'+1)!}\right)/\f{R} \,.
\]
We still denote by $\varrho_1,\varrho_2$ the restrictions of $\varrho_1$ and $\varrho_2$ to $X$. Notice that what we have proven so far still holds on $X$. 

Let us also introduce the sets
\[
X_A\coloneqq\left(\bigsqcup_{k\in\N_{\geq 1}}A_{(2k)!}\right)/\f{R}, \qquad X_B\coloneqq\left(\bigsqcup_{k'\in\N_{\geq 1}}B_{(2k'+1)!}\right)/\f{R}.
\]

We claim now that $\varrho_1\nabla\varrho_2(z_0,z_1)\le 1$ and $\varrho_1\nabla\varrho_2(z_1,z_2)\le 1$. We prove $\varrho_1\nabla\varrho_2(z_0,z_1)\le 1$, the other case being analogous. Notice that it is sufficient to prove that $\lim_{k\to \infty} \mathcal{E}_{(2k)!}(P_{(2k)!})=1$, where $\mathcal{E}_{(2k)!}$ is the energy between $\varrho_1$ and $\varrho_2$ and $P_{(2k)!}\in \mathscr{P}(z_0,z_1;(2k)!)$ is the $(2k)!$-path from $z_0$ to $z_1$ defined as 
\[
P_{(2k)!}=(z_0,a_{2,(2k)!},a_{4,(2k)!},\dots,z_1; \,a_{1,(2k)!},a_{3,(2k)!},\dots,a_{2(2k)!-1,(2k)!}) \,.
\]
We have
\begin{align*}
\mathcal{E}_{(2k)!}(P_{(2k)!})&=(2k)! \sum_{i=1}^{(2k)!} \left ( \varrho_1^2(a_{2i-2,(2k)!}, a_{2i-1,(2k)!}) + \varrho_2^2(a_{2i-1,(2k)!}, a_{2i,(2k)!})\right)\\
&=(2k)! \sum_{i=1}^{(2k)!} \left(\frac{1}{\sqrt{2}(2k)!}\right)^2+\left(\frac{1}{\sqrt{2}(2k)!}\right)^2=1 \,,
\end{align*}
for every $k\in \N_{\geq 1}$, where we have used \eqref{eq dist for neigh_a} and \eqref{eq dist for neigh_b}, and thus the claim is proved. 

\medskip

\noindent\textbf{5th step. }($\varrho_1\nabla\varrho_2(z_0,z_2)=+\infty$)
In this final step we will show that 
\begin{equation}\label{final claim trian}
\varrho_1\nabla\varrho_2(z_0,z_2)=+\infty \qquad \textrm{on } X,
\end{equation} so that
\[
\varrho_1\nabla\varrho_2(z_0,z_2)=+\infty>2\ge \varrho_1\nabla\varrho_2(z_0,z_1)+\varrho_1\nabla\varrho_2(z_1,z_2) \,,
\]
thus providing an example where the inf-convolution fails to satisfy the triangle inequality. 

By definition of inf-convolution, we can find $P_{N_l}=(x^l_0, x^l_1, \dots, x^l_{N_l}; y^l_1, \dots, y^l_{N_l}) \in \mathscr{P}(z_0,z_2;N_l)$ such that $N_l\to+\infty$ as $l\to+\infty$ and
\[
\varrho_1\nabla\varrho_2(z_0,z_2)=\lim_{l\to\infty}\mathcal{E}_{N_l}^{1/2}(P_{N_l}) \,.
\]
Let $l \in \N$ be fixed; it will be useful also to denote
\[ z_{2i}^l\coloneqq x_i^l, \quad i=0, \dots, N_l,\quad z_{2i-1}^\ell\coloneqq y_i^\ell, \quad i=1, \dots, N_l. \]
We call \emph{consecutive} two points $v,w\in P_{N_l}$ such that $v=z_i^l, w=z_{i+1}^l$ for some $i\in\{0,\dots,2N_l-1\}$. If $P_{N_l}$ contains two consecutive distant points, then from \eqref{claim dist trian} it immediately follows that $\mathcal{E}_{N_l}(P_{N_l})> N_l$. If $P_{N_l}$ contains only non-distant consecutive points, this means that for every pair of consecutive points $(v,w)$ in $P_{N_l}$ it holds that $(v,w)$ satisfies one among $\ref{C.1}-\ref{C.6}$. Therefore, only two cases are possible:
\begin{enumerate}
    \item Each pair of consecutive points $(v,w)$ in $P_{N_l}$ satisfies one between \ref{C.1}-\ref{C.2} (and thus does not satisfy any among conditions \ref{C.3}-\ref{C.6});
    \item $P_{N_l}$ contains only non-distant consecutive points and there exists at least one pair of consecutive points $(v,w)$ in $P_{N_l}$ that satisfies one among conditions \ref{C.3}-\ref{C.6}.
\end{enumerate}
We show that case (2) can be excluded: let us call \emph{bad} a pair of consecutive points satisfying one among conditions \ref{C.3}-\ref{C.6}; we show that any path $P_{N_l}$ as in (2) above can be replaced by a path $\tilde{P}_{N_l}$ with lower energy containing no consecutive bad or distant points, that is $\tilde{P}_{N_l}$ falls under case (1) above. We show how to do that in a very specific case, that is, when $P_{N_l}$ as in (2) contains a unique pair of bad points of the form $(v,w)=(z^l_{i^*},z^l_{i^*+1})$ for some index $i^* \in \{0, \dots, 2N_l-1\}$, satisfying \ref{C.3} (resp.~\ref{C.4}); the other cases \ref{C.5}-\ref{C.6} for a single point can be treated in a similar way, and the general case can be achieved with a simple induction argument. Thus, suppose that
\begin{itemize}
\item $(v,w)$ as above satisfies \ref{C.3}: we set
$\tilde{P}_{N_l}=(\tilde{z}^l_0, \tilde{z}^l_2, \dots, \tilde{z}^l_{ 2N_l}; \tilde{z}^l_1, \dots, \tilde{z}^l_{2N_l-1})\in\mathscr{P}(z_0,z_2;N_l)$ defined as $\tilde{z}^l_i=z_0$ for every index $i \in \{0,\dots, i^*\}$, and $\tilde{z}^l_i=z^l_i$ for every $i\in \{i^*+1,\dots, 2N_l\}$; to see that $\mathcal{E}_{N_l}(\tilde{P}_{N_l}) \le \mathcal{E}_{N_l}(P_{N_l})$ it is enough to observe that $\varrho_u(a_{1,n}, a_{1, m}) \ge \varrho_u(z_0, a_{1, m})$ for every $n\ne m \in \N_{\ge 1}$, $u=1,2$.
\item $(v,w)$ as above satisfies \ref{C.4}: let $j^*$ be the minimal value of $j$ such that $z^l_j \in X_B \setminus \{z_1\}$. We set $\tilde{P}_{N_l}=(\tilde{z}^l_0, \tilde{z}^l_2, \dots, \tilde{z}^l_{ 2N_l}; \tilde{z}^l_1, \dots, \tilde{z}^l_{2N_l-1})\in\mathscr{P}(z_0,z_2;N_l)$ defined as $\tilde{z}_i^l = z_i^l$ for every $i \in \{0, \dots, i^*\} \cup \{ j^*, \dots, 2N_l\}$, and $\tilde{z}_i^l = z_1$ for every $i \in \{ i^*+1, \dots, j^*-1\}$; to see that $\mathcal{E}_{N_l}(\tilde{P}_{N_l}) \le \mathcal{E}_{N_l}(P_{N_l})$ it is enough to observe that $\varrho_u(a_{2n-1,n}, a_{2m-1, m}) \ge \varrho_u(a_{2n-1, n},z_1)$ for every $n\ne m \in \N_{\ge 1}$, $u=1,2$.
\end{itemize}
We are left to consider only case (1). It is also easy to see that we can assume that the following does not occur: there are indexes $0 \le i^* < j^* < l^* \le 2N_l$ such that $z_{i^*}^l = z_{l^*}^l \ne z_{j^*}^l$. We call \emph{ordered} the paths avoiding this behaviour.
Therefore, we are left to consider the case in which $P_{N_l}$ is an ordered path for which every pair of consecutive points $(v,w)$ is such that $v\sim w$ or $v=w$. In particular, there exist indexes $k, k', i^*\in \N_{\ge 1}$ such that 
\[ z_i^l \in A_{(2k)!} \setminus \{z_1\} \text{ for every } i \in \{0, \dots, i^*-1\} \quad \text{ and } \quad z_i^l \in B_{(2k'+1)!} \text{ for every } i \in \{i^*, \dots, 2N_l\}.\]
Let $j_l, j'_l$ be the largest numbers in $\N_{\ge 1}$ such that $2(2j_l)! \le i^*$ and $2(2j'_l+1)! \le 2N_l-i^*$, and set $q_l\coloneqq (2j_l)!$, $q'_l\coloneqq (2j'_l+1)!$; note that $k\le j_l$, $k' \le j'_l$, and $q_l+q'_l \le N_l$. Using the values of $\varrho_u$ computed in the 3rd step, $u=1,2$, it holds
\begin{align*}
\mathcal{E}_{N_l}(P_{N_l})&= N_l \sum_{i=1}^{N_l} \left ( \varrho_1^2(x^l_{i-1}, y^l_i) + \varrho_2^2(y^l_i, x^l_i)\right) \ge (q_l+q'_l) \sum_{i=1}^{N_l} \left ( \varrho_1^2(x^l_{i-1}, y^l_i) + \varrho_2^2(y^l_i, x^l_i)\right) \\
& \ge (q_l+q'_l) \left [ \sum_{i=1}^{(2k)!} 2 \frac{1}{2((2k)!)^2} + \sum_{i=1}^{(2k'+1)!} 2 \frac{1}{2((2k'+1)!)^2}\right ] \\
& = (q_l+q'_l) \left [ \frac{1}{(2k)!} + \frac{1}{(2k'+1)!} \right ]\\
&\ge (q_l+q'_l)\left[\frac{1}{q_l}+\frac{1}{q'_l}\right]=2+\frac{(2j_l)!}{(2j'_l+1)!}+\frac{(2j'_l+1)!}{(2j_l)!}>2+2\max\{j_l,j'_l\}
\end{align*}
where the last inequality is elementary. Note that to pass from the first to the second line we have used that the part of $P_{N_l}$ in $X_A$ must contain at least $2(2k)!$ pairs of consecutive points $(v,w)$ with $v \ne w$ and $\varrho_u(v,w)$ is bounded from below by $\frac{1}{\sqrt{2} (2k)!}$ for every $u=1,2$. The same holds for the part of $P_{N_l}$ in $X_B$, replacing $2k$ with $2k'+1$.\\
We conclude that, in general, we have \[ \mathcal{E}_{N_l} (P_{N_l}) > \min \{ N_l, 2+2\max\{j_l, j'_l\} \} \to + \infty \quad \text{ as } l \to +\infty.\]
This shows \eqref{final claim trian}, and concludes the example.
\end{example}

We introduce now a definition of \emph{stability} for a cost, which will be relevant to understand when a single step of inf-convolution is already sufficient to determine the inf-convolution (see Lemma \ref{lem:criterion}). We also show that the prototypical example of a stable cost is given by a length distance (see Lemma \ref{lem:stability_length_semidist}).

\begin{definition}\label{def:good}
An extended cost $\sfc : U \times U \to [0,+\infty]$ is \emph{stable} provided
\begin{equation}\label{eq:stable}
\sfc^2(z_0,z_1) = \liminf_{N \to \infty} \inf \mathcal{F}_N(P) \,,
\end{equation}
where
\[
\mathcal{F}_N(P) \coloneqq N\sum_{i=1}^N \sfc^2(x_{i-1},x_i)
\]
and the infimum runs over all $P = (x_0,x_1,\dots,x_N) \in U^{N+1}$ such that $x_0 = z_0$ and $x_N = z_1$.
\end{definition}

\begin{lemma}\label{lem:criterion}
Let $\sfc_1,\sfc_2$ be two extended costs on $U$. If
\[
\sfc(z_0,z_1) \coloneqq \Big( \inf\left\{\mathcal{E}_1(P) \,:\, P \in \mathscr{P}(z_0,z_1;1)\right\} \Big)^{1/2}, \qquad z_0,z_1 \in U
\]
defines a stable extended cost (where $\mathcal{E}_1$ is the energy introduced in \eqref{eq:energy} for $N=1$ with costs $\sfc_1,\sfc_2$), then
\[
\sfc_1 \nabla \sfc_2 = \sfc.
\]
\end{lemma}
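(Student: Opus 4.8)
The plan is to establish the two inequalities $\sfc_1\nabla\sfc_2\le\sfc$ and $\sfc_1\nabla\sfc_2\ge\sfc$ separately, at an arbitrary fixed pair $z_0,z_1\in U$. The structural fact underlying both directions is that, unwinding the definitions, $\sfc$ is precisely the pointwise ``one-step'' infimal convolution of $\sfc_1^2$ and $\sfc_2^2$: a $1$-path in $\mathscr P(u,w;1)$ is a triple $(u,w;v)$ with $v\in U$ arbitrary, and its energy \eqref{eq:energy} is $\sfc_1^2(u,v)+\sfc_2^2(v,w)$, so $\sfc^2(u,w)=\inf_{v\in U}\big(\sfc_1^2(u,v)+\sfc_2^2(v,w)\big)$; in particular $\sfc^2(u,w)\le\sfc_1^2(u,v)+\sfc_2^2(v,w)$ for all $u,v,w\in U$.

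For $\sfc_1\nabla\sfc_2\ge\sfc$, I would take any $N\in\N$ and any $N$-path $P=(x_0,\dots,x_N;y_1,\dots,y_N)\in\mathscr P(z_0,z_1;N)$, apply the pointwise bound above with $u=x_{i-1}$, $v=y_i$, $w=x_i$, and sum over $i$ after multiplying by $N$, obtaining $\mathcal F_N(\tilde P)\le\mathcal E_N(P)$ where $\tilde P\coloneqq(x_0,\dots,x_N)$ is an admissible competitor in Definition \ref{def:good}. Taking first the infimum over $P$ and then $\liminf_{N\to\infty}$ gives $(\sfc_1\nabla\sfc_2)^2(z_0,z_1)\ge\liminf_{N\to\infty}\inf\mathcal F_N=\sfc^2(z_0,z_1)$, the last equality being exactly the assumed stability of $\sfc$. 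In particular, if $\sfc^2(z_0,z_1)=+\infty$ we are done, so for the converse we may assume $\sfc^2(z_0,z_1)<+\infty$.

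For $\sfc_1\nabla\sfc_2\le\sfc$, stability provides a sequence $M_k\to\infty$ and paths $\tilde P_k=(x_0^k,\dots,x_{M_k}^k)$ with $x_0^k=z_0$, $x_{M_k}^k=z_1$ and $\mathcal F_{M_k}(\tilde P_k)\to\sfc^2(z_0,z_1)<+\infty$; in particular $\sfc^2(x_{i-1}^k,x_i^k)<+\infty$ for every $i$. Using that $\sfc^2(x_{i-1}^k,x_i^k)$ is a (possibly not attained) infimum over $v$, I would pick for each $i=1,\dots,M_k$ a point $y_i^k\in U$ with $\sfc_1^2(x_{i-1}^k,y_i^k)+\sfc_2^2(y_i^k,x_i^k)\le\sfc^2(x_{i-1}^k,x_i^k)+2^{-k}M_k^{-2}$. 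Then $P_k\coloneqq(x_0^k,\dots,x_{M_k}^k;y_1^k,\dots,y_{M_k}^k)\in\mathscr P(z_0,z_1;M_k)$, and summing gives $\mathcal E_{M_k}(P_k)\le\mathcal F_{M_k}(\tilde P_k)+2^{-k}$. Hence $\inf_{P\in\mathscr P(z_0,z_1;M_k)}\mathcal E_{M_k}(P)\le\mathcal F_{M_k}(\tilde P_k)+2^{-k}\to\sfc^2(z_0,z_1)$, and since the $\liminf$ over all $N$ is bounded above by the limit along the subsequence $(M_k)$, we conclude $(\sfc_1\nabla\sfc_2)^2(z_0,z_1)\le\sfc^2(z_0,z_1)$. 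Combining the two inequalities yields $\sfc_1\nabla\sfc_2=\sfc$.

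I do not expect a genuine obstacle: the argument is bookkeeping built on the one fact that edge-wise one-step optimality passes to $N$-paths. The points requiring a little care are that stability is phrased with a $\liminf$ rather than a limit (so one must work along a suitable subsequence $M_k$ both when extracting the $\tilde P_k$ and when concluding), the separate treatment of $\sfc^2(z_0,z_1)=+\infty$, and the possible non-attainment of the one-step infimum defining $\sfc$, which is why the arbitrarily small slack $2^{-k}M_k^{-2}$ (summing to $2^{-k}$ after the factor $M_k^2$) is inserted when choosing the intermediate points $y_i^k$. No regularity, symmetry, or null-on-the-diagonal hypotheses on $\sfc_1,\sfc_2$ are needed.
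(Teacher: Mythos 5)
Your proof is correct, and it rests on the same core observation as the paper's: the $N$-step energy $\mathcal E_N(P)$ factors through the one-step cost $\sfc^2$ edge by edge, so that $\inf\mathcal E_N$ and $\inf\mathcal F_N$ agree for every $N$. The difference is purely one of execution. The paper obtains $\inf_{(x_i)}\inf_{(y_i)}\mathcal E_N = \inf_{(x_i)}\mathcal F_N$ in one stroke, by noting that each $y_i$ appears in exactly one summand, so the finite sum and the family of independent infima over $y_1,\dots,y_N$ interchange as an \emph{exact identity} in $[0,+\infty]$; no $\eps$-slack, no subsequence extraction along the $\liminf$, and no case split on $\sfc^2(z_0,z_1)=+\infty$ are required. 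You instead establish the two inequalities separately: the ``$\ge$'' direction is the trivial half of the interchange (drop the $y_i$'s and use $\sfc^2\le\sfc_1^2+\sfc_2^2$ edgewise), and the ``$\le$'' direction re-proves the other half by inserting $\eps$-optimal intermediate points along a subsequence $M_k$ realizing the $\liminf$ in the definition of stability, with the $2^{-k}M_k^{-2}$ slack chosen so the total error vanishes after multiplying by $N=M_k$. This is sound and the subsequence bookkeeping is handled correctly (one indeed needs $M_k$ along which $\inf\mathcal F_{M_k}\to\sfc^2$, then near-minimizers $\tilde P_k$, then near-minimizers $y_i^k$), but it is somewhat heavier than necessary: recognizing the exchange of a finite sum with independent infima as an identity collapses both directions and the finiteness discussion into a single chain of equalities.
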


\begin{proof}
Let $z_0,z_1 \in U$ be fixed. The stability of $\sfc$ and its very definition yield the first and the second identity below, respectively
\begin{align*}
\sfc^2(z_0,z_1) & = \liminf_{N \to \infty} \inf \left\{ N\sum_{i=1}^N \sfc^2(x_{i-1},x_i)\right\} \\
& = \liminf_{N \to \infty} \inf \left\{ N\sum_{i=1}^N \inf_{y_i \in U}\left\{ \sfc_1^2(x_{i-1},y_i) + \sfc_2^2(y_i,x_i) \right\}\right\} \\
& = \liminf_{N \to \infty} \inf \inf_{\substack{y_i \in U, \\ i=1,\dots,N}} \left\{ N\sum_{i=1}^N \left\{ \sfc_1^2(x_{i-1},y_i) + \sfc_2^2(y_i,x_i) \right\}\right\} \,,
\end{align*}
where, as in Definition \ref{def:good}, the outermost infimum runs over all $(x_0,x_1,\dots,x_N) \in U^{N+1}$ such that $x_0 = z_0$ and $x_N = z_1$. It is now sufficient to remark that minimizing over all $(x_0,x_1,\dots,x_N) \in U^{N+1}$ as above and over all $(y_1,\dots,y_N) \in U^N$ is equivalent to minimizing over all $P \in \mathscr{P}(z_0,z_1;N)$, so that (taking also \eqref{eq:energy} into account) the last line above becomes
\[
\liminf_{N \to \infty} \inf\left\{\mathcal{E}_N(P) \,:\, P \in \mathscr{P}(z_0,z_1;N)\right\}
\]
and this is exactly $(\sfc_1 \nabla \sfc_2)^2(z_0,z_1)$.
\end{proof}

\begin{lemma}\label{lem:stability_length_semidist}
Let $\varrho$ be an extended length semi-distance on $U$. Then $\varrho$ is a stable extended cost.
\end{lemma}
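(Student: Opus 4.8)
The plan is to prove the two inequalities implicit in the identity \eqref{eq:stable} separately; both are short, and the second one follows the pattern of the proof of Proposition~\ref{prop: uppboundinfconv}. Fix $z_0,z_1 \in U$ and write $\ell_\varrho$ for the length functional associated with $\varrho$. The inequality ``$\le$'' in \eqref{eq:stable} uses only the (extended semi-)distance axioms: given any $N\in\N_{\ge1}$ and any admissible $P=(x_0,\dots,x_N)\in U^{N+1}$ (so $x_0=z_0$, $x_N=z_1$), the triangle inequality followed by Cauchy--Schwarz yields
\[
\varrho(z_0,z_1)\ \le\ \sum_{i=1}^N\varrho(x_{i-1},x_i)\ \le\ \sqrt{N}\left(\sum_{i=1}^N\varrho^2(x_{i-1},x_i)\right)^{1/2}=\mathcal{F}_N(P)^{1/2}\,,
\]
whence $\varrho^2(z_0,z_1)\le\inf\mathcal{F}_N(P)$ for every $N$, and therefore $\varrho^2(z_0,z_1)\le\liminf_{N\to\infty}\inf\mathcal{F}_N(P)$.

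For the reverse inequality I would argue exactly as in Proposition~\ref{prop: uppboundinfconv}. If $\varrho(z_0,z_1)=+\infty$ there is nothing to prove. Otherwise, fix $\eps>0$; since $\varrho$ is an \emph{extended length} semi-distance, there is $\gamma\in\AC([0,1];(U,\varrho))$ with $\gamma(0)=z_0$, $\gamma(1)=z_1$ and $\ell_\varrho(\gamma)<\varrho(z_0,z_1)+\eps$, which after reparametrization by normalized arclength (trivial when $\ell_\varrho(\gamma)=0$) may be taken to have constant metric speed, so that $\varrho(\gamma(s),\gamma(t))\le(t-s)\,\ell_\varrho(\gamma)$ for all $0\le s<t\le1$. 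Sampling at $x_i\coloneqq\gamma(i/N)$ produces an admissible $P=(x_0,\dots,x_N)$ with
\[
\mathcal{F}_N(P)=N\sum_{i=1}^N\varrho^2(x_{i-1},x_i)\le N\sum_{i=1}^N\frac{\ell_\varrho(\gamma)^2}{N^2}=\ell_\varrho(\gamma)^2<(\varrho(z_0,z_1)+\eps)^2
\]
for every $N$; hence $\limsup_{N\to\infty}\inf\mathcal{F}_N(P)\le(\varrho(z_0,z_1)+\eps)^2$, and letting $\eps\downarrow0$ and combining with the first step gives \eqref{eq:stable} (with the $\liminf$ in fact an honest limit).

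I do not expect a genuine obstacle here. The only point that needs a word of care is the constant-speed reparametrization of the near-length-minimizing curve, which is the standard fact that a rectifiable curve in a (semi-)metric space admits a constant-speed parametrization; and the fact that $\varrho$ may be degenerate is harmless, since if $\varrho(z_0,z_1)=0$ the curves above have length $<\eps$ and the sampled chains satisfy $\mathcal{F}_N(P)\le\eps^2$, so the upper-bound argument goes through unchanged.
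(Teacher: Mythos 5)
Your proof is correct and follows the same route as the paper: the lower bound via the triangle inequality and Cauchy--Schwarz (equivalently, the elementary inequality $(\sum a_i)^2 \le N\sum a_i^2$), and the upper bound by sampling a constant-speed near-length-minimizing curve at the points $i/N$. The paper parametrizes by arc-length on $[0,\ell_\varrho(\gamma^\eps)]$ while you normalize to $[0,1]$, but this is only cosmetic.
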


\begin{proof}
Let $z_0,z_1 \in U$ be fixed. If $N \in \N_{\ge 1}$ and $P=(x_0, \dots, x_N) \in U^{N+1}$ is such that $x_0=z_0$, $x_N=z_1$, then 
\begin{equation}\label{eq:N_triangle_ineq}
\varrho^2(z_0,z_1) \leq \left ( \sum_{i=1}^N \varrho(x_{i-1}, x_i) \right )^2 \leq N \sum_{i=1}^N \varrho^2(x_{i-1}, x_i) \leq \mathcal{F}_N(P) \,.
\end{equation}
Taking the infimum over all $(N+1)$-tuples $P$ such that $x_0=z_0$, $x_N=z_1$ and then passing to the $\liminf$ as $N \to + \infty$, we get the `$\leq$' inequality in \eqref{eq:stable}.

For the converse inequality, by definition of length semi-distance we can find, for every $\eps>0$, a Lipschitz curve (which we may assume to be parametrized by arc-length) $\gamma^\eps :[0, \ell_{\varrho}(\gamma^\eps)] \to U$ such that $\ell_\varrho(\gamma^\eps) \leq \varrho(z_0,z_1) + \eps$. Let $N \in \N_{\ge 1}$ be arbitrary and let us consider points $x_i\coloneqq \gamma^\eps(i\ell/N)$ for $i=0, \dots, N$, where $\ell\coloneqq \ell_\varrho(\gamma^\eps)$ for ease of notation. Defining $P \coloneqq (x_0,\dots,x_N)$, we have
\begin{align*}
\mathcal{F}_N(P) & = N \sum_{i=1}^N \varrho^2(x_{i-1}, x_i) \leq N \sum_{i=1}^N \ell_\varrho(\restricts{\gamma^\eps}{[(i-1)\ell/N, i\ell/N]})^2 \\
& = N \sum_{i=1}^N \left( \int_{(i-1)\ell/N}^{i\ell/N} |\dot{\gamma}^\eps|(t) \de t \right)^2 = N \sum_{i=1}^N \left( \frac{\ell}{N}\right )^2 \\
& = \ell_\varrho(\gamma^\eps)^2 \leq (\varrho(z_0, z_1)+\eps)^2 \,.
\end{align*}
Considering again the infimum over all $(N+1)$-tuples $P$ such that $x_0=z_0$, $x_N=z_1$ and passing to the liminf as $N \to +\infty$, we deduce that 
\[ 
\liminf_{N \to +\infty}\inf \mathcal{F}_N(P) \leq (\varrho(z_0, z_1)+\eps)^2 \,.
\]
Since $\eps>0$ was arbitrary, we obtain the `$\geq$' inequality in \eqref{eq:stable} and thus the conclusion.
\end{proof}

From the combination of the previous two lemmas we can describe the inf-convolution of an extended (length) semi-distance with itself. The following result also gives a sufficient condition for the inf-convolution to be finite and, under stronger assumptions, to enjoy all the properties of a distance. 

\begin{corollary}\label{cor: infconvsamesemidist}
Let $\varrho$ be an extended semi-distance on $U$. Then $\frac{\varrho}{\sqrt{2}} \le \varrho \nabla \varrho$. If in addition $\varrho$ is a length semi-distance, then $\varrho \nabla \varrho = \frac{\varrho}{\sqrt{2}}$.
\end{corollary}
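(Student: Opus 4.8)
The plan is to prove the two assertions separately: the general inequality $\tfrac{\varrho}{\sqrt2}\le \varrho\nabla\varrho$ by a direct energy estimate, and the equality in the length case by combining Lemma \ref{lem:criterion} with Lemma \ref{lem:stability_length_semidist}. Both parts reduce to bookkeeping with the triangle inequality and the elementary inequality $(a+b)^2\le 2(a^2+b^2)$, so no heavy machinery is needed.

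For $\tfrac{\varrho}{\sqrt2}\le \varrho\nabla\varrho$, fix $z_0,z_1\in U$, let $N\in\N_{\ge 1}$ and let $P=(x_0,\dots,x_N;y_1,\dots,y_N)\in\mathscr{P}(z_0,z_1;N)$ be arbitrary. First I would apply $(a+b)^2\le 2(a^2+b^2)$ together with the triangle inequality for $\varrho$ to get $\varrho^2(x_{i-1},x_i)\le 2\big(\varrho^2(x_{i-1},y_i)+\varrho^2(y_i,x_i)\big)$ for each $i$; summing and multiplying by $N$ yields $\mathcal{E}_N(P)\ge \tfrac N2\sum_{i=1}^N\varrho^2(x_{i-1},x_i)$. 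Then the Cauchy--Schwarz bound $\big(\sum_i a_i\big)^2\le N\sum_i a_i^2$ and one more use of the triangle inequality give $\mathcal{E}_N(P)\ge \tfrac12\big(\sum_i\varrho(x_{i-1},x_i)\big)^2\ge \tfrac12\varrho^2(z_0,z_1)$. Since this holds for every $P$ and every $N$, taking the infimum over $P$ and then the $\liminf$ over $N$ gives $(\varrho\nabla\varrho)^2(z_0,z_1)\ge \tfrac12\varrho^2(z_0,z_1)$. This is the same chain of inequalities as in \eqref{eq:N_triangle_ineq}, with the extra factor $2$ produced by the two slots of an $N$-path.

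For the second assertion, assume in addition that $\varrho$ is a length semi-distance, and consider the single-step cost $\sfc(z_0,z_1)\coloneqq \big(\inf\{\mathcal{E}_1(P):P\in\mathscr{P}(z_0,z_1;1)\}\big)^{1/2}=\big(\inf_{y\in U}\{\varrho^2(z_0,y)+\varrho^2(y,z_1)\}\big)^{1/2}$ from Lemma \ref{lem:criterion}. I would show $\sfc=\tfrac{\varrho}{\sqrt2}$: the bound $\sfc\ge\tfrac{\varrho}{\sqrt2}$ is the $N=1$ instance of the computation above, while $\sfc\le\tfrac{\varrho}{\sqrt2}$ follows, when $\varrho(z_0,z_1)<+\infty$, by choosing an almost length-minimizing arc-length curve $\gamma$ from $z_0$ to $z_1$ with $\ell_\varrho(\gamma)\le \varrho(z_0,z_1)+\eps$, picking $y$ to be its midpoint so that $\varrho(z_0,y),\varrho(y,z_1)\le \ell_\varrho(\gamma)/2$, and hence $\varrho^2(z_0,y)+\varrho^2(y,z_1)\le \tfrac12(\varrho(z_0,z_1)+\eps)^2$; letting $\eps\downarrow 0$ closes this case, and the case $\varrho(z_0,z_1)=+\infty$ is handled by noting that then every admissible $y$ forces at least one of the two terms to be $+\infty$. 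Finally, $\tfrac{\varrho}{\sqrt2}$ is again an extended length semi-distance (positive rescaling of a distance rescales the length of every curve by the same factor, hence preserves the length property), so Lemma \ref{lem:stability_length_semidist} shows $\sfc=\tfrac{\varrho}{\sqrt2}$ is a stable extended cost, and Lemma \ref{lem:criterion} then yields $\varrho\nabla\varrho=\sfc=\tfrac{\varrho}{\sqrt2}$.

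I do not expect a serious obstacle here: the proof is a combination of the triangle inequality, the sharp inequality $(a+b)^2\le 2(a^2+b^2)$ (equality exactly for $a=b$, which is precisely why the midpoint construction matches the lower bound), and the two lemmas already established. The only points requiring a little care are the handling of infinite values of $\varrho$ in the identification $\sfc=\tfrac{\varrho}{\sqrt2}$, and the verification that the length property is stable under positive rescaling, so that Lemma \ref{lem:stability_length_semidist} can be applied to $\tfrac{\varrho}{\sqrt2}$ and not merely to $\varrho$.
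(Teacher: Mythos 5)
Your proposal is correct and follows essentially the same route as the paper: the lower bound via the triangle inequality plus $(a+b)^2 \le 2(a^2+b^2)$ and Cauchy--Schwarz (matching \eqref{eq:aqualung}), and the upper bound via the single-step identity \eqref{eq:one_step} combined with Lemma \ref{lem:criterion} and Lemma \ref{lem:stability_length_semidist}. The one place where you are a bit more careful than the paper is in explicitly remarking that $\tfrac{\varrho}{\sqrt{2}}$ is still a length semi-distance before invoking Lemma \ref{lem:stability_length_semidist} (the paper takes this for granted), and in spelling out the $\varrho(z_0,z_1)=+\infty$ case; both are trivial checks, but it is good hygiene to state them.
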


\begin{proof}
The first part of the statement follows from a variant of $\eqref{eq:N_triangle_ineq}$. If we fix $z_0,z_1 \in U$, take $N \in \N_{\ge 1}$ and $P=(x_0,x_1, \dots, x_N; y_1, \dots, y_N) \in \mathscr{P}(z_0,z_1; N)$, then 
\begin{equation}\label{eq:aqualung}
\varrho^2(z_0,z_1) \leq N \sum_{i=1}^N \varrho^2(x_{i-1}, x_i) \leq 2N \sum_{i=1}^N (\varrho^2(x_{i-1}, y_i) + \varrho^2(y_i, x_i)) = 2\mathcal{E}_N(P) \,.
\end{equation}
It is now sufficient to take again the infimum over $\mathscr{P}(z_0,z_1; N)$ and then pass to the $\liminf$ as $N \to + \infty$ to get $\varrho \leq \sqrt{2} \varrho \nabla \varrho$.

\smallskip

Assume now that $\varrho$ is a length semi-distance. It suffices to show that
\begin{equation}\label{eq:one_step}
\inf_{z \in U}\left\{ \varrho^2(z_0,z) + \varrho^2(z,z_1) \right\} = \frac12 \varrho^2(z_0,z_1).
\end{equation}
Indeed, by Lemma \ref{lem:stability_length_semidist} we know that (the square root of) the right-hand side above is a stable cost. Hence, if this identity holds true, then Lemma \ref{lem:criterion} ensures that the left-hand side is equal to $(\varrho \nabla \varrho)^2$, whence the conclusion. 

In order to prove \eqref{eq:one_step}, the `$\geq$' inequality is a particular case of \eqref{eq:aqualung} for $N=1$. As regards `$\leq$', we argue similarly to Lemma \ref{lem:stability_length_semidist}. By definition of length semi-distance we can find, for every $\eps>0$, a Lipschitz curve (which we may assume to be parametrized by arc-length) $\gamma^\eps :[0, \ell_{\varrho}(\gamma^\eps)] \to U$ such that $\ell_\varrho(\gamma^\eps) \leq \varrho(z_1,z_2) + \eps$. Let us consider the point $z \coloneqq \gamma^\eps(\ell/2)$, where $\ell\coloneqq \ell_\varrho(\gamma^\eps)$ for ease of notation. Setting $P \coloneqq (x_0,y_1,x_1) = (z_0,z,z_1)$, we have
\begin{align*}
\mathcal{E}_1(P) & = \varrho^2(x_0,y_1) + \varrho^2(y_1,x_1) \leq \ell_\varrho(\restricts{\gamma^\eps}{[0,\ell/2]})^2 + \ell_\varrho(\restricts{\gamma^\eps}{[\ell/2,\ell]})^2 \\
& = \left( \int_0^{\ell/2} |\dot{\gamma}^\eps|(t) \de t \right)^2 + \left( \int_{\ell/2}^{\ell} |\dot{\gamma}^\eps|(t) \de t \right)^2 = \left( \frac{\ell}{2}\right )^2 + \left( \frac{\ell}{2}\right )^2 \\
& = \frac12 \ell_\varrho(\gamma^\eps)^2 \leq \frac12 (\varrho(z_0, z_1)+\eps)^2.
\end{align*}
Taking the infimum over all $P \in \mathscr{P}(z_0,z_1;1)$, namely over all $z \in U$, and then letting $\eps \downarrow 0$ gives the conclusion.
\end{proof}

\subsection{Metric infimal convolution vs. convex infimal convolution}\label{subsect:mt vs conv}
In this section we compare our notion of (metric) infimal convolution with the \emph{convex} infimal convolution that we are going to recall. 

\begin{definition}
  Let $V$ be a vector space and $f,g:V\to (-\infty,+\infty]$ be extended real-valued functions. Their convex infimal convolution (convex inf-convolution, for short), denoted by $(f\ \Box\ g):V\to [-\infty, + \infty]$, is defined as
  \begin{equation}\label{def:convinfconv}
   (f\ \Box\ g)(v)\coloneqq \inf\{f(w)+g(z) \ : \ w,z\in V, w+z=v\}=\inf\{f(z)+g(v-z) \ : \ z\in V\}.   
  \end{equation}
\end{definition} 
 We use the terminology \emph{convex} infimal convolution to distinguish it from the \emph{metric} infimal convolution that we have introduced in Definition \ref{def: inf conv}. Notice that the convex inf-convolution corresponds to the classical infimal convolution in the sense of convex analysis (e.g., see \cite{Rockafellar70}), and in this field it is well known that the convex inf-convolution of two proper, lower semicontinuous and convex functions is again a proper, lower semicontinuous and convex function. The same operation is also called \emph{epi-sum} or \emph{epi-addition}, since the epigraph of the convex inf-convolution of two functions is the Minkowski sum of the epigraphs of those functions.

\begin{proposition}\label{prop: infconvseminorm}
Let $V$ be a vector space and $\|\cdot\|_1,\|\cdot\|_2$ be two (extended) semi-norms on $V$. Then $(\|\cdot\|^2_1\ \Box\ \|\cdot\|^2_2)^{1/2}:V\to \mathbb{R}$ is an (extended) semi-norm on $V$.
\end{proposition}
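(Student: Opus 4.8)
The plan is to verify directly, for the function $F \eqdef (\|\cdot\|_1^2\ \Box\ \|\cdot\|_2^2)^{1/2}$, the three defining properties of an (extended) semi-norm on $V$: non-negativity, positive homogeneity $F(\lambda v)=|\lambda|\,F(v)$, and subadditivity $F(v_1+v_2)\le F(v_1)+F(v_2)$. By definition of the convex inf-convolution in \eqref{def:convinfconv},
\[
F(v)^2=\inf\bigl\{\|w\|_1^2+\|z\|_2^2 \,:\, w,z\in V,\ w+z=v\bigr\}\in[0,+\infty],
\]
so $F$ is well defined and non-negative (the decomposition $v=v+0$ is always admissible, hence the infimum runs over a non-empty set of non-negative numbers), and it may genuinely take the value $+\infty$ only when both semi-norms are extended: if $\|\cdot\|_i$ is finite-valued for some $i$, then $F(v)^2\le\|v\|_i^2<+\infty$. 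This is why the resulting object is in general only an \emph{extended} semi-norm.

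For homogeneity the case $\lambda=0$ is immediate, since $w=z=0$ is admissible for $v=0$, whence $F(0)=0$. For $\lambda\ne 0$ the assignment $(w,z)\mapsto(\lambda w,\lambda z)$ is a bijection between admissible decompositions of $v$ and those of $\lambda v$; using the $1$-homogeneity of each $\|\cdot\|_i$ one has $\|\lambda w\|_1^2+\|\lambda z\|_2^2=\lambda^2\bigl(\|w\|_1^2+\|z\|_2^2\bigr)$, and passing to the infimum gives $F(\lambda v)^2=\lambda^2F(v)^2$, i.e.\ $F(\lambda v)=|\lambda|F(v)$.

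The only step that requires an idea is subadditivity, which I expect to be the main (though not deep) obstacle. Fix $v_1,v_2\in V$; we may assume $F(v_1),F(v_2)<+\infty$, otherwise there is nothing to prove. Given $\eps>0$, choose decompositions $v_j=w_j+z_j$ with $\|w_j\|_1^2+\|z_j\|_2^2\le F(v_j)^2+\eps$ for $j=1,2$. Since $v_1+v_2=(w_1+w_2)+(z_1+z_2)$ is admissible for $v_1+v_2$, and using the triangle inequalities of $\|\cdot\|_1$ and $\|\cdot\|_2$,
\[
F(v_1+v_2)^2\le \|w_1+w_2\|_1^2+\|z_1+z_2\|_2^2\le \bigl(\|w_1\|_1+\|w_2\|_1\bigr)^2+\bigl(\|z_1\|_2+\|z_2\|_2\bigr)^2 .
\]
The right-hand side is precisely the squared Euclidean norm of the vector sum $(\|w_1\|_1,\|z_1\|_2)+(\|w_2\|_1,\|z_2\|_2)$ in $\R^2$, so the triangle inequality in the Euclidean plane $(\R^2,|\cdot|_2)$ yields
\[
F(v_1+v_2)^2\le \Bigl(\sqrt{\|w_1\|_1^2+\|z_1\|_2^2}+\sqrt{\|w_2\|_1^2+\|z_2\|_2^2}\Bigr)^2\le \Bigl(\sqrt{F(v_1)^2+\eps}+\sqrt{F(v_2)^2+\eps}\Bigr)^2 .
\]
Taking square roots and letting $\eps\downarrow 0$ gives $F(v_1+v_2)\le F(v_1)+F(v_2)$, completing the proof. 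The only technical care needed throughout is the $\eps$-approximation, since the infimum defining $F$ need not be attained; otherwise the whole argument reduces to the triangle inequalities of $\|\cdot\|_1$, $\|\cdot\|_2$ and of the Euclidean plane, together with the elementary rescaling used for homogeneity. (One can also observe, in the spirit of Corollary \ref{cor: infconvsamesemidist}, that taking $\|\cdot\|_1=\|\cdot\|_2=\|\cdot\|$ recovers $F=\|\cdot\|/\sqrt2$, which is consistent with the present statement.)
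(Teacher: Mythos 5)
Your proof is correct. The non-negativity and homogeneity parts match the paper's argument exactly. Where you differ is the subadditivity step: the paper proves, for each $t\in(0,1)$ and using the elementary inequality $\|a+b\|_i^2 \le \frac{\|a\|_i^2}{t}+\frac{\|b\|_i^2}{1-t}$, that
\[
\frac{\|v_1\|_{\Box}^2}{t} + \frac{\|v_2\|_{\Box}^2}{1-t} \ge \|v_1+v_2\|_{\Box}^2,
\]
and then optimizes over $t$ at the end (the optimal $t$ gives exactly $(\|v_1\|_\Box + \|v_2\|_\Box)^2$ on the left). You instead combine the $\eps$-optimal decompositions directly and invoke the Euclidean triangle inequality in $\R^2$ for the vectors $(\|w_j\|_1,\|z_j\|_2)$. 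These are two faces of the same Minkowski-type inequality: the paper's weighted quadratic inequality with optimal $t$ is precisely one way to prove the Euclidean triangle inequality you use. Your route is a bit more concrete and self-contained; the paper's parametrized version has the mild advantage of being the same device used elsewhere in the paper (e.g.\ in Proposition~\ref{prop: triangineqholds}), keeping the arguments uniform. The closing sanity check against Corollary~\ref{cor: infconvsamesemidist} is a nice touch and is indeed consistent with $(\|\cdot\|\,\Box\,\|\cdot\|)^{1/2}=\|\cdot\|/\sqrt2$.
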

\begin{proof}
To simplify the notation, let us put $\|v\|_{\Box}\coloneqq (\|\cdot\|^2_1\ \Box\ \|\cdot\|^2_2)^{1/2}(v)$. 

It is clear that $\|\cdot\|_{\Box}$ is non-negative and finite in case the two norms are finite. Let $\lambda\in \R$ and $v\in V$, we have $\|\lambda v\|_{\Box}=|\lambda|\|v\|_{\Box}$ since the absolute homogeneity holds for $\|\cdot\|_1$ and $\|\cdot\|_2$, and 
$$\|\lambda v\|^2_{\Box}=\inf\{\|\lambda w\|^2_1+\|\lambda z\|^2_2 \ : w,z\in V, w+z=v\}.$$
It remains to prove that $\|\cdot\|_{\Box}$ satisfies the triangle inequality, which is equivalent to show that for any $v,w\in V$ and for any $t\in (0,1)$ it holds
\[
\frac{\|v\|_{\Box}^2}{t}+\frac{\|w\|_{\Box}^2}{1-t}\ge \|v+w\|_{\Box}^2.
\]
Let $\varepsilon>0$. By definition of $\|\cdot\|_{\Box}$ we can find $v_1,v_2,w_1,w_2\in V$, with $v_1+v_2=v$ and $w_1+w_2=w$, such that
\[
\frac{\|v\|_{\Box}^2+\varepsilon}{t}+\frac{\|w\|_{\Box}^2+\varepsilon}{1-t}\geq \frac{\|v_1\|_1^2+\|v_2\|_2^2}{t}+\frac{\|w_1\|_1^2+\|w_2\|_2^2}{1-t}\geq \|v_1+w_1\|_1^2+\|v_2+w_2\|_2^2\geq \|v+w\|_{\Box}^2\,,
\]
where in the second inequality we have used that $\|\cdot\|_1$ and $\|\cdot\|_2$ satisfy the triangle inequality, and again the definition of $\|\cdot\|_{\Box}$ in the last inequality since $v_1+w_1+v_2+w_2=v+w.$ The proof is concluded by sending $\varepsilon\downarrow 0$.
\end{proof}

In the next theorem we obtain that the convex infimal convolution and the metric infimal convolution coincide when computed between norms. 
\begin{theorem}\label{th: infconv=infmetric}
Let $V$ be a vector space and $\|\cdot\|_1,\|\cdot\|_2$ be two extended semi-norms on $V$. Then for every $v,w\in V$ it holds
$$(\|\cdot\|^2_1\ \Box\ \|\cdot\|^2_2)^{1/2}(v-w)=(\|\cdot\|_1\nabla\|\cdot\|_2)(v,w).$$    
\end{theorem}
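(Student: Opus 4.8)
The plan is to deduce the equality directly from the criterion of Lemma~\ref{lem:criterion} together with the stability of length semi-distances established in Lemma~\ref{lem:stability_length_semidist}. Throughout, regard the two semi-norms as extended costs on $V$ via $\varrho_i(v,w)\coloneqq\|v-w\|_i$, $i=1,2$, and set $\|\cdot\|_\Box\coloneqq(\|\cdot\|^2_1\ \Box\ \|\cdot\|^2_2)^{1/2}$, which by Proposition~\ref{prop: infconvseminorm} is again an extended semi-norm on $V$. The whole point is that the single minimization step already produces $\|\cdot\|_\Box$, and that $\|\cdot\|_\Box$ is rigid enough (being a semi-norm) that no further steps can lower it.

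\emph{Step 1: identifying the one-step cost.} For fixed $z_0,z_1\in V$, every element of $\mathscr{P}(z_0,z_1;1)$ has the form $(z_0,z_1;y)$ with $y\in V$ arbitrary, so by the definition \eqref{eq:energy} of the energy,
\[
\sfc^2(z_0,z_1)\coloneqq\inf\bigl\{\mathcal{E}_1(P):P\in\mathscr{P}(z_0,z_1;1)\bigr\}=\inf_{y\in V}\bigl\{\|z_0-y\|_1^2+\|y-z_1\|_2^2\bigr\}.
\]
First I would substitute $z\coloneqq y-z_0$ and use absolute homogeneity, so that $\|z_0-y\|_1=\|z\|_1$ and $\|y-z_1\|_2=\|(z_1-z_0)-z\|_2$; this rewrites the right-hand side as $\inf_{z\in V}\{\|z\|_1^2+\|(z_1-z_0)-z\|_2^2\}$, which by \eqref{def:convinfconv} is exactly $\|z_1-z_0\|_\Box^2$. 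Hence $\sfc(z_0,z_1)=\|z_1-z_0\|_\Box$ for all $z_0,z_1\in V$. (Symmetry of $\|\cdot\|_\Box$ is automatic from Proposition~\ref{prop: infconvseminorm}, consistently with the symmetry of $\sfc$.)

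\emph{Step 2: stability of $\sfc$.} Next I would observe that, since $\sfc$ is the extended semi-distance induced by the semi-norm $\|\cdot\|_\Box$, it is in particular an extended \emph{length} semi-distance: for every $z_0,z_1\in V$ the affine segment $\gamma(t)\coloneqq(1-t)z_0+tz_1$ satisfies $\sfc(\gamma(s),\gamma(t))=|t-s|\,\|z_1-z_0\|_\Box$, hence is a Lipschitz, constant-speed curve of length $\sfc(z_0,z_1)$, and by the triangle inequality no absolutely continuous curve joining $z_0$ to $z_1$ can be shorter; thus $\sfc$ coincides with its induced length semi-distance. Lemma~\ref{lem:stability_length_semidist} then guarantees that $\sfc$ is a stable extended cost in the sense of Definition~\ref{def:good}.

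\emph{Step 3: conclusion.} The extended cost obtained from the pair $(\varrho_1,\varrho_2)$ by one minimization step is precisely the $\sfc$ of Steps~1--2, which is stable; hence Lemma~\ref{lem:criterion} yields $\varrho_1\nabla\varrho_2=\sfc$, that is
\[
(\|\cdot\|_1\nabla\|\cdot\|_2)(v,w)=\sfc(v,w)=\|v-w\|_\Box=(\|\cdot\|^2_1\ \Box\ \|\cdot\|^2_2)^{1/2}(v-w)\qquad\text{for all }v,w\in V,
\]
which is the claim. I do not expect a genuine obstacle here: the argument is essentially a reduction to the lemmas already proved, the only point requiring care being that the semi-norms — and therefore $\|\cdot\|_\Box$ and $\sfc$ — may take the value $+\infty$. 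This is harmless, since Proposition~\ref{prop: infconvseminorm}, Lemma~\ref{lem:criterion} and Lemma~\ref{lem:stability_length_semidist} are all formulated for extended objects; once this bookkeeping is kept straight, everything rests on the elementary substitution of Step~1.
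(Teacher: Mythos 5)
Your proof is correct and follows essentially the same path as the paper: show that $\|\cdot\|_\Box$, regarded as a translation-invariant semi-distance, is a stable cost via Lemma~\ref{lem:stability_length_semidist}, and then invoke Lemma~\ref{lem:criterion}. Your Step~1, with the change of variables $z=y-z_0$ identifying the one-step infimum with $\|\cdot\|_\Box^2$, makes explicit a point the paper leaves implicit when invoking Lemma~\ref{lem:criterion}, but this is a fill-in of the same argument rather than a different route.
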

\begin{proof}
By Proposition \ref{prop: infconvseminorm} we know that $(\|\cdot\|^2_1\ \Box\ \|\cdot\|^2_2)^{1/2}$ is an extended semi-norm, and thus the function 
$$(\|\cdot\|^2_1\ \Box\ \|\cdot\|^2_2)^{1/2}: V\times V\to [0,\infty], \qquad (\|\cdot\|^2_1\ \Box\ \|\cdot\|^2_2)^{1/2}(v,w)\coloneqq (\|\cdot\|^2_1\ \Box\ \|\cdot\|^2_2)^{1/2}(v-w)$$
is an extended semi-distance on $V$. It is also a length extended semi-distance since the curve
$$\gamma(t):[0,1]\to V, \qquad \gamma(t)\coloneqq v+t(w-v)$$
is a length minimizing geodesic connecting two arbitrary vectors $v,w\in V$, as a consequence of the absolute homogeneity of the extended semi-norm $(\|\cdot\|^2_1\ \Box\ \|\cdot\|^2_2)^{1/2}$. In particular, Lemma \ref{lem:stability_length_semidist} ensures that $(\|\cdot\|^2_1\ \Box\ \|\cdot\|^2_2)^{1/2}$ is a stable cost. We are thus in position to apply Lemma \ref{lem:criterion} which concludes the proof. 
\end{proof}

The following example is inspired by \cite{MOcommWen} and shows that the conclusion in Proposition \ref{prop: infconvseminorm} is sharp, in the sense that we cannot infer that the infimal convolution is a norm even when it is computed between two norms.

\begin{example}[Inf-convolution may be degenerate]\label{ex:degenerate}
Let $V$ be a vector space and $\|\cdot\|_1$, $\|\cdot\|_2$ be two complete but not equivalent norms on $V$ (one can take for instance $V=\ell^2$, $\|\cdot\|_1$ be the norm induced by the following scalar product
\[
(v,w)=\frac{1}{n}\sum_{n=1}^{\infty}v_nw_n \qquad v=(v_n)_{n=1}^{\infty},\ w=(w_n)_{n=1}^{\infty} \,,
\]
and $\|\cdot\|_2$ be the standard $\ell^2$-norm). Then $\|\cdot\|_1\nabla\|\cdot\|_2(v,w)=0$ for some $v\neq w\in V$. Indeed, suppose by contradiction that $\|\cdot\|_1\nabla\|\cdot\|_2$ is not degenerate and thus a length distance induced by a norm thanks to Proposition \ref{prop: infconvseminorm} and Theorem \ref{th: infconv=infmetric}. Let us consider the identity map $\mathsf{id}:(V,\|\cdot\|_1)\to (V,\|\cdot\|_2)$. By Proposition \ref{prop: uppboundinfconv}, the product topology of $(V,\|\cdot\|_1)\times (V,\|\cdot\|_2)$ is finer than the product topology of $(V,\|\cdot\|_1\nabla\|\cdot\|_2)^2$, which is Hausdorff. It follows that $\mathsf{id}$ has a closed graph, and thus it is continuous by the closed graph theorem. This gives the desired contradiction, since $\|\cdot\|_1$ and $\|\cdot\|_2$ were supposed to be not equivalent. 
\end{example}

In the next proposition we show, under suitable assumption, an explicit representation of the infimal convolution of two \emph{Hilbertian} norms. The formula we obtain is strongly connected to the so-called \emph{parallel addition} of two operators, introduced in the finite-dimensional setting in \cite{AndDuf69} and further extended to the case of infinite-dimensional Hilbert spaces in \cite{PekSmu76, EriLeu86}.

\begin{proposition}\label{prop: representationhilb}
Let $V$ be a vector space and $A,B:V\times V \to \R$ be two scalar products with induced norms $\|\cdot\|_A,\|\cdot\|_B$, respectively. Let us suppose that $(V,A)$ is a Hilbert space and that there exist $C_1,C_2>0$ such that
\begin{equation}
\|v\|_B\ge C_1\|v\|_A, \qquad  |B(v,w)|\le C_2\|v\|_A\|w\|_B \quad \forall v,w\in V.
\end{equation}
Then 
\begin{equation}\label{eq: explicitinfconvhilb}
    (\|\cdot\|_1\nabla\|\cdot\|_2)(v,w)=\|v-w\|_B-B(v-w,T^{A,B}(v-w)),
\end{equation}
where $T^{A,B}:V\to V$ is the unique map such that
\begin{equation}\label{eq: laxmilgramdef}
A(T^{A,B}v,w)+B(T^{A,B}v,w)=B(v,w) \quad \forall w\in V.   
\end{equation}
In particular, if $V=\R^d$ and we are identifying $A$ and $B$ with two symmetric and positive definite $d\times d$ matrices, we have 
$$(\|\cdot\|_1\nabla\|\cdot\|_2)(v,w)=(v-w)^{\intercal}(A^{-1}+B^{-1})^{-1}(v-w).$$
\end{proposition}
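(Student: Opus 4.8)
The plan is to reduce the statement to a quadratic minimization in a Hilbert space by invoking Theorem~\ref{th: infconv=infmetric}. Identifying $\|\cdot\|_1 = \|\cdot\|_A$ and $\|\cdot\|_2 = \|\cdot\|_B$, that theorem gives $(\|\cdot\|_1 \nabla \|\cdot\|_2)^2(v,w) = (\|\cdot\|^2_A\ \Box\ \|\cdot\|^2_B)(v-w)$, and by \eqref{def:convinfconv} the right-hand side equals $\inf_{z \in V} F(z)$ where
\[
F(z) \coloneqq \|z\|_A^2 + \|u-z\|_B^2, \qquad u \coloneqq v-w
\]
(by symmetry of the inf-convolution the roles of the two norms may be exchanged, so the order is immaterial; here and below the identities are for the \emph{square} of the inf-convolution, consistently with Definition~\ref{def: inf conv}). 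Thus it suffices to show that $F$ has a unique minimizer $z_u := T^{A,B}u$ and that $F(z_u) = \|u\|_B^2 - B(u, T^{A,B}u)$; the matrix identity will then drop out by a short computation.

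First I would extract from the hypotheses that $\|\cdot\|_A$ and $\|\cdot\|_B$ are equivalent: testing $|B(v,w)| \le C_2\|v\|_A\|w\|_B$ with $w=v$ and dividing by $\|v\|_B$ gives $\|v\|_B \le C_2\|v\|_A$, which together with $\|v\|_B \ge C_1\|v\|_A$ yields the equivalence; in particular $(V,\|\cdot\|_B)$ is complete as well. Hence the symmetric bilinear form $a(z,h) := A(z,h)+B(z,h)$ is a scalar product on $V$ with $\|z\|_A \le \|z\|_a \le \sqrt{1+C_2^2}\,\|z\|_A$, so $(V,a)$ is a Hilbert space, and the linear functional $h \mapsto B(u,h)$ is $a$-bounded, since $|B(u,h)| \le \|u\|_B\|h\|_B \le \|u\|_B\|h\|_a$. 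The Riesz representation theorem (the symmetric case of Lax--Milgram) then furnishes a unique vector $T^{A,B}u \in V$ with $a(T^{A,B}u,h) = B(u,h)$ for all $h \in V$, which is exactly \eqref{eq: laxmilgramdef}; uniqueness makes $u \mapsto T^{A,B}u$ linear.

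Next I would check by completing the square that $z_u := T^{A,B}u$ minimizes $F$: using the defining relation of $T^{A,B}u$, for every $h \in V$
\[
F(z_u + h) = F(z_u) + 2\big(a(z_u,h) - B(u,h)\big) + \|h\|_A^2 + \|h\|_B^2 = F(z_u) + \|h\|_A^2 + \|h\|_B^2 \ge F(z_u),
\]
with equality only if $h=0$. To evaluate the minimum I would test the defining relation with $h=z_u$, which gives $\|z_u\|_A^2 = B(u-z_u,z_u)$, whence
\[
F(z_u) = \|z_u\|_A^2 + \|u-z_u\|_B^2 = B(u-z_u,z_u) + B(u-z_u,u-z_u) = B(u-z_u,u) = \|u\|_B^2 - B(u, T^{A,B}u),
\]
using the symmetry of $B$ in the last step; this is \eqref{eq: explicitinfconvhilb} with $u=v-w$ (and, as a sanity check, for $A=B$ one has $T^{A,A}u = u/2$ and the value is $\tfrac12\|u\|_A^2$, in agreement with Corollary~\ref{cor: infconvsamesemidist}). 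Finally, for $V = \R^d$ with $A,B$ symmetric positive definite matrices, \eqref{eq: laxmilgramdef} reads $(A+B)T^{A,B}u = Bu$, so $T^{A,B}u = (A+B)^{-1}Bu$ and $F(z_u) = u^\intercal Bu - u^\intercal B(A+B)^{-1}Bu = u^\intercal B(A+B)^{-1}A\,u$; since $A^{-1}(A+B)B^{-1} = A^{-1}+B^{-1}$ one gets $B(A+B)^{-1}A = (A^{-1}+B^{-1})^{-1}$, giving the stated matrix formula.

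I do not expect a genuine obstacle: the only mildly hidden point is that the two hypotheses together force $\|\cdot\|_A$ and $\|\cdot\|_B$ to be equivalent, after which the claim is a textbook minimization of a coercive quadratic functional over a Hilbert space. The places that require a little care are choosing the inner product $a = A+B$ in which to apply Riesz/Lax--Milgram, checking that $h \mapsto B(u,h)$ is bounded there, and keeping track of the square root when passing between $\|\cdot\|_1 \nabla \|\cdot\|_2$ and its square.
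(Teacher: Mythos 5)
Your proof is correct and follows essentially the same route as the paper: reduce to the convex inf-convolution via Theorem~\ref{th: infconv=infmetric}, invoke Lax--Milgram (equivalently, Riesz in the inner product $A+B$) to define $T^{A,B}$, identify $T^{A,B}u$ as the minimizer, and finish the matrix case with the identity $B-B(A+B)^{-1}B=(A^{-1}+B^{-1})^{-1}$. The only difference is that you spell out the steps the paper leaves implicit (deriving $\|\cdot\|_B\le C_2\|\cdot\|_A$ from the hypotheses, the completion-of-the-square verification of the minimizer, and the evaluation of the minimum), and you correctly note that the displayed identity should be read as one between squares.
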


\begin{proof}
We start by noting that the (bijective and linear) map $T^{A,B}$ is well defined as a consequence of the Lax--Milgram lemma, since the form $b(v,w)\coloneqq A(v,w)+B(v,w)$ is continuous and strongly coercive and the map $w\mapsto B(v,w)$ is linear and continuous with respect to the normed space $(V,\|\cdot\|_A)$. \\
The last conclusion follows from \eqref{eq: explicitinfconvhilb} up to noticing the matrices identities
$$T^{A,B}=(A+B)^{-1}B, \qquad B-B(A+B)^{-1}B=(A^{-1}+B^{-1})^{-1},$$
where the second one is a consequence of the Woodbury matrix identity. 
\\It remains to prove \eqref{eq: explicitinfconvhilb}. By Theorem \ref{th: infconv=infmetric} it is sufficient to show 
$$\inf_{z\in V}A(z,z)+B(v-z,v-z)=B(v,v)-B(v,T^{A,B}v) \quad \forall v\in V$$ 
or, equivalently, that 
\begin{equation}\label{eq: proofminhilb}
\inf_{z\in V}A(z,z)-2B(v,z)+B(z,z)=-B(v,T^{A,B}v) \quad \forall v\in V,
\end{equation}
which follows easily since $z=T^{A,B}v$ is a minimizer in the left hand side of \eqref{eq: proofminhilb}.
\end{proof}

Notice that, under the assumptions of Proposition \ref{prop: representationhilb}, the resulting infimal convolution is always a norm since, up to a constant, it is bounded from below by the norm $\|\cdot\|_A$.

\section{Three distances on measures}\label{sec:tdist} 

In this whole section $(X, \sfd)$ is a metric space. We are going to introduce the Hellinger (or Matusita), the (Kantorovich--Rubinstein--)Wasserstein and the Hellinger--Kantorovich (or Wasserstein--Fisher--Rao) distances on the space of non-negative, finite and Radon measures $\meas_+(X)$ on $X$. To do so, we adopt the approach of \cite{SS24} to Unbalanced Optimal Transport in order to introduce these distances as particular instances of costs on measures.

\subsection{Unbalanced Optimal Transport}\label{sec:uot}

\begin{definition}[Unbalanced Optimal Transport cost] Let $q \in [1,+\infty)$ and let $\mathsf H: \pc \to [0,+\infty]$ be a proper, lower semicontinuous, and radially $q$-homogeneous function, see Section \ref{sec:fcone}. We define the $q$-\emph{Unbalanced Optimal Transport cost} associated to $\mathsf{H}$ as
\[ \mathsf{UOT}_{\mathsf{H},q}(\mu_0, \mu_1) \coloneqq \inf \left \{ \int_{\pc} \mathsf{H} \de \aalpha : \aalpha \in \f{H}^q(\mu_0, \mu_1) \right \}, \quad \mu_0, \mu_1 \in \meas_+(X). \]    
\end{definition}
\begin{remark}[$2$-homogeneity and $q$-homogeneity]\label{rem:tp} 
Given $q \in [1,+\infty)$, we define the map $\mathsf T_q: \pc \to \pc$ as
\begin{equation}
    \label{eq:tq}
\mathsf T_q([x_0,r_0],[x_1,r_1]) \coloneqq ([x_0,r_0^{2/q}], [x_1, r_1^{2/q}]).
\end{equation} 
It is easy to check that $(\mathsf T_q)_\sharp$ is a bijective transformation from $\f{H}^2(\mu_0, \mu_1)$ to $\f{H}^q(\mu_0, \mu_1)$ for any pair $(\mu_0, \mu_1) \in \meas_+(X) \times \meas_+(X)$. Moreover, if $\sfH_q: \pc \to [0,+\infty]$ is radially $q$-homogeneous, then $\sfH\coloneqq \sfH_q \circ \mathsf T_q$ is radially $2$-homogeneous and 
\begin{equation}
    \label{eq:qhom}
\mathsf{UOT}_{\sfH_q,q}= \mathsf{UOT}_{\sfH,2}.
\end{equation}
For this reason, we will simply use the notation $\mathsf{UOT}_{\sfH}$ in place of $\mathsf{UOT}_{\sfH, 2}$ and limit our exposition to $2$-homogeneous cost functions $\sfH$.
\end{remark}

The following result contains the main properties of the Unbalanced Optimal Transport cost \cite[Theorem 1.1]{SS24}.

\begin{theorem}\label{thm:omnibus} Let $\sfH:\pc\to [0, + \infty]$ be a proper, lower semicontinuous, and radially $2$-homogeneous function. 
\begin{enumerate}
    \item For every $(\mu_0 ,\mu_1) \in \meas_+(X)\times \meas_+(X)$ such that $\mathsf{UOT}_{\mathsf{H}}(\mu_0, \mu_1) <+\infty$, there exists an optimal $2$-homogeneous coupling
 $\aalpha \in \f{H}^2(\mu_0, \mu_1)$ such that 
\[ \mathsf{UOT}_{\mathsf{H}}(\mu_0, \mu_1) = \int_{\pc}\sfH \de \aalpha.\]
Such coupling can be chosen to be a probability concentrated on $\{ 0 \le \sfr_i \le R, \, i=0,1\}$ where $R>0$ only depends on $\mu_0(X)$ and $\mu_1(X)$.
\item $\mathsf{UOT}_{\mathsf{H}}$ is a lower semicontinuous, subadditive, convex function and satisfies 
\[\mathsf{UOT}_{\mathsf{H}}(r_0 \delta_{x_0}, r_1 \delta_{x_1}) = \cce{\sfH}([x_0,r_0],[x_1,r_1]) \le \sfH([x_0,\sqrt{r_0}], [x_1, \sqrt{r_1}])
\]
for every $x_0,x_1 \in X$ and every $r_0, r_1\ge 0$, where $\cce{\sfH}$ is the l.s.c.~convex envelope of $(r_0,r_1)\mapsto \sfH([x_0,\sqrt{r_0}],[x_1,\sqrt{r_1}])$.
  \end{enumerate}
\end{theorem}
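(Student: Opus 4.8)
This is \cite[Theorem~1.1]{SS24}, so one would ultimately just invoke it; here is the strategy I would follow to prove it.

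\emph{Part (1): a normalization, then a compactness argument.} Fix $(\mu_0,\mu_1)$ with $v:=\mathsf{UOT}_{\mathsf H}(\mu_0,\mu_1)<+\infty$ and a minimizing sequence $\aalpha_n\in\f{H}^2(\mu_0,\mu_1)$. First I would replace each $\aalpha_n$ by a \emph{probability} coupling concentrated on bounded radii, leaving both the cost and the homogeneous marginals untouched, via a fibre-wise dilation: since $\int(\sfr_0^2+\sfr_1^2)\,\de\aalpha_n=\mu_0(X)+\mu_1(X)<+\infty$, push forward $g\aalpha_n$ (which carries no mass at the vertex–vertex point, where $g$ vanishes), with $g:=\max(\sfr_0,\sfr_1)^2$, under the map $\f{z}=([x_0,r_0],[x_1,r_1])\mapsto([x_0,r_0/t(\f{z})],[x_1,r_1/t(\f{z})])$, $t(\f{z}):=\max(r_0,r_1)$. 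Radial $2$-homogeneity of $\sfH$ makes this preserve $\int\sfH\,\de\aalpha_n$, and a computation in the spirit of \eqref{eq:dil_invariance} makes it preserve $\f{h}^2_i$; the result is supported on $\{\max(\sfr_0,\sfr_1)=1\}$ with total mass in $\big[\tfrac12,1\big]\cdot(\mu_0(X)+\mu_1(X))$. A further $(\vartheta,2)$-dilation with $\vartheta^{2}$ equal to the reciprocal of that mass — permissible by \eqref{eq:dil_invariance}–\eqref{eq:hqhom} — produces a minimizing sequence $\beta_n$ of probability measures supported on $\{0\le\sfr_i\le R\}$ with $R:=(\mu_0(X)+\mu_1(X))^{1/2}$ (depending only on the masses, as required), and along which $\max(\sfr_0,\sfr_1)\ge R/\sqrt2$.

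\emph{The main obstacle.} The hard part is the compactness of $\{\beta_n\}$: I need a narrowly convergent subsequence, and this is delicate precisely because $\f{C}[X]$ is not locally compact at the vertex when $X$ is unbounded, so a priori $\beta_n$-mass could escape to infinity in $X$ at bounded radius. The idea I would push: mass whose $i$-th radius stays bounded below is controlled by the fixed — hence tight — marginal $\mu_i$ through $(\sfx_i)_\sharp(\sfr_i^2\beta_n)=\mu_i$; mass whose $i$-th radius is small is automatically $\sfd_{\f{C}}$-close to the vertex in the $i$-th coordinate (the cone distance to $\f{o}$ being exactly the radius), hence harmless for tightness. Making this dichotomy quantitative is the technical heart, and is where \cite{SS24} concentrates its effort. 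Given tightness, $\beta_n\rightharpoonup\beta$ along a subsequence; lower semicontinuity of $\sfH$ yields $\int\sfH\,\de\beta\le\liminf_n\int\sfH\,\de\beta_n=v$, while $\sfr_i^2$ is bounded and continuous on the common support so that $\f{h}^2_i$ passes to the limit and $\beta\in\f{H}^2(\mu_0,\mu_1)$; thus $\beta$ is the sought optimal probability coupling.

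\emph{Part (2).} Subadditivity and convexity are immediate from linearity of $\aalpha\mapsto\f{h}^2_i(\aalpha)$ and of $\aalpha\mapsto\int\sfH\,\de\aalpha$: $\aalpha_0+\aalpha_1$ (resp.\ $\theta\aalpha_0+(1-\theta)\aalpha_1$) is admissible for the summed (resp.\ convexly combined) marginal data with the corresponding cost; moreover $\mathsf{UOT}_{\mathsf H}$ is positively $1$-homogeneous, since scaling a coupling scales both the constraints and the cost. Lower semicontinuity is obtained by the scheme of Part~(1) applied to $\mu^n_i\rightharpoonup\mu_i$. For the Dirac case, the marginal constraints force any $\aalpha\in\f{H}^2(r_0\delta_{x_0},r_1\delta_{x_1})$ to be concentrated, up to vertex points, on pairs $([x_0,s],[x_1,t])$, so the problem becomes the minimization of $\int h\,\de\omega$ over $\omega\in\meas_+([0,\infty)^2)$ subject to $\int s^2\,\de\omega=r_0$ and $\int t^2\,\de\omega=r_1$, where $h(s,t):=\sfH([x_0,s],[x_1,t])$. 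The substitution $u=s^2,\ v=t^2$ combined with radial $2$-homogeneity of $\sfH$ rewrites the objective as $\int\tilde h\,\de\tilde\omega$ with $\tilde h(u,v):=\sfH([x_0,\sqrt u],[x_1,\sqrt v])$ \emph{positively $1$-homogeneous}, and the constraints as the moment conditions $\int u\,\de\tilde\omega=r_0$, $\int v\,\de\tilde\omega=r_1$. The lower bound $\int\tilde h\,\de\tilde\omega\ge\cce{\sfH}([x_0,r_0],[x_1,r_1])$ then follows from $\tilde h\ge\cce{\sfH}([x_0,\cdot],[x_1,\cdot])$, the sublinearity of the latter (it is convex and $1$-homogeneous), and the moment conditions; the matching upper bound follows by taking $\tilde\omega$ finitely supported and nearly optimal in the definition of the l.s.c.\ convex envelope, the barycenter being fixed exactly thanks again to $1$-homogeneity. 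Finally, $\cce{\sfH}([x_0,r_0],[x_1,r_1])\le\sfH([x_0,\sqrt{r_0}],[x_1,\sqrt{r_1}])$ is witnessed by the one-point competitor $\tilde\omega=\delta_{(r_0,r_1)}$, i.e.\ $\aalpha=\delta_{([x_0,\sqrt{r_0}],[x_1,\sqrt{r_1}])}$, whose energy equals $\sfH([x_0,\sqrt{r_0}],[x_1,\sqrt{r_1}])$ exactly.
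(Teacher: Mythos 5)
The paper does not prove Theorem~\ref{thm:omnibus} at all: it is imported verbatim as \cite[Theorem~1.1]{SS24}, so there is no in-house proof to compare your sketch against, and you are correct that citing it would suffice.

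Your sketch is sound in outline, but one step of Part~(1), as written, does not close. The normalization via a fibre-wise dilation to $\{\max(\sfr_0,\sfr_1)=1\}$ followed by a global $(\vartheta,2)$-dilation to a probability is correct: radial $2$-homogeneity of $\sfH$ keeps the cost fixed, \eqref{eq:dil_invariance} keeps the homogeneous marginals fixed, and $\max(\sfr_0,\sfr_1)^2$ is comparable to $\sfr_0^2+\sfr_1^2$, giving a mass in $[\tfrac12,1](\mu_0(X)+\mu_1(X))$ and hence $R=(\mu_0(X)+\mu_1(X))^{1/2}$. The passage to the limit (lower semicontinuity of $\sfH$ and $\f{h}_i^2$ via the continuous bounded test functions $\varphi(\sfx_i)\sfr_i^2$ on $\{\sfr_i\le R\}$) is also fine. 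However, the dichotomy you offer for tightness does not by itself give Prokhorov compactness: the sets $\f{C}_\delta[X]=\{\sfr\le\delta\}$ are closed but \emph{not} compact when $X$ is non-compact (e.g.\ $[x_n,\delta/2]$ with $x_n$ leaving every compact set sits at distance exactly $\delta/2$ from $\f{o}$ and has no convergent subsequence), so `small radius implies close to the vertex' does not by itself produce the compact exhaustion one needs. The fixed marginal controls only the mass of $\beta_n\restricts{\{\sfr_i\ge\delta\}}$ that escapes spatially, not the mass on $\{\sfr_i<\delta\}$. You do flag this as the technical heart handled in \cite{SS24}, which is honest, but one should not present the stated intuition as if it already resolved the issue; an additional device (a compactification, a collapse of near-vertex mass to the vertex compensated in the marginals, or a weak-$*$ argument avoiding Prokhorov) is genuinely required there. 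The same remark applies to lower semicontinuity in Part~(2), since you obtain it by re-running Part~(1). The rest of Part~(2) is correct: subadditivity and convexity from linearity of the cost and of $\f{h}_i^2$ together with positive $1$-homogeneity via dilations, and the Dirac case via the substitution $(u,v)=(s^2,t^2)$, the $1$-homogeneity of $\tilde h(u,v)=\sfH([x_0,\sqrt u],[x_1,\sqrt v])$, sublinearity of its l.s.c.\ convex envelope for the lower bound, and density of finitely supported competitors for the upper bound (the latter needs a brief argument that the measure-theoretic infimum over $\tilde\omega\in\meas_+([0,\infty)^2)$ with prescribed barycenter equals the biconjugate, but this is standard).
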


\subsection{The three choices of cost function}

We now consider three specific choices of cost function $\sfH$ as above inducing the three distances that will play a crucial role in the sequel.

\subsubsection{The $L^p$-Wasserstein (extended) distance}\label{sec:wass}
Let $p \in [1,+\infty)$ and let $\sfH_{\W_{p,\sfd}}: \pc \to [0,+\infty]$ be the proper, lower semicontinuous, and radially $2$-homogeneous function defined as
\begin{equation}\label{eq:HW}
\sfH_{\W_{p,\sfd}}([x_0,r_0],[x_1,r_1])\coloneqq \begin{cases} r_0^2 \sfd^p(x_0,x_1) \quad &\text{ if } r_0=r_1, \\
+ \infty \quad &\text{ else},\end{cases} \quad [x_0, r_0],[x_1,r_1] \in \f{C}[X].    
\end{equation}
The resulting UOT cost induced by $\sfH_{\W_{p,\sfd}}$ is the $p$-th power of the $L^p$-Wasserstein (extended) distance $\W_{p, \sfd}$ \cite{AGS08, Villani09, santambrogio} which can be also characterized as
\[ \W_{p,\sfd}(\mu_0, \mu_1)= \left ( \inf \left \{ \int_{X \times X} \sfd^p \de \ggamma : \ggamma \in \Gamma(\mu_0, \mu_1) \right \} \right )^{1/p}, \quad \mu_0, \mu_1 \in \meas_+(X),\]
where $\Gamma(\mu_0, \mu_1)$ denotes the set of couplings between $\mu_0$ and $\mu_1$ defined as
\[ \Gamma(\mu_0, \mu_1) \coloneqq \{ \ggamma \in \meas_+(X \times X) : \pi^i_\sharp \ggamma = \mu_i, \, i=0,1 \},\]
being $\pi^i: X \times X \to X$ the projection $\pi^i(x_0, x_1)\coloneqq x_i$ for every $(x_0, x_1) \in X \times X$, $i=0,1$. Notice that, whenever $\mu_0(X) \ne \mu_1(X)$, then $\Gamma(\mu_0, \mu_1) = \emptyset$ so that $\W_{p,\sfd}(\mu_0, \mu_1)=+\infty$. When $\W_{p,\sfd}(\mu_0, \mu_1)<+\infty$ it is well known that the set of optimal couplings $\Gamma_o(\mu_0, \mu_1)$ of elements $\ggamma \in \Gamma(\mu_0, \mu_1)$ realizing the infimum above is not empty (convex and weakly compact), see e.g.~\cite[Theorem 3.5]{SS20}. When we consider the subset of $\prob(X)$ (the Borel probability measures in $X$) given by 
\[ 
\prob_p(X)\coloneqq \left\{ \mu \in \prob(X) : \int_X \sfd^p(x_0, x) \de \mu(x) < + \infty \text{ for some (hence for every) $x_0 \in X$} \right\} \,, 
\]
then $(\prob_p(X), \W_{p, \sfd})$ is a metric space. If $(X, \sfd)$ is separable, then $(\prob_p(X), \W_{p, \sfd})$ is separable. If, additionally, $(X, \sfd)$ is complete (resp.~a length space, resp.~a geodesic space), then $(\prob_p(X), \W_{p, \sfd})$ is complete (resp.~a length space, resp.~a geodesic space). In case $(X,\sfd)$ is complete and separable, the topology of $(\prob_p(X), \W_{p, \sfd})$ is induced by the following notion of convergence
$$\mu_n\rightarrow\mu \textrm{ in }\W_{p,\sfd}\Longleftrightarrow \mu_n \rightharpoonup\mu  \textrm{ and } \int_X\sfd^p(x_0, x) \de \mu_n(x)\rightarrow \int_X\sfd^p(x_0, x) \de \mu(x), \ x_0\in X.$$
For the above properties, see e.g.~\cite[Chapter 7]{AGS08}.

Finally, notice that $\sfH_{\W_{p, \sfd}}$ is radially $2$-convex, so that 
\[ \W_{p, \sfd}^p(r_0 \delta_{x_0}, r_1 \delta_{x_1}) = \begin{cases} r_0 \sfd^p(x_0, x_1) \quad & \text{ if } r_0 = r_1, \\ +\infty \quad &\text{ else,}\end{cases} \quad x_0,x_1 \in X, \, r_0, r_1 \ge 0.\]

\subsubsection{The $p$-Hellinger distance}\label{sec:he} Let $p \in [1,+\infty)$ and let $\sfH_{\He_p}: \pc \to [0,+\infty)$ be the proper, lower semicontinuous, and radially $2$-homogeneous function defined as
\begin{equation}\label{eq:HHe}
\sfH_{\He_p}([x_0,r_0],[x_1,r_1])\coloneqq \begin{cases} (r_0^{2/p}-r_1^{2/p})^p \quad &\text{ if } x_0=x_1, \\
r_0^2+r_1^2 \quad &\text{ if } x_0 \ne x_1,\end{cases} \quad [x_0, r_0],[x_1,r_1] \in \f{C}[X].
\end{equation}
The resulting UOT cost induced by $\sfH_{\He_p}$ is the $p$-th power of the $p$-Hellinger distance $\He_p$, which can be also characterized as
$$ \He_p(\mu_0, \mu_1)\coloneqq \left ( \int_{X} \left | \bigg( \frac{\de \mu_0}{\de \eta}\bigg)^{1/p} - \bigg(\frac{\de \mu_1}{\de \eta}\bigg)^{1/p} \right |^p \de \eta \right)^{1/p}, \quad \mu_0, \mu_1 \in \meas_+(X),
$$
where $\eta \in \meas_+(X)$ is any measure such that $\mu_i \ll \eta$ for $i=0,1$. Notice that the above characterization does not depend on $\eta$. The $p$-Hellinger distance is a complete and geodesic distance and induces the topology of the total variation on $\meas_+(X)$, see e.g.~\cite{Luise-Savare19}. By introducing the function 
\begin{equation}\label{eq:mp}
M_p:\mathbb{R}\to [0,+\infty] \qquad M_p(s)=\begin{cases}(s^{1/p}-1)^p &\textrm{if} \ s\ge 0,\\
+\infty &\textrm{otherwise,}
\end{cases}
\end{equation}
it is easy to show (e.g., see \cite[Lemma 3]{DePonti20}) that $\He_p^p$ can be equivalently defined as
\begin{equation}\label{def:eqHe}
\He_p^p(\mu_0,\mu_1)=D_{M_p}(\mu_0||\mu_1)\coloneqq \int_X M_p( \vartheta)\de \mu_1+\mu_0^{\perp}(X)\,,  \qquad \mu_0= \vartheta\mu_1+\mu_0^{\perp},
\end{equation}
where we have used the Lebesgue decomposition of $\mu_1$ w.r.t.~$\mu_0$, see \eqref{eq:lebdec}, and thus corresponds to the Csisz\'ar divergence generated by the convex and lower semicontinuous function $M_p$, $M_p$-divergence for short.

Finally notice that $\sfH_{\He_{p}}$ is $2$-radially convex, so that 
\[ \He_{p}^p(r_0 \delta_0, r_1 \delta_{x_1}) = \begin{cases} (r_0^{1/p}-r_1^{1/p})^p \quad &\text{ if } x_0=x_1, \\
r_0+r_1 \quad &\text{ if } x_0 \ne x_1,\end{cases} \quad x_0,x_1 \in X, \, r_0, r_1 \ge 0.\]

\subsubsection{The Hellinger--Kantorovich distance}\label{sec:hk} Let $\sfH_{\HK_\sfd}: \pc \to [0,+\infty)$ be the proper, lower semicontinuous, and radially $2$-homogeneous function defined as
\begin{equation}\label{eq:HHK}
\begin{split}
\sfH_{\HK_\sfd}([x_0,r_0],[x_1,r_1])&\coloneqq \sfd^2_{\pi/2, \f{C}}([x_0, r_0], [x_1, r_1])\\
&=\nc  r_0^2+r_1^2-2r_0 r_1\cos(\sfd(x_0,x_1) \wedge \pi/2),\quad [x_0, r_0],[x_1,r_1] \in \f{C}[X],  
\end{split}
\end{equation}
where $\sfd^2_{\pi/2, \f{C}}$ is as in \eqref{ss22:eq:distcone}.  
The resulting UOT cost induced by $\sfH_{\HK_\sfd}$ is the square of the Hellinger--Kantorovich distance $\HK_{\sfd}$. Whenever $(X,\sfd)$ is separable, then $(\meas_+(X),\HK_\sfd)$ is separable. If, additionally, $(X, \sfd)$ is complete (resp.~a length space, resp.~a geodesic space), then $(\meas_+(X),\HK_\sfd)$ is complete (resp.~a length space, resp.~a geodesic space). In case $(X,\sfd)$ is complete and separable, the topology of $(\meas_+(X),\HK_\sfd)$ coincides with the weak topology. The above properties can be found in \cite[Sections 7,8]{LMS18}. 

Notice that $\sfH_{\HK_\sfd}$ is radially $2$-convex, so that
\begin{equation}\label{eq:HK_truncated}
\HK^2(r_0 \delta_0, r_1 \delta_1) = \sfd_{\pi/2,\f{C}}^2([x_0, \sqrt{r_0}], [x_1, \sqrt{r_1}])\,.
\end{equation}
Moreover, it is not difficult to check that the squared canonical distance on the cone $\sfd^2_{\f{C}}$ is a proper, lower semicontinuous, and radially 2-homogeneous function whose l.s.c.~convex envelope $\cce{\sfd^2_{\f{C}}}$ coincides with the function
\[ \sfd^2_{\pi/2, \f{C}}([x_0, \sqrt{r_0}], [x_1, \sqrt{r_1}]),\]
so that (see e.g.~\cite[Corollary 3.18]{SS24} for a proof) the Hellinger--Kantorovich distance can be recovered minimizing w.r.t.~the canonical cone distance i.e.
\[ \HK^2_{\sfd}(\mu_0, \mu_1) = \inf \left \{ \int_{\pc} \sfd^2_{\f{C}} \de \aalpha : \aalpha \in \f{H}^2(\mu_0, \mu_1) \right \}, \quad \mu_0, \mu_1 \in \meas_+(X). \]

Let us also recall a useful dynamic formulation of $\HK$, consequence of \cite[Theorem 8.4]{LMS18}:

\begin{theorem}\label{thm:dynhk}
Let $(X,\sfd)$ be a length complete and separable metric space and let us define the lower semicontinuous functional (the 2-action functional) $\mathcal{A}_2 : \rmC([0,1];(\f{C}[X],\sfd_\f{C})) \to [0,+\infty]$ as
\begin{equation}\label{eq:a2def}
\mathcal{A}_2(\f{y}) \coloneqq   
\begin{cases} 
\displaystyle{\int_0^1 |\f{y}'_t|^2_{\sfd_{\f{C}}} 
\de t} \quad &\text{ if } \f{y} \in \AC^2([0,1];(\f{C}[X],\sfd_\f{C})), \\
+ \infty \quad &\text{ else,} 
\end{cases}
\end{equation}
where $|\f{y}'_t|^2_{\sfd_{\f{C}}} $ is the metric derivative of the curve $\f{y}$ at time $t$, see \eqref{eq:metrd} and \eqref{eq:AC-cone}.
Then, for all $\mu_0,\mu_1 \in \mathcal{M}_+(X)$ it holds
\begin{equation}\label{eq:dynamic_HK}
\HK_\sfd^2(\mu_0,\mu_1) = \inf  \int \mathcal{A}_2(\f{y}) \de\ppi(\f{y}) \,,
\end{equation}
where the infimum runs over all $\ppi \in \prob(\AC^2([0,1];(\f{C}[X],\sfd_{\f{C}}))$ such that $(\sfe_0, \sfe_1)_\sharp \ppi \in \f{H}^2(\mu_0, \mu_1)$. If $(X,\sfd)$ is additionally assumed to be geodesic, then the infimum is attained.
\end{theorem}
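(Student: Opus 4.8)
The plan is to transfer the statement to the static Unbalanced Optimal Transport description recalled above, namely $\HK^2_\sfd(\mu_0,\mu_1)=\inf\big\{\int_{\pc}\sfd^2_{\f{C}}\de\aalpha:\aalpha\in\f{H}^2(\mu_0,\mu_1)\big\}$, and to prove the two inequalities separately. A preliminary remark is that $\HK_\sfd$ is always finite on $\meas_+(X)\times\meas_+(X)$: indeed $\HK^2_\sfd(\mu,0)=\mu(X)$, so the triangle inequality gives $\HK_\sfd(\mu_0,\mu_1)\le\sqrt{\mu_0(X)}+\sqrt{\mu_1(X)}$, and hence Theorem \ref{thm:omnibus} will always be applicable.

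For the inequality $\HK^2_\sfd(\mu_0,\mu_1)\le\inf\int\mathcal{A}_2\de\ppi$ I would take an arbitrary admissible $\ppi$, set $\aalpha\coloneqq(\sfe_0,\sfe_1)_\sharp\ppi\in\f{H}^2(\mu_0,\mu_1)$, and combine the static formula, which yields $\HK^2_\sfd(\mu_0,\mu_1)\le\int_{\pc}\sfd^2_{\f{C}}\de\aalpha=\int\sfd^2_{\f{C}}(\f{y}(0),\f{y}(1))\de\ppi(\f{y})$, with the elementary pointwise bound $\sfd^2_{\f{C}}(\f{y}(0),\f{y}(1))\le\big(\int_0^1|\f{y}'_t|_{\sfd_{\f{C}}}\de t\big)^2\le\int_0^1|\f{y}'_t|^2_{\sfd_{\f{C}}}\de t=\mathcal{A}_2(\f{y})$, which holds by Cauchy--Schwarz for $\f{y}\in\AC^2([0,1];(\f{C}[X],\sfd_{\f{C}}))$ and trivially otherwise. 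Taking the infimum over admissible $\ppi$ gives the claim.

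For the reverse inequality together with attainment I would first suppose $(X,\sfd)$ geodesic, so that $(\f{C}[X],\sfd_{\f{C}})$ is geodesic as well. Applying Theorem \ref{thm:omnibus}(1) to the proper, lower semicontinuous, radially $2$-homogeneous cost $\sfd^2_{\f{C}}$ produces an optimal coupling $\aalpha\in\f{H}^2(\mu_0,\mu_1)$ which is a probability and satisfies $\HK^2_\sfd(\mu_0,\mu_1)=\int_{\pc}\sfd^2_{\f{C}}\de\aalpha$. I would then compose $\aalpha$ with a Borel geodesic selection $\mathsf{G}\colon\f{C}[X]^2\to\geo((\f{C}[X],\sfd_{\f{C}}))$ with $(\sfe_0,\sfe_1)\circ\mathsf{G}=\mathrm{id}$ --- the existence of such a map is precisely what is proved in Appendix \ref{sec:measappendix}, and rests on $(\f{C}[X],\sfd_{\f{C}})$ being Polish, so that the closed set of constant-speed geodesics admits a Borel section over its continuous endpoint map --- and set $\ppi\coloneqq\mathsf{G}_\sharp\aalpha$. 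Then $\ppi$ is a probability concentrated on constant-speed geodesics of the cone, hence on $\AC^2([0,1];(\f{C}[X],\sfd_{\f{C}}))$; it is admissible since $(\sfe_0,\sfe_1)_\sharp\ppi=\aalpha\in\f{H}^2(\mu_0,\mu_1)$; and, as a constant-speed geodesic has metric speed a.e.\ equal to the $\sfd_{\f{C}}$-distance of its endpoints, $\mathcal{A}_2(\mathsf{G}(\f{y}_0,\f{y}_1))=\sfd^2_{\f{C}}(\f{y}_0,\f{y}_1)$, so that $\int\mathcal{A}_2\de\ppi=\int_{\pc}\sfd^2_{\f{C}}\de\aalpha=\HK^2_\sfd(\mu_0,\mu_1)$. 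Together with the previous step this gives equality and identifies $\ppi$ as a minimizer. When $(X,\sfd)$ is only a length space, $(\f{C}[X],\sfd_{\f{C}})$ is a length space and I would run the same scheme up to $\eps$: for each $\eps>0$, replace $\mathsf{G}$ by a Borel selection $\mathsf{G}_\eps$ of constant-speed near-geodesics with $\ell_{\sfd_{\f{C}}}\le\sfd_{\f{C}}(\f{y}_0,\f{y}_1)+\eps$ --- available via a Kuratowski--Ryll-Nardzewski selection, since the lower semicontinuity of $\ell_{\sfd_{\f{C}}}$ and the continuity of the endpoints make the relevant multifunction closed-valued, composed with the constant-speed reparametrization, which is a Borel operation on curves of positive length --- obtaining an admissible $\ppi_\eps$ with $\int\mathcal{A}_2\de\ppi_\eps=\int\ell^2_{\sfd_{\f{C}}}(\mathsf{G}_\eps(\f{y}_0,\f{y}_1))\de\aalpha\le\int_{\pc}(\sfd_{\f{C}}+\eps)^2\de\aalpha$, which converges to $\HK^2_\sfd(\mu_0,\mu_1)$ as $\eps\downarrow0$ by dominated convergence (recall that $\aalpha$ has finite $2$-radial moments). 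Alternatively, the length-space statement may be deduced from \cite[Theorem 8.4]{LMS18}.

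The step I expect to be the real obstacle is the measurability of the (near-)geodesic selection on the cone, which is what makes it possible to promote a static optimizer to a plan on curves; everything else is bookkeeping with push-forwards and an application of Cauchy--Schwarz. This selection is isolated in Appendix \ref{sec:measappendix}. The only remaining technical nuisance, the non-compactness of $\f{C}[X]^2$, is harmless here because by Theorem \ref{thm:omnibus}(1) the optimal $\aalpha$ can be taken concentrated on a set $\{0\le\sfr_i\le R\}$ with $R$ depending only on $\mu_0(X)$ and $\mu_1(X)$, and $\aalpha$, being a Radon probability, is inner regular.
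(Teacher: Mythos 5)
Your proposal is essentially correct, and it takes a genuinely different route from the paper: the paper proves Theorem \ref{thm:dynhk} by a direct citation of \cite[Theorem 8.4]{LMS18}, whereas you give a self-contained argument reducing the dynamical formulation to the static cone optimization $\HK^2_\sfd(\mu_0,\mu_1) = \inf_{\aalpha\in\f{H}^2(\mu_0,\mu_1)}\int\sfd^2_\f{C}\de\aalpha$ already recorded in Section \ref{sec:hk}. The $\le$ direction (project an admissible plan via $(\sfe_0,\sfe_1)$ and use $\sfd^2_\f{C}(\f{y}(0),\f{y}(1))\le\mathcal{A}_2(\f{y})$ by the length bound and Cauchy--Schwarz) is clean and holds in any complete separable $(X,\sfd)$, since the static identity does. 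The $\ge$ direction with attainment in the geodesic case is correct: Theorem \ref{thm:omnibus}(1) applied to $\sfH=\sfd^2_\f{C}$ produces an optimal probability $\aalpha$, and push-forward through a measurable geodesic selection on $(\f{C}[X],\sfd_\f{C})$ (which is complete, separable, geodesic whenever $(X,\sfd)$ is) gives an admissible $\ppi$ with $\int\mathcal{A}_2\de\ppi=\int\sfd^2_\f{C}\de\aalpha=\HK^2_\sfd(\mu_0,\mu_1)$. What your route buys is an elementary proof readable entirely inside this paper's framework; what the citation buys is that \cite{LMS18} handles the purely length (non-geodesic) case in one stroke.

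Two small caveats. First, the selection map furnished by Theorem \ref{th:measgeod} is universally measurable (i.e.\ $\mathcal{B}(\f{C}[X]^2)^*$-$\mathcal{B}$ measurable), not Borel; this is harmless for your purposes, since the push-forward of a Radon probability through a universally measurable map is still a Radon probability, as discussed in Appendix \ref{sec:measappendix}, but the wording ``Borel geodesic selection'' is inaccurate. Second, your sketch of the length-space case (a Kuratowski--Ryll-Nardzewski selection of constant-speed $\eps$-near-geodesics, followed by dominated convergence) is plausible but underspecified: the relevant multifunction has closed graph because $\ell_{\sfd_\f{C}}$ is lower semicontinuous and the endpoint map is continuous, yet promoting this to a measurable closed-valued multifunction with nonempty values and then post-composing with constant-speed reparametrization requires a bit more care than you indicate (in particular, near-geodesics of strictly positive length must be handled separately from constant curves when $\f{y}_0=\f{y}_1$). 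You sensibly note the alternative of simply citing \cite[Theorem 8.4]{LMS18} for the length case, which is exactly what the paper does, so there is no actual gap.
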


We conclude with a slight improvement of \cite[Theorem 8.6]{LMS18}.

\begin{theorem}\label{thm:toolbox} Let $(X, \sfd)$ be a geodesic, complete and separable metric space. For every (constant speed, length-minimizing) $\HK_{\sfd}$-geodesic $(\mu_t)_{t \in [0,1]}$ there exists a measure $\ppi \in \prob({\rm Geo}((\f{C}[X], \sfd_{\f{C}})))$ such that:
\begin{enumerate}[(a)]
\item $\f{h}^2((\mathsf{e}_t)_\sharp \ppi) = \mu_t$ for all $t \in [0,1]$;
\item it holds
\begin{equation}\label{eq:lifting_eq}
|\mu_t'|_{\HK_{\sfd}}^2 = \int |\f{y}'(t)|^2_{\sfd_{\f{C}}} \de \ppi(\f{y}) =\int \left (|r_{\f{y}}'(t)|^2 + |r_{\f{y}}(t)|^2|x_{\f{y}}'(t)|_\sfd^2 \right ) \de \ppi(\f{y}) \quad \text{ for a.e.~$t \in [0,1]$}\,;  
\end{equation}
\item $(\mathsf{e}_t)_\sharp \ppi$ is concentrated on $\f{C}_R[X] \coloneqq \{ [x,r] \in \f{C}[X] \,:\, 0 \leq r \leq R\}$, for all $t \in [0,1]$, where $R>0$ is a constant depending only on $\mu_0$ and $\mu_1$; 
\item $\ppi$ is concentrated on 
\begin{equation*} \left \{ \f{y} \in {\rm Geo}((\f{C}[X], \sfd_{\f{C}})) \,:\, r_{\f{y}}(0)r_{\f{y}}(1) >0 \,\Rightarrow\, \sfd(x_{\f{y}}(0), x_{\f{y}}(1)) \le \frac{\pi}{2} \right \}\,.
\end{equation*}
\end{enumerate}
\end{theorem}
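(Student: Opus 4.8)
The plan is to show that the lifting measure $\ppi$ furnished by \cite[Theorem 8.6]{LMS18} — which already provides items (a), (b) and (c), with $\ppi$ concentrated on ${\rm Geo}((\f{C}[X],\sfd_\f{C}))$ — automatically enjoys the additional concentration property (d), so that no genuinely new object has to be constructed. The conceptual reason is that the $\pi/2$-truncation built into $\sfH_{\HK_\sfd}$, i.e.\ into $\cce{\sfd^2_\f{C}}$, is exactly the angular threshold beyond which rerouting transported mass through the vertex $\f{o}$ of the cone is strictly cheaper, so that cone geodesics joining two off-vertex points at $\sfd$-distance larger than $\pi/2$ cannot occur in an optimal lifting.

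First I would reduce (d) to a statement about minimizers of the static formulation of $\HK_\sfd$. By \cite[Theorem 8.6]{LMS18} (equivalently, by integrating the energy identity (b) over $t\in[0,1]$ and using that $(\mu_t)$ is a constant-speed geodesic, so $|\mu_t'|_{\HK_\sfd}\equiv\HK_\sfd(\mu_0,\mu_1)$ for a.e.\ $t$) one has $\int\mathcal{A}_2\de\ppi=\HK_\sfd^2(\mu_0,\mu_1)$. Since $\ppi$ is concentrated on constant-speed $\sfd_\f{C}$-geodesics, $\mathcal{A}_2(\f{y})=\sfd_\f{C}^2(\f{y}(0),\f{y}(1))$ for $\ppi$-a.e.\ $\f{y}$; hence $\aalpha_\star\coloneqq(\sfe_0,\sfe_1)_\sharp\ppi\in\f{H}^2(\mu_0,\mu_1)$ satisfies $\int_\pc\sfd_\f{C}^2\de\aalpha_\star=\HK_\sfd^2(\mu_0,\mu_1)$, i.e.\ $\aalpha_\star$ is a minimizer of $\inf\{\int\sfd_\f{C}^2\de\aalpha:\aalpha\in\f{H}^2(\mu_0,\mu_1)\}$, the static problem recalled right after \eqref{eq:HK_truncated}. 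On the other hand $(\sfe_0,\sfe_1)$ maps the set of geodesics violating (d), namely $\{\f{y}:r_\f{y}(0)r_\f{y}(1)>0\text{ and }\sfd(x_\f{y}(0),x_\f{y}(1))>\pi/2\}$, into the Borel ``bad set'' $B\coloneqq\{([x_0,r_0],[x_1,r_1])\in\pc:r_0r_1>0,\ \sfd(x_0,x_1)>\pi/2\}$, so the $\ppi$-measure of the former equals $\aalpha_\star(B)$, and it is enough to prove that every such minimizer $\aalpha_\star$ has $\aalpha_\star(B)=0$.

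The heart of the argument is then a \emph{splitting through the vertex}. Suppose $\aalpha_\star(B)>0$ and put $\aalpha'\coloneqq\aalpha_\star\mres B^c+(S_0)_\sharp(\aalpha_\star\mres B)+(S_1)_\sharp(\aalpha_\star\mres B)$, with $S_0([x_0,r_0],[x_1,r_1])\coloneqq([x_0,r_0],\f{o})$ and $S_1([x_0,r_0],[x_1,r_1])\coloneqq(\f{o},[x_1,r_1])$. Because $S_i$ keeps the $i$-th cone factor and sends the other to $\f{o}$, where the radial weight $\sfr$ vanishes, one checks $\f{h}_0^2(\aalpha')=\f{h}_0^2(\aalpha_\star)=\mu_0$ and $\f{h}_1^2(\aalpha')=\f{h}_1^2(\aalpha_\star)=\mu_1$, and $\int(\sfr_0^2+\sfr_1^2)\de\aalpha'=\mu_0(X)+\mu_1(X)<+\infty$, so $\aalpha'\in\f{H}^2(\mu_0,\mu_1)$. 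Moreover $\sfd_\f{C}^2([x_0,r_0],\f{o})+\sfd_\f{C}^2(\f{o},[x_1,r_1])=r_0^2+r_1^2$, whereas on $B$ one has $\sfd_\f{C}^2([x_0,r_0],[x_1,r_1])=r_0^2+r_1^2-2r_0r_1\cos(\sfd(x_0,x_1)\wedge\pi)>r_0^2+r_1^2$ strictly, since there $r_0r_1>0$ and $\sfd(x_0,x_1)>\pi/2$ force $\cos(\sfd(x_0,x_1)\wedge\pi)<0$. Therefore $\int\sfd_\f{C}^2\de\aalpha'<\int\sfd_\f{C}^2\de\aalpha_\star=\HK_\sfd^2(\mu_0,\mu_1)$, contradicting the minimality of $\aalpha_\star$. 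Hence $\aalpha_\star(B)=0$, which, by the previous paragraph, is precisely (d) for $\ppi$.

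The remaining points are routine and I expect no real obstacle: Borel measurability of $B$, $S_0$, $S_1$ and of $(\sfe_0,\sfe_1)$ on $\rmC([0,1];(\f{C}[X],\sfd_\f{C}))$ (so the push-forwards and restrictions above are legitimate and $\ppi((\sfe_0,\sfe_1)^{-1}(B))=\aalpha_\star(B)$); the inequality $r_\f{y}(t)\le\max\{r_\f{y}(0),r_\f{y}(1)\}$ along any $\sfd_\f{C}$-geodesic, read off from the case analysis of Section \ref{sec:curves_cone}, which is what propagates the endpoint radius bound to the whole curve and underlies item (c) with $R=R(\mu_0(X),\mu_1(X))$ as in Theorem \ref{thm:omnibus}(1); and the concentration of $\ppi$ on ${\rm Geo}((\f{C}[X],\sfd_\f{C}))$, which is inherited from \cite[Theorem 8.6]{LMS18}. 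The only genuinely new ingredient is the splitting argument — recognizing that cone geodesics joining two off-vertex points at $\sfd$-distance larger than $\pi/2$ are strictly suboptimal and can be replaced by vertex-touching geodesics without altering the $2$-homogeneous marginals — and it is short once the static reformulation is in place.
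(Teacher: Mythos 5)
Your proof is correct and reaches (d) by a genuinely different, and arguably cleaner, route than the paper's. Both proofs rest on the same geometric kernel, namely the strict inequality $\sfd_\f{C}^2([x_0,r_0],[x_1,r_1]) = r_0^2+r_1^2-2r_0r_1\cos(\sfd(x_0,x_1)\wedge\pi) > r_0^2+r_1^2 = \sfd_\f{C}^2([x_0,r_0],\f{o})+\sfd_\f{C}^2(\f{o},[x_1,r_1])$ when $r_0r_1>0$ and $\sfd(x_0,x_1)>\pi/2$, and both conclude by contradiction with an optimality property. The divergence is in \emph{where} the splitting through the vertex is performed. The paper modifies the dynamic lifting $\ppi$ directly, replacing each bad geodesic by two vertex-touching geodesics via maps $T_0, T_1$, and has to insert the $\sqrt{2}$ dilation of the radii and the factors $\tfrac12$ so that $\tilde\ppi$ stays a probability while preserving the $2$-homogeneous marginals; the contradiction is then reached by building a strictly shorter absolutely continuous competitor curve $\tilde\mu_t=\f{h}^2((\sfe_t)_\sharp\tilde\ppi)$, which requires invoking the metric-speed estimate of \cite[Formula (8.20)]{LMS18}. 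You instead push $\ppi$ down to the static coupling $\aalpha_\star=(\sfe_0,\sfe_1)_\sharp\ppi$, observe (via (b) and $|\mu_t'|_{\HK}\equiv\HK(\mu_0,\mu_1)$) that $\aalpha_\star$ is an optimizer of the static problem $\inf\{\int\sfd_\f{C}^2\,\de\aalpha:\aalpha\in\f{H}^2(\mu_0,\mu_1)\}$, and split $\aalpha_\star$ via $S_0, S_1$. Because in the static formulation the admissible couplings are finite measures (not necessarily probabilities) constrained only through their $2$-homogeneous marginals, you need neither the $\sqrt{2}$ dilation nor the halving, and you avoid any dynamical bookkeeping: the contradiction follows directly from $\int\sfd_\f{C}^2\,\de\aalpha'<\int\sfd_\f{C}^2\,\de\aalpha_\star$. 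This is a real simplification. The trade-off is that the paper's dynamic argument exhibits an explicit shorter curve, which is perhaps more geometrically transparent as to why (d) must hold, whereas your argument is shorter and more mechanical. The verification that $\f{h}_i^2(\aalpha')=\mu_i$ (because $\sfr_i\circ S_{1-i}=0$ and $\sfr_i\circ S_i=\sfr_i$) and the measurability of $B$, $S_0$, $S_1$, $(\sfe_0,\sfe_1)$ are indeed routine, as you say.
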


\begin{proof} 
By \cite[Theorem 8.6]{LMS18}, we can find $\ppi \in \prob({\rm Geo}((\f{C}[X], \sfd_{\f{C})}))$ satisfying points (a)-(c) as above. We show that also point (d) must hold: we argue by contradiction, assuming that $\ppi(B)>0$, where
\[ 
B\coloneqq \left \{ \f{y} \in {\rm Geo}((\f{C}[X], \sfd_{\f{C}})) : r_{\f{y}}(0) r_{\f{y}}(1)>0 \text{ and } \sfd(x_{\f{y}}(0), x_{\f{y}}(1)) > \frac{\pi}{2} \right\}, 
\]
and we produce a curve $(\tilde{\mu}_t)_{t \in [0,1]} \in \AC^2([0,1]; (\meas_+(X),\HK_{\sfd}))$ connecting $\mu_0$ to $\mu_1$ whose length is strictly less than $\HK_{\sfd}(\mu_0, \mu_1)$. Let $T_0,T_1: {\rm Geo}((\f{C}[X], \sfd_{\f{C}})) \to {\rm Geo}((\f{C}[X], \sfd_{\f{C}}))$ be defined as
\[ T_0(\f{y})(t)\coloneqq [x_{\f{y}}(0), (1-t)\sqrt{2}r_{\f{y}}(0)], \quad T_1(\f{y})(t)\coloneqq [x_{\f{y}}(1), t\sqrt{2}r_{\f{y}}(1)], \quad t \in [0,1],\, \f{y} \in {\rm Geo}((\f{C}[X], \sfd_{\f{C}})).\]
Note that, by construction, $T_0(\f{y})$ is the geodesic connecting $\sqrt{2}\f{y}(0)$ to $\f{o}$ and $T_1(\f{y})$ is the geodesic connecting $\f{o}$ to $\sqrt{2}\f{y}(1)$ so that
\begin{equation}\label{eq:speed_T0T1}
|T_i(\f{y})'|^2_{\sfd_{\f{C}}}(t) = 2r_{\f{y}}^2(i), \quad \text{ for a.e.~} t \in (0,1), \quad i=0,1.
\end{equation}
Observe also that every element $\f{y} \in B$ satisfies
\begin{equation}\label{eq:strictly}
r_{\f{y}}^2(0)+r_{\f{y}}^2(1) < \sfd^2_{\f{C}}(\f{y}(0), \f{y}(1)) = |\f{y}'(t)|^2_{\sfd_{\f{C}}} \quad \text{for a.e.~} t \in (0,1).   
\end{equation}
We define
\[ \tilde{\ppi} \coloneqq \ppi \mres B^c + \frac{1}{2} (T_0)_\sharp (\ppi \mres B) + \frac{1}{2} (T_1)_\sharp (\ppi \mres B), \quad \tilde{\mu}_t\coloneqq  \f{h}^2((\mathsf{e}_t)_\sharp \tilde{\ppi}), \quad t \in [0,1]. \]
By construction $\tilde{\ppi} \in \prob({\rm Geo}((\f{C}[X], \sfd_{\f{C})}))$; we show that $\tilde{\mu}_i=\f{h}^2((\mathsf{e}_i)_\sharp \tilde{\ppi})=\mu_i$ for $i=0,1$: indeed, for every $\varphi \in \rmC_b(X)$, we have
\begin{align*}
\int_X \varphi \de \f{h}^2((\mathsf{e}_i)_\sharp \tilde{\ppi}) & = \int \varphi(x_{\f{y}}(i))r^2_{\f{y}}(i) \de \tilde{\ppi}(\f{y}) = \int_{B^c}\varphi(x_{\f{y}}(i))r^2_{\f{y}}(i) \de \ppi(\f{y}) \\
        & \quad + \frac{1}{2} \int_B \varphi(x_{T_0(\f{y})}(i))r^2_{T_0(\f{y})}(i) \de \ppi(\f{y})  + \frac{1}{2} \int_B \varphi(x_{T_1(\f{y})}(i))r^2_{T_1(\f{y})}(i)) \de \ppi(\f{y}) \\
        &= \int_{B^c} \varphi(x_{\f{y}}(i))r^2_{\f{y}}(i) \de \ppi(\f{y}) + \int_B \varphi(x_{\f{y}}(i)) r_{\f{y}}^2(i) \de \ppi(\f{y}) = \int_X \varphi \de \f{h}^2((\mathsf{e}_i)_\sharp \ppi).
\end{align*}
If $t \in (0,1)$ belongs to the (full measure) set of times where point (b) is satisfied for $\ppi$ and both \eqref{eq:speed_T0T1}, \eqref{eq:strictly} hold true, we have
\begin{align*}
    \int |\f{y}'(t)|^2_{\sfd_{\f{C}}} \de \tilde{\ppi}(\f{y}) &= \int_{B^c} |\f{y}'(t)|^2_{\sfd_{\f{C}}}\de \ppi (\f{y}) + \frac{1}{2} \int_{B} 2 r^2_{\f{y}}(0)  \de \ppi + \frac{1}{2} \int_{B} 2 r^2_{\f{y}}(1)  \de \ppi \\
    & < \int_{B^c} |\f{y}'(t)|^2_{\sfd_{\f{C}}} \de \ppi (\f{y}) + \int_{B} |\f{y}'(t)|^2_{\sfd_{\f{C}}}  \de \ppi(\f{y}) \\
    & = \int |\f{y}'(t)|^2_{\sfd_{\f{C}}} \de \ppi (\f{y}) = |\mu_t'|^2_{\HK_{\sfd}} = \HK_{\sfd}^2(\mu_0, \mu_1) \,.
\end{align*}
Integrating the above inequality in $[0,1]$ we deduce that $(\tilde{\mu}_t)_{t \in [0,1]} \in \AC^2([0,1]; (\meas_+(X), \HK_{\sfd}))$ (see the discussion above \cite[Formula (8.20)]{LMS18}) and that
\[ 
\ell^2_{\HK_{\sfd}}((\tilde{\mu}_t)_{t \in [0,1]}) \le \int_0^1 |\tilde{\mu}'_t|^2_{\HK_{\sfd}} \de t \le \int_0^1 \int |\f{y}'(t)|^2_{\sfd_{\f{C}}} \de \tilde{\ppi}(\f{y}) < \HK_{\sfd}^2(\mu_0, \mu_1) \,, 
\]
namely the sought contradiction.
\end{proof}

\subsection{Contraction and invariance properties of UOT}

The following is a simple contraction property of these three distances w.r.t.~Lipschitz transformations, see also \cite[Lemma 8.22]{LMS18}.

\begin{lemma}[Contraction property of UOT]\label{le:contraction} Let $(X, \sfd_X)$ and $(Y, \sfd_Y)$ be metric spaces and let $f:X \to Y$ be a $1$-Lipschitz function, i.e.
\[ \sfd_Y(f(x_0), f(x_1)) \le \sfd_X(x_0, x_1) \quad \text{ for every } x_0, x_1 \in X.\]
Then for every $\mu_0, \mu_1 \in \meas_+(X)$, we have
\begin{itemize}
    \item $\HK_{\sfd_Y}(f_\sharp \mu_0, f_\sharp \mu_1) \le \HK_{\sfd_X}(\mu_0, \mu_1)$,
    \item $\He_{p, Y}(f_\sharp \mu_0, f_\sharp \mu_1) \le \He_{p,X}(\mu_0, \mu_1)$,
    \item $\W_{p, \sfd_Y}(f_\sharp \mu_0, f_\sharp \mu_1) \le \W_{p,\sfd_X}(\mu_0, \mu_1)$,
\end{itemize}
where we added a subscript $X$ (resp.~$Y$) to denote the $p$-Hellinger distance in $\meas_+(X)$ (resp.~$\meas_+(Y)$).
\end{lemma}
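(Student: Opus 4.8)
The natural strategy is to exploit the Unbalanced Optimal Transport representation of all three distances, as set up in Section \ref{sec:uot}, and to transfer an optimal homogeneous coupling on $\f{C}[X]^2$ forward along the cone-lift of $f$. First I would introduce the induced map $\f{f}:\f{C}[X]\to\f{C}[Y]$ defined by $\f{f}([x,r])\coloneqq [f(x),r]$; this is well defined (it respects the identification of all points with $r=0$, since $f$ does not change the radial coordinate), and it is continuous because the topology of the cone is induced by the cone distance and $\f{f}$ does not increase the relevant distances — indeed, since $f$ is $1$-Lipschitz, $\sfd_Y(f(x_0),f(x_1))\wedge a \le \sfd_X(x_0,x_1)\wedge a$ for every $a\in(0,\pi]$, and hence, by formula \eqref{ss22:eq:distcone},
\[
\sfd_{a,\f{C}_Y}(\f{f}([x_0,r_0]),\f{f}([x_1,r_1])) \le \sfd_{a,\f{C}_X}([x_0,r_0],[x_1,r_1]) \quad\text{ for all }a\in(0,\pi].
\]
Write $\f{F}\coloneqq \f{f}\times\f{f}:\pc_X\to\pc_Y$ for the product map. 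The key observation is that $\f{F}$ is compatible with the $q$-homogeneous marginal constraints: since $\sfr_i\circ\f{F}=\sfr_i$ and $\sfx_i\circ\f{F}=f\circ\sfx_i$ (the latter up to the irrelevant choice of base points), one checks directly that for any $\aalpha\in\mathcal{\f{M}}_+^q(\pc_X)$ one has $\f{F}_\sharp\aalpha\in\mathcal{\f{M}}_+^q(\pc_Y)$ and $\f{h}_i^q(\f{F}_\sharp\aalpha)=f_\sharp\bigl(\f{h}_i^q(\aalpha)\bigr)$ for $i=0,1$. In particular $\f{F}_\sharp$ maps $\f{H}^2(\mu_0,\mu_1)$ into $\f{H}^2(f_\sharp\mu_0,f_\sharp\mu_1)$.

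\textbf{The three pointwise cost inequalities.} The second ingredient is that each of the three generating cost functions is non-increasing under $\f{F}$. For $\sfH_{\HK_{\sfd}}$ this is exactly the displayed inequality above with $a=\pi/2$, recalling \eqref{eq:HHK}. For $\sfH_{\W_{p,\sfd}}$ one uses \eqref{eq:HW}: if $r_0=r_1$ then $\sfH_{\W_{p,\sfd_Y}}(\f{f}([x_0,r_0]),\f{f}([x_1,r_0]))=r_0^2\,\sfd_Y^p(f(x_0),f(x_1))\le r_0^2\,\sfd_X^p(x_0,x_1)=\sfH_{\W_{p,\sfd_X}}([x_0,r_0],[x_1,r_1])$, and if $r_0\ne r_1$ the right-hand side is $+\infty$, so the inequality is trivial. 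For $\sfH_{\He_p}$ one uses \eqref{eq:HHe}: if $x_0=x_1$ then $f(x_0)=f(x_1)$ and both sides equal $(r_0^{2/p}-r_1^{2/p})^p$; if $x_0\ne x_1$ then $\sfH_{\He_p}([x_0,r_0],[x_1,r_1])=r_0^2+r_1^2$, while the left-hand side is either $r_0^2+r_1^2$ (if $f(x_0)\ne f(x_1)$) or $(r_0^{2/p}-r_1^{2/p})^p\le r_0^2+r_1^2$ (if $f(x_0)=f(x_1)$), so in all cases $\sfH_{\He_{p,Y}}\circ\f{F}\le\sfH_{\He_{p,X}}$. In short, for $\sfH\in\{\sfH_{\HK_\sfd},\sfH_{\He_p},\sfH_{\W_{p,\sfd}}\}$ we have $\sfH_Y\circ\f{F}\le\sfH_X$ pointwise on $\pc_X$.

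\textbf{Conclusion.} Fix any of the three distances; call $\sfH_X$ the corresponding cost on $\pc_X$ and $\sfH_Y$ the one on $\pc_Y$. If the right-hand side of the claimed inequality is $+\infty$ there is nothing to prove, so assume $\mathsf{UOT}_{\sfH_X}(\mu_0,\mu_1)<+\infty$; by Theorem \ref{thm:omnibus}(1) there is an optimal $\aalpha\in\f{H}^2(\mu_0,\mu_1)$ with $\mathsf{UOT}_{\sfH_X}(\mu_0,\mu_1)=\int_{\pc_X}\sfH_X\de\aalpha$. Then $\f{F}_\sharp\aalpha\in\f{H}^2(f_\sharp\mu_0,f_\sharp\mu_1)$ is admissible for the UOT problem on $\pc_Y$, and
\[
\mathsf{UOT}_{\sfH_Y}(f_\sharp\mu_0,f_\sharp\mu_1)\le\int_{\pc_Y}\sfH_Y\de(\f{F}_\sharp\aalpha)=\int_{\pc_X}(\sfH_Y\circ\f{F})\de\aalpha\le\int_{\pc_X}\sfH_X\de\aalpha=\mathsf{UOT}_{\sfH_X}(\mu_0,\mu_1).
\]
Taking the appropriate root ($\tfrac12$-power for $\HK$, $\tfrac1p$-power for $\He_p$ and $\W_{p}$) yields the three asserted inequalities. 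The only mildly delicate point is the measurability/admissibility bookkeeping for $\f{F}_\sharp\aalpha$ — that $\f{F}$ is Borel (indeed continuous) and that the radial-moment and homogeneous-marginal identities survive the push-forward — but this is routine given the identities $\sfr_i\circ\f{F}=\sfr_i$ and $\sfx_i\circ\f{F}=f\circ\sfx_i$ and the change-of-variables formula for push-forwards; no genuine obstacle arises. (Alternatively, for $\W_{p,\sfd}$ and $\He_p$ one can bypass the cone formalism entirely: for $\W_{p,\sfd}$ push a coupling $\ggamma\in\Gamma(\mu_0,\mu_1)$ forward by $f\times f$ and use $\sfd_Y^p(f(x_0),f(x_1))\le\sfd_X^p(x_0,x_1)$; for $\He_p$ use the divergence representation \eqref{def:eqHe} together with the fact that the data-processing / joint convexity of $M_p$-divergences gives $D_{M_p}(f_\sharp\mu_0\,\|\,f_\sharp\mu_1)\le D_{M_p}(\mu_0\,\|\,\mu_1)$. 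But the unified cone argument above handles all three at once.)
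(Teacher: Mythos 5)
Your proof is correct and uses essentially the same approach as the paper: introduce the cone-lift $T_f([x_0,r_0],[x_1,r_1])=([f(x_0),r_0],[f(x_1),r_1])$ (your $\f{F}$), verify the pointwise inequality $\sfH_Y\circ T_f\le \sfH_X$ for each of the three cost functions, check that $(T_f)_\sharp$ maps $\f{H}^2(\mu_0,\mu_1)$ into $\f{H}^2(f_\sharp\mu_0,f_\sharp\mu_1)$, and conclude. The only cosmetic difference is that you invoke existence of an optimal coupling via Theorem \ref{thm:omnibus}(1), whereas the paper simply takes the infimum over all admissible $\aalpha$, which avoids that appeal.
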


\begin{proof}
    Let $\sfH_Z$ be any among $\sfH_{\W_{p,\sfd_Z}}$, $\sfH_{\He_{p,Z}}$ or $\sfH_{\HK_{\sfd_Z}}$ for $Z$ equal to $X$ or $Y$ (again we added a subscript in the notation for $\He_{p}$). Let $T_f: \f{C}[X,X] \to \f{C}[Y,Y]$ be defined as
    \[ T_f([x_0, r_0],[x_1, r_1]) \coloneqq ([f(x_0), r_0], [f(x_1), r_1]).\]
    It is easy to check that $\sfH_Y \circ T_f \le \sfH_X$ and, if $\aalpha \in \f{H}^2(\mu_0, \mu_1)$ for measures $\mu_0, \mu_1 \in \meas_+(X)$, then $(T_f)_\sharp \aalpha \in \f{H}^2(f_\sharp \mu_0, f_\sharp \mu_1)$. We deduce that for any $\aalpha \in \f{H}^2(\mu_0, \mu_1)$, we have
    \[ \mathsf{UOT}_{\sfH_Y} (f_\sharp \mu_0, f_\sharp \mu_1) \le \int_{\f{C}[Y,Y]} \sfH_Y \de [(T_f)_\sharp \aalpha] = \int_{\f{C}[X,X]} (\sfH_Y \circ T_f) \de \aalpha \le \int_{\f{C}[X,X]} \sfH_X \de \aalpha. \]
    Passing to the infimum in $\aalpha \in \f{H}^2(\mu_0, \mu_1)$, we conclude.
\end{proof}

As a consequence we get that any of the above three distances is invariant for isometric embeddings, when the metric spaces are complete.

\begin{proposition}\label{prop:iso} Let $(X, \sfd_X)$ and $(Y, \sfd_Y)$ be metric spaces, $(X, \sfd_X)$ complete, and let $\iota:X \to Y$ be an isometry onto its image, in the sense that
\[ \sfd_Y(\iota(x_0), \iota(x_1))= \sfd_X(x_0,x_1) \quad \text{ for every } x_0,x_1 \in X.\]
Then for every $\mu_0, \mu_1 \in \meas_+(X)$, we have
\begin{itemize}
    \item $\HK_{\sfd_Y}(\iota_\sharp \mu_0, \iota_\sharp \mu_1) = \HK_{\sfd_X}(\mu_0, \mu_1)$,
    \item $\He_{p, Y}(\iota_\sharp \mu_0, \iota_\sharp \mu_1) = \He_{p,X}(\mu_0, \mu_1)$,
    \item $\W_{p, \sfd_Y}(\iota_\sharp \mu_0, \iota_\sharp \mu_1) = \W_{p,\sfd_X}(\mu_0, \mu_1)$,
\end{itemize}
where we added a subscript $X$ (resp.~$Y$) to denote the $p$-Hellinger distance in $\meas_+(X)$ (resp.~$\meas_+(Y)$).
\end{proposition}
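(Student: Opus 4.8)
The plan is to derive each of the three identities from the $\mathsf{UOT}$-formulation of the distances together with the contraction Lemma \ref{le:contraction}, proving the two inequalities separately.

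The ``$\le$'' inequalities are immediate: an isometry onto its image is in particular $1$-Lipschitz, so Lemma \ref{le:contraction} applied to $f=\iota$ yields $\HK_{\sfd_Y}(\iota_\sharp\mu_0,\iota_\sharp\mu_1)\le\HK_{\sfd_X}(\mu_0,\mu_1)$, and likewise for $\He_p$ and $\W_p$. For the reverse inequalities one is tempted to apply Lemma \ref{le:contraction} to $\iota^{-1}$, but this map is only defined on $\iota(X)$ and a general metric space admits neither a $1$-Lipschitz extension of it to all of $Y$ nor a $1$-Lipschitz retraction of $Y$ onto $\iota(X)$. So instead I would work directly with homogeneous couplings. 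Fix one of the three cost functions, denoted $\sfH_Z$ for $Z\in\{X,Y\}$ as in the proof of Lemma \ref{le:contraction}; it suffices to show $\mathsf{UOT}_{\sfH_X}(\mu_0,\mu_1)\le\mathsf{UOT}_{\sfH_Y}(\iota_\sharp\mu_0,\iota_\sharp\mu_1)$, and we may assume the right-hand side is finite, so that we are handed some $\aalpha\in\f{H}^2(\iota_\sharp\mu_0,\iota_\sharp\mu_1)$ with $\int\sfH_Y\de\aalpha<+\infty$.

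This is where completeness of $(X,\sfd_X)$ is used: it makes $\iota(X)$ closed, hence Borel, in $Y$. Consequently the map $T_\iota$ — that is, the map $T_f$ from the proof of Lemma \ref{le:contraction} with $f=\iota$ — is a homeomorphism of $\f{C}[X]^2$ onto the closed subset $\f{C}[\iota(X)]^2\subseteq\f{C}[Y]^2$; and the marginal constraints $\f{h}_i^2(\aalpha)=\iota_\sharp\mu_i$, being concentrated on $\iota(X)$, force $\aalpha$ to be concentrated on $\f{C}[\iota(X)]^2$ (because $\sfr_i^2\aalpha$ lives on $\{\sfr_i>0\}$, where $\sfx_i$ returns the genuine $Y$-coordinate). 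I would then set $\tilde\aalpha\coloneqq(T_\iota^{-1})_\sharp\aalpha\in\meas_+(\f{C}[X]^2)$, a bona fide Radon measure since $T_\iota^{-1}$ is continuous on $\f{C}[\iota(X)]^2$ and $\aalpha$ is concentrated there.

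It then remains to verify that $\tilde\aalpha\in\f{H}^2(\mu_0,\mu_1)$ and that it carries the same total cost. Since $T_\iota$ preserves the radial coordinates and satisfies $\sfx_i\circ T_\iota=\iota\circ\sfx_i$ on $\{\sfr_i>0\}$, a direct computation gives $\f{h}_i^2(\tilde\aalpha)=(\iota^{-1})_\sharp\f{h}_i^2(\aalpha)=\mu_i$ (using that $\iota$ is invertible onto its image). A case-by-case inspection of the three explicit cost functions $\sfH_{\HK_\sfd}$, $\sfH_{\He_p}$, $\sfH_{\W_{p,\sfd}}$ — using here that $\iota$ is a genuine isometry, so $\sfd_Y(\iota(x),\iota(x'))=\sfd_X(x,x')$ — shows $\sfH_X=\sfH_Y\circ T_\iota$ on all of $\f{C}[X]^2$; hence $\int\sfH_X\de\tilde\aalpha=\int(\sfH_Y\circ T_\iota)\de\tilde\aalpha=\int\sfH_Y\de\aalpha$. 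Taking the infimum over all admissible $\aalpha$ gives the desired ``$\ge$'' inequality, completing the proof. The routine work is the check that $T_\iota$ transfers the homogeneous marginals and the cost (including the degenerate strata $\{\sfr_i=0\}$, where both sides reduce to the same radius-dependent boundary value of the cost); the only substantive point is that Lemma \ref{le:contraction} alone does not suffice for the reverse inequality, and that the role of completeness is precisely to legitimize the transport of $\aalpha$ back to $\f{C}[X]^2$.
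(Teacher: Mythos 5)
Your proposal is correct and essentially follows the paper's own argument: the ``$\le$'' direction from Lemma~\ref{le:contraction} applied to $\iota$, and the ``$\ge$'' direction by observing that completeness of $X$ makes $\iota(X)$ closed, which forces every $\aalpha\in\f{H}^2(\iota_\sharp\mu_0,\iota_\sharp\mu_1)$ to concentrate on $\f{C}[\iota(X),\iota(X)]$. The only (cosmetic) difference is in the final transfer: you push $\aalpha$ back to $\f{C}[X,X]$ explicitly via $(T_\iota^{-1})_\sharp$, whereas the paper notes that, once concentration on $\f{C}[\iota(X),\iota(X)]$ is established, one has $\mathsf{UOT}_{\sfH_Y}=\mathsf{UOT}_{\sfH_{\iota(X)}}$ and can then \emph{reuse} Lemma~\ref{le:contraction} applied to the $1$-Lipschitz map $\iota^{-1}\colon(\iota(X),\sfd_Y)\to(X,\sfd_X)$ — so your remark that the contraction lemma ``does not suffice'' for the reverse inequality is slightly too pessimistic: it does, once the domain is localized.
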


\begin{proof} 
As in the proof of Lemma \ref{le:contraction}, we denote by $\sfH_Z$ any among $\sfH_{\W_{p,\sfd_Z}}$, $\sfH_{\He_{p,Z}}$ or $\sfH_{\HK_{\sfd_Z}}$ for $Z$ equal to $X$,  $Y$ or $\iota(X) \subset Y$ (again we added a subscript in the notation for $\He_{p}$).
Since $\iota$ is $1$-Lipschitz, we have by Lemma \ref{le:contraction} that
\[ \mathsf{UOT}_{\sfH_Y}(\iota_\sharp \mu_0, \iota_\sharp \mu_1)\le \mathsf{UOT}_{\sfH_X}(\mu_0, \mu_1) \quad \text{ for every } \mu_0, \mu_1 \in \meas_+(X).\]
We are left to prove the reverse inequality. We observe that 
\begin{equation} \label{eq:inclusion}
\supp(\aalpha) \subset \f{C}[\iota(X),\iota(X)] \qquad \textrm{ for every }\aalpha \in \f{H}^2(\iota_\sharp \mu_0, \iota_\sharp \mu_1) \subset \meas_+(\f{C}[Y,Y]).
\end{equation}
Indeed, $\iota$ is an isometry and $(X,\sfd_X)$ is complete, so that it is a closed map, hence $\supp(\iota_\sharp \mu_i) \subset \iota(\supp(\mu_i))\subset \iota(X)$, $i=0,1$. Therefore, \eqref{eq:inclusion} follows if we prove $\supp(\aalpha)\subset V_0\times V_1$, where we have introduced
\[ V_i \coloneqq \{ [y,r], \, y \in \supp(\iota_\sharp\mu_i), \, r > 0 \} \cup \{\f{o}_Y\}, \quad i=0,1. \]
Let $(\f{y}_0, \f{y}_1) \in \supp(\aalpha)$ and let us prove w.l.o.g.~that $\f{y}_0 \in V_0$: if $\f{y}_0=\f{o}_Y$, then $\f{y}_0 \in V_0$ by definition. If instead $\f{y}_0 \ne \f{o}_Y$, then $\f{y}_0 = [y_0, r_0]$ for some $y_0 \in Y$ and some $r_0 >0$. Assume by contradiction, that $y_0 \notin \supp(\iota_\sharp \mu_0)$. Hence we can find a Borel function $\varphi: Y \to [0,1]$ and a radius $\eps>0$ such that $\varphi \equiv 1$ on the open ball $\rmB_{\sfd_Y}(y_0, \eps)$ and
\[ \int_Y \varphi \de (\iota_\sharp \mu_0)=0.\]
However, setting $A\coloneqq \f{p}(\rmB_{\sfd_Y}(y_0, \eps) \times (r_0/2, 3/2 r_0)) \times \f{C}[Y]$, we have
\[ 0= \int_{Y} \varphi \de (\iota_\sharp \mu_0) = \int_{\f{C}[Y,Y]} (\varphi \circ \sfx_0) \sfr_0^2 \de \aalpha \ge \int_{A} (\varphi \circ \sfx_0) \sfr_0^2 \de \aalpha \ge \frac{1}{4} r_0^2 \aalpha(A)>0, \]
where we have used that $A$ is an open neighbourhood of $(\f{y}_0, \f{y}_1) \in \supp(\aalpha)$. The proof of \eqref{eq:inclusion} is thus concluded and we can immediately derive that
\[
\mathsf{UOT}_{\sfH_Y}(\iota_{\sharp}\mu_0,\iota_{\sharp}\mu_1)=\mathsf{UOT}_{\sfH_{\iota(X)}}(\iota_{\sharp}\mu_0,\iota_{\sharp}\mu_1)\quad \text{ for every } \mu_0, \mu_1 \in \meas_+(X)\
\]
by definition of Unbalanced Optimal Transport cost.
We now consider the map $\iota^{-1}$ which is $1$-Lipschitz as a map from $(\iota(X),\sfd_Y)$ to $(X,\sfd_X)$ and thus, using again Lemma \ref{le:contraction}, for every $\mu_0,\mu_1\in \meas_+(X)$ we have
\[ \mathsf{UOT}_{\sfH_X}(\mu_0,\mu_1)=\mathsf{UOT}_{\sfH_X}(\iota^{-1}_{\sharp}(\iota_\sharp \mu_0), \iota^{-1}_{\sharp}(\iota_\sharp \mu_1))\le \mathsf{UOT}_{\sfH_{\iota(X)}}(\iota_{\sharp}\mu_0, \iota_{\sharp}\mu_1)=\mathsf{UOT}_{\sfH_Y}(\iota_{\sharp}\mu_0,\iota_{\sharp}\mu_1)\]
which gives the desired reverse inequality and concludes the proof.
\end{proof}

\section{Marginal entropy-transport problems}\label{sec: Whe}

In this section we assume a metric space $(X, \sfd)$ to be fixed and we use the notation $\He\coloneqq \He_{2}$, $\W\coloneqq \W_{2, \sfd}$, $\HK\coloneqq \HK_{\sfd}$. We devote this section to the study of the energy $\mathcal{E}_1$ in  \eqref{eq:energy} when $N=1$, $\sfc_1  = \He$ and $\sfc_2 = \W$. This leads to the following definition.

\begin{definition}[$\He-\W$ minimizing movement]
We define the marginal $\He-\W$ problem between $\mu_0,\mu_1\in \meas_+(X)$ as 
\begin{equation}\label{def:WHe}
\W\He(\mu_0,\mu_1)\coloneqq \inf_{\nu \in \meas_+(X)}  \He^2(\mu_0,\nu)+\W^2(\nu,\mu_1).
\end{equation}
\end{definition}

The terminology is inspired by \cite[Section 3.3 E.8]{LMS18}, since \eqref{def:WHe} enters in the framework of the marginal Entropy-Transport problems in the sense of Liero, Mielke, and Savar\'e, with transport cost given by $\sfd^2$ and relative entropy generated by the function $M_2$ as in \eqref{eq:mp}.

We collect in the following result a few immediate properties of $\W\He$.

\begin{proposition}\label{prop:whe} The function $\W\He$ has the following properties:
\begin{enumerate}
    \item $\W\He(\mu_0, \mu_1) <+\infty$ for every $\mu_0, \mu_1 \in \meas_+(X)$;
    \item if $\mu_i$ is the null measure, then $\W\He(\mu_0, \mu_1)=\mu_{1-i}(X)$ realized by $\nu=\mu_i$, for $i=0,1$;
    \item introducing $\sfH_{\W\He}:\pc \to [0,+\infty)$ defined as 
    \begin{equation}\label{eq:HWHe}
    \sfH_{\W\He}([x_0,r_0],[x_1,r_1]) \coloneqq \begin{cases} (r_0-r_1)^2 + r_1^2 \sfd^2(x_0,x_1) \quad &\text{ if } r_0>0, \\
    r_1^2 \quad &\text{ if } r_0=0,
    \end{cases}
    \end{equation}
    then $\sfH_{\W\He}$ is a well defined, proper, lower semicontinuous and radially $2$-homogeneous function such that $\W\He = \mathsf{UOT}_{\sfH_{\W\He}}$. In particular, for every pair $(\mu_0,\mu_1) \in \meas_+(X) \times \meas_+(X)$ there exists $\aalpha \in \f{H}^2(\mu_0, \mu_1)$ such that 
    \[ \W\He(\mu_0, \mu_1) = \int_{\pc}\sfH_{\W\He} \de \aalpha;\]
    \item for every pair $(\mu_0, \mu_1) \in \meas_+(X)\times \meas_+(X)$ the set of $\nu \in \meas_+(X)$ realizing the infimum in the definition of $\W\He$ is not empty.
\end{enumerate}
\end{proposition}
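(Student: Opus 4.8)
The plan is to prove item~(1) first (it is needed as input to item~(3)), then the structural claims and the identity $\W\He = \mathsf{UOT}_{\sfH_{\W\He}}$ in item~(3), and finally to read off items~(4) and~(2) as by-products. For~(1) I would simply test the definition \eqref{def:WHe} with $\nu = \mu_1$: since $\W^2(\mu_1,\mu_1)=0$ and, by \eqref{def:eqHe} (or directly from the defining formula of $\He_2$), $\He^2(\mu_0,\mu_1)\le \mu_0(X)+\mu_1(X)<+\infty$, we get $\W\He(\mu_0,\mu_1)\le \He^2(\mu_0,\mu_1)<+\infty$. That $\sfH_{\W\He}$ in \eqref{eq:HWHe} is well defined on $\pc$ (independent of the vertex representative), radially $2$-homogeneous and proper (it is everywhere finite) is immediate from the explicit expression; lower semicontinuity I would check sequentially, the only delicate point being a sequence whose first cone–variable converges to $\f{o}$, where $r_0^n\to 0$, $r_1^n\to r_1$ force $(r_0^n-r_1^n)^2\to r_1^2$ while $(r_1^n)^2\sfd^2\ge 0$, whence $\liminf \ge r_1^2 = \sfH_{\W\He}(\f{o},[x_1,r_1])$.

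For the inequality $\mathsf{UOT}_{\sfH_{\W\He}}\le \W\He$ I would fix any $\nu$ with $\He^2(\mu_0,\nu)+\W^2(\nu,\mu_1)<+\infty$ (so $\nu(X)=\mu_1(X)$), pick a $\W$-optimal $\ggamma\in\Gamma(\nu,\mu_1)$, and write the Lebesgue decomposition $\mu_0 = \vartheta\nu+\mu_0^\perp$ (with $\vartheta$ a Borel version), as in \eqref{eq:lebdec}. Then I set
\[
\aalpha \coloneqq \Phi_\sharp\ggamma + \Psi_\sharp\mu_0^\perp, \qquad \Phi(x,y)\coloneqq\big([x,\sqrt{\vartheta(x)}],[y,1]\big),\quad \Psi(x)\coloneqq\big([x,1],\f{o}\big).
\]
A direct push-forward computation, using that the first marginal of $\ggamma$ is $\nu$, gives $\f{h}_0^2(\aalpha)=\vartheta\nu+\mu_0^\perp=\mu_0$ and $\f{h}_1^2(\aalpha)=\mu_1+0=\mu_1$, so $\aalpha\in\f{H}^2(\mu_0,\mu_1)$; moreover, using $\sfH_{\W\He}([x,\sqrt{\vartheta(x)}],[y,1]) = (\sqrt{\vartheta(x)}-1)^2 + \car_{\{\vartheta(x)>0\}}\sfd^2(x,y)$, $\sfH_{\W\He}([x,1],\f{o})=1$, and \eqref{def:eqHe}, one obtains $\int\sfH_{\W\He}\,\de\aalpha \le \int(\sqrt{\vartheta}-1)^2\,\de\nu + \int\sfd^2\,\de\ggamma + \mu_0^\perp(X) = \He^2(\mu_0,\nu)+\W^2(\nu,\mu_1)$. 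Taking the infimum over $\nu$ gives the claim.

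For the reverse inequality $\W\He\le \mathsf{UOT}_{\sfH_{\W\He}}$, by item~(1) and the inequality just proved we have $\mathsf{UOT}_{\sfH_{\W\He}}(\mu_0,\mu_1)<+\infty$; hence, by the structural properties above, Theorem~\ref{thm:omnibus}(1) provides an optimal $\aalpha\in\f{H}^2(\mu_0,\mu_1)$, a probability concentrated on $\{0\le\sfr_i\le R\}$ (this already yields the existence statement in~(3)). I then lift $\aalpha$ by inserting the intermediate cone point dictated by the inf-convolution structure of $\sfH_{\W\He}$: let $L\colon\pc\to\f{C}[X]^3$ be the Borel map with $L([x_0,r_0],[x_1,r_1]) \coloneqq ([x_0,r_0],[x_0,r_1],[x_1,r_1])$ on $\{\sfr_0>0\}$ and $L(\f{o},[x_1,r_1]) \coloneqq (\f{o},[x_1,r_1],[x_1,r_1])$ on $\{\sfr_0=0\}$, put $\bbeta \coloneqq L_\sharp\aalpha$ and $\nu \coloneqq \f{h}_1^2(\bbeta)\in\meas_+(X)$. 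One checks $\f{h}_0^2(\bbeta)=\mu_0$ and $\f{h}_2^2(\bbeta)=\mu_1$, so $(\pi^{0},\pi^{1})_\sharp\bbeta\in\f{H}^2(\mu_0,\nu)$ and $(\pi^{1},\pi^{2})_\sharp\bbeta\in\f{H}^2(\nu,\mu_1)$ are competitors for $\He^2(\mu_0,\nu)=\mathsf{UOT}_{\sfH_{\He}}(\mu_0,\nu)$ and $\W^2(\nu,\mu_1)=\mathsf{UOT}_{\sfH_{\W}}(\nu,\mu_1)$; and by construction the pointwise identity $\sfH_{\He}\circ(\pi^{0},\pi^{1})\circ L + \sfH_{\W}\circ(\pi^{1},\pi^{2})\circ L = \sfH_{\W\He}$ holds (on $\{\sfr_0>0\}$ it reads $(r_0-r_1)^2 + r_1^2\sfd^2(x_0,x_1)$, on $\{\sfr_0=0\}$ it reads $r_1^2+0$), so that
\[
\He^2(\mu_0,\nu)+\W^2(\nu,\mu_1) \le \int \big(\sfH_{\He}\circ(\pi^{0},\pi^{1}) + \sfH_{\W}\circ(\pi^{1},\pi^{2})\big)\,\de\bbeta = \int\sfH_{\W\He}\,\de\aalpha = \mathsf{UOT}_{\sfH_{\W\He}}(\mu_0,\mu_1).
\]
Since the left-hand side is $\ge\W\He(\mu_0,\mu_1)$ by definition, this proves the inequality, hence the equality in~(3); and, combined with the reverse trivial bound, it shows that this $\nu$ realizes the infimum in \eqref{def:WHe}, which is~(4). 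For~(2): if $\mu_1=0$ the only $\nu$ with $\W^2(\nu,\mu_1)<+\infty$ is $\nu=0=\mu_1$, so $\W\He(\mu_0,0)=\He^2(\mu_0,0)=\mu_0(X)$; if $\mu_0=0$ then $\He^2(0,\nu)=\nu(X)$ for every $\nu$ by \eqref{def:eqHe}, so the objective equals $\nu(X)+\W^2(\nu,\mu_1)\ge\mu_1(X)$, with equality at $\nu=\mu_1$.

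The genuinely delicate part is not any single estimate but the bookkeeping on the cone: choosing the intermediate point correctly, namely $[x_0,r_1]$ when $r_0>0$ but $[x_1,r_1]$ when $r_0=0$ (the latter handling the singular part of $\mu_0$, where mass is created directly at the target), is exactly what makes the pointwise identity $\sfH_{\He}\circ L + \sfH_{\W}\circ L = \sfH_{\W\He}$ hold and what pins down the precise form \eqref{eq:HWHe}; and one must check with care that $\bbeta$ has the announced homogeneous marginals and finite radial moments, paying attention to the vertex and to the (immaterial) base-point ambiguity in the definitions of $\sfx_i$.
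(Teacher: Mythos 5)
Your proof is correct and proceeds by the same overall strategy as the paper: first the structural properties of $\sfH_{\W\He}$, then the two inequalities $\mathsf{UOT}_{\sfH_{\W\He}}\le\W\He$ and $\W\He\le\mathsf{UOT}_{\sfH_{\W\He}}$ via explicit competitors, with items (4) and (2) read off at the end. For the first inequality your competitor $\aalpha$ is the paper's, up to one sign-of-exponent detail: you take the radial coordinate to be $\sqrt{\vartheta(x)}$ rather than $\vartheta(x)$, and yours is the right choice, since only with $\sfr_0=\sqrt{\vartheta\circ\pi^0}$ does one get $\f{h}_0^2(\aalpha)=\vartheta\nu+\mu_0^\perp=\mu_0$, and only then does the resulting cost $\int(\sqrt{\vartheta}-1)^2\,\de\nu+\mu_0^\perp(X)$ match $\He_2^2(\mu_0,\nu)$ via \eqref{def:eqHe}; so your computation in fact repairs a small slip in the paper's own formula for $\aalpha$. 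For the second inequality the route is genuinely different in presentation: the paper defines four auxiliary objects $\mu_1',\ggamma,\nu,\tilde\aalpha$ by separate restrictions and push-forwards of the optimal $\aalpha$ and then verifies the cost and marginal identities one by one, whereas you lift $\aalpha$ to a single plan $\bbeta=L_\sharp\aalpha$ on $\f{C}[X]^3$, choosing the intermediate cone point $[x_0,r_1]$ when $r_0>0$ and $[x_1,r_1]$ when $r_0=0$, and then invoke the pointwise identity $\sfH_{\He_2}\circ(\pi^0,\pi^1)\circ L+\sfH_{\W_{2,\sfd}}\circ(\pi^1,\pi^2)\circ L=\sfH_{\W\He}$. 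A short computation shows your $\nu=\f{h}_1^2(\bbeta)$ coincides with the paper's $\nu=\pi^0_\sharp\ggamma$, so the two arguments are equivalent in content; what your version buys is that the marginal checks are immediate (from $\pi^0\circ L=\pi^0$ and $\pi^2\circ L=\pi^1$) and, more importantly, it exhibits $\sfH_{\W\He}$ explicitly as a one-step infimal convolution of $\sfH_{\He_2}$ and $\sfH_{\W_{2,\sfd}}$ on the cone, which is precisely the structural viewpoint the rest of the paper relies on.
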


\begin{proof}
    Claim (1) follows by considering the competitor $\nu\coloneqq \mu_1$ and the finiteness of $\He$.
    
    Claim (2) is obvious from the definitions of $\He$ and $\W$. 
    
   To prove claim (3) we start by noticing that for $r_1=0$ we have $(r_0-r_1)^2 + r_1^2 \sfd^2(x_0,x_1)=r_0^2$. Since for $r_0=0$ $\sfH_{\W\He}$ is also independent on the spatial variables, this implies that $\sfH_{\W\He}$ is well defined on $\pc.$ It is clear that $\sfH_{\W\He}$ is proper, while the radially $2$-homogeneity can be easily checked since for every $\lambda>0$ we have
   \begin{align*}
   \sfH_{\W\He}([x_0,\lambda r_0],[x_1,\lambda r_1]) &=\begin{cases} (\lambda r_0-\lambda r_1)^2 + \lambda^2 r_1^2 \sfd^2(x_0,x_1) \quad &\text{ if } r_0>0 \\
    \lambda^2 r_1^2 \quad &\text{ if } r_0=0\end{cases} \\
    &=\begin{cases} \lambda^2(r_0-r_1)^2 + \lambda^2 r_1^2 \sfd^2(x_0,x_1) \hspace{6mm} &\text{ if } r_0>0 \\
    \lambda^2 r_1^2 \hspace{6mm} &\text{ if } r_0=0\end{cases}\\
    &=\lambda^2\sfH_{\W\He}([x_0,r_0],[x_1,r_1]).
   \end{align*}
To prove the lower semicontinuity of $\sfH_{\W\He}$, we note that in general
\begin{equation}\label{eq:fromab}
\sfH_{\W\He}([x_0,r_0],[x_1,r_1]) \ge |r_0-r_1|^2 \quad \text{ for every } [x_0,r_0],[x_1,r_1] \in \f{C}[X]    
\end{equation}
with equality if $r_0r_1=0$. We consider any sequence $(\f{y}_0^n, \f{y}_1^n)_n = ([x_0^n, r_0^n],[x_1^n, r_1^n])_n \subset \f{C}[X,X]$ converging to $(\f{y}_0, \f{y}_1)=([x_0,r_0],[x_1,r_1]) \in \f{C}[X,X]$ as $n \to + \infty$. Using the description of the topology on $\f{C}[X,X]$ we can infer that $r_i^n \to r_i$ as $n \to + \infty$, $i=0,1$. Thus, if $r_0r_1=0$, we can use \eqref{eq:fromab} and see that
\[ \liminf_{n \to + \infty} \sfH_{\W\He}(\f{y}_0^n, \f{y}_1^n) \ge \liminf_{n \to + \infty} |r_0^n-r_1^n|^2 =|r_0-r_1|^2 = \sfH_{\W\He}(\f{y}_0, \f{y}_1). \]
If, on the other hand, $r_0r_1>0$, for $n$ large enough we have $r_0^n r_1^n >0$ so that
\begin{align*}
 \lim_{n \to + \infty} \sfH_{\W\He}(\f{y}_0^n, \f{y}_1^n)&= \lim_{n \to + \infty} |r_0^n-r_1^n|^2 + (r_1^n)^2 \sfd^2(x_0^n, x_1^n) \\
 &=|r_0-r_1|^2 + r_1^2 \sfd^2(x_0, x_1) = \sfH_{\W\He}(\f{y}_0, \f{y}_1).    
\end{align*}
We now show that $\mathsf{UOT}_{\sfH_{\W\He}}=\W\He$. We prove separately the two inequalities, starting with $\mathsf{UOT}_{\sfH_{\W\He}}\le \W\He$. Let $\nu \in \meas_+(X)$; if $\W(\nu, \mu_1) = + \infty$  there is nothing to prove, so let us assume $\W(\nu, \mu_1) < + \infty$ and let 
\[\mu_0=\mu_0^{ac}+\mu_0^{\perp} = \vartheta \nu +\mu_0^{\perp}\] be the Lebesgue decomposition of $\mu_0$ w.r.t.~$\nu$. Let $\ggamma \in \Gamma_o(\nu, \mu_1)$ (cf.~Section \ref{sec:wass}) and let us define
\[ \aalpha\coloneqq ([\pi^0, \vartheta \circ \pi^0], [\pi^1, 1])_\sharp \ggamma + ([\text{id}_X,1],\f{o})_\sharp \mu_0^\perp,\]
where $\pi^i$ are the projections from $X \times X$ to $X$ and $\text{id}_X$ is the identity map on $X$. 
It is not difficult to check that $\aalpha \in \f{H}^2(\mu_0, \mu_1)$, so that
\begin{align*}
\mathsf{UOT}_{\sfH_{\W\He}}(\mu_0, \mu_1) &\le \int_{\pc} \sfH_{\W\He} \de \aalpha  \\
&=\int_{X \times X} \sfH_{\W\He}([x_0,\vartheta(x_0)],[x_1,1]) \de \ggamma(x_0,x_1) + \int_X 1 \de \mu_0^\perp \\
&= \int_{\{\vartheta >0\}} \left ( \left |\vartheta \circ \pi^0-1\right |^2 + \sfd^2 \right ) \de \ggamma + \int_{\{\vartheta =0\}} 1 \de \ggamma + \mu_0^\perp(X) \\
&= \int_X \left ( \vartheta-1 \right )^2 \de \nu + \mu_0^\perp(X) + \int_{\{\vartheta>0\}} \sfd^2 \de \ggamma \\\
&\le \He^2(\mu_0, \nu) + \W^2(\nu, \mu_1),   
\end{align*}
where we have also used \eqref{def:eqHe}. This gives the desired inequality since $\nu$ is arbitrary. To prove the converse inequality, let now $\aalpha \in \f{H}^2(\mu_0, \mu_1)$ be optimal for the definition of $\mathsf{UOT}_{\sfH_{\W\He}}(\mu_0, \mu_1)$ (cf.~Theorem \ref{thm:omnibus}(1)) and let us define
\begin{align}\label{eq:defnuopt}
\notag &\mu_1'\coloneqq (\sfx_1)_\sharp (\sfr_1^2 \aalpha \mres{\{\sfr_0=0\}})\,, \quad \ggamma \coloneqq (\sfx_0, \sfx_1)_\sharp (\sfr_1^2 \aalpha \mres{\{\sfr_0>0\}}) + (\operatorname{id}_X, \operatorname{id}_X)_\sharp \mu_1'\,,\\   
&\nu\coloneqq \pi^0_\sharp \ggamma\,, \quad \tilde{\aalpha} \coloneqq ([\sfx_0, \sfr_0],[\sfx_0, \sfr_1])_\sharp (\aalpha \mres{\{\sfr_0>0\}}) + (\f{o}, [\operatorname{id}_X, 1])_\sharp \mu_1'\,.
\end{align}
It is not difficult to check that $\ggamma \in \Gamma(\nu, \mu_1)$ and $\tilde{\aalpha} \in \f{H}^2(\mu_0, \nu)$. Moreover we have
\begin{equation}
    \int_{\f{C}[X,X]} \sfH_{\He} \de \tilde{\aalpha} = \int_{\f{C}[X,X]} |{\sfr_0}-{\sfr_1}|^2 \de \aalpha\,,
\end{equation}
where $\sfH_{\He}$ is as in \eqref{eq:HHe}; indeed:
\begin{align*}
\int_{\f{C}[X,X]} \sfH_{\He} \de \tilde{\aalpha} &= \int_{\{\sfr_0>0\}} \sfH_{\He}([x_0,r_0],[x_0,r_1]) \de \aalpha([x_0,r_0],[x_1,r_1]) + \int_{X} \sfH_{\He}(\f{o},[x,1]) \de \mu_1'(x)  \\
&= \int_{\{\sfr_0>0\}} |\sfr_0-\sfr_1|^2 \de \aalpha + \int_X 1 \de \left ( (\sfx_1)_\sharp (\sfr_1^2 \aalpha \mres{\{\sfr_0=0\}}) \right ) \\
& = \int_{\{\sfr_0>0\}} |\sfr_0-\sfr_1|^2 \de \aalpha + \int_{\{\sfr_0 =0\}} \sfr_1^2 \de \aalpha \\
& = \int_{\f{C}[X,X]} |\sfr_0-\sfr_1|^2 \de \aalpha.
\end{align*}
Then we have
\begin{align*}
    \mathsf{UOT}_{\sfH_{\W\He}}(\mu_0, \mu_1) &= \int_{\pc} \sfH_{\W\He} \de \aalpha \\
    & = \int_{\{\sfr_0>0\}} \left ( |r_0-r_1|^2 + \sfr_1^2 \sfd^2(\sfx_0, \sfx_1) \right ) \de \aalpha + \int_{\{\sfr_0 =0\}} \sfr_1^2 \de \aalpha \\
    & = \int_{\f{C}[X,X]} |r_0-r_1|^2 \de \aalpha + \int_{\{\sfr_0>0\}} \sfr_1^2 \sfd^2(\sfx_0,\sfx_1) \de \aalpha \\
    & = \int_{\f{C}[X,X]} \sfH_{\sfH_{\He}} \de \tilde{\aalpha} + \int_{X \times X} \sfd^2 \de \ggamma\\
    & \ge \He^2(\mu_0, \nu) + \W^2(\nu, \mu_1).
\end{align*}
Moreover, the above computations show that the measures $\nu$ defined in \eqref{eq:defnuopt} satisfies claim (4).
\end{proof}

\begin{remark}
We notice that the function $\sfH_{\W\He}$ defined in \eqref{eq:HWHe} is not the square of a distance on $\f{C}[X]$. Indeed, it is not symmetric since $\sfH_{\W\He}([x_0,r_0],[x_1,r_1])\neq \sfH_{\W\He}([x_1,r_1],[x_0,r_0])$ whenever $r_0=1, r_1=2$ and $\sfd(x_0,x_1)=1$. Also the triangle inequality fails: for this, one can consider a triple of points $[x_0,r_0], [x_1,r_1], [x_2,r_2]$ satisfying the admissible constraints
$$r_0=r_2>0, \quad r_1=0, \quad \sfd(x_0,x_2)\le\sfd(x_0,x_1)+\sfd(x_1,x_2), \quad   \sfd^2(x_0,x_2)>4.$$
It is immediate to see that for these points
$$\sqrt{\sfH_{\W\He}([x_0,r_0],[x_2,r_2])}>\sqrt{\sfH_{\W\He}([x_0,r_0],[x_1,r_1])}+\sqrt{\sfH_{\W\He}([x_1,r_1],[x_2,r_2])}.$$
\end{remark}

\begin{lemma}\label{le:double} Let $\sfH_{\HK}$ be as in \eqref{eq:HHK} and let $\sfH_{\W\He}$ be as in \eqref{eq:HWHe}. Then
\[ \sfH_{\HK} \le \sfd_{\f{C}}^2 \le 2 \sfH_{\W\He}\,.\]
\end{lemma}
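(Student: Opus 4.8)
The plan is to prove the two inequalities separately, both by elementary estimates on the cone distance. For $\sfH_{\HK}\le\sfd_{\f{C}}^2$, I would simply compare the two quantities through their defining formulas: by \eqref{eq:HHK} and the definition $\sfd_{\f{C}}=\sfd_{\pi,\f{C}}$ (see \eqref{ss22:eq:distcone}), for arbitrary $[x_0,r_0],[x_1,r_1]\in\f{C}[X]$ one has
\[
\sfH_{\HK}([x_0,r_0],[x_1,r_1]) = r_0^2+r_1^2-2r_0r_1\cos\!\big(\sfd(x_0,x_1)\wedge\tfrac\pi2\big)
\]
and
\[
\sfd_{\f{C}}^2([x_0,r_0],[x_1,r_1]) = r_0^2+r_1^2-2r_0r_1\cos\!\big(\sfd(x_0,x_1)\wedge\pi\big).
\]
Since $a\wedge\tfrac\pi2\le a\wedge\pi$ for every $a\ge0$ and $\cos$ is non-increasing on $[0,\pi]$, we get $\cos(\sfd(x_0,x_1)\wedge\tfrac\pi2)\ge\cos(\sfd(x_0,x_1)\wedge\pi)$; multiplying by the non-positive factor $-2r_0r_1$ and adding $r_0^2+r_1^2$ gives the claim at once.

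For the inequality $\sfd_{\f{C}}^2\le2\sfH_{\W\He}$ I would distinguish the degenerate case $r_0=0$ from the generic one. If $r_0=0$ then $[x_0,r_0]=\f{o}$, and both sides are computed directly: $\sfd_{\f{C}}^2(\f{o},[x_1,r_1])=r_1^2$ while $\sfH_{\W\He}(\f{o},[x_1,r_1])=r_1^2$ by \eqref{eq:HWHe}, so the inequality reads $r_1^2\le2r_1^2$. If instead $r_0>0$, the idea is to route through the triangle inequality for the genuine distance $\sfd_{\f{C}}$, using the auxiliary point $[x_0,r_1]$:
\[
\sfd_{\f{C}}([x_0,r_0],[x_1,r_1])\le\sfd_{\f{C}}([x_0,r_0],[x_0,r_1])+\sfd_{\f{C}}([x_0,r_1],[x_1,r_1]).
\]
By \eqref{eq:cos_to_sin} the first term equals $|r_0-r_1|$ (the base point is unchanged), while the second equals $2r_1\sin\!\big(\tfrac{\sfd(x_0,x_1)\wedge\pi}{2}\big)$, which is bounded by $r_1\sfd(x_0,x_1)$ via $\sin t\le t$ for $t\ge0$. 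Hence $\sfd_{\f{C}}([x_0,r_0],[x_1,r_1])\le|r_0-r_1|+r_1\sfd(x_0,x_1)$, and squaring with $(a+b)^2\le2a^2+2b^2$ gives exactly $2\big((r_0-r_1)^2+r_1^2\sfd^2(x_0,x_1)\big)=2\sfH_{\W\He}([x_0,r_0],[x_1,r_1])$.

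I do not expect a real obstacle: the only points requiring (minor) care are not to overlook the degenerate case $r_0=0$ — where the value of $\sfH_{\W\He}$ drops to $r_1^2$ and the triangle-inequality bound would be wasteful, so a direct computation is needed — and to choose the midpoint $[x_0,r_1]$ rather than, say, $[x_1,r_0]$, so that the resulting split matches the asymmetric expression $(r_0-r_1)^2+r_1^2\sfd^2(x_0,x_1)$ appearing in $\sfH_{\W\He}$. A direct trigonometric expansion of $\sfd_{\f{C}}^2$ in the $\sin$-form also works (it reduces to $(r_0-3r_1)^2\ge0$ after splitting on whether $\sfd(x_0,x_1)\le2$), but it is casework-heavier and less transparent than the triangle-inequality route.
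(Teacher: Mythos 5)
Your proof is correct, but for the main inequality $\sfd_{\f{C}}^2 \le 2\sfH_{\W\He}$ you take a genuinely different route from the paper. The paper argues by direct expansion: starting from \eqref{eq:cos_to_sin} it writes $4r_0r_1\sin^2(\cdot) = 4(r_0-r_1)r_1\sin^2(\cdot) + 4r_1^2\sin^2(\cdot)$, absorbs the cross term via Young's inequality into an extra copy of $|r_0-r_1|^2$ plus $4r_1^2\sin^4(\cdot)$, and then uses $\sin^2\le\sin\le \mathrm{id}$ twice to land on $2|r_0-r_1|^2 + 2r_1^2\sfd^2(x_0,x_1)$. You instead insert the intermediate point $[x_0,r_1]$, apply the triangle inequality for $\sfd_{\f{C}}$ to get $\sfd_{\f{C}}([x_0,r_0],[x_1,r_1]) \le |r_0-r_1| + 2r_1\sin\big(\tfrac{\sfd(x_0,x_1)\wedge\pi}{2}\big) \le |r_0-r_1| + r_1\sfd(x_0,x_1)$, and conclude with $(a+b)^2\le 2a^2+2b^2$. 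Your argument is shorter and more transparent (it also makes visible \emph{why} the constant $2$ appears and why the intermediate point must be $[x_0,r_1]$ rather than $[x_1,r_0]$, matching the asymmetry of $\sfH_{\W\He}$), at the cost of invoking the triangle inequality on the cone rather than staying purely algebraic. You also correctly flag that the degenerate case $r_0=0$ must be handled separately, since there $\sfH_{\W\He}$ drops to $r_1^2$ and the triangle-inequality bound is too weak; the paper makes the same case split. Your explicit monotonicity argument for $\sfH_{\HK}\le\sfd_{\f{C}}^2$ is exactly what the paper dismisses as obvious.
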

\begin{proof} The first inequality is obvious, we only consider the second one.
If $r_0=0$ both functions are equal to $r_1$ so that we have only to consider the case $r_0>0$:
\begin{align*}
    \sfd^2_{\f{C}}([x_0,r_0],[x_1,r_1]) &\stackrel{\eqref{eq:cos_to_sin}}{=} |r_0-r_1|^2 + 4r_0r_1 \sin^2 \left (\frac{\sfd(x_0,x_1) \wedge \pi}{2} \right ) \\
    &\,\,= |r_0-r_1|^2 + 4(r_0-r_1+r_1)r_1 \sin^2 \left (\frac{\sfd(x_0,x_1) \wedge \pi}{2} \right ) \\
    &\,\,= |r_0-r_1|^2 + 2 \left (r_0-r_1 \right ) \left [ 2r_1^2\sin^2 \left (\frac{\sfd(x_0,x_1) \wedge \pi}{2} \right ) \right ] \\
    &\,\,\quad + 4r_1^2 \sin^2 \left (\frac{\sfd(x_0,x_1) \wedge \pi}{2} \right ) \\
    &\,\, \le |r_0-r_1|^2 + |r_0-r_1|^2 + 4r_1^2 \sin^4 \left (\frac{\sfd(x_0,x_1) \wedge \pi}{2} \right ) + r_1^2 \sfd^2(x_0,x_1)\\
    &\,\, \le 2 |r_0-r_1|^2 + 4r_1^2 \sin^2 \left (\frac{\sfd(x_0,x_1) \wedge \pi}{2} \right ) + r_1^2 \sfd^2(x_0,x_1) \\
    &\,\, \le 2 |r_0-r_1|^2 + r_1^2 \sfd^2(x_0,x_1) + r_1^2 \sfd^2(x_0,x_1) \\
    &\,\,= 2\sfH_{\W\He}([x_0,r_0],[x_1,r_1]).
\end{align*}
\end{proof}

Although the following results are not needed in the proof of the equality $\HK = \He \nabla \W$, we believe they are of interest for future investigations on Marginal Entropy-Transport problems and help to better clarify the properties of the $\W\He$ cost. The first result describes minimizers in the definition of $\W\He$ in \eqref{def:WHe}.

\begin{proposition}\label{prop:structure} Let $\mu_0, \mu_1 \in \meas_+(X)$ and let $\nu$ be a minimizer for $\W\He$, i.e.~such that
\[ \He^2(\mu_0, \nu) + \W^2(\nu, \mu_1) = \W\He(\mu_0, \mu_1).\]
Let $\nu=\nu^{ac}+\nu^{\perp}$ be the Lebesgue decomposition of $\nu$ w.r.t.~$\mu_0$. Then for every $\ggamma \in \Gamma_o(\nu, \mu_1)$ (that is, $\ggamma$ is optimal for $\W(\nu, \mu_1)$) we have that its disintegration $\{\ggamma_x\}_{x \in X}$ w.r.t.~$\nu$ satisfies
\[ \ggamma_x = \delta_x \quad \text{ for $\nu^\perp$-a.e.~$x \in X$}.\]
\end{proposition}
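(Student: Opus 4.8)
The plan is to prove the statement by a competitor argument resting on the following heuristic: on the part of $\nu$ that is singular with respect to $\mu_0$, the Hellinger term $\He^2(\mu_0,\nu)$ charges a fixed ``creation cost'' of one unit per unit of mass, independently of where that mass is placed; hence an optimal $\nu$ must create this mass exactly where $\mu_1$ sits, so that it is not transported at all.

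First I would fix a Borel set $N\subset X$ with $\mu_0(N)=0$, $\nu\mres N=\nu^\perp$ and $\nu\mres N^c=\nu^{ac}$, and let $\ggamma\in\Gamma_o(\nu,\mu_1)$ be arbitrary; this set is nonempty since $\W^2(\nu,\mu_1)<+\infty$ (because $\W\He(\mu_0,\mu_1)<+\infty$ by Proposition~\ref{prop:whe}(1), so both $\He^2(\mu_0,\nu)$ and $\W^2(\nu,\mu_1)$ are finite). Writing $\ggamma=\ggamma\mres(N\times X)+\ggamma\mres(N^c\times X)$, set $\rho\coloneqq(\pi^1)_\sharp\big(\ggamma\mres(N\times X)\big)$ and $\rho'\coloneqq(\pi^1)_\sharp\big(\ggamma\mres(N^c\times X)\big)$, so that $\rho+\rho'=\mu_1$ and $\rho(X)=\nu^\perp(X)$. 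The competitor is $\tilde\nu\coloneqq\nu^{ac}+\rho$, which I couple to $\mu_1$ by $\tilde\ggamma\coloneqq\ggamma\mres(N^c\times X)+(\operatorname{id}_X,\operatorname{id}_X)_\sharp\rho\in\Gamma(\tilde\nu,\mu_1)$. Since $\int_{X\times X}\sfd^2\de\big((\operatorname{id}_X,\operatorname{id}_X)_\sharp\rho\big)=0$, this yields
\[
\W^2(\tilde\nu,\mu_1)\le\int_{X\times X}\sfd^2\de\tilde\ggamma=\int_{N^c\times X}\sfd^2\de\ggamma=\W^2(\nu,\mu_1)-\int_{N\times X}\sfd^2\de\ggamma \,.
\]

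Next I would show $\He^2(\mu_0,\tilde\nu)\le\He^2(\mu_0,\nu)$. Choosing $\lambda\coloneqq\mu_0+\nu+\mu_1\in\meas_+(X)$ and letting $a,b^{ac},b^\perp,c$ denote the $\lambda$-densities of $\mu_0,\nu^{ac},\nu^\perp,\rho$, the definition of $\He=\He_2$ in Section~\ref{sec:he} gives $\He^2(\eta_0,\eta_1)=\int_X\big(\sqrt{\de\eta_0/\de\lambda}-\sqrt{\de\eta_1/\de\lambda}\big)^2\de\lambda$ for all $\eta_0,\eta_1\ll\lambda$. Using $\nu^{ac}\perp\nu^\perp$ and $\mu_0\perp\nu^\perp$ one obtains, on the one hand, $\He^2(\mu_0,\nu)=\nu^\perp(X)+\He^2(\mu_0,\nu^{ac})$, and on the other hand the elementary pointwise inequality $(\sqrt a-\sqrt{b^{ac}+c})^2\le(\sqrt a-\sqrt{b^{ac}})^2+c$ (equivalent to $ac\ge0$); integrating the latter and using $\int_X c\,\de\lambda=\rho(X)=\nu^\perp(X)$ gives $\He^2(\mu_0,\tilde\nu)\le\He^2(\mu_0,\nu^{ac})+\nu^\perp(X)=\He^2(\mu_0,\nu)$.

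Finally, combining the two estimates, $\He^2(\mu_0,\tilde\nu)+\W^2(\tilde\nu,\mu_1)\le\He^2(\mu_0,\nu)+\W^2(\nu,\mu_1)=\W\He(\mu_0,\mu_1)$; since $\tilde\nu$ is admissible in \eqref{def:WHe}, all inequalities must be equalities, so in particular $\W^2(\tilde\nu,\mu_1)=\W^2(\nu,\mu_1)$, which together with the displayed bound forces $\int_{N\times X}\sfd^2\de\ggamma=0$. As $(X,\sfd)$ is a metric space, $\ggamma\mres(N\times X)$ is then concentrated on the diagonal $\{(x,x):x\in X\}$, and since its first marginal equals $\nu\mres N=\nu^\perp$ it must coincide with $(\operatorname{id}_X,\operatorname{id}_X)_\sharp\nu^\perp$; equivalently, the disintegration $\{\ggamma_x\}_{x\in X}$ of $\ggamma$ with respect to $\nu$ satisfies $\ggamma_x=\delta_x$ for $\nu^\perp$-a.e.\ $x\in X$, as claimed. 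I expect the main obstacle to be the Hellinger estimate $\He^2(\mu_0,\tilde\nu)\le\He^2(\mu_0,\nu)$ together with the identification of the right competitor $\tilde\nu$: recognising that re-routing the created mass directly onto its target cannot raise the Hellinger cost, which rests on the $L^2$-of-square-roots representation of $\He$ combined with the pointwise inequality above. The remaining points (Radon-ness of $\rho$, $\tilde\nu$, $\tilde\ggamma$, and existence of the disintegration) are routine.
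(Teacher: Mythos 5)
Your proposal is correct and uses essentially the same competitor as the paper: replace the singular part $\nu^\perp$ of $\nu$ with the image $\rho$ of the mass originating from $\nu^\perp$ under the optimal coupling, and couple that new mass to $\mu_1$ via the identity. The only differences are presentational: you argue directly (deducing $\int_{N\times X}\sfd^2\,\de\ggamma = 0$ from saturation of the inequalities) rather than by contradiction, and you estimate $\He^2(\mu_0,\tilde\nu)$ via the symmetric $L^2$-of-square-roots representation instead of the $M_2$-divergence with the $\vartheta'$-density, but the key observation and construction coincide.
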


\begin{remark}
This is to say that the singular part of $\nu$ w.r.t.~$\mu_0$ does not need to be moved when matching $\nu$ to $\mu_1$ in an optimal way.
\end{remark}

\begin{proof} Let $\ggamma \in \Gamma_o(\nu, \mu_1)$; we assume by contradiction that there exists a Borel set $A \subset X$ such that $\nu^\perp(A)>0$ and $\ggamma_x \ne \delta_x$ for every $x \in A$. Let us denote by $\vartheta$ the density of $\nu^{ac}$ w.r.t.~$\mu_0$ and let us define
    \[ \tilde{\mu_1}\coloneqq \pi^1_\sharp \left ( \int_X \ggamma_x \de \nu^{\perp}(x)  \right ), \quad \tilde{\nu}\coloneqq \nu^{ac} + \tilde{\mu_1}, \quad \tilde{\mu_1} =: \vartheta' \mu_0 + \tilde{\mu_1}^\perp, \quad \tilde{\ggamma}\coloneqq \int_X \ggamma_x \de \nu^{ac}(x) + \int_X \delta_y \de \tilde{\mu_1}(y).\]
    We have for every $\varphi \in \rmC_b(X)$ that
    \begin{align*}
        \int_X \varphi \de (\pi^0_\sharp \tilde{\ggamma}) = \int_{X \times X} (\varphi \circ \pi^0) \de \tilde{\ggamma} &= \int_{X} \int_X \varphi(x) \de \ggamma_x(y) \de \nu^{ac}(x) + \int_X \int_X \varphi(x) \de \delta_y(x) \de \tilde{\mu_1}(y) \\
        &= \int_X \varphi \de \nu^{ac} + \int_X \varphi \de \tilde{\mu_1} 
         = \int_X \varphi \de \tilde{\nu},\\
        \int_X \varphi \de (\pi^1_\sharp \tilde{\ggamma}) = \int_{X \times X} (\varphi \circ \pi^1) \de \tilde{\ggamma} &= \int_{X} \int_X \varphi(y) \de \ggamma_x(y) \de \nu^{ac}(x) + \int_X \int_X \varphi(y) \de \delta_y(x) \de \tilde{\mu_1}(y) \\
        & = \int_{X} \int_X \varphi(y) \de \ggamma_x(y) \de \nu^{ac}(x) + \int_X \varphi \de \tilde{\mu_1} \\
        &= \int_{X} \int_X \varphi(y) \de \ggamma_x(y) \de \nu^{ac}(x) + \int_X \int_X \varphi(y) \de \ggamma_x(y) \de \nu^\perp(x) \\
        & = \int_{X \times X} \varphi(y) \de \ggamma(x,y) = \int_X \varphi \de \mu_1.
    \end{align*}
    We deduce that $\tilde{\ggamma} \in \Gamma(\tilde{\nu}, \mu_1)$. Notice that 
    \[ \int_X \sfd^2(x,y) \de \ggamma_x(y) = \int_{ \{y \ne x\}} \sfd^2(x,y) \de \ggamma_x(y) >0 \quad \text{ for every } x \in A,\]
    since $\ggamma_x( \{ y \ne x \}) = \ggamma_x(\{ y: \sfd^2(x,y)>0\})>0$ being $\ggamma_x \ne \delta_x.$ We deduce that
     \[ \int_X \int_X \sfd^2(x,y) \de \ggamma_x(y) \de \nu^{\perp}(x) \ge \int_A \int_X \sfd^2(x,y) \de \ggamma_x(y) \de \nu^\perp(x)>0.\]
     Therefore, using \eqref{def:eqHe}, we have
    \begin{align*}
     \He^2(\mu_0, \tilde{\nu})+\W^2(\tilde{\nu},\mu_1) & \le \int_X \left (1-\sqrt{\vartheta +\vartheta' } \right )^2 \de \mu_0 + \tilde{\mu_1}^\perp (X) +\int_{X \times X} \sfd^2 \de \tilde{\ggamma} \\
     & = \int_X \left ( 1+ \vartheta + \vartheta' -2\sqrt{\vartheta+\vartheta'} \right ) \de \mu_0 + \tilde{\mu_1}^\perp (X) \\
     & \quad + \int_X \int_X \sfd^2(x,y) \de \ggamma_x(y) \de \nu^{ac}(x) + \int_X \int_X \sfd^2(x,y) \de \delta_y(x) \de \tilde{\mu_1}(y) \\
     & \le \int_X \left ( 1+ \vartheta + \vartheta' -2\sqrt{\vartheta} \right ) \de \mu_0 +\tilde{\mu_1}^\perp (X) + \int_X \int_X \sfd^2(x,y) \de \ggamma_x(y) \de \nu^{ac}(x) \\
     & = \int_X \left ( 1+ \vartheta + \vartheta' -2\sqrt{\vartheta} \right ) \de \mu_0 +\tilde{\mu_1}^\perp (X) \\
     & \quad + \int_X \int_X \sfd^2(x,y) \de \ggamma_x(y) \de \nu(x) -\int_X \int_X \sfd^2(x,y) \de \ggamma_x(y) \de \nu^{\perp}(x) \\
     & < \int_X \left ( 1+ \vartheta + \vartheta' -2\sqrt{\vartheta} \right ) \de \mu_0 +\tilde{\mu_1}^\perp (X) + \int_X \int_X \sfd^2(x,y) \de \ggamma_x(y) \de \nu(x)\\
     & \le \int_X \left (1-\sqrt{\vartheta} \right )^2 \de \mu_0 + \int_X \vartheta' \de \mu_0 + \tilde{\mu_1}^\perp(X) + \int_X \int_X \sfd^2(x,y) \de \ggamma_x(y) \de \nu(x) \\
     & = \int_X \left (1-\sqrt{\vartheta} \right )^2 \de \mu_0 + \tilde{\mu_1}(X) + \W(\nu, \mu_1) \\
     & =\int_X \left (1-\sqrt{\vartheta} \right )^2 \de \mu_0 +   \W(\nu, \mu_1) + \int_X \int_X 1 \de \ggamma_x(y) \de \nu^\perp(x) \\
     & = \int_X \left (1-\sqrt{\vartheta} \right )^2 \de \mu_0 + \nu^\perp(X) + \W(\nu, \mu_1) \\
     & = \He^2(\mu_0, \nu) + \W^2(\nu, \mu_1).
    \end{align*}
This contradicts the minimality of $\nu$.
\end{proof}

Thanks to Proposition \ref{prop:structure} we can explicitly compute the optimal $\nu$ in case of Dirac measures.

\begin{proposition}\label{prop:atomic} Let $x_0,x_1 \in X$, $r_0, r_1 \in [0,+\infty)$ and let $\mu_0\coloneqq r_0 \delta_{x_0}$ and $\mu_1\coloneqq r_1 \delta_{x_1}$. Then the unique optimal $\nu$ for $\W\He(\mu_0, \mu_1)$ is given by
\[ \nu= s_0 \delta_{x_0} +(r_1-s_0) \delta_{x_1}, \quad s_0 \coloneqq \begin{cases} r_1 \wedge \frac{r_0}{ \sfd^4(x_0,x_1)} \quad &\text{ if } x_0 \ne x_1, \\
r_1 \quad & \text{ if } x_0=x_1.\end{cases}\]
In particular
\[  \W\He(r_0 \delta_{x_0}, r_1 \delta_{x_1})= \begin{cases} (\sqrt{r_0}-\sqrt{r_1})^2 + r_1 \sfd^2(x_0, x_1) \quad &\text{ if } r_1\sfd^4(x_0,x_1) \le r_0, \\
r_0+r_1- \frac{r_0}{\sfd^2(x_0,x_1)} \quad & \text{ if }r_1 \sfd^4(x_0,x_1)>r_0.\end{cases} \]
\end{proposition}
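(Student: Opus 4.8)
The plan is to reduce the infinite-dimensional minimization \eqref{def:WHe} to a scalar one, using the atomic structure of $\mu_0,\mu_1$ together with Proposition \ref{prop:structure}, and then to solve the scalar problem by convexity. First I would recall that by Proposition \ref{prop:whe}(1) and (4) a minimizer $\nu$ exists and $\He^2(\mu_0,\nu)+\W^2(\nu,\mu_1)<+\infty$; in particular $\W(\nu,\mu_1)<+\infty$, so $\nu(X)=\mu_1(X)=r_1$ and there is $\ggamma\in\Gamma_o(\nu,\mu_1)$. Since $\mu_1=r_1\delta_{x_1}$ is a Dirac, $\ggamma$ is concentrated on $X\times\{x_1\}$, hence its disintegration $\{\ggamma_x\}$ with respect to $\nu$ satisfies $\ggamma_x=\delta_{x_1}$ for $\nu$-a.e.\ $x$. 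Writing $\nu=\nu^{ac}+\nu^\perp$ for the Lebesgue decomposition with respect to $\mu_0=r_0\delta_{x_0}$, Proposition \ref{prop:structure} gives $\ggamma_x=\delta_x$ for $\nu^\perp$-a.e.\ $x$; comparing the two identities forces $\delta_x=\delta_{x_1}$, so $\nu^\perp$ is concentrated on $\{x_1\}$. As $\nu^{ac}\ll\mu_0$ is concentrated on $\{x_0\}$, the minimizer $\nu$ lives on $\{x_0,x_1\}$, whence $\nu=\nu_s:=s\,\delta_{x_0}+(r_1-s)\,\delta_{x_1}$ for some $s\in[0,r_1]$ (if $x_0=x_1$ this simply reads $\nu=r_1\delta_{x_0}$). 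Conversely every $\nu_s$ is admissible in \eqref{def:WHe}.

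It then remains to minimize the energy along $\{\nu_s\}_{s\in[0,r_1]}$. From \eqref{def:eqHe} with reference measure the counting measure on $\{x_0,x_1\}$ one computes $\He^2(\mu_0,\nu_s)=(\sqrt{r_0}-\sqrt s)^2+(r_1-s)$, while, $\mu_1$ being a Dirac, the only coupling of $\nu_s$ with $\mu_1$ gives $\W^2(\nu_s,\mu_1)=s\,\sfd^2(x_0,x_1)$. Setting $d:=\sfd(x_0,x_1)$ and summing, the objective becomes
\[
F(s)=r_0+r_1+d^2 s-2\sqrt{r_0}\,\sqrt{s},\qquad s\in[0,r_1].
\]
If $x_0=x_1$ then $d=0$, $F$ is strictly decreasing, the unique minimizer is $s=r_1$, $\nu=r_1\delta_{x_0}$, and $F(r_1)=(\sqrt{r_0}-\sqrt{r_1})^2$, which is the claimed formula (here $r_1\,\sfd^4(x_0,x_1)=0\le r_0$). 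If $x_0\ne x_1$, substituting $u=\sqrt s\in[0,\sqrt{r_1}]$ turns $F$ into the strictly convex parabola $G(u)=r_0+r_1+d^2u^2-2\sqrt{r_0}\,u$, with unconstrained minimizer $u^\star=\sqrt{r_0}/d^2$. When $r_0/d^4\le r_1$ this is admissible, giving $s_0=r_0/d^4$ and $\W\He=r_0+r_1-r_0/d^2$; when $r_0/d^4\ge r_1$ the parabola is decreasing on $[0,\sqrt{r_1}]$, so $s_0=r_1$ and $\W\He=(\sqrt{r_0}-\sqrt{r_1})^2+r_1 d^2$ (the two expressions coinciding at the threshold $r_0=r_1 d^4$). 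In every case $s_0=r_1\wedge(r_0/d^4)$, and strict convexity of $G$ yields uniqueness of the optimal $\nu$, which is precisely the assertion.

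The Hellinger and Wasserstein computations and the scalar optimization are routine; the only delicate point is the reduction in the first paragraph, whose crux is the interplay between Proposition \ref{prop:structure} (which pins down the singular part $\nu^\perp$) and the Dirac hypothesis on $\mu_0,\mu_1$ (which pins down both the absolutely continuous part and the total mass). The same argument covers the degenerate sub-cases $r_0=0$, $r_1=0$, and $x_0=x_1$ without modification, so no separate treatment is required.
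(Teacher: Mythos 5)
Your proof is correct and follows essentially the same route as the paper: reduce to measures supported on $\{x_0,x_1\}$ via Proposition \ref{prop:structure} and the fact that $\mu_1$ is a Dirac, compute the Hellinger and Wasserstein terms explicitly, and solve the resulting one-dimensional minimization (the paper parametrizes by $r=\nu(\{x_0\})$ and minimizes $r\sfd^2-2\sqrt{r_0 r}$, while you substitute $u=\sqrt s$ to read off convexity, but these are cosmetic differences). The only slight wrinkle is that when $x_0=x_1$ your formula $(\sqrt{r_0}-\sqrt s)^2+(r_1-s)$ is not $\He^2(\mu_0,\nu_s)$ for $s<r_1$ (since $\nu_s\equiv r_1\delta_{x_0}$), but its infimum over $s$ still equals the true constant cost, so the conclusion is unaffected.
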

\begin{proof} We know that in case $r_0=0$ or $r_1=0$ the minimizer is $\nu\coloneqq \mu_1$; let us assume that both $r_0$ and $r_1$ are strictly larger than $0$. Let $\nu$ be a minimizer for $\W\He(\mu_0, \mu_1)$, let $(\nu^{ac},\nu^\perp)$ be its Lebesgue decomposition w.r.t.$\mu_0$ and notice that $\nu^{ac}=r \delta_{x_0}$ for some $r \in [0,r_1]$, since $\nu^{ac} \ll \mu_0$ and $\nu(X)=\mu_1(X)=r_1$. The unique optimal transport plan $\ggamma \in \Gamma(\nu, \mu_1)$ is simply $\ggamma = \nu \otimes \delta_{x_1}$ so that its disintegration $\{\ggamma_x\}_{x \in X}$ w.r.t.~$\nu$ is simply given by $\ggamma_x= \delta_{x_1}$ for every $x \in X$. By Proposition \ref{prop:structure}, we know that $\nu^\perp$ is concentrated on the set 
    \[ \{ x \in X : \ggamma_x = \delta_x \} = \{ x \in X : \delta_{x_1} = \delta_x\} = \{x_1\}.\]
    We deduce that $\nu^\perp = (r_1-r)\delta_{x_1}$. In case $x_0=x_1$ we thus get $s_0=r_1$; let us assume that $x_0 \ne x_1$: we can explicitly compute $\He^2(\mu_0, \nu)$ and $\W^2(\nu, \mu_1)$:
    \begin{align*}
        \He^2(\mu_0, \nu) &= \int_X \left ( 1 - \sqrt{\frac{\de \nu^{ac}}{\de \mu_0}} \right )^2 \de \mu_0 + \nu^\perp(X) = r_0 (1-\sqrt{r/r_0})^2 + (r_1-r) = r_0 + r_1 -2\sqrt{r_0 r},\\
        \W^2(\nu, \mu_1) &= \int_{X \times X} \sfd^2 \de \ggamma = \int_X \sfd^2(x,x_1) \de \nu(x)= r\sfd^2(x_0,x_1).
    \end{align*}
    We are then left with the minimization problem 
    \[ \text{ minimize } \quad r \sfd^2(x_0,x_1) - 2\sqrt{r_0 r} \quad \text{ subject to } \quad 0 \le r \le r_1,\]
    which gives the desired result.
\end{proof}

\begin{remark}
By Theorem \ref{thm:omnibus}(2), we have that the lower semicontinuous and convex envelope of the $1$-homogeneous version of $\sfH_{\W\He}$ is thus given by
\begin{equation}\label{eq:convenv}
 \cce{\sfH_{\W\He}}([x_0,r_0],[x_1,r_1]) = \begin{cases} (\sqrt{r_0}-\sqrt{r_1})^2 + r_1 \sfd^2(x_0, x_1) \quad &\text{ if } r_1\sfd^4(x_0,x_1) \le r_0, \\
r_0+r_1- \frac{r_0}{\sfd^2(x_0,x_1)} \quad & \text{ if }r_1 \sfd^4(x_0,x_1)>r_0.\end{cases} 
\end{equation}
The same result could be obtained directly: let $x_0, x_1 \in X$ and set $d\coloneqq \sfd(x_0,x_1) \ge 0$; by Fenchel--Moreau duality Theorem, we need to compute the (restriction to $[0,+\infty)^2$ of the) biconjugate of the function
\[ f_d(r_0, r_1) \coloneqq \begin{cases} (\sqrt{r_0}-\sqrt{r_1})^2 + r_1 d^2 \nchi_{(0,+\infty)}(r_0) \quad &\text{ if } r_0, r_1 \ge 0, \\ + \infty \quad &\text{ else}. \end{cases}\]
Since $f_d$ is $1$-homogeneous, it is not difficult to see that 
\[ f^*_d(s_0, s_1) \coloneqq \sup \left \{ r_0s_0+r_1s_1 -(\sqrt{r_0}-\sqrt{r_1})^2 + r_1 d^2 \nchi_{(0,+\infty)}(r_0) : r_0, r_1 \ge 0\right \} = I_{C_d}, \]
that is, $f^*_d$ is the indicator of the set 
\[ C_d \coloneqq \left \{ (s_0, s_1) : s_0<1, s_1 \le 1, s_1-1-d^2 \le \frac{1}{s_0-1} \right\}.\]
Therefore the biconjugate function $f^{**}_d$ is obtained by computing 
\[ f^{**}_d(r_0, r_1) \coloneqq \sup \left \{ r_0s_0+r_1s_1 : (s_0, s_1) \in C_d \right \} =  r_0+r_1 - \inf \{ r_0 x + r_1 y : x>0, y \ge 0, y + d^2 \ge 1/x \}. \]
The infimum above is clearly equal to $0$ in case $r_1r_0=0$, so that we can restrict to the case $r_0r_1>0$;
in this case, the constrained minimization problem in two variables is solved looking at the behaviour of the objective function on the boundary of the domain of minimization, that is, one has to minimize the function $b:(0, +\infty) \to \R$ given by 
\[ b(x)= \begin{cases} r_0x + \frac{r_1}{x} - r_1d^2 \quad &\text{ if } 0<x \le \frac{1}{d^2}, \\ r_0x \quad &\text{ if } x \ge \frac{1}{d^2}, \end{cases}\]
where we mean that, in case $d=0$, only the first case occurs. In case $d=0$, we have that the minimum is attained at $x=\sqrt{r_1/r_0}$ equal to $2\sqrt{r_0r_1}$. In case $d>0$, we have two cases:
\begin{enumerate}
    \item If $r_1 d^4 \le r_0$ , then $b$ is decreasing in $(0, \sqrt{r_1/r_0})$ and increasing in $(\sqrt{r_1/r_0}, + \infty)$. Thus the minimum is attained at $x= \sqrt{r_1/r_0}$ equal to $2\sqrt{r_0r_1} - r_1d^2$.
    \item  If $r_1 d^4 >r_0$, then $b$ is decreasing in $(0, 1/d^2)$ and increasing in $(1/d^2, + \infty)$. Thus the minimum is attained at $x= 1/d^2$ equal to $r_0/d^2$.
\end{enumerate}
Putting together this information, one obtains precisely the expression in \eqref{eq:convenv} for $f^{**}_d$.
\end{remark}

\section{\texorpdfstring{$\He \nabla \W\le \HK$}{HK > He d W2}}\label{sec:ineq1}

In this section we assume $(X, \sfd)$ to be a geodesic, complete and separable metric space;
we adopt the same notation $\He, \W, \HK, \W\He$ of the previous section, removing the dependence on the exponent $p=2$ and on the distance $\sfd$ when this does not cause confusion. In this section we also use the notation $\mathcal{E}_N$ as in \eqref{eq:energy} meaning that $\sfc_1= \He$ and $\sfc_2 = \W$.

Let $\mu_0, \mu_1 \in \meas_+(X)$ be fixed; we aim to show that 
\begin{equation}\label{eq:aimsec6}
(\He \nabla \W)^2(\mu_0, \mu_1) \le \HK^2(\mu_0, \mu_1).    
\end{equation}
By Theorem \ref{thm:toolbox}, given a (constant speed, length minimizing) $\HK$-geodesic $(\mu_t)_{t \in [0,1]}$ connecting $\mu_0$ to $\mu_1$, we can find a plan $\ppi \in \prob({\rm Geo}((\f{C}[X], \sfd_{\f{C})}))$ such that
\begin{equation}\label{eq:ppi}
\HK^2(\mu_0, \mu_1) = \int \mathcal{A}_2 \de \ppi,   
\end{equation}
where $\mathcal{A}_2$ is as in \eqref{eq:a2def}; therefore, the natural strategy is to consider, for every $N \in \N_{> 1}$, a family of $N-1$ equidistributed points along $(\mu_t)_{t \in [0,1]}$ and, then, to construct an $N$-path by interpolating those points with $\W\He$ minimizers: we aim then to show that the corresponding energy $\mathcal{E}_N$ can be controlled in the limit by $\HK^2(\mu_0, \mu_1)$. More in detail, we set
\begin{equation}\label{eq:sigmai}
\sigma_i^{N} \coloneqq  \f{h}^2((\mathsf{e}_{i/N})_\sharp\ppi)\in \meas_+(X), \qquad i=0,\dots,N    
\end{equation}
and
\[
\aalpha_i^{N} \coloneqq (\mathsf{e}_{\frac{i-1}{N}},\mathsf{e}_{\frac{i}{N}})_\sharp \ppi\in \meas_+(\pc), \qquad i=1,\dots,N
\]
and, for every $i=1, \dots, N$, we select $\nu_i^{N} \in \meas_+(X)$ minimizer for $\W\He(\sigma_{i-1}^{N},\sigma_i^{N})$, whose existence is given by Proposition \ref{prop:whe}(4). We thus define $P_{N} \coloneqq (\sigma_0^{N}, \sigma_1^{N}, \dots \sigma_N^{N}; \nu_1^{N}, \dots, \nu_N^{N})$ and note that $P_{N} \in \mathscr{P}(\mu_0, \mu_1; N)$. 

Observe that it holds
\begin{equation*}
\begin{split}
\mathcal{E}_N(P_{N}) & = N\sum_{i=1}^N\left(\He^2(\sigma^{N}_{i-1},\nu^{N}_i) + \W^2(\nu^{N}_i,\sigma^{N}_i)\right) \\
& = N\sum_{i=1}^N \W\He(\sigma^{N}_{i-1},\sigma^{N}_i) \leq N\sum_{i=1}^N \int_{\pc} \sfH_{\W\He}\de\aalpha^{N}_i,
\end{split}
\end{equation*}
where $\sfH_{\W\He}$ is as in \eqref{eq:HWHe}. By definition of infimal convolution, therefore, the final inequality \eqref{eq:aimsec6} is achieved if we prove that 
\begin{equation}\label{eq:intermediatee}
(\He \nabla \W)^2(\mu_0, \mu_1) \le \limsup_{N \to + \infty} N\sum_{i=1}^N \int_{\pc} \sfH_{\W\He}\de\aalpha^{N}_i  \le \int \mathcal{A}_2 \de \ppi,    
\end{equation}
where we also used \eqref{eq:ppi}. Rewriting the term in the middle in terms of $\ppi$ we have
\begin{align*}
N\sum_{i=1}^N \int_{\pc} \sfH_{\W\He}\de\aalpha^{N}_i &= \int N\sum_{i=1}^N \sfH_{\W\He} \left ( \f{y} \left (\frac{i-1}{N} \right ), \f{y} \left (\frac{i}{N} \right ) \right ) \de\ppi(\f{y}) \\ 
& \le \iint_0^1 \left [ |r_\f{y}'(t)|^2 + \left| r_\f{y}\left(\frac{\lceil tN \rceil}{N} \right) \right|^2 |x_\f{y}'(t)|_\sfd^2 \right ]\de t\de\ppi(\f{y}) \\
& =: \iint_0^1 D_2^N(t, \f{y}) \de t \de \ppi=:I_N,
\end{align*}
where we have employed the explicit expression of $\sfH_{\W\He}$ (cf.~\eqref{eq:HWHe}), and we have also used Jensen's inequality (more details will be given in the proof of Theorem \ref{thm:firstineq}). What makes $D_2^N$ different from the metric derivative on the cone (cf.~\eqref{eq:speed-cone}) is the fact that the term multiplying  $|x_\f{y}'(t)|_\sfd^2$ should be replaced by $r_\f{y}(t)^2$, but one can infer that, as $N \to + \infty$ and by continuity of $\f{y}$, the latter will indeed appear, so that 
\[ \limsup_{N \to + \infty} I_N = \limsup_{N \to + \infty} \iint_0^1 D_2^N(t, \f{y}) \de t \de \ppi(\f{y}) \le \iint_0^1 \left [ |r_\f{y}'(t)|^2 + r_{\f{y}}^2(t) |x_\f{y}'(t)|_\sfd^2 \right ]\de t \de \ppi =  \int \mathcal{A}_2 \de \ppi, \]
which leads precisely to \eqref{eq:intermediatee}. To make this strategy rigorous, however, one would need a (uniformly in $N$) integrable control on the ratio 
\begin{equation}\label{eq:ratio}
\frac{r_\f{y}\left(\frac{\lceil tN \rceil}{N} \right)}{r_\f{y}(t)}, 
\end{equation}
in order to apply e.g.~the Lebesgue dominated convergence theorem (notice that a constant bound on $r_\f{y}\left(\frac{\lceil tN \rceil}{N} \right)$ would not suffice since we do not know whether $(\f{y},t) \mapsto |x_\f{y}'(t)|_\sfd^2$ belongs to $L^1(\ppi \otimes \mathcal{L}^1 \mres [0,1])$). To control the ratio in \eqref{eq:ratio}, we consider the set of `non-degenerate' geodesics
\begin{equation}\label{eq:seta}
A \coloneqq \left\{ \f{y} \in {\rm Geo}((\f{C}[X], \sfd_{\f{C}})) \,:\, 0 < \sfd(x_\f{y}(0),x_\f{y}(1)) \leq \frac{\pi}{2} \,\textrm{ and }\, r_\f{y}(0) r_\f{y}(1) > 0 \right\}
\end{equation}
and, for any $\eps>0$, that of `$\eps$-good' geodesics
\begin{equation}\label{eq:setae}
A_\eps \coloneqq \left\{\f{y} \in A \,:\, \frac{|r_\f{y}(t)|}{|r_\f{y}(s)|} \leq 2 \textrm{ for all } t,s \in [0,1] \textrm{ such that } |t-s| < \eps \right\}.
\end{equation}
Therefore, on the set $A_\eps$ the dominated convergence theorem can be applied, obtaining 
\begin{equation}\label{eq:thefirste}
\limsup_{N\to +\infty} \int_{A_\eps} \int_0^1 D_2^N(t, \f{y}) \de t \de \ppi(\f{y}) \le \int_{A_\eps} \mathcal{A}_2 \de \ppi.    
\end{equation}
We will observe that on the set ${\rm Geo}((\f{C}[X], \sfd_{\f{C}}))\setminus A$ both the metric derivative and $D_2^N$ are equal to $|r'_{\f{y}}(t)|^2$ so that no limit in $N$ is actually necessary, hence
\begin{equation}\label{eq:theseconde}
\limsup_{N\to +\infty} \int_{{\rm Geo}((\f{C}[X], \sfd_{\f{C}}))\setminus A} \int_0^1 D_2^N(t, \f{y}) \de t \de \ppi(\f{y}) = \int_{{\rm Geo}((\f{C}[X], \sfd_{\f{C}}))\setminus A} \mathcal{A}_2 \de \ppi.    
\end{equation}
It remains to control the integral of $D_2^N$ on the set $A \setminus A_\eps$: unfortunately, no uniformly integrable control of $D_2^N$ is available on this set, so that we need to modify the plan $\ppi$ into an `approximated' plan $\ppi_\eps$ defined as
\begin{equation}\label{eq:pi-eps}
\ppi_\eps \coloneqq \ppi \mres (A_\eps \cup ({\rm Geo}((\f{C}[X], \sfd_{\f{C}}))\setminus A)) + T_\sharp (\ppi \mres (A \setminus A_\eps)),
\end{equation}
where 
$T: {\rm Geo}((\f{C}[X], \sfd_{\f{C}})) \to \AC^2([0,1];(\f{C}[X],\sfd_{\f{C}}))$, $\f{y} \mapsto T(\f{y})$, is defined as
\[
T(\f{y})(t) \coloneqq 
\left\{
\begin{array}{ll}
[x_\f{y}(0),(1-2t)r_\f{y}(0)] & \textrm{if } t \in [0,1/2], \\[5pt]

[x_\f{y}(1),(2t-1)r_\f{y}(1)] & \textrm{if } t \in (1/2,1].
\end{array}
\right.
\]
We can thus repeat the above construction from \eqref{eq:sigmai} to \eqref{eq:theseconde} word by word replacing $\ppi$ with $\ppi_\eps$ (and thus $\sigma_i^N$ with $\sigma_i^{N, \eps}$, $\aalpha_i^N$ with $\aalpha_i^{N, \eps}$, $\nu_i^N$ with $\nu_i^{N,\eps}$, $P_N$ with $P_{N, \eps}$, and $I_N$ with $I_N^\eps$): the argument to reach the inequalities \eqref{eq:thefirste} and \eqref{eq:theseconde} on the sets $A_\eps$ and ${\rm Geo}((\f{C}[X], \sfd_{\f{C}}))\setminus A)$, respectively, is indeed the same. On the other hand, on the set $T(A\setminus A_\eps)$, we can show that $D_2^N(t, \f{y})$ is bounded by $|r'_{\f{y}}(t)|^2 + o(N)$, and that the metric derivative coincides with $|r'_{\f{y}}(t)|^2$, therefore obtaining also
\[ \limsup_{N \to + \infty} \int_{T(A \setminus A_\eps)} \int_0^1 D_2^N(t, \f{y}) \de t \de \ppi_\eps(\f{y}) \le \int_{T(A \setminus A_\eps)} \mathcal{A}_2 \de \ppi_\eps. \]
Putting together this latter inequality with \eqref{eq:thefirste} and \eqref{eq:theseconde} written for $\ppi_\eps$ one finally arrives to 
\[ 
(\He \nabla \W)^2(\mu_0, \mu_1) \le \limsup_{N \to + \infty} I_N^\eps = \limsup_{N \to + \infty} \iint_0^1 D_2^N(t, \f{y}) \de t \de \ppi_\eps \le \int \mathcal{A}_2 \de \pi_\eps.
\]
It is then sufficient to show that $\lim_{\eps \downarrow 0} \int \mathcal{A}_2 \de \ppi_\eps = \int \mathcal{A}_2 \de \ppi$, see Lemma \ref{lem:convergence-2action-pi}.

We now detail the above strategy: as said at the beginning, we fix measures $\mu_0, \mu_1 \in \meas_+(X)$ and we select a (constant speed, length-minimizing) $\HK$-geodesic $(\mu_t)_{t \in [0,1]}$ connecting $\mu_0$ to $\mu_1$. By Theorem \ref{thm:toolbox} there exists $\ppi \in \prob({\rm Geo}((\f{C}[X], \sfd_{\f{C}})))$ satisfying conditions (a)-(d) in the statement of the same theorem.

Let us point out that the definitions of $A$ and $A_\eps$ in \eqref{eq:seta} and \eqref{eq:setae} are meaningful  since for a geodesic $\f{y} \in A$ the radius $r_{\f{y}}(t)$ is positive at any $t \in [0,1]$, as explained in Remark \ref{rem:geocone}. As a consequence of the same observation we have that $A_\eps \subset A_{\eps'} \subset A$ whenever $0<\eps'<\eps$ and $\cup_{\eps>0}A_\eps=A$. Finally, we briefly remark that both $A$ and $A_\eps$ are Borel sets: indeed, $A$ can be expressed as intersection of the pre-images of the intervals $(0,\frac{\pi}{2}]$ and $(0,+\infty)$ via the Borel maps $\sfd(\sfx \circ \sfe_0,\sfx \circ \sfe_1)$ and $\sfr \circ \sfe_0 \cdot \sfr \circ \sfe_1$ respectively; on the other hand, since $t \mapsto r_\f{y}(t)$ is continuous on $[0,1]$, 
\[
A_\eps = \bigcap_{\substack{t,s \in \mathbb{Q} \cap [0,1] \\ |t-s|<\eps}}\left\{\f{y} \in A \,:\, \frac{|r_\f{y}(t)|}{|r_\f{y}(s)|} \leq 2 \right\}
\]
and each set on the right-hand side is Borel being the pre-image of $[0,2]$ via a well-defined (by Remark \ref{rem:geocone}) Borel map, thus settling the measurability of $A_\eps$.

As a first step, let us prove that $\ppi_\eps$ (when projected) still joins $\mu_0$ to $\mu_1$.

\begin{lemma} Let $\ppi_\eps$ be defined as in \eqref{eq:pi-eps}; then 
$\ppi_\eps \in \prob(\AC^2([0,1];(\f{C}[X],\sfd_{\f{C}})))$ and $\f{h}^2((\mathsf{e}_i)_\sharp \ppi_\eps) = \mu_i$ for $i=0,1$.
\end{lemma}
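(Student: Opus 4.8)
The plan is to verify two things about $\ppi_\eps$: first that it is a probability measure concentrated on $\AC^2([0,1];(\f{C}[X],\sfd_{\f{C}}))$, and second that its endpoint marginals give back $\mu_0$ and $\mu_1$ after applying $\f{h}^2$.

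\textbf{Step 1: $\ppi_\eps$ is a probability measure on absolutely continuous curves.} Recall from \eqref{eq:pi-eps} that $\ppi_\eps = \ppi \mres (A_\eps \cup ({\rm Geo}((\f{C}[X], \sfd_{\f{C}}))\setminus A)) + T_\sharp (\ppi \mres (A \setminus A_\eps))$. Since $A$ and $A_\eps$ are Borel (as established just above the lemma in the main text), the three pieces $A_\eps$, ${\rm Geo}\setminus A$, and $A\setminus A_\eps$ form a Borel partition of ${\rm Geo}((\f{C}[X], \sfd_{\f{C}}))$ up to the piece $A \setminus A_\eps$, and in any case $A_\eps \cup ({\rm Geo}\setminus A)$ and $A\setminus A_\eps$ are disjoint Borel sets whose union is all of ${\rm Geo}$. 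Hence the total mass of $\ppi_\eps$ is $\ppi(A_\eps \cup ({\rm Geo}\setminus A)) + \ppi(A\setminus A_\eps) = \ppi({\rm Geo}) = 1$, using that $T_\sharp$ preserves mass. For the support: geodesics in ${\rm Geo}((\f{C}[X], \sfd_{\f{C}}))$ are in particular in $\AC^2([0,1];(\f{C}[X],\sfd_{\f{C}}))$ (being Lipschitz), so the first summand is concentrated on $\AC^2$. For the second, one checks directly from the explicit formula for $T(\f{y})$ that $T(\f{y})$ is a concatenation of two geodesic segments of the cone (each joining a point to the vertex $\f{o}$, run at doubled speed), hence Lipschitz, hence in $\AC^2([0,1];(\f{C}[X],\sfd_{\f{C}}))$; one should also remark that $T$ is Borel measurable (it is continuous in $\f{y}$ on ${\rm Geo}$, or at least Borel), so that $T_\sharp(\ppi\mres(A\setminus A_\eps))$ is a well-defined Borel measure on $\AC^2$. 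Thus $\ppi_\eps \in \prob(\AC^2([0,1];(\f{C}[X],\sfd_{\f{C}})))$.

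\textbf{Step 2: the endpoint marginals.} Fix $i \in \{0,1\}$ and $\varphi \in \rmC_b(X)$; we compute $\int_X \varphi \de \f{h}^2((\sfe_i)_\sharp \ppi_\eps) = \int |\sfr(\f{y}(i))|^2 \varphi(\sfx(\f{y}(i))) \de \ppi_\eps(\f{y})$. Split the integral over the two summands of $\ppi_\eps$. On $A_\eps \cup ({\rm Geo}\setminus A)$ the integrand is unchanged. On $A \setminus A_\eps$ we integrate against $T_\sharp(\ppi\mres(A\setminus A_\eps))$, i.e.~we evaluate the integrand at $T(\f{y})$ instead of $\f{y}$. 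The key pointwise identity, read off from the definition of $T$, is that $T(\f{y})(0) = [x_\f{y}(0), r_\f{y}(0)] = \f{y}(0)$ and $T(\f{y})(1) = [x_\f{y}(1), r_\f{y}(1)] = \f{y}(1)$ — that is, $T$ fixes the two endpoints of every geodesic (the reparametrization only bends the curve down to the vertex and back, but starts and ends at the same cone points). Therefore $\sfr(T(\f{y})(i)) = r_\f{y}(i)$ and $\sfx(T(\f{y})(i)) = x_\f{y}(i)$ for $i=0,1$, so the integrand over $A\setminus A_\eps$ is also unchanged after pushing forward by $T$. Recombining the two pieces gives exactly $\int |\sfr(\f{y}(i))|^2\varphi(\sfx(\f{y}(i))) \de \ppi(\f{y}) = \int_X \varphi \de \f{h}^2((\sfe_i)_\sharp\ppi) = \int_X \varphi \de \mu_i$, where the last equality is property (a) of Theorem \ref{thm:toolbox}. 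Since $\varphi \in \rmC_b(X)$ was arbitrary and both measures are finite Radon measures on $X$, we conclude $\f{h}^2((\sfe_i)_\sharp \ppi_\eps) = \mu_i$.

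The proof involves no real obstacle; the only point deserving care is the bookkeeping in Step 2 — namely noting that $T$ preserves the endpoints of each geodesic, which is the whole reason the definition of $T$ in \eqref{eq:pi-eps} was chosen with the specific factors $(1-2t)$ and $(2t-1)$ and the switch at $t=1/2$. One should also make sure the Borel measurability of $T$ (needed for $T_\sharp$ to make sense) is either quoted or checked; this is routine since $T$ is continuous from ${\rm Geo}((\f{C}[X],\sfd_{\f{C}}))$ to $\rmC([0,1];(\f{C}[X],\sfd_{\f{C}}))$ in the uniform topology.
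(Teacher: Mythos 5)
Your proof is correct and follows essentially the same route as the paper's: for Step 1 you observe that geodesics are $\AC^2$ and that $T(\f{y})$ is a Lipschitz concatenation of two cone segments (the paper invokes \eqref{eq:AC-cone} for the same purpose), and for Step 2 you split the integral according to the two pieces of $\ppi_\eps$ and exploit that $T$ fixes the endpoints $\f{y}(0),\f{y}(1)$, exactly as in the paper. The only (minor) addition is your explicit remark about the Borel measurability of $T$, which the paper leaves implicit; this is a reasonable point of care but routine.
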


\begin{proof}
The fact that $\ppi_\eps$ is concentrated on $\AC^2([0,1]; (\f{C}[X],\sfd_{\f{C}}))$ follows by the fact that $\ppi$ is concentrated on ${\rm Geo}((\f{C}[X], \sfd_{\f{C}}))$ and \eqref{eq:AC-cone}. Moreover, since the total mass is preserved by push-forward, we also see that $\ppi_\eps (\AC^2([0,1];(\f{C}[X],\sfd_{\f{C}})))=1$. 

As for $\f{h}^2((\mathsf{e}_i)_\sharp \ppi_\eps) = \mu_i$, fix $\varphi \in \rmC_b(X)$ and note that, for $i=0,1$,
\[
\begin{split}
\int_X \varphi\de\f{h}^2((\mathsf{e}_i)_\sharp \ppi_\eps) & = \int_{\f{C}[X]} (\varphi \circ \sfx)\sfr^2 \de(e_i)_\sharp \ppi_\eps = \int \varphi(x_\f{y}(i)) r_\f{y}^2(i) \de\ppi_\eps(\f{y}) \\
& = \int_{A_\eps \cup A^c} \varphi(x_\f{y}(i)) r_\f{y}^2(i) \de\ppi(\f{y}) + \int \varphi(x_\f{y}(i)) r_\f{y}^2(i)\de T_\sharp(\ppi \mres (A \setminus A_\eps))(\f{y}) \\
& = \int_{A_\eps \cup A^c} \varphi(x_\f{y}(i)) r_\f{y}^2(i) \de\ppi(\f{y}) + \int_{A \setminus A_\eps} \varphi(x_{T(\f{y})}(i)) r_{T(\f{y})}^2(i)\de \ppi(\f{y}) \\
& = \int_{A_\eps \cup A^c} \varphi(x_\f{y}(i)) r_\f{y}^2(i) \de\ppi(\f{y}) + \int_{A \setminus A_\eps} \varphi(x_\f{y}(i)) r_\f{y}^2(i)\de \ppi(\f{y}) \\
& = \int \varphi(x_\f{y}(i)) r_\f{y}^2(i) \de\ppi(\f{y}) = \int_{\f{C}[X]}(\varphi \circ \sfx)\sfr^2 \de(\mathsf{e}_i)_\sharp \ppi \\
& = \int_X \varphi \de\f{h}^2((\mathsf{e}_i)_\sharp \ppi) = \int_X \varphi\de\mu_i,
\end{split}
\]
where on the fourth line we used the fact that $x_{T(\f{y})}(i) = x_\f{y}(i)$ and $r_{T(\f{y})}(i) = r_\f{y}(i)$ by construction and on the last line the fact that $\f{h}^2((\mathsf{e}_i)_\sharp \ppi) = \mu_i$.
\end{proof}

\begin{lemma}\label{lem:convergence-2action-pi}
Let $\mathcal{A}_2 : \rmC([0,1];(\f{C}[X],\sfd_\f{C})) \to [0,+\infty]$ be defined as in \eqref{eq:a2def} and let $\ppi_\eps$ be as in \eqref{eq:pi-eps}.  Then
\[
\lim_{\eps \downarrow 0}\int \mathcal{A}_2\de\ppi_\eps = \int \mathcal{A}_2 \de\ppi.
\]
\end{lemma}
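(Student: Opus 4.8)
The plan is to split the integral according to the decomposition of $\ppi_\eps$ in \eqref{eq:pi-eps} and to control each of the three pieces separately. Write ${\rm Geo} \coloneqq {\rm Geo}((\f{C}[X],\sfd_{\f{C}}))$ for brevity and recall that $A_\eps \subset A_{\eps'} \subset A$ for $0 < \eps' < \eps$, with $\bigcup_{\eps>0} A_\eps = A$. On the set $A_\eps \cup ({\rm Geo}\setminus A)$ the plan $\ppi_\eps$ coincides with $\ppi$, so the only contribution we need to understand is
\[
\int \mathcal{A}_2 \de \ppi_\eps = \int_{A_\eps \cup ({\rm Geo}\setminus A)} \mathcal{A}_2 \de \ppi + \int_{A \setminus A_\eps} \mathcal{A}_2(T(\f{y})) \de \ppi(\f{y}).
\]
First I would compute $\mathcal{A}_2(T(\f{y}))$ explicitly for $\f{y} \in A$: since $T(\f{y})$ consists of the two cone segments from $\sqrt{2}\,\f{y}(0)$-type endpoints to the vertex $\f{o}$ and back (reparametrised on $[0,1/2]$ and $[1/2,1]$), a direct use of \eqref{eq:speed-cone} (the radial part moves at speed $2r_\f{y}(0)$ resp. $2r_\f{y}(1)$, the spatial part is constant) gives
\[
\mathcal{A}_2(T(\f{y})) = \int_0^{1/2} 4 r_\f{y}^2(0) \de t + \int_{1/2}^1 4 r_\f{y}^2(1) \de t = 2\big(r_\f{y}^2(0) + r_\f{y}^2(1)\big).
\]
This is exactly $2\sfH_{\He}(\f{y}(0),\f{o}) $-type quantity and, crucially, it is dominated by $2\,\sfd_{\f{C}}^2(\f{y}(0),\f{o}) + 2\,\sfd_{\f{C}}^2(\f{o},\f{y}(1)) \le 4(r_\f{y}^2(0)+r_\f{y}^2(1))$, hence it is $\ppi$-integrable since $(\mathsf{e}_0)_\sharp\ppi,(\mathsf{e}_1)_\sharp\ppi$ are concentrated on $\f{C}_R[X]$ by Theorem \ref{thm:toolbox}(c).

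Next I would set up the passage to the limit $\eps \downarrow 0$. For the first term, since $A_\eps \cup ({\rm Geo}\setminus A) \uparrow {\rm Geo}$ as $\eps \downarrow 0$ and $\mathcal{A}_2 \ge 0$, monotone convergence gives
\[
\lim_{\eps \downarrow 0} \int_{A_\eps \cup ({\rm Geo}\setminus A)} \mathcal{A}_2 \de \ppi = \int \mathcal{A}_2 \de \ppi = \HK^2(\mu_0,\mu_1) < +\infty,
\]
where finiteness comes from \eqref{eq:ppi}/Theorem \ref{thm:dynhk}. For the second term, since $A \setminus A_\eps \downarrow \emptyset$ as $\eps \downarrow 0$ (because $\bigcup_\eps A_\eps = A$) and $\f{y} \mapsto \mathcal{A}_2(T(\f{y})) = 2(r_\f{y}^2(0)+r_\f{y}^2(1))$ is an $\ppi$-integrable function independent of $\eps$, dominated convergence yields
\[
\lim_{\eps \downarrow 0}\int_{A \setminus A_\eps} \mathcal{A}_2(T(\f{y})) \de \ppi(\f{y}) = 0.
\]
Adding the two limits gives the claim.

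\textbf{Main obstacle.} The routine part is the explicit computation of $\mathcal{A}_2(T(\f{y}))$ and the verification that $\ppi_\eps$ decomposes as written. The one point that needs a little care is confirming that $A\setminus A_\eps$ genuinely shrinks to a $\ppi$-null set: this rests on the identity $\bigcup_{\eps>0} A_\eps = A$, which in turn uses that for $\f{y} \in A$ the radius $r_\f{y}$ is continuous and \emph{strictly positive} on the compact interval $[0,1]$ (Remark \ref{rem:geocone}, since $r_\f{y}(0)r_\f{y}(1)>0$ and $\sfd(x_\f{y}(0),x_\f{y}(1)) \le \pi/2 < \pi$), hence bounded away from $0$, so the oscillation ratio $|r_\f{y}(t)|/|r_\f{y}(s)|$ is uniformly continuous and close to $1$ for $|t-s|$ small — putting $\f{y}$ into some $A_\eps$. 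Once that is in hand, the rest is bookkeeping: the integrand on $A\setminus A_\eps$ is a fixed integrable majorant, so dominated convergence closes the argument. No genuine hard analysis is required here; the lemma is essentially a clean monotone/dominated convergence statement made possible by the boundedness built into Theorem \ref{thm:toolbox}(c) and the structure of $T$.
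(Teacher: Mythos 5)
Your proposal is correct and follows essentially the same route as the paper: split $\int \mathcal{A}_2\de\ppi_\eps$ according to the decomposition of $\ppi_\eps$, use monotone convergence on the $A_\eps \cup A^c$ piece (since $A_\eps \uparrow A$ and $\mathcal{A}_2 \ge 0$), compute $\mathcal{A}_2(T(\f{y})) = 2\big(r_\f{y}^2(0)+r_\f{y}^2(1)\big)$ via \eqref{eq:speed-cone}, and conclude that the $T_\sharp(\ppi\mres(A\setminus A_\eps))$ contribution vanishes using the uniform bound $r_\f{y}(0),r_\f{y}(1)\le R$ from Theorem \ref{thm:toolbox}(c) together with $\ppi(A\setminus A_\eps)\to 0$. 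The paper estimates the second term directly by $4R^2\,\ppi(A\setminus A_\eps)$ while you invoke dominated convergence with the bounded majorant $4R^2$; these are equivalent. One small glitch in your write-up: the sentence ``dominated by $2\,\sfd_{\f{C}}^2(\f{y}(0),\f{o}) + 2\,\sfd_{\f{C}}^2(\f{o},\f{y}(1)) \le 4(r_\f{y}^2(0)+r_\f{y}^2(1))$'' is circular, since $\sfd_{\f{C}}^2(\f{y}(i),\f{o})=r_\f{y}^2(i)$ exactly, so the first expression already equals $\mathcal{A}_2(T(\f{y}))$; the relevant (and correct) point you then make is simply that $2(r_\f{y}^2(0)+r_\f{y}^2(1))\le 4R^2$ $\ppi$-a.e.\ by Theorem \ref{thm:toolbox}(c). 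This does not affect the validity of the argument.
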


\begin{proof} Since $\ppi$ and $\ppi_\eps$ are concentrated on $\AC^2([0,1];(\f{C}[X],\sfd_\f{C}))$, $\mathcal{A}_2$ only takes finite values on the support of these probabilities.
We then rewrite
\[
\int \mathcal{A}_2\de\ppi_\eps = \int_{A_\eps \cup A^c} \mathcal{A}_2\de\ppi + \int \mathcal{A}_2\de T_\sharp(\ppi \mres (A \setminus A_\eps))
\]
and observe that the first term on the right-hand side converges to $\int \mathcal{A}_2 \de\ppi$ as $\eps \downarrow 0$ by monotone convergence: indeed, $\mathcal{A}_2 \geq 0$ and $A_\eps \uparrow A$ as $\eps \downarrow 0$.

Thus, we are only left to prove that the second term vanishes as $\eps \downarrow 0$. To this end, noticing that $T(\f{y}) \in \AC^2([0,1]; (\f{C}[X],\sfd_{\f{C}}))$ with $|r_{T(\f{y})}'(t)| = 2|r_\f{y}(0)|$ on $[0,1/2]$, $|r_{T(\f{y})}'(t)| = 2|r_\f{y}(1)|$ on $(1/2,1]$, and $|x_{T(\f{y})}'(t)|_\sfd = 0$ a.e.\ in $[0,1]$, yields
\[
\begin{split}
\int \mathcal{A}_2\de T_\sharp(\ppi \mres (A \setminus A_\eps)) & = \int_{A \setminus A_\eps}\int_0^1 \Big(|r_{T(\f{y})}'(t)|^2 + |r_{T(\f{y})}(t)|^2|x_{T(\f{y})}'(t)|^2_\sfd\Big)\de t \de\ppi(\f{y}) \\
& = 2\int_{A \setminus A_\eps}\big(|r_\f{y}(0)|^2 + |r_\f{y}(1)|^2\big)\de\ppi(\f{y}) \leq 4R^2 \ppi(A \setminus A_\eps),
\end{split}
\]
where in the last inequality we used the fact that, by Theorem \ref{thm:toolbox}(c), $(\mathsf{e}_0)_\sharp \ppi, (\mathsf{e}_1)_\sharp \ppi$ are concentrated on $\f{C}_R[X]$. The fact that $\ppi(A \setminus A_\eps) \to 0$ as $\eps \downarrow 0$ then follows again by $A_\eps \uparrow A$ as $\eps \downarrow 0$.
\end{proof}

We are now in the position to prove the main result of this section.

\begin{theorem}\label{thm:firstineq} Let $(X,\sfd)$ be a complete, separable and geodesic metric space and let $\mu_0, \mu_1 \in \meas_+(X)$. Then
\[ (\He_{2} \nabla \W_{2, \sfd})(\mu_0, \mu_1) \le \HK_{\sfd}(\mu_0, \mu_1).\]
\end{theorem}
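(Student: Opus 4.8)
The plan is to realize the strategy already sketched in the excerpt. Fix $\mu_0,\mu_1 \in \meas_+(X)$; by Theorem \ref{thm:toolbox} pick a $\HK$-geodesic $(\mu_t)_{t\in[0,1]}$ and a plan $\ppi \in \prob({\rm Geo}((\f{C}[X],\sfd_\f{C})))$ satisfying (a)--(d) there, so that in particular $\HK^2(\mu_0,\mu_1)=\int \mathcal{A}_2 \de\ppi$ and $(\mathsf{e}_i)_\sharp\ppi$ are concentrated on $\f{C}_R[X]$. For fixed $\eps>0$, form the modified plan $\ppi_\eps$ of \eqref{eq:pi-eps} (which by the two preliminary lemmas still has $2$-homogeneous marginals $\mu_0,\mu_1$ and satisfies $\int\mathcal{A}_2\de\ppi_\eps \to \int\mathcal{A}_2\de\ppi$ as $\eps\downarrow 0$). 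Then define, for each $N>1$, the points $\sigma_i^{N,\eps}\coloneqq \f{h}^2((\mathsf{e}_{i/N})_\sharp\ppi_\eps)$, the couplings $\aalpha_i^{N,\eps}\coloneqq(\mathsf{e}_{(i-1)/N},\mathsf{e}_{i/N})_\sharp\ppi_\eps$, and minimizers $\nu_i^{N,\eps}$ for $\W\He(\sigma_{i-1}^{N,\eps},\sigma_i^{N,\eps})$ (Proposition \ref{prop:whe}(4)), giving an $N$-path $P_{N,\eps}\in\mathscr{P}(\mu_0,\mu_1;N)$. By definition of inf-convolution and Proposition \ref{prop:whe}(3),
\[
(\He\nabla\W)^2(\mu_0,\mu_1) \le \liminf_{N\to\infty} \mathcal{E}_N(P_{N,\eps}) \le \limsup_{N\to\infty} N\sum_{i=1}^N \int_\pc \sfH_{\W\He}\de\aalpha_i^{N,\eps}.
\]

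Next I would rewrite $N\sum_i\int\sfH_{\W\He}\de\aalpha_i^{N,\eps}$ as an integral over $\ppi_\eps$ of the quantity $N\sum_i \sfH_{\W\He}(\f{y}((i-1)/N),\f{y}(i/N))$. On the time interval $((i-1)/N,i/N)$ one has $r_\f{y}(i/N)^2\sfd^2(x_\f{y}((i-1)/N),x_\f{y}(i/N)) \le r_\f{y}(i/N)^2 (\int_{(i-1)/N}^{i/N} |x_\f{y}'|_\sfd)^2 \le \frac1N r_\f{y}(i/N)^2 \int_{(i-1)/N}^{i/N}|x_\f{y}'|_\sfd^2$ by Jensen, and similarly $(r_\f{y}((i-1)/N)-r_\f{y}(i/N))^2 \le \frac1N\int_{(i-1)/N}^{i/N}|r_\f{y}'|^2$; summing over $i$ gives the pointwise bound by $\int_0^1 D_2^N(t,\f{y})\de t$ where $D_2^N(t,\f{y})=|r_\f{y}'(t)|^2 + |r_\f{y}(\lceil tN\rceil/N)|^2|x_\f{y}'(t)|_\sfd^2$. (One must check the measurability of $(t,\f{y})\mapsto D_2^N$ and that $r_\f{y}>0$ wherever it appears in a denominator, which holds on $A$ by Remark \ref{rem:geocone}, and on $A^c$ and on $T(A\setminus A_\eps)$ the $x$-metric derivative vanishes.) It then remains to bound $\limsup_N \int_{\f{C}[X]}\int_0^1 D_2^N\de t\de\ppi_\eps$ by $\int\mathcal{A}_2\de\ppi_\eps$, which I split into three pieces according to the decomposition of $\ppi_\eps$:

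\emph{Piece 1: on $A_\eps$.} Here for $N>1/\eps$ and $t\in((i-1)/N,i/N)$ one has $|t-\lceil tN\rceil/N|<\eps$, so $r_\f{y}(\lceil tN\rceil/N)^2 \le 4 r_\f{y}(t)^2$; thus $D_2^N(t,\f{y}) \le 4|\f{y}'(t)|^2_{\sfd_\f{C}}$ with $\int_0^1 4|\f{y}'|^2_{\sfd_\f{C}}\de t = 4\mathcal{A}_2(\f{y})$ integrable against $\ppi_\eps$ (it equals $4\HK^2$ on $A_\eps\cup A^c$ after subtracting the $T$-part). Moreover $r_\f{y}(\lceil tN\rceil/N)\to r_\f{y}(t)$ pointwise by continuity of $\f{y}$, so $D_2^N(t,\f{y})\to |\f{y}'(t)|^2_{\sfd_\f{C}}$ $\mathcal{L}^1\otimes\ppi_\eps$-a.e.; dominated convergence yields $\limsup_N\int_{A_\eps}\int_0^1 D_2^N \le \int_{A_\eps}\mathcal{A}_2\de\ppi_\eps$. \emph{Piece 2: on $A^c={\rm Geo}\setminus A$.} By Theorem \ref{thm:toolbox}(d), $\ppi$ (hence $\ppi_\eps$ on this set) is concentrated on geodesics with $r_\f{y}(0)r_\f{y}(1)=0$ or $\sfd(x_\f{y}(0),x_\f{y}(1))=0$; from the explicit geodesic formulas in Section \ref{sec:curves_cone} (cases \eqref{item:geod_trivial}, \eqref{item:geod_cusp}) one checks $|x_\f{y}'(t)|_\sfd=0$ a.e., so $D_2^N(t,\f{y})=|r_\f{y}'(t)|^2=|\f{y}'(t)|^2_{\sfd_\f{C}}$ with no $N$-dependence, and the integral equals $\int_{A^c}\mathcal{A}_2\de\ppi_\eps$. \emph{Piece 3: on $T(A\setminus A_\eps)$.} Here $r_{T(\f{y})}$ is piecewise affine with slope $2r_\f{y}(0)$ (resp.\ $2r_\f{y}(1)$), $x_{T(\f{y})}$ is locally constant, so $|x_{T(\f{y})}'(t)|_\sfd=0$ a.e.\ except possibly at $t=1/2$, whence again $D_2^N(t,T(\f{y}))=|r_{T(\f{y})}'(t)|^2$ up to a measure-zero set of times and the integral equals $\int_{T(A\setminus A_\eps)}\mathcal{A}_2\de\ppi_\eps$. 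Adding the three pieces gives $\limsup_N \int\int_0^1 D_2^N\de t\de\ppi_\eps \le \int\mathcal{A}_2\de\ppi_\eps$, hence $(\He\nabla\W)^2(\mu_0,\mu_1)\le \int\mathcal{A}_2\de\ppi_\eps$ for every $\eps>0$; letting $\eps\downarrow0$ and using Lemma \ref{lem:convergence-2action-pi} together with $\int\mathcal{A}_2\de\ppi=\HK^2(\mu_0,\mu_1)$ concludes.

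The main obstacle is Piece 3, namely controlling $D_2^N$ on the set where $\ppi$ has been modified: there the curves $\f{y}\in A\setminus A_\eps$ themselves can have $r_\f{y}$ oscillating wildly relative to the time step, so no uniform-in-$N$ integrable domination of $D_2^N(\cdot,\f{y})$ survives, which is exactly why one replaces $\ppi\mres(A\setminus A_\eps)$ by $T_\sharp(\ppi\mres(A\setminus A_\eps))$; verifying that this surgery does not change the $2$-homogeneous marginals, only increases the action by the controlled amount $4R^2\ppi(A\setminus A_\eps)\to0$, and produces curves whose spatial metric derivative vanishes (so that $D_2^N$ collapses to $|r'|^2$ and no limit in $N$ is needed) is the technical heart. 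A secondary delicate point is the measurability of the selection $\f{y}\mapsto$ geodesic / of the maps defining $r_{\min}$ and $A_\eps$, for which one invokes the Borel structure discussed after \eqref{eq:setae} and in Appendix \ref{sec:measappendix}.
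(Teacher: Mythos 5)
Your overall strategy matches the paper's: work with the modified plan $\ppi_\eps$, build the $N$-path $P_{N,\eps}$ from $\W\He$-minimizers, bound the energy by a discretized action, split the integral over $A_\eps$, $A^c$, and $T(A\setminus A_\eps)$, pass to the limit in $N$ and then in $\eps$, using Lemma \ref{lem:convergence-2action-pi}. Pieces 1 and 2, and the treatment of the radial term $|r_\f{y}'|^2$ via Jensen and dominated convergence, are correct and essentially identical to the paper's $I_1^N, I_2^N, I_3^N$.

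However, Piece 3 contains a genuine gap. Your key pointwise bound
\[
N\sum_{i=1}^N \sfH_{\W\He}\!\left(\f{y}\left(\tfrac{i-1}{N}\right),\f{y}\left(\tfrac{i}{N}\right)\right)\;\le\;\int_0^1 D_2^N(t,\f{y})\,\de t
\]
rests on the Jensen-type estimate $\sfd\left(x_\f{y}\left(\tfrac{i-1}{N}\right),x_\f{y}\left(\tfrac{i}{N}\right)\right)\le\int_{(i-1)/N}^{i/N}|x_\f{y}'(t)|_\sfd\,\de t$, which requires $x_\f{y}$ to be absolutely continuous on $\left[\tfrac{i-1}{N},\tfrac{i}{N}\right]$. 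For a curve $T(\f{y})$ with $\f{y}\in A\setminus A_\eps$, the spatial component $x_{T(\f{y})}$ jumps from $x_\f{y}(0)$ to $x_\f{y}(1)$ across $t=1/2$ (exactly because $r_{T(\f{y})}(1/2)=0$ and $T(\f{y})$ passes through the vertex); it is therefore not absolutely continuous on the interval containing $1/2$. When $N$ is odd, the corresponding term of the sum,
\[
N\,r_{T(\f{y})}\!\left(\tfrac{i^*}{N}\right)^2\sfd^2\bigl(x_\f{y}(0),x_\f{y}(1)\bigr),
\]
is strictly positive, whereas your claimed majorant $N\,r_{T(\f{y})}(i^*/N)^2\int_{(i^*-1)/N}^{i^*/N}|x_{T(\f{y})}'|_\sfd^2\,\de t$ equals $0$. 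So $D_2^N$ is \emph{not} an upper bound for the discrete energy on $T(A\setminus A_\eps)$, and the ``collapse to $|r'|^2$'' argument does not close. The paper instead computes this single term explicitly, using the bounds $r_{T(\f{y})}(i^*/N)\le 2R/N$ (from Theorem \ref{thm:toolbox}(c) and the definition of $T$) and $\sfd(x_\f{y}(0),x_\f{y}(1))\le\pi/2$ (from Theorem \ref{thm:toolbox}(d)), giving $I_4^N\le R^2\pi^2/N\to 0$; for $N$ even the term vanishes outright because $r_{T(\f{y})}\left(\tfrac{i^*-1}{N}\right)=0$ and $\sfH_{\W\He}$ (hence $\sfH''$) is by definition equal to $r_1^2$, not $r_1^2\sfd^2$, when the first radius vanishes. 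Supplying this explicit $O(1/N)$ estimate is exactly the missing step in your Piece 3, and it is the one place where the proof needs an argument that cannot be phrased as a domination by a time-integral.
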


\begin{proof}
Fix $\eps>0$ and consider $\ppi_\eps$ as defined in \eqref{eq:pi-eps}. For every $N \in \N_{\ge 1}$ with $N > 1/\eps$ we also define
\[
\sigma_i^{N,\eps} \coloneqq  \f{h}^2((\mathsf{e}_{i/N})_\sharp\ppi_\eps)\in \meas_+(X), \qquad i=0,\dots,N
\]
and
\[
\aalpha_i^{N,\eps} \coloneqq (\mathsf{e}_{\frac{i-1}{N}},\mathsf{e}_{\frac{i}{N}})_\sharp \ppi_\eps\in \meas_+(\pc), \qquad i=1,\dots,N
\]
and, for every $i=1, \dots, N$, we select $\nu_i^{N,\eps} \in \meas_+(X)$ minimizer for $\W\He(\sigma_{i-1}^{N,\eps},\sigma_i^{N,\eps})$, whose existence is given by Proposition \ref{prop:whe}(4). We thus define $P_{N,\eps} \coloneqq (\sigma_0^{N,\eps}, \sigma_1^{N,\eps}, \dots \sigma_N^{N,\eps}; \nu_1^{N,\eps}, \dots, \nu_N^{N,\eps})$ and note that $P_{N,\eps} \in \mathscr{P}(\mu_0, \mu_1; N)$. 

After this premise, we start observing that by construction
\begin{equation}\label{eq:bounding_EN}
\begin{split}
\mathcal{E}_N(P_{N,\eps}) & = N\sum_{i=1}^N\left(\He^2(\sigma^{N,\eps}_{i-1},\nu^{N,\eps}_i) + \W^2(\nu^{N,\eps}_i,\sigma^{N,\eps}_i)\right) \\
& = N\sum_{i=1}^N \W\He(\sigma^{N,\eps}_{i-1},\sigma^{N,\eps}_i) \leq N\sum_{i=1}^N \int_{\pc} \sfH_{\W\He}\de\aalpha^{N,\eps}_i,
\end{split}
\end{equation}
where $\sfH_{\W\He}$ is as in \eqref{eq:HWHe}. If we introduce for ease of notation the functions $\sfH',\sfH'' : \pc \to [0,+\infty)$ defined as
\[
\sfH'([x_0,r_0],[x_1,r_1]) \coloneqq (r_0-r_1)^2, \qquad  \sfH''([x_0,r_0],[x_1,r_1]) \coloneqq  
\left\{
\begin{array}{ll}
r_1^2 \sfd^2(x_0,x_1) & \textrm{if } r_0>0 \\
0 & \textrm{if } r_0=0
\end{array},
\right.
\]
so that $\sfH_{\W\He} = \sfH' + \sfH''$, then we can rewrite the last term above as
\begin{equation}\label{eq:three-terms}
\begin{split}
N\sum_{i=1}^N \int_{\pc} \sfH_{\W\He}\de\aalpha^{N,\eps}_i & = \int N\sum_{i=1}^N \sfH' \left ( \f{y} \left (\frac{i-1}{N} \right ), \f{y} \left (\frac{i}{N} \right ) \right ) \de\ppi_\eps(\f{y}) \\
& \quad + \int_{A_\eps} N\sum_{i=1}^N \sfH'' \left ( \f{y} \left (\frac{i-1}{N} \right ), \f{y} \left (\frac{i}{N} \right ) \right ) \de\ppi(\f{y}) \\
& \quad + \int_{A^c} N\sum_{i=1}^N \sfH'' \left ( \f{y} \left (\frac{i-1}{N} \right ), \f{y} \left (\frac{i}{N} \right ) \right ) \de\ppi(\f{y}) \\
& \quad + \int N\sum_{i=1}^N \sfH'' \left ( \f{y} \left (\frac{i-1}{N} \right ), \f{y} \left (\frac{i}{N} \right ) \right ) \de T_\sharp(\ppi \mres (A \setminus A_\eps))(\f{y}).
\end{split}
\end{equation}
Let us now discuss separately the four integrals, that for sake of brevity we shall denote $I_1^N$, $I_2^N$, $I_3^N$, and $I_4^N$ respectively. As regards the first one, by \eqref{eq:AC-cone} first and Jensen's inequality then we obtain
\[
\begin{split}
I_1^N & = \int N\sum_{i=1}^N \left|r_\f{y}\Big( \frac{i}{N} \Big) - r_\f{y}\Big(\frac{i-1}{N} \Big) \right|^2 \de\ppi_\eps(\f{y}) \leq \int N\sum_{i=1}^N \left|\int_{\frac{i-1}{N}}^{\frac{i}{N}}|r_\f{y}'(t)|\de t \right|^2 \de\ppi_\eps(\f{y}) \\
& \leq \int \sum_{i=1}^N \int_{\frac{i-1}{N}}^{\frac{i}{N}}|r_\f{y}'(t)|^2\de t \de\ppi_\eps(\f{y}) = \int \int_0^1 |r_\f{y}'(t)|^2\de t\de\ppi_\eps(\f{y})
\end{split}
\]
and observe that the right-hand side does not depend on $N$ any longer, so that
\[
\limsup_{N \to +\infty} I_1^N \leq \int \int_0^1 |r_\f{y}'(t)|^2\de t\de\ppi_\eps(\f{y}).
\]
As for $I_2^N$, note that for any geodesic $\f{y} \in A$, and a fortiori in $A_\eps$, both $\f{y}_0, \f{y}_1 \neq \f{o}$, so that by Remark \ref{rem:geocone} it holds $\min_{t \in [0,1]} r_\f{y}(t) > 0$. This fact, together with \eqref{eq:AC-cone} and Jensen's inequality again, implies that
\[
\begin{split}
I_2^N & = \int_{A_\eps} N\sum_{i=1}^N \left|r_\f{y}\Big(\frac{i}{N}\Big)\right|^2 \sfd^2\left(x_\f{y}\Big( \frac{i}{N} \Big),x_\f{y}\Big( \frac{i-1}{N} \Big)\right)\de\pi(\f{y}) \\
& \leq \int_{A_\eps} N\sum_{i=1}^N \left|r_\f{y}\Big(\frac{i}{N}\Big)\right|^2 \left|\int_{\frac{i-1}{N}}^{\frac{i}{N}}|x_\f{y}'(t)|_\sfd \de t\right|^2 \d\ppi(\f{y}) \\
& \leq \int_{A_\eps} \sum_{i=1}^N \left|r_\f{y}\Big(\frac{i}{N}\Big)\right|^2 \int_{\frac{i-1}{N}}^{\frac{i}{N}}|x_\f{y}'(t)|^2_\sfd \de t \d\ppi(\f{y}) \\
& = \int_{A_\eps}\int_0^1 \left| r_\f{y}\left(\frac{\lceil tN \rceil}{N} \right) \right|^2 |x_\f{y}'(t)|_\sfd^2 \de t\de\ppi(\f{y}).
\end{split}
\]
Dividing and multiplying the integrand function by $|r_\f{y}(t)|^2$ (which is possible since, as already pointed out, for any geodesic $\f{y} \in A_\eps$ we have $\min_{t \in [0,1]} r_\f{y}(t) > 0$), we can exploit the definition of $A_\eps$, because $|\frac{\lceil tN \rceil}{N} - \frac1N| < \frac1N < \eps$ for every $t\in [0,1]$, and this yields
\[
\left| r_\f{y}\left(\frac{\lceil tN \rceil}{N} \right) \right|^2 |x_\f{y}'(t)|_\sfd^2 \leq 4 \chi_{A_\eps} |r_\f{y}(t)|^2 |x_\f{y}'(t)|_\sfd^2, \qquad \forall t \in [0,1],\,\forall \f{y} \in A_\eps.
\]
Since $\chi_{A_\eps} |r_\f{y}(t)|^2 |x_\f{y}'(t)|_\sfd^2 \leq |r_\f{y}(t)|^2 |x_\f{y}'(t)|_\sfd^2 \in L^1(\ppi \otimes \mathcal{L}^1 \mres [0,1])$ as a consequence of \eqref{eq:lifting_eq}, by Lebesgue's dominated convergence theorem we conclude that
\[
\limsup_{N \to +\infty} I_2^N \leq \int_{A_\eps} \int_0^1 |r_\f{y}(t)|^2 |x_\f{y}'(t)|^2_\sfd \de\ppi(\f{y}).
\]
For the third integral in \eqref{eq:three-terms}, by Theorem \ref{thm:toolbox}(d), if $\f{y} \in A^c$, then only two situations may occur:
\begin{itemize}
\item either $r_\f{y}(0) r_\f{y}(1) = 0$, so that $x_{\f{y}}(t)$ is constant in time (since $\f{y}$ falls within case \eqref{item:geod_trivial} or case \eqref{item:geod_cusp} discussed in Section \ref{sec:curves_cone});
\item or $r_\f{y}(0) r_\f{y}(1) >0$ and $\sfd(x_\f{y}(0),x_\f{y}(1))=0$, so that $x_{\f{y}}(t)$ is constant in time.
\end{itemize}
In both cases 
\[ r^2_{\f{y}}(t) |x_{\f{y}}'(t)|^2_{\sfd}= \sfH'' \left ( \f{y} \left (\frac{i-1}{N} \right ), \f{y} \left (\frac{i}{N} \right ) \right ) =0 \qquad \text{for every } i=1, \dots, N, \text{ and a.e.~} t \in (0,1), \]
so that
\[ 
\limsup_{N \to +\infty} I_3^N = 0 = \int_{A^c} \int_0^1 |r_\f{y}(t)|^2 |x_\f{y}'(t)|^2_\sfd \de\ppi(\f{y}).
\]
Finally, the fourth integral in \eqref{eq:three-terms} is given by
\[
I_4^N = \int_{A \setminus A_\eps} N\sum_{i=1}^N \sfH'' \left ( T(\f{y}) \left (\frac{i-1}{N} \right ), T(\f{y}) \left (\frac{i}{N} \right ) \right )\de\ppi(\f{y})
\]
and we will now discuss separately the case when $N$ is even and when $N$ is odd. In the latter, we first remark that for all $\f{y} \in A$ (which ensures $r_{T(\f{y})}(0) = r_\f{y}(0) > 0$ and $r_{T(\f{y})}(1) = r_\f{y}(1) > 0$) it holds $r_{T(\f{y})}(i/N) > 0$ for all $i=0,\dots,N$ by definition of $T$. Thus, by definition of $\sfH''$
\[
\begin{split}
I_4^N & = \int_{A \setminus A_\eps} N\sum_{i=1}^N \left|r_{T(\f{y})}\Big(\frac{i}{N}\Big)\right|^2 \sfd^2\left(x_{T(\f{y})}\Big( \frac{i}{N} \Big),x_{T(\f{y})} \Big( \frac{i-1}{N} \Big)\right) \de\ppi(\f{y}) \\
\end{split}
\]
and we observe that all but one terms $\sfd(x_{T(\f{y})}(i/N),x_{T(\f{y})}((i-1)/N))$ equal 0; more precisely, since $N$ is odd we have exactly one non-zero summand which corresponds to the unique index $i^*$ such that $(i^*-1)/N < 1/2 < i^*/N$, namely the unique index $i^*$ such that $x_{T(\f{y})}$ is discontinuous in $[\frac{i^*-1}{N},\frac{i^*}{N})$. Therefore
\begin{equation}\label{eq:bounding_r2d2}
\begin{split}
N\sum_{i=1}^N \left|r_{T(\f{y})}\Big(\frac{i}{N}\Big)\right|^2 & \sfd^2\left(x_{T(\f{y})}\Big( \frac{i}{N} \Big),x_{T(\f{y})} \Big( \frac{i-1}{N} \Big)\right) \\ 
& = N\left| r_{T(\f{y})} \Big(\frac{i^*}{N}\Big) \right|^2 \sfd^2\left(x_{T(\f{y})} \Big( \frac{i^*}{N} \Big),x_{T(\f{y})} \Big( \frac{i^*-1}{N} \Big)\right) \\
& \leq N \frac{4R^2}{N^2} \frac{\pi^2}{4} = \frac{R^2\pi^2}{N},
\end{split}
\end{equation}
where the inequality is due to Theorem \ref{thm:toolbox}(d) and to the fact that, by definition of $T$ and Theorem \ref{thm:toolbox}(c),
\[
\left|r_{T(\f{y})}\Big(\frac{i^*}{N}\Big)\right| \leq \left| \frac{2i^*}{N}-1 \right| \max\{r_\f{y}(0),r_\f{y}(1)\} \leq \frac{2R}{N}.
\]
If we now plug \eqref{eq:bounding_r2d2} into the expression of $I_4^N$, we get the estimate $I_4^N \leq \frac1N R^2\pi^2$ for $N$ odd. 
\\If instead $N$ is even, there exists exactly one index $i^*$ such that $(i^*-1)/N = 1/2$, which implies $r_{T(\f{y})}((i^*-1)/N) = 0$; for all the other indexes $i=0,\dots,N$, $i\neq i^*$, we have $\sfd(x_{T(\f{y})}(i/N),x_{T(\f{y})}((i-1)/N))=0$ and thus $I_4^N=0$ for $N$ even. 
\\Overall,
\[
\limsup_{N \to +\infty} I_4^N \leq 0 = \int_{A \setminus A_\eps}\int_0^1 |r_{T(\f{y})}(t)|^2 |x_{T(\f{y})}'(t)|^2_\sfd \de t \de\ppi(\f{y}),
\]
since $|x_{T(\f{y})}'(t)|_\sfd = 0$ a.e.\ in $[0,1]$, for any $\f{y} \in A \setminus A_\eps$. 
\\In conclusion, combining the above estimates on $I_1^N,I_2^N,I_3^N,I_4^N$ with \eqref{eq:three-terms} and \eqref{eq:bounding_EN} we deduce that
\[
\begin{split}
(\He \nabla \W)^2(\mu_0,\mu_1) & = \liminf_{N \to +\infty} \inf \big\{\mathcal{E}_N(P) \,:\, P \in \mathscr{P}(\mu_0,\mu_1;N)\big\} \leq \liminf_{N \to +\infty} \mathcal{E}_N(P_{N,\eps}) \\
& \leq \liminf_{N \to +\infty}\big\{I_1^N+I_2^N+I_3^N+I_4^N\big\} \leq \limsup_{N \to +\infty}\big\{I_1^N+I_2^N+I_3^N+I_4^N\big\} \\
& \leq \limsup_{N \to +\infty} I_1^N + \limsup_{N \to +\infty} I_2^N + \limsup_{N \to +\infty} I_3^N + \limsup_{N \to +\infty} I_4^N \\
& \leq \int \int_0^1 |r_\f{y}'(t)|^2\de t\de\ppi_\eps(\f{y}) + \int_{A_\eps} \int_0^1 |r_\f{y}(t)|^2 |x_\f{y}'(t)|^2_\sfd \de\ppi(\f{y}) \\
& \quad + \int_{A^c} \int_0^1 |r_\f{y}(t)|^2 |x_\f{y}'(t)|^2_\sfd \de\ppi(\f{y}) + \int\int_0^1 |r_\f{y}(t)|^2 |x_\f{y}'(t)|^2_\sfd \de T_\sharp(\ppi \mres(A \setminus A_\eps)) \\
& = \int_{A_\eps} \mathcal{A}_2 \de\ppi + \int_{A^c} \mathcal{A}_2 \de\ppi + \int \mathcal{A}_2 \de T_\sharp(\ppi \mres (A \setminus A_\eps)) \\
& = \int \mathcal{A}_2 \de\ppi_\eps.
\end{split}
\]
As this inequality holds true for any fixed $\eps>0$, passing to the limit as $\eps \downarrow 0$ and applying Lemma \ref{lem:convergence-2action-pi} yield
\[
\begin{split}
(\He \nabla \W)^2(\mu_0,\mu_1) & \leq \lim_{\eps \downarrow 0}\int \mathcal{A}_2 \de\ppi_\eps = \int \mathcal{A}_2 \de\ppi = \int\int_0^1 |\f{y}'(t)|_{\sfd_{\f{C}}}^2 \de t\de \ppi(\f{y}) \\
& = \int_0^1 |\mu_t'|_\HK^2 \de t = \HK^2(\mu_0,\mu_1),
\end{split}
\]
thus concluding the proof.
\end{proof}

\section{\texorpdfstring{$\HK \le \He \nabla \W$}{HK < He d W2}}\label{sec:ineq2}

Aim of this section is to prove the converse inequality to \eqref{eq:aimsec6}, namely the following

\begin{theorem}\label{thm:main1}
Let $(X,\sfd)$ be a complete and separable metric space and let $\mu_0, \mu_1 \in \meas_+(X)$. Then
\[ 
\HK_{\sfd}(\mu_0, \mu_1) \leq (\He_{2} \nabla \W_{2, \sfd})(\mu_0, \mu_1)\,.
\]
\end{theorem}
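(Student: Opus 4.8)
The plan is to establish the inequality $\HK(\mu_0,\mu_1) \le (\He \nabla \W)(\mu_0,\mu_1)$ by taking a near-optimal sequence of $N$-paths for the infimal convolution and extracting from it a plan on curves in the cone that is admissible for the dynamic formulation of $\HK$ given in Theorem \ref{thm:dynhk}. First I would fix $\mu_0,\mu_1\in\meas_+(X)$; if $(\He\nabla\W)(\mu_0,\mu_1)=+\infty$ there is nothing to prove, so I assume it is finite. By definition of the metric infimal convolution there exist $N_l\to+\infty$ and $N_l$-paths $P_{N_l}=(\mu_0,\sigma_1^l,\dots,\sigma_{N_l-1}^l,\mu_1;\,\tau_1^l,\dots,\tau_{N_l}^l)$ with $\mathcal{E}_{N_l}(P_{N_l})\to(\He\nabla\W)^2(\mu_0,\mu_1)$; writing $\sigma_0^l=\mu_0$, $\sigma_{N_l}^l=\mu_1$, I would first replace each intermediate point $\tau_i^l$ by a minimizer $\nu_i^l\in\argmin\W\He(\sigma_{i-1}^l,\sigma_i^l)$ (which exists by Proposition \ref{prop:whe}(4)), so that the energy does not increase: $N_l\sum_i\W\He(\sigma_{i-1}^l,\sigma_i^l)\le\mathcal{E}_{N_l}(P_{N_l})$. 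Using Proposition \ref{prop:whe}(3) and Theorem \ref{thm:omnibus}(1), for each consecutive pair I would pick an optimal $2$-homogeneous coupling $\aalpha_i^l\in\f{H}^2(\sigma_{i-1}^l,\sigma_i^l)$ for $\mathsf{UOT}_{\sfH_{\W\He}}$, a probability concentrated on $\{0\le\sfr_j\le R\}$ with $R$ depending only on $\mu_0(X),\mu_1(X)$ (note total masses of all $\sigma_i^l$ are controlled, since $\W$-finiteness forces mass equality along transport steps and $\He$ only changes mass by a bounded amount in a single step, so an a priori mass bound holds uniformly).

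The second stage is the \emph{interpolation}: for each consecutive pair I would use the optimal $\aalpha_i^l$ to build a plan $\ppi_i^l\in\prob(\rmC([0,1];(\f{C}[X],\sfd_\f{C})))$ on curves realizing the step from $\sigma_{i-1}^l$ to $\sigma_i^l$, by selecting (measurably --- here Appendix \ref{sec:measappendix} on the geodesic selection map enters) for each pair of cone points $([x_0,r_0],[x_1,r_1])$ in the support a curve whose squared action equals (or is at most) $\sfH_{\W\He}([x_0,r_0],[x_1,r_1])$ up to the $2$-homogeneity rescaling, following the relation \eqref{eq:infocnveq}: concretely, interpolate first by a Hellinger-type vertical segment from $[x_0,\sqrt{r_0}]$ to $[x_0,\sqrt{r_1}]$ and then by a cone geodesic (a Wasserstein-type horizontal move) from $[x_0,\sqrt{r_1}]$ to $[x_1,\sqrt{r_1}]$ on $[(i-1)/N_l,i/N_l]$. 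Concatenating over $i$ and pushing forward by the $(\vartheta,q)$-dilation/reparametrization to glue the time intervals $[(i-1)/N_l,i/N_l]$, I obtain $\ppi^l\in\prob(\rmC([0,1];(\f{C}[X],\sfd_\f{C})))$ with $\f{h}^2((\sfe_0)_\sharp\ppi^l)=\mu_0$, $\f{h}^2((\sfe_1)_\sharp\ppi^l)=\mu_1$, and $\int\mathcal{A}^{N_l}_2(\f{y})\,\de\ppi^l(\f{y})\le N_l\sum_i\W\He(\sigma_{i-1}^l,\sigma_i^l)+o(1)$, where $\mathcal{A}^{N_l}_2(\f{y})=N_l\sum_i\sfH_{\W\He}(\f{y}((i-1)/N_l),\f{y}(i/N_l))$; crucially, by the elementary inequality $\sfH_{\HK}\le\sfd_\f{C}^2\le 2\sfH_{\W\He}$ of Lemma \ref{le:double} (and a sharper comparison at the infinitesimal scale), one gets a \emph{lower} bound of the true discrete action $\mathcal{A}^{N_l}_2$ by the discretization of $\int_0^1|\f{y}'_t|^2_{\sfd_\f{C}}\,\de t$ which in turn converges up from below to $\mathcal{A}_2(\f{y})$ along the partition, so that $\liminf_l\int\mathcal{A}^{N_l}_2\,\de\ppi^l\ge\liminf_l\int\mathcal{A}_2(\f{y})\,\de\ppi^l$.

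The third stage is \emph{compactness and lower semicontinuity}. Since $X$ is only complete and separable (not locally compact), tightness of $(\ppi^l)_l$ is not automatic; I would first reduce to a compact situation by a truncation/approximation argument --- exploiting that the supports of $(\sfe_t)_\sharp\ppi^l$ lie in $\f{C}_R[X]$ uniformly in $t$ and $l$, and approximating $\mu_0,\mu_1$ by measures supported on finitely many balls so as to confine the relevant curves to a $\sigma$-compact region, using Proposition \ref{prop:iso} (invariance under isometric embeddings) and the contraction Lemma \ref{le:contraction} to transfer. On such a region the curves have uniformly bounded action, hence are uniformly $1/2$-H\"older in $\sfd_\f{C}$, and Ascoli--Arzel\`a plus Prokhorov give a subsequential narrow limit $\ppi\in\prob(\rmC([0,1];(\f{C}[X],\sfd_\f{C})))$ with $\f{h}^2((\sfe_i)_\sharp\ppi)=\mu_i$, $i=0,1$. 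By lower semicontinuity of $\mathcal{A}_2$ (Theorem \ref{thm:dynhk}) and of the action functional along narrow convergence, $\int\mathcal{A}_2(\f{y})\,\de\ppi(\f{y})\le\liminf_l\int\mathcal{A}_2(\f{y})\,\de\ppi^l(\f{y})\le\liminf_l\int\mathcal{A}^{N_l}_2\,\de\ppi^l+o(1)\le(\He\nabla\W)^2(\mu_0,\mu_1)$. Theorem \ref{thm:dynhk} then yields $\HK^2(\mu_0,\mu_1)\le\int\mathcal{A}_2\,\de\ppi\le(\He\nabla\W)^2(\mu_0,\mu_1)$, and removing the approximation (by continuity of $\HK$ under weak convergence and upper semicontinuity of $\He\nabla\W$ in this limit, or a diagonal argument) finishes the proof.

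\textbf{Main obstacle.} The hard part will be the third stage: since $(X,\sfd)$ is not assumed locally compact, establishing tightness of the family of plans $(\ppi^l)_l$ --- which, unlike in the first inequality, are genuinely $l$-dependent and built from near-optimal competitors rather than from a fixed geodesic plan --- requires a careful compact-exhaustion/approximation scheme, and one must simultaneously control that the discrete actions $\mathcal{A}^{N_l}_2$ do not lose mass in the limit (no concentration of action on the "vertical" Hellinger portions that could be hidden by the dilation rescaling). Getting the comparison between the discretized $\sfH_{\W\He}$-action and the genuine metric-derivative action on the cone sharp enough --- so that the $o(1)$ errors from the $\mathcal{O}(\sfd^2|\sqrt{r_1}-\sqrt{r_0}|)$ term in \eqref{eq:infocnveq} are uniformly controlled as $N_l\to\infty$ --- is the delicate quantitative estimate underpinning the whole argument.
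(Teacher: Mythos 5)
Your overall architecture (near-optimal $N$-paths $\rightarrow$ optimal couplings $\rightarrow$ lift to curves $\rightarrow$ compare discrete and continuous actions $\rightarrow$ use the dynamical representation of $\HK$) matches the paper's strategy, but there is a genuine gap at the decisive comparison step, and it is not a technicality you can wave away.

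The comparison you need is, in effect, $\sfH_{\W\He}(\f{y}_0,\f{y}_1) \geq \sfd_\f{C}^2(\f{y}_0,\f{y}_1)$, so that the discretized $\sfH_{\W\He}$-action of a piecewise-geodesically interpolated lift dominates $\mathcal{A}_2$. This inequality is \emph{false}: the paper's Lemma~\ref{le:double} only gives $\sfd_\f{C}^2 \leq 2\sfH_{\W\He}$, and the factor $2$ is not spurious (take $r_0=10$, $r_1=1$, $\sfd(x_0,x_1)=1$: then $\sfd_\f{C}^2 \approx 90.2 > \sfH_{\W\He} = 82$). The correct pointwise lower bound is the paper's \eqref{eq:caimano},
\[
\sfH_{\W\He}(\f{y}_0,\f{y}_1) \geq \Big(1 \wedge \tfrac{r_1}{r_0}\Big)\, \sfd_\f{C}^2(\f{y}_0,\f{y}_1)\,,
\]
and to make the factor $1 \wedge r_1/r_0$ converge to $1$ along the partition one needs the radii $r_{\f{y}}(i/N_l)$ of the lifted curves to be \emph{uniformly bounded away from zero} (so that the radius is equicontinuous and close between consecutive mesh points). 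You gesture at "a sharper comparison at the infinitesimal scale" and, in the obstacle paragraph, at controlling the $\mathcal{O}(\sfd^2|\sqrt{r_1}-\sqrt{r_0}|)$ error of \eqref{eq:infocnveq}; but the Taylor estimate is irrelevant when radii degenerate, and nothing in your construction prevents intermediate radii along the lifted curves from collapsing to $0$. This is precisely the difficulty that occupies the bulk of the paper's proof: the splitting into vertex/non-vertex marginals, the cutoff in $m$, the a priori estimates of Section~\ref{sec:apriori}, and above all the path-modification map $G_N$ of Theorem~\ref{thm:thmdinicolo}, whose entire purpose is to replace a competitor path by one with a quantitative lower bound on all intermediate radii (see~\eqref{eq:radialboundG_N}) without increasing $\sfH_N$. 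Proposition~\ref{prop:conva2} then turns the $\big(1 \wedge r_1/r_0\big)$-corrected discrete action into $\mathcal{A}_2$ in the narrow limit. Your proposal contains no substitute for this mechanism, so the crucial step $\liminf_l \int \mathcal{A}^{N_l}_2\,\de\ppi^l \geq \liminf_l \int \mathcal{A}_2\,\de\ppi^l$ does not follow from what you write.

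A secondary issue is the interpolation scheme itself. Interpolating "vertical Hellinger segment then horizontal Wasserstein segment" produces a curve whose action over $[(i-1)/N_l, i/N_l]$ equals $2\sfH_{\W\He}$ (the factor $2$ coming from squeezing each move into a half-interval), so lower semicontinuity along these lifts would yield $\HK^2 \leq 2(\He\nabla\W)^2$, not the sharp constant. The paper instead uses a measurable \emph{geodesic} interpolation on the cone (via the map $\Gamma^{N_k}$ in \eqref{eq:geo_interpolation}), so that the continuous cone action agrees exactly with the discrete sum $N_l\sum_i\sfd_\f{C}^2(\f{y}_{i-1},\f{y}_i)$; the sharpening then happens through the $\big(1 \wedge r_1/r_0\big)$ factor, not through any elementary two-sided bound between $\sfd_\f{C}^2$ and $\sfH_{\W\He}$.

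Your compactness reduction and tightness remarks are in the right spirit (the paper's Proposition~\ref{prop:compred} uses the Kuratowski embedding and truncation onto cubes in $\R^m$), and your use of $\W\He$-minimizers and the optimal homogeneous couplings is correct. The gap is entirely in the "no small radii" control and the ensuing sharp action comparison.
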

Unlike in Theorem \ref{thm:firstineq}, notice that for this inequality we do not even require the space to be geodesic.

For ease of notation, in what follows we drop the dependence on the exponent $p=2$ and on the distance $\sfd$ in $\He, \HK, \W$. Moreover, as the proof of Theorem \ref{thm:main1} is rather involved, we present here the strategy. Again, we denote by $\mathcal{E}_N$ the energy as in \eqref{eq:energy} where $\sfc_1= \He$ and $\sfc_2=\W$, unless otherwise stated.
In comparison with the previous section, to prove it we need to adopt a rather opposite approach. Indeed, in Section \ref{sec:ineq1} we started with a $\HK$-geodesic between $\mu_0$ and $\mu_1$ and by time discretization we were naturally led to an optimal family of $N$-paths between the same measures. Now we start with an optimal family of $N$-paths and we need to find a way to interpolate between them: this will first require a gluing procedure and a subsequent lifting to $\prob(\rmC([0,1];(\f{C}[X],\sfd_{\f{C}})))$. 

More explicitly, assuming without loss of generality that $(\He \nabla \W)(\mu_0, \mu_1) < + \infty$, by definition of $\He \nabla \W$ there exist a sequence $(N_k)_{k \in \N}$ and $N_k$-paths $(P'_k)_{k \in \N}$ such that 
\[ 
P'_k \in \mathscr{P}(\mu_0, \mu_1; N_k), \quad N_k \uparrow + \infty, \quad \mathcal{E}_{N_k}(P'_k) \to (\He \nabla \W)^2(\mu_0, \mu_1) \quad \text{ as } k \to + \infty\,.
\]
By Proposition \ref{prop:whe}(4) we can replace every $N_k$-path $P'_k = (\mu_0^k, \mu_1^k, \dots, \mu_{N_k}^k; \tilde{\nu}_1^k, \dots, \tilde{\nu}_{N_k}^k)$ with $P_k \coloneqq (\mu_0^k, \mu_1^k, \dots, \mu_{N_k}^k; \nu_1^k, \dots, \nu_{N_k}^k)$, where $\nu_i^k$ is such that
\[ 
\W\He(\mu_{i-1}^k, \mu_i^k) = \He^2(\mu_{i-1}^k, \nu_i^k) + \W^2(\nu_i^k, \mu_i^k) \quad \text{ for every } 1 \le i \le N_k, \, k \in \N\,.
\]
For the new paths it clearly still holds 
\begin{equation}\label{eq:energy_to_HeW}
\mathcal{E}_{N_k}(P_k) \to (\He \nabla \W)^2(\mu_0, \mu_1) \quad \text{ as } k \to + \infty\,, 
\end{equation}
so that the inequality $\HK(\mu_0,\mu_1) \le (\He \nabla \W)(\mu_0,\mu_1)$ is proven if we show that $\HK^2(\mu_0,\mu_1) \le \lim_{k \to +\infty} \mathcal{E}_{N_k}(P_k)$. Moreover, by \eqref{eq:energy_to_HeW} there exists a constant $C \in (0, +\infty)$ such that 
\begin{equation}\label{eq:cbound}
\mathcal{E}_{N_k}(P_k) \le C  \quad \text{ for every } k \in \N\,.
\end{equation}
By Jensen's inequality this implies
\[
\left( \sum_{i=1}^{N_k} \sqrt{\W\He(\mu_{i-1}^k, \mu_i^k)}\right)^2 \le \mathcal{E}_{N_k}(P_k) \le C \qquad \text{ for every } k \in \N\,,
\]
whence
\begin{equation}\label{eq:theta}
\sqrt{\mu_0(X)} +\sum_{i=1}^{N_k} \sqrt{\W\He(\mu_{i-1}^k, \mu_i^k)} \le \sqrt{\mu_0(X)} + \sqrt{C} =: \Theta \,,
\end{equation}
so that the left-hand side is bounded by a finite positive constant independent of $k$. By (a simple adaptation of) \cite[Lemma 7.11]{LMS18} we can thus find probability measures $\aalpha^k \in \prob(\f{C}_\Theta[X]^{N_k+1})$, see \eqref{eq:cone_r}, such that
\begin{equation}\label{eq:aalpha_k}
\left\{
\begin{array}{l}
\aalpha_i^k\coloneqq \pi^{i-1,i}_\sharp \aalpha^k \in \f{H}^2(\mu_{i-1}^k, \mu_i^k) \\[10pt]
\displaystyle{\int_{\pc} \sfH_{\W\He} \de \aalpha_i^k = \W\He(\mu_{i-1}^k, \mu_i^k)}
\end{array}
\right. \quad \textrm{ for every } i=1, \dots, N_k,\, k \in \N\,,
\end{equation}
where $\sfH_{\W\He}$ is as in \eqref{eq:HWHe}. The energy $\mathcal{E}_{N_k}$ then rewrites as
\begin{equation}\label{eq:pinguino}
\mathcal{E}_{N_k}(P_k) = N_k\sum_{i=1}^{N_k} \int_{\pc} \sfH_{\W\He} \de \aalpha_i^k \,,
\end{equation}
which resembles a discretization of the action $\mathcal{A}_2$, see \eqref{eq:a2def}. We are thus naturally led to lift $\aalpha^k$ to $\ppi^k \in \prob(\AC^2([0,1];(\f{C}[X],\sfd_{\f{C}})))$, because if we prove that
\begin{equation}\label{eq:desiderata}
\int \mathcal{A}_2 \de\ppi \leq \liminf_{k \to +\infty}\int  N_k\sum_{i=1}^{N_k} \sfH_{\W\He} \circ (\sfe_{\frac{i-1}{N_k}},\sfe_{\frac{i}{N_k}})\de \ppi^k = \liminf_{k \to + \infty} \mathcal{E}_{N_k}(P_k)\,,
\end{equation}
for some $\ppi \in \prob(\AC^2([0,1];(\f{C}[X],\sfd_{\f{C}})))$ with $(\sfe_0, \sfe_1)_\sharp\ppi \in \f{H}^2(\mu_0, \mu_1)$, then the desired inequality $ \HK(\mu_0,\mu_1)\le (\He \nabla \W)(\mu_0,\mu_1)$ follows from the dynamical representation \eqref{eq:dynamic_HK} of $\HK$. 

However, for the above inequality to hold we need tightness of the measures $\ppi^k$, so that $\ppi^k \rightharpoonup \ppi$ for some $\ppi$, and some sort of uniform convergence of the integrands towards $\mathcal{A}_2$. To overcome the first obstacle, we assume $X$ to be compact: this is not restrictive by Proposition \ref{prop:compred}. The second obstacle is instead more involved. If the supports of the measures $\ppi^k$ and $\ppi$ only contain curves with radii uniformly bounded away from 0, then \eqref{eq:desiderata} holds true: the idea is to bound, as in \eqref{eq:caimano},
\[
\sfH_{\W\He} \circ (\sfe_{\frac{i-1}{N_k}},\sfe_{\frac{i}{N_k}})(\f{y}) \geq \left(1 \wedge \frac{r_{\f{y}}(i/N_k)}{r_{\f{y}}((i-1)/N_k)}\right) \sfd_{\f{C}}^2(x_{\f{y}}((i-1)/N_k),x_{\f{y}}(i/N_k))
\]
and, if $\ppi^k$ is concentrated between the time steps $\frac{i-1}{N_k}$ and $\frac{i}{N_k}$ on constant-speed geodesics, then $\sfd_{\f{C}}(x_{\f{y}}((i-1)/N_k),x_{\f{y}}(i/N_k))$ coincides with the rescaled metric speed $N_k^{-1} |\f{y}'(t)|_{\sfd_{\f{C}}}$. Integrating in time the right-hand side above thus yields (up to a multiplicative factor tending to 1 as $k \to +\infty$) the action $\mathcal{A}_2$. Handling the liminf as $k \to +\infty$ when the above inequality is integrated w.r.t.\ $\ppi^k$ is discussed in Proposition \ref{prop:conva2}.

Incidentally, the above discussion suggests how to actually lift $\aalpha^k$ to $\prob(\AC^2([0,1];(\f{C}[X],\sfd_{\f{C}})))$. We define a geodesic interpolation map $\Gamma^{N_k} : \f{C}[X]^{N_k+1} \to \AC^2([0,1];(\f{C}[X],\sfd_{\f{C}}))$ that sends the $(N_k+1)$-tuple $(\f{y}_0,\dots,\f{y}_{N_k})$ into an absolutely continuous curve that, on every interval $[(i-1)/N_k,i/N_k]$, $i=1,\dots,N_k$, is a reparametrized constant-speed geodesic connecting $\f{y}_{i-1}$ and $\f{y}_i$ with metric speed equal to $N_k \sfd_{\f{C}}(\f{y}_{i-1},\f{y}_i)$. Then, $\ppi^k \coloneqq \Gamma^{N_k}_\sharp \aalpha^k$.

\medskip

However, we cannot exclude the limit measure $\ppi$ to concentrate on curves that either start or end in $\f{o}$. This motivates a first splitting of $\ppi^k$ and $\ppi$. The idea, although slightly different from the one adopted in the proof of Theorem \ref{thm:main1} (we will perform the splitting on $\ssigma^k \coloneqq (\sfe_0,\sfe_1)_\sharp \ppi^k$ and $\ssigma \coloneqq (\sfe_0,\sfe_1)_\sharp \ppi$), is the following: we single out the curves that either start or end in $\f{o}$, namely $\f{D}_{\f{o}} \coloneqq \{\f{y} \in \AC^2([0,1];(\f{C}[X],\sfd_{\f{C}})) \,:\, \f{y}(0) = \f{o} \textrm{ or } \f{y}(1) = \f{o}\}$ and accordingly split $\ppi$ into
\[
\ppi_{\f{o}} \coloneqq \ppi \mres \f{D}_{\f{o}} \qquad \textrm{and} \qquad \ppi_> \coloneqq \ppi \mres \f{D}_{\f{o}}^c \,.
\]
Then, by Lemma \ref{lem:decomposition}, we build measures $\ppi_>^k,\ppi_{\f{o}}^k \in \meas_+(\AC^2([0,1];(\f{C}[X],\sfd_{\f{C}})))$ such that $\ppi^k = \ppi_{\f{o}}^k + \ppi_>^k$ and $\ppi_{\f{o}}^k \rightharpoonup \ppi_{\f{o}}$, $\ppi_>^k \rightharpoonup \ppi_>$ as $k \to +\infty$. It is not difficult to check that 
\begin{equation}\label{eq:liminf_pok}
\begin{split}
\liminf_{k \to +\infty} \int N_k\sum_{i=1}^{N_k} \sfH_{\W\He} \circ (\sfe_{\frac{i-1}{N_k}},\sfe_{\frac{i}{N_k}})\de \ppi_{\f{o}}^k & \geq \int_{\pc} \sfd_{ \pi/2,\f{C}}^2 \,\de (\sfe_0,\sfe_1)_\sharp\ppi_{\f{o}} \\
& \ge \HK^2(\f{h}((\sfe_0)_\sharp \ppi_{\f{o}}), \f{h}((\sfe_1)_\sharp \ppi_{\f{o}}))
\end{split}
\end{equation}
as a consequence of the definition of the cone distance, see the first step in the proof of Theorem \ref{thm:main1}.
A further splitting concerns the measures $\ppi^k_>$: we single out a `good' set of curves $\f{D}_{N_k} \subset \f{D}_{\f{o}}^c$ where it holds
\[ \f{y} \in \f{D}_{N_k} \quad \Longrightarrow \quad N_k \sum_{i=1}^{N_k}\sfH_{\W\He} \left ( \f{y} \left ( \frac{i-1}{N_k}\right ), \f{y}\left (\frac{i}{N_k}\right) \right )\ge \sfd_{ \pi/2,\f{C}}^2 (\f{y}(0), \f{y}(1)) \,,
\]
so that, setting $\ppi^k_g \coloneqq \ppi^k_> \mres \f{D}_{N_k}$, we also get
\begin{equation}\label{eq:liminf_pgk}
\begin{split}
\liminf_{k \to +\infty} \int N_k\sum_{i=1}^{N_k} \sfH_{\W\He} \circ (\sfe_{\frac{i-1}{N_k}},\sfe_{\frac{i}{N_k}})\de \ppi_g^k & \geq \int_{\pc} \sfd_{ \pi/2,\f{C}}^2 \,\de (\sfe_0,\sfe_1)_\sharp\ppi_g \\
&\ge \HK^2(\f{h}((\sfe_0)_\sharp \ppi_{g}), \f{h}((\sfe_1)_\sharp \ppi_{g})),
\end{split}
\end{equation}
where $\ppi_g$ is the weak limit of $\ppi_g^k$ as $k \to + \infty$. 

It remains to handle the \emph{energy of the plan} $\ppi^k_b$, that is the integral
\begin{equation}\label{eq:discrete_energy_bad}
\int N_k\sum_{i=1}^{N_k} \sfH_{\W\He} \circ (\sfe_{\frac{i-1}{N_k}},\sfe_{\frac{i}{N_k}})\de \ppi^k_b,    
\end{equation}
where $\ppi^k_b \coloneqq  \ppi^k_> \mres (\f{D}_{\f{o}}^c \setminus \f{D}_{N_k})$ with weak limit $\ppi_b$ as $k \to +\infty$. Recalling that if $\ppi^k_b$ and $\ppi_b$ are concentrated on curves with radii uniformly bounded away from zero then \eqref{eq:desiderata} (written for $\ppi^k_b$ and $\ppi_b$) holds true, by a cut-off argument we introduce a new family of measures $\ppi_b^{k,m}$ concentrated on curves $\f{y}$ with $1/m < r_{\f{y}}(0), r_{\f{y}}(1) \leq \Theta$ with a lower energy, and also satisfying
\begin{equation}\label{eq:samehk}
\lim_{m \to + \infty} \HK^2(\f{h}((\sfe_0)_\sharp \ppi_{b}^m),\f{h}((\sfe_1)_\sharp \ppi_{b}^m)) = \HK^2(\f{h}((\sfe_0)_\sharp \ppi_{b}),\f{h}((\sfe_1)_\sharp \ppi_{b})) \,.
\end{equation}
The cut-off makes the radii bounded away from zero at the initial and final time. We cannot avoid the radii to become arbitrarily small or vanish at intermediate times. However, we can show that if this happens, then for any $k \in \N$ we can find a better plan $\tilde{\ppi}^{k,m}_b$ (that is, with a lower energy) with the same initial and final 2-homogeneous marginals, and additionally concentrated on curves with radii uniformly bounded away from zero at all times: this is achieved by exploiting the specific form of the set $\f{D}_{N_k}$, the structure of geodesics on the cone, building a map $G_{N_k}$ lifting those curves whose radii are too close to $0$ (see Theorem \ref{thm:thmdinicolo} for the precise result), and setting $\tilde{\ppi}^{k,m}_b \coloneqq (G_{N_k})_\sharp \ppi_b^{k,m}$. The latter map is constructed studying the properties of minimizers of the function 
\[ 
N_k\sum_{i=1}^{N_k} \sfH_{\W\He}(\f{y}_{i-1}, \f{y}_i) \quad \text{on the set} \quad \f{C}[X]^{N_k+1}\,,
\]
and it is at the core of the arguments used in the proof of Theorem \ref{thm:main1}. We can finally apply Proposition \ref{prop:compred} and obtain \eqref{eq:desiderata} for the plan $\tilde{\ppi}^{k,m}_b$, that is 
\begin{align*}
\HK^2(\f{h}((\sfe_0)_\sharp \ppi_{b}^m),\f{h}((\sfe_1)_\sharp \ppi_{b}^m)) &= \HK^2(\f{h}((\sfe_0)_\sharp \tilde{\ppi}_{b}^m),\f{h}((\sfe_1)_\sharp \tilde{\ppi}_{b}^m)) \le \int \mathcal{A}_2 \de\tilde{\ppi}^m_b \\
&\leq \liminf_{k \to +\infty}\int  N_k\sum_{i=1}^{N_k} \sfH_{\W\He} \circ (\sfe_{\frac{i-1}{N_k}},\sfe_{\frac{i}{N_k}})\de \tilde{\ppi}^{k,m}_b \,,
\end{align*}
where $\tilde{\ppi}^m_b$ is the weak limit of $\tilde{\ppi}^{k,m}_b$ as $k \to + \infty$, and the first inequality follows from the dynamical formulation of $\HK$, see \eqref{eq:dynamic_HK}. Since, as discussed above, the energy of $\tilde{\ppi}^{k,m}_b$ lower bounds the one of $\ppi^{k,m}_b$ and, in turn, this is less than the one of $\ppi^k_b$ for every $m \in \N$, we conclude that 
\[ \HK^2(\f{h}((\sfe_0)_\sharp \ppi_{b}^m),\f{h}((\sfe_1)_\sharp \ppi_{b}^m)) \le \liminf_{k \to + \infty} N_k\sum_{i=1}^{N_k} \sfH_{\W\He} \circ (\sfe_{\frac{i-1}{N_k}},\sfe_{\frac{i}{N_k}})\de \ppi^{k}_b \quad \text{ for every } m \in \N.\]
Using \eqref{eq:samehk}, a final passage to the limit as $m \to + \infty$ gives
\begin{equation}\label{eq:pibm}
    \HK^2(\f{h}((\sfe_0)_\sharp \ppi_{b}),\f{h}((\sfe_1)_\sharp \ppi_{b})) \le \liminf_{k \to + \infty}\int N_k\sum_{i=1}^{N_k} \sfH_{\W\He} \circ (\sfe_{\frac{i-1}{N_k}},\sfe_{\frac{i}{N_k}})\de \ppi^{k}_b.
\end{equation}
Together with \eqref{eq:pinguino}, \eqref{eq:liminf_pok}, \eqref{eq:liminf_pgk}, and the subadditivity of $\HK^2$, this yields $\HK \leq \He \nabla \W$.

\subsection{A priori estimates for the minimization problem}\label{sec:apriori}

In this section we study, \emph{at a discrete level}, the minimization problem associated with the inf-convolution between the Hellinger and the Wasserstein distance. We obtain some a priori bounds on the minimizers that will be crucial for the convergence results for general measures contained in the next subsection.

Let $N\in \mathbb{N}_{>1}$, and $r_0,r_N>0$. Let $f_N(r_0, r_N; \cdot): \mathbb{R}^{2N-1} \to [0,+\infty)$ be the function
\begin{equation}\label{eq:deff_N}
  f_N(r_0,r_N;r_1,\dots,r_{N-1},d_1,\dots,d_N)\coloneqq N\sum_{i=1}^N\left(|r_i-r_{i-1}|^2+r_i^2d_i^2\right). 
\end{equation}
Notice that $f_N(r_0, r_N; \cdot)$ is clearly continuous and symmetric w.r.t.~the origin in the variables $d_i$. Given $d \ge 0$, we also introduce the sets $\Omega_{\ge d}^N, \Omega_d^N\subset \mathbb{R}^{2N-1}$ defined as
\begin{equation}\label{def:Omega_dN}
\Omega_{\ge d}^N\coloneqq \left \{ r_i >0, \, d_i \ge 0, \, \sum_{i=1}^N d_i \ge d \right \}, \qquad \Omega_d^N\coloneqq  \left \{ r_i >0, \, d_i \ge 0, \, \sum_{i=1}^N d_i = d \right \}.
\end{equation}

\begin{lemma}\label{lem: properties-f}
Let $N\in \mathbb{N}_{>1}$ and let us set $Y\coloneqq (0,+\infty)^2 \times [0,+\infty)$. The following properties hold:
\begin{enumerate}
\item For every $(r_0,r_N,d)\in Y$, the set
\begin{equation}\label{eq:defPsi}
\Psi(r_0,r_N,d)\coloneqq \argmin_{\Omega_{\ge d}^N} f_N(r_0,r_N;\cdot)
\end{equation}
is non-empty. 
\item For every $(r_0,r_N,d)\in Y$, if 
\[\min_{\Omega_{\ge d}^N} f_N(r_0, r_N; \cdot) \le |r_0-r_N|^2 + r_0r_N (d \wedge \pi/2)^2\]
then 
\begin{equation}\label{eq:radialandspatialbounds}
 \Psi(r_0,r_N,d)\subset    \left[\Big(1-\frac{\pi}{4}\Big)r_0\wedge r_N,r_0\vee r_N\right]^{N-1}\times \left[0,\frac{1}{\sqrt{N}}\frac{\pi}{2-\frac{\pi}{2}}\frac{\sqrt{r_0r_N}}{r_0\wedge r_N}\right]^N\,.
 \end{equation} 
\item There exists a Borel measurable map $U_N:Y\to \mathbb{R}^{2N-1}$ such that $U_N(r_0,r_N,d)\in \Psi(r_0,r_N,d)\cap \Omega_{d}^N$ for every $(r_0,r_N,d)\in Y.$
\end{enumerate}
\end{lemma}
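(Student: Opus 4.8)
The plan is to establish the three claims in order, as claims (1) and (3) rely on the compactness estimate in claim (2).

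\textbf{Claim (1): existence of minimizers.} Fix $(r_0, r_N, d) \in Y$. The function $f_N(r_0, r_N; \cdot)$ is continuous and non-negative on $\mathbb{R}^{2N-1}$, but $\Omega_{\ge d}^N$ is neither closed (the constraints $r_i > 0$ are open) nor bounded. First I would argue that one may as well minimize over the truncated set where additionally $d_i \le d$ for all $i$ and $r_i \le r_0 \vee r_N$ for all $i$: indeed, given any competitor, replacing each $r_i$ by $r_i \wedge (r_0 \vee r_N)$ does not increase $|r_i - r_{i-1}|^2$ (since $r_0, r_N$ are already below the cap) and decreases $r_i^2 d_i^2$, while replacing $(d_1, \dots, d_N)$ by a rescaled vector with the same sum but each component $\le d$ only decreases $\sum r_i^2 d_i^2$ (this is possible since $N \ge 2$; one just redistributes). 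So the infimum over $\Omega_{\ge d}^N$ equals the infimum over a set whose closure $K$ is compact and on which $f_N$ is continuous; moreover on $K \setminus \Omega_{\ge d}^N$ some $r_i = 0$, and I would check that such boundary points cannot be strictly better than an interior competitor (e.g. the straight radial interpolation $r_i = (1-i/N) r_0 + (i/N) r_N$ with all $d_i = d/N$ has finite value, and any point with some $r_i = 0$ forces at least one jump $|r_i - r_{i-1}|^2$ that, after the $N$ prefactor, can be compared). Hence the minimum is attained in $\Omega_{\ge d}^N$.

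\textbf{Claim (2): a priori bounds.} This is the technical heart. Assume the minimum value is $\le |r_0 - r_N|^2 + r_0 r_N (d \wedge \pi/2)^2 =: \mathsf{H}_{\HK}$-type bound. Let $(r_1, \dots, r_{N-1}, d_1, \dots, d_N) \in \Psi(r_0, r_N, d)$. For the radial bounds: since $f_N \ge N \sum |r_i - r_{i-1}|^2 \ge (\sum |r_i - r_{i-1}|)^2 \ge |r_0 - r_N|^2$ always, the constraint forces $N \sum r_i^2 d_i^2 \le r_0 r_N (d\wedge \pi/2)^2$ after subtracting; but more carefully one uses the first-order optimality conditions. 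At an interior minimizer, $\partial_{r_j} f_N = 0$ for $j = 1, \dots, N-1$ gives a discrete elliptic relation $2(r_j - r_{j-1}) - 2(r_{j+1} - r_j) + 2 r_j d_j^2 = 0$, i.e. $r_{j+1} = 2r_j - r_{j-1} + r_j d_j^2 \ge 2 r_j - r_{j-1}$, so the sequence $(r_j)$ is convex; a convex sequence with positive endpoints $r_0, r_N$ is bounded above by $\max\{r_0, r_N\}$ and bounded below by... here one needs the quantitative lower bound $(1 - \pi/4)(r_0 \wedge r_N)$, which should come from combining convexity with an upper bound on $\sum d_j^2$ (hence on how far the minimum of the convex sequence can dip below the chord, controlled by the total "curvature" $\sum r_j d_j^2$, which the energy bound controls by $\mathsf{H}_{\HK} \le$ const $\cdot r_0 r_N \pi^2/4$). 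For the spatial bounds on $d_i$: from $N r_i^2 d_i^2 \le f_N \le \mathsf{H}_{\HK} \le r_0 r_N \pi^2/4 + |r_0-r_N|^2$, together with the just-obtained lower bound $r_i \ge (1-\pi/4)(r_0 \wedge r_N)$, one isolates $d_i^2 \le \frac{1}{N}\cdot\frac{\mathsf{H}_{\HK}}{r_i^2}$ and simplifies; the stated constant $\frac{1}{\sqrt N}\cdot \frac{\pi}{2 - \pi/2}\cdot\frac{\sqrt{r_0 r_N}}{r_0 \wedge r_N}$ should fall out after bounding $\mathsf{H}_{\HK}$ by a multiple of $r_0 r_N$ (using $|r_0 - r_N|^2 \le$ comparison, or noting the relevant regime). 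I expect matching the precise constants — especially the $1 - \pi/4$ — to be the fiddly part, requiring care with which chord/curvature inequality is invoked; the qualitative boundedness is robust.

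\textbf{Claim (3): measurable selection.} With claim (2) in hand, $\Psi(r_0, r_N, d)$ is, under the stated energy hypothesis, contained in a fixed compact box depending continuously (indeed nicely) on $(r_0, r_N, d)$; intersecting with the closed slice $\Omega_d^N = \{\sum d_i = d\}$ is harmless since by the rescaling remark in claim (1) the minimum over $\Omega_{\ge d}^N$ is always attained with $\sum d_i = d$ (pushing mass outward never helps), so $\Psi(r_0,r_N,d) \cap \Omega_d^N \ne \emptyset$. The set-valued map $(r_0, r_N, d) \mapsto \Psi(r_0, r_N, d) \cap \Omega_d^N$ has closed graph (as $f_N$ is jointly continuous and the constraint set varies continuously) and takes non-empty values in a common compact set, so it is a measurable multifunction; by the Kuratowski–Ryll-Nardzewski measurable selection theorem it admits a Borel selection $U_N$. (Without the energy hypothesis one can still select, e.g., by always taking the straight radial interpolant, or by noting the truncation argument of claim (1) confines minimizers to a compact set regardless — so the selection exists on all of $Y$.) I would phrase this last paragraph to cover all of $Y$, invoking the truncation from claim (1) to get a compact-valued, closed-graph multifunction everywhere.
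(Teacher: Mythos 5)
Your proposal follows essentially the same architecture as the paper (compact truncation for existence; Euler--Lagrange $\Rightarrow$ convexity of the radial sequence plus Jensen for the bounds; a measurable-selection theorem for the final claim), but there are two genuine gaps and one small error that would need repair.

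First, in claim (1) the crucial step is not the truncation but showing that a minimizer over the closed set $\overline{\Omega}_d^N$ cannot have any $r_i=0$. Your suggested argument (``any point with some $r_i = 0$ forces at least one jump $|r_i - r_{i-1}|^2$ that, after the $N$ prefactor, can be compared'') does not work: for $N$ large one can drive some $r_i$ to $0$ with all increments of order $1/N$, so that $N\sum|r_i-r_{i-1}|^2$ stays bounded, and the comparison with the straight interpolant is not forced. The paper's actual argument is a local monotonicity argument: the map $t\mapsto f_N(r_0,r_N;r_1,\dots,r_{i-1},t,r_{i+1},\dots)$ is strictly decreasing on $[0,r_{i-1}/(2+d_i^2))$, so by induction on $i$ (starting from $r_0>0$) a minimizer must have $r_i>0$ for all $i$. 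This is the piece you would need to supply; the rest of claim (1) is essentially right.

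Second, in claim (2) you correctly derive the discrete convexity $2r_j\le r_{j-1}+r_{j+1}$ and reach for Jensen's inequality, but the Jensen application you outline only yields $|r_0-r_N|^2\le N\sum|r_i-r_{i-1}|^2$, which is too weak to produce the lower bound $(1-\pi/4)(r_0\wedge r_N)$. The paper instead exploits the convexity structurally: for a convex sequence with a unique interior minimum $r_j$, the telescoping sum $\sum|r_i-r_{i-1}|$ equals $r_0 + r_N - 2r_j$ exactly (first decreasing then increasing), so Jensen gives $(r_0+r_N-2r_j)^2 \le |r_0-r_N|^2 + r_0 r_N(\pi/2)^2$, which after solving and a one-variable optimization over the ratio $r_N/r_0$ produces the constant $2-\pi/2$ (hence $1-\pi/4$). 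Your alternative idea of controlling ``how far the minimum dips below the chord via $\sum r_j d_j^2$'' is plausibly also workable but is not what gives the stated constant, and you explicitly flag that you haven't checked it; this is where the proposal stops short. The $d_i$-bound then follows as you sketch, after subtracting $(r_0-r_N)^2$ from both sides rather than using the whole $\mathsf{H}_{\HK}$-type bound directly (otherwise the extra $|r_0-r_N|^2$ spoils the constant).

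Third, a small but real error in your claim (3): the parenthetical fallback of ``always taking the straight radial interpolant'' does not define a valid selection, since that point is generically not a minimizer of $f_N$ (it only minimizes the radial part with all $d_i$ equal, ignoring the $r_i^2 d_i^2$ coupling). Your main route via a closed-graph, compact-valued multifunction and Kuratowski--Ryll-Nardzewski is a legitimate alternative to the paper's invocation of Bogachev's selection theorem and, once the compactness from claim (1) is in hand, gives the same conclusion.
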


\begin{proof}
For every $r_0, r_N>0$, the function $f_N(r_0, r_N; \cdot)$ is increasing in the $d_i$-components in $[0,+\infty)$, hence it is immediate to see that the minimization problems of $f_N(r_0, r_N; \cdot)$ on $\Omega_{\ge d}^N$ or on $\Omega_d^N$ are equivalent (in the sense that they have the same minimizers and the same minimal value); we thus restrict ourselves to study the problem in the latter set.
For every $(r_0,r_N,0)$ in the Borel set $Y_0\coloneqq (0,+\infty)^2\times \{0\}$, setting $r_i\coloneqq r_0+i/N(r_N-r_0)$ for $i=1,\dots,N-1$, it is also immediate to see that the point $p=(r_1,\dots,r_{N-1},0,\dots,0)\in \Omega_0^N$ is the only minimizer of $f_N(r_0, r_N; \cdot)$ in $\Omega_0^N$ and all the statements trivially hold with $Y$ replaced by $Y_0$. It is thus not restrictive to assume $d>0$ in the following, and construct a Borel measurable map $U_N:Y\setminus Y_0\to \R^{2N-1}$.

(1) We fix $(r_0, r_N, d) \in Y \setminus Y_0$, we let 
\[\overline{\Omega}_d^N=\left \{ r_i \ge0, \, d_i \ge 0, \, \sum_{i=1}^N d_i = d \right \}
\]
be the closure of $\Omega_d^N$ in $\R^{2N-1}$, and we set
\begin{equation}\label{eq:defsetCandK}
C\coloneqq \left[0, \frac{N(r_0^2+r_N^2)^{1/2}}{d}\right]^{N-1}\times \mathbb{R}^{N}\subset \R^{2N-1}, \qquad K\coloneqq \overline{\Omega}_d^N\cap C\,.  
\end{equation}
Since $f_N(r_0,r_N;\cdot)$ is continuous and $K$ is compact, $f_N(r_0,r_N;\cdot)$ has at least a minimum point in $K$. For every $z=(r_1,\dots,r_{N-1,}d_1,\dots,d_N)\in \overline{\Omega}_d^N\cap C^c$ let $j\in \{1,\dots,N\}$ be an index such that $d_j\ge d/N$ (notice that such an index $j$ must exist); we have 
\[f(z)\ge Nr_j^2d_j^2> \frac{N^3}{d^2}(r_0^2+r_N^2)\frac{d^2}{N^2}=N(r_0^2+r_N^2)=f(c)\,,\]
where $c=(0,\dots,0,d,0,\dots,0)\in K$. Thus 
\[
\argmin_{K}f_N(r_0,r_N;\cdot)=\argmin_{\overline{\Omega}_d^N}f_N(r_0,r_N;\cdot).
\] 
To conclude, we show that any minimizer $z^{*}\coloneqq (r_1^*,\dots,r^*_{N-1},d^*_1,\dots,d^*_N)$ of $f_N(r_0,r_N;\cdot)$ in $\overline{\Omega}_d^N$ is such that $r^*_i>0$ for every $i=1,\dots,N-1$, so that
\begin{equation}\label{eq:argminonKandOmega}
\argmin_{K}f_N(r_0,r_N;\cdot)=\argmin_{\Omega_d^N}f_N(r_0,r_N;\cdot).
\end{equation}
Indeed, for any $z\coloneqq (r_1,\dots,r_{N-1},d_1,\dots,d_N)\in \overline{\Omega}_d^N$ the map
$$t\mapsto f_N(r_0,r_N;t,r_2,\dots,r_{N-1},d_1,\dots,d_N)$$
is strictly decreasing in $[0,r_0/(2+d_1^2))$ as an immediate consequence of its expression, which implies $r_1^*>0$. Moreover, if $z\coloneqq (r_1,\dots,r_{N-1},d_1,\dots,d_N)\in \overline{\Omega}_d^N$ is such that $r_j>0$ for every $1\le j\le i-1$, with $1<i<N$, then the map
$$t\mapsto f_N(r_0,r_N;r_1,\dots,r_{i-1},t,r_{i+1},\dots,r_{N-1},d_1,\dots,d_N)$$
is strictly decreasing in $[0,r_{i-1}/(2+d_i^2))$ which implies $r^*_i>0$ and concludes the proof of the first point of the statement. 
\smallskip

$(2)$ We fix $(r_0, r_N, d) \in Y \setminus Y_0$; notice that $f_N(r_0,r_N;\cdot)$ is a smooth function and the constraint $\sum_{i=1}^Nd_i=d$ is admissible, thus every $z=(r_1,\dots,r_{N-1},d_1,\dots,d_N)\in \Psi(r_0,r_N,d)$ satisfies the following system of equations given by the Lagrange multipliers theorem (here we also used the symmetry in the variables $d_i$ in order to avoid an additional constraint for $d_i \ge 0$):
\[
\begin{cases}
    r_i\left(2+d_i^2\right)=r_{i-1}+r_{i+1} \qquad &\forall i=1,\dots,N-1,\\
    2r_i^2d_i=\lambda_N \qquad &\forall i=1,\dots,N,\\
    \sum_{i=1}^N d_i=d\,,
\end{cases}
\]
where $\lambda_N$ is the Lagrange multiplier. By the equations in the first line of the system we infer $2r_i\le r_{i-1}+r_{i+1}$, which implies that one (and only one) of the two following situations occurs:
\begin{itemize}
    \item the map $i:\{0,\dots,N\}\to \R$, $i\mapsto r_i$ is monotone (increasing or decreasing);
    \item there exists an index $j\in \{1,\dots,N-1\}$ such that the map $i\mapsto r_i$ is monotone decreasing in $\{0,\ldots,j\}$ and monotone increasing in $\{j,\dots,N\}$. In particular, $r_j=\min\{r_i : i=1,\ldots,N-1\}$.
\end{itemize}
In the first case it is obvious that $r_i\in [(1-\pi/4)r_0\wedge r_N,r_0\vee r_N]$ for every $i=1,\dots,N-1$. 
\\In the second case we reason as follows:  it is clear that $r_i\le r_0\vee r_N$ for every $i=1,\dots,N-1$; moreover, thanks to the upper bound in the assumption and Jensen's inequality we have
\begin{equation}\label{eq:prooflowerboundradialpart}
\begin{aligned}
|r_0-r_N|^2 + r_0r_N (d \wedge \pi/2)^2 & \ge \min_{\Omega_d^N} f_N(r_0, r_N; z) \ge N\sum_{i=1}^N|r_i-r_{i-1}|^2\\
&\ge \left(\sum_{i=1}^N|r_i-r_{i-1}|\right)^2=(r_0+r_N-2r_j)^2,
\end{aligned}
\end{equation}
whence
\begin{equation}\label{eq:proofboundminradius}
    r_j\ge \frac{r_0+r_N-\sqrt{r_0^2+r_N^2+r_0r_N(\frac{\pi^2}{4}-2)}}{2}.
\end{equation}
Introducing the function $g:[1,\infty)\to \R$, $g(t)=1+t-\sqrt{1+t^2+t(\pi^2/4-2)}$, by elementary considerations one can prove that $g$ is minimized at $t=1$ with value $g(1)=2-\pi/2$. This fact, together with the bound \eqref{eq:proofboundminradius}, gives that 
\begin{equation}\label{eq:prooffinalboundradius}
 r_j\ge \Big(1-\frac{\pi}{4}\Big)r_0\wedge r_N\,.
\end{equation}
It remains to prove that 
\[
d_k\coloneqq \max\{d_i : i=1,\ldots,N\}\le \frac{1}{\sqrt{N}}\frac{\pi}{2-\frac{\pi}{2}}\frac{\sqrt{r_0r_N}}{r_0\wedge r_N}\,.
\]
To prove this bound we can reason as in the proof of \eqref{eq:prooflowerboundradialpart} to get
\[
|r_0-r_N|^2+r_0r_N(d\wedge \pi/2)^2\ge (r_0-r_N)^2+N\sum_{i=1}^Nr_i^2d_i^2
\]
which implies, using \eqref{eq:prooffinalboundradius},
\[r_0r_N\frac{\pi^2}{4}\ge N\left(1-\frac{\pi}{4}\right)^2(r_0\wedge r_N)^2\sum_{i=1}^Nd_i^2 \ge N\left(1-\frac{\pi}{4}\right)^2(r_0\wedge r_N)^2d_k^2\,,\]
which gives the desired bound on $d_k$.

\smallskip
(3) The Borel map $U_N: Y \setminus Y_0 \to \R^{2N-1}$ is constructed as a selection of the multi-valued map $\Psi:(0,+\infty)^3\to 2^{\R^{2N-1}}$ defined in \eqref{eq:defPsi}. In order to ensure the Borel measurability, we appeal to \cite[6.9.3]{Bogachev07} and we have to show that for every $U\subset \R^{2N-1}$ open the set $\hat{\Psi}(U)$ is Borel, where we have introduced the map 
\[\hat{\Psi}:2^{\R^{2N-1}}\to 2^{(0,+\infty)^3}, \qquad\hat{\Psi}(U)\coloneqq \{(r_0,r_N,d)\in (0,+\infty)^3\ :\ \Psi(r_0,r_N,d)\cap U\neq \emptyset\}\,.\]
It sufficient to show that $\hat{\Psi}(C)$ is closed for every $C\subset \R^{2N-1}$ closed, since every open set $U\subset \R^{2N-1}$ is a countable union of closed sets $\cup_n C_n=U$ and $\hat{\Psi}(\cup_n C_n)=\cup_n \hat{\Psi}(C_n)$. So let $y^j\coloneqq (r^j_0,r^j_N,d^j)$ be a sequence of points in $\hat{\Psi}(C)$ such that $y^j\to y\coloneqq (r_0,r_N,d)\in(0,+\infty)^3$; we aim to show that $y\in \hat{\Psi}(C)$. Since $(y^j)_j$ is converging, we can assume it is contained in a compact set $K'=[a,b]^3\subset (0,+\infty)^3$ and from the proof of point $(1)$ we know that every sequence $(x^j)_j$ such that $x^j\in \Psi(y^j)\cap C$ for every $j$ is contained in a compact set $K\subset \mathbb{R}^{2N-1}$ of the form (recalling \eqref{eq:defsetCandK} and \eqref{eq:argminonKandOmega})
\[
K = \left\{(r_1,\dots,r_{N-1},d_1,\dots,d_N) \,:\, r_i\in\left[0,\frac{\sqrt{2}Nb}{a}\right],\, d_i\ge 0,\, \sum_{i=1}^Nd_i\in[a,b]\right\}\,.
\]
Up to a non-relabeled subsequence, we can thus assume that $x^j$ is converging to some $x\in C$; let $p=(s_1,\dots,s_{N-1},c_1,\dots,c_N)\in \Omega_d^N$. We can consider the sequence $p^j\coloneqq (s_1,\dots,s_{N-1},c_1d^j/d,\dots,c_Nd^j/d) \in \Omega_{d^j}^N$ so that $p^j\to p$ and 
\[f_N(r_0^j,r_N^j;x^j)\le f_N(r_0^j,r_N^j;p^j)\]
since $x^j\in \Psi(y^j)$. Using the continuity of $f_N(\cdot,\cdot\,;\cdot):\R^{2N+1}\to [0,+\infty)$, passing to the limit in the last inequality we obtain
\[f_N(r_0,r_N;x)\le f_N(r_0,r_N;p)\]
which shows that $x\in \Psi(y)$ due to the arbitrariness of $p$, so that $y\in \hat{\Psi}(C)$.
\end{proof}

\vspace{.5cm}

Let $N\in \mathbb{N}_{\ge 1}$. We introduce the function $\sfH_N: \f{C}[X]^{N+1} \to [0,+\infty)$ defined as
\begin{equation}\label{eq:NsumH}
    \sfH_N(\f{y}_0, \f{y}_1, \dots, \f{y}_{N})\coloneqq N \sum_{i=1}^N \sfH_{\W\He}(\f{y}_{i-1}, \f{y}_i)\,.
\end{equation}
We also introduce
\begin{equation}\label{eq: fakeconedistance}
\tilde{\sfd}_{\f{C}}: \f{C}[X]^2 \to [0,+\infty), \qquad \tilde{\sfd}^2_{\f{C}}([x,r],[y,s]) \coloneqq |r-s|^2 + rs (\sfd(x,y) \wedge \pi/2) ^2 \,.
\end{equation}
Notice that $\tilde{\sfd}_{\f{C}}$ is well defined on $\f{C}[X]^2$ with 
\begin{equation}\label{eq: Handtildedonvertex}
\sfH_{\W\He}(\f{o}, [y,s])= \tilde{\sfd}^2_{\f{C}}(\f{o}, [y,s])=s^2 \quad \textrm{and} \quad \sfH_{\W\He}( [x,r],\f{o})= \tilde{\sfd}^2_{\f{C}}([x,r],\f{o})=r^2.
\end{equation}
Using formulation \eqref{eq:cos_to_sin}, it follows immediately that $\tilde{\sfd}_{\f{C}} \ge \mathsf{d}_{\pi/2, \f{C}}$ where $\mathsf{d}_{\pi/2, \f{C}}$ was defined in \eqref{ss22:eq:distcone}. \label{text:d_tilde}

In the next statement
\[
\begin{array}{ccccc}
\pi^i & : & \f{C}[X]^{N+1} & \to & \f{C}[X] \\[5pt]
& & (\f{y}_0,\dots,\f{y}_N) & \mapsto & \f{y}_i
\end{array}\,, 
\qquad i=0,\dots,N
\]
denote the projection maps on the cone and $\pi^{i,j} \coloneqq (\pi^i,\pi^j)$, $0 \leq i < j \leq N$. We use the notation $\mathcal{B}(Z)$ for the Borel sigma-algebra of a complete and separable metric space $Z$, and $\mathcal{B}(Z)^*$ for the sigma-algebra of \emph{universally measurable} subsets of $Z$. We refer the reader to Appendix \ref{sec:measappendix} for the details about universally measurable subsets. 

\begin{theorem}\label{thm:thmdinicolo} Let $(X, \sfd)$ be a complete, separable and geodesic metric space, and let $N \in \N_{\ge 1}$.
There exists a map $G_N : \f{C}[X]^{N+1} \to \f{C}[X]^{N+1}$ which is $\mathcal{B}(\f{C}[X]^{N+1})^*$-$\mathcal{B}(\f{C}[X]^{N+1})$ measurable such that 
\begin{equation}\label{eq:fermo_ai_bordi}
\pi^0 \circ G_N = \pi^0, \quad \pi^N \circ G_N = \pi^N \quad \textrm{and}\quad \sfH_N \circ G_N \le \sfH_N \  \textrm{on} \ \f{C}[X]^{N+1},  
\end{equation}
and such that, on the set $\mathcal{D}_N\coloneqq \{ \sfH_N <\tilde{\sfd}_{\f{C}}^2 \circ \pi^{0,N} \}\subset \f{C}[X]^{N+1}$, it holds

\begin{equation}\label{eq:radialboundG_N}
\mathsf{r} \circ \pi^i \circ G_N \in \left [ \Big(1-\frac{\pi}{4}\Big)\Big[(\mathsf{r} \circ \pi^0) \wedge (\mathsf{r} \circ \pi^N)\Big],  (\mathsf{r} \circ \pi^0) \vee (\mathsf{r} \circ \pi^N) \right ]
\end{equation}
for every $i=0, \dots, N,$ and

\begin{equation}\label{eq:spatialboundG_N}
\mathsf{d} ( \mathsf{x} ( \pi^{i-1} \circ G_N), \mathsf{x} ( \pi^{i} \circ G_N)) \in \left [ 0, \frac{1}{\sqrt{N}}\frac{\pi}{2-\frac{\pi}{2}}\frac{ \sqrt{(\mathsf{r} \circ \pi^0) (\mathsf{r} \circ \pi^N)}}{(\mathsf{r} \circ \pi^0) \wedge (\mathsf{r} \circ \pi^N) } \right ]
\end{equation}
for every $i=1, \dots, N$, where $\sfH_N$ is as in \eqref{eq:NsumH}.
\end{theorem}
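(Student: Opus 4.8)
The plan is to build $G_N$ essentially pointwise, using the finite-dimensional minimization analysis of Lemma~\ref{lem: properties-f}, and then to verify that the pointwise assignment is universally measurable. First I would fix $(\f{y}_0,\dots,\f{y}_N) \in \f{C}[X]^{N+1}$ and reduce to the case in which the tuple lies in $\mathcal{D}_N$ and additionally $r_0 := \mathsf r(\f{y}_0)>0$ and $r_N := \mathsf r(\f{y}_N)>0$: outside $\mathcal{D}_N$ (or when the prescribed endpoint radii vanish) I simply set $G_N = \mathrm{id}$, which trivially satisfies \eqref{eq:fermo_ai_bordi} and makes the remaining conclusions vacuous. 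Note that $\mathcal{D}_N \cap \{r_0 r_N = 0\} = \emptyset$: indeed by \eqref{eq: Handtildedonvertex} and the decomposition of $\sfH_{\W\He}$, if $r_0 = 0$ then $\sfH_N \geq N r_1^2 + \dots \ge \dots$, and a short computation forces $\sfH_N \geq \tilde{\sfd}_{\f{C}}^2(\f{y}_0,\f{y}_N)$; similarly for $r_N=0$ (this uses only the telescoping lower bound $\sfH' \geq (r_0-r_N)^2/N$ after Jensen and the fact that when $r_0$ or $r_N$ is zero, $\tilde{\sfd}_{\f{C}}^2(\f{y}_0,\f{y}_N) = (r_0 \vee r_N)^2$). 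So on $\mathcal D_N$ we may and do assume $r_0,r_N>0$.

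Next, on $\mathcal{D}_N \cap \{r_0,r_N>0\}$, the key observation is that minimizing $\sfH_N$ over all intermediate points, with $\f{y}_0,\f{y}_N$ fixed, \emph{decouples} into a radial/angular optimization of exactly the form $f_N(r_0,r_N;\cdot)$ in \eqref{eq:deff_N}. Concretely: given a candidate tuple, replace each pair of consecutive spatial points $\mathsf x(\f{y}_{i-1}),\mathsf x(\f{y}_i)$ by a geodesic in $X$ (available since $(X,\sfd)$ is geodesic) realizing the distance $d_i := \sfd(\mathsf x(\f{y}_{i-1}),\mathsf x(\f{y}_i))\wedge (\pi/2)$, and then observe that $\sfH_{\W\He}(\f{y}_{i-1},\f{y}_i) = |r_{i-1}-r_i|^2 + r_i^2 d_i^2$ once $d_i \le \pi/2$ (which we can always enforce by truncating the angular jumps, only decreasing $\sfH_N$). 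Thus the infimum of $\sfH_N$ over intermediate $(r_1,\dots,r_{N-1})$ and over the \emph{total} spatial budget $d := \sfd(\mathsf x(\f{y}_0),\mathsf x(\f{y}_N))$ equals $\min_{\Omega^N_{\ge d}} f_N(r_0,r_N;\cdot)$. The hypothesis $(\f{y}_0,\dots,\f{y}_N) \in \mathcal{D}_N$ says precisely that $\sfH_N$ evaluated at the given tuple is $< \tilde{\sfd}_{\f{C}}^2(\f{y}_0,\f{y}_N) = |r_0-r_N|^2 + r_0 r_N (d\wedge \pi/2)^2$, which is exactly the smallness assumption in item (2) of Lemma~\ref{lem: properties-f}. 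Hence the minimizer selected by the Borel map $U_N$ of Lemma~\ref{lem: properties-f}(3) lands in the box \eqref{eq:radialandspatialbounds}. I then define $G_N(\f{y}_0,\dots,\f{y}_N)$ by: keeping $\f{y}_0$ and $\f{y}_N$ fixed; reading off $(r_1^*,\dots,r_{N-1}^*,d_1^*,\dots,d_N^*) = U_N(r_0,r_N,d)$; choosing a constant-speed geodesic $x(\cdot):[0,d]\to X$ from $\mathsf x(\f{y}_0)$ to $\mathsf x(\f{y}_N)$ (here $d = \sfd(\mathsf x(\f{y}_0),\mathsf x(\f{y}_N))$, and if $d>\pi/2$ we rescale so the accumulated spatial increments use only a sub-geodesic of total length $\sum d_i^* = d$ when $d \le \pi/2$, or the full one capped appropriately); and setting the $i$-th new point to be $[x(s_i), r_i^*]$ where $s_i := d_1^* + \dots + d_i^*$. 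The inequalities \eqref{eq:radialboundG_N}, \eqref{eq:spatialboundG_N} are then immediate from \eqref{eq:radialandspatialbounds}, while $\sfH_N \circ G_N = f_N(r_0,r_N;U_N(r_0,r_N,d)) \le \sfH_N$ follows because $U_N$ attains the minimum of $f_N$ over $\Omega^N_{\ge d}$ and, as noted, $\sfH_N$ at the original tuple is at least $f_N$ evaluated there (after truncating angular jumps to $\le \pi/2$, which does not increase $\sfH_{\W\He}$).

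For the measurability of $G_N$: the set $\mathcal{D}_N$ is Borel in $\f{C}[X]^{N+1}$ since $\sfH_N$ and $\tilde{\sfd}_{\f{C}}^2 \circ \pi^{0,N}$ are Borel (indeed lower semicontinuous, as $\sfH_{\W\He}$ is l.s.c.\ by Proposition~\ref{prop:whe}(3) and $\tilde{\sfd}_{\f{C}}$ is continuous). On $\mathcal{D}_N$ the map $(\f{y}_0,\dots,\f{y}_N) \mapsto (r_0,r_N,d) \in Y = (0,+\infty)^2\times[0,+\infty)$ is Borel, and $U_N : Y \to \R^{2N-1}$ is Borel by Lemma~\ref{lem: properties-f}(3), so the radial/angular data of $G_N$ depend Borel-measurably on the input. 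The only genuinely delicate ingredient is the \emph{measurable selection of a geodesic} $x(\cdot)$ in $X$ joining $\mathsf x(\f{y}_0)$ to $\mathsf x(\f{y}_N)$: the geodesic selection map is in general only universally measurable, not Borel — this is precisely what is handled in Appendix~\ref{sec:measappendix}. Composing a universally measurable geodesic selector with the Borel data and the evaluation maps $\geo((X,\sfd)) \times [0,1] \to X$, $(\gamma,t)\mapsto \gamma(t)$, gives a $\mathcal{B}(\f{C}[X]^{N+1})^*$-$\mathcal{B}(\f{C}[X]^{N+1})$ measurable $G_N$. I expect this measurable-selection step — reconciling the Borel selection $U_N$ with the only-universally-measurable geodesic selection in $X$, and checking that the composition stays in the right $\sigma$-algebra — to be the main technical obstacle; the rest is a careful but routine bookkeeping of the identity $\sfH_{\W\He}(\f{y}_{i-1},\f{y}_i) = |r_{i-1}-r_i|^2 + r_i^2(\sfd(\mathsf x(\f{y}_{i-1}),\mathsf x(\f{y}_i))\wedge\pi/2)^2$ valid once angular jumps are truncated, together with the a priori bounds already established in Lemma~\ref{lem: properties-f}.
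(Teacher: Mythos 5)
Your overall architecture matches the paper's: define $G_N$ as the identity outside $\mathcal D_N$, and on $\mathcal D_N$ replace the intermediate points by the output of the Borel selection map $U_N$ from Lemma~\ref{lem: properties-f} placed along a universally-measurable geodesic selection in $X$; your observation that $\mathcal D_N \cap \{r_0 r_N = 0\} = \emptyset$ and your measurability discussion (Borel $\mathcal D_N$, Borel $U_N$, universally measurable $\Gamma^X$) are correct and essentially identical to the paper. However, there are two problems in the core inequality $\sfH_N \circ G_N \le \sfH_N$.

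First, a misstatement that signals a conceptual confusion. You claim ``$\sfH_{\W\He}(\f{y}_{i-1},\f{y}_i) = |r_{i-1}-r_i|^2 + r_i^2 d_i^2$ once $d_i \le \pi/2$ (which we can always enforce by truncating the angular jumps).'' Looking at \eqref{eq:HWHe}, $\sfH_{\W\He}$ carries \emph{no} angular cap: the correct condition for the displayed identity is $r_{i-1}>0$, not $d_i \le \pi/2$. The $\pi/2$ truncation appears only in $\tilde{\sfd}_{\f{C}}$ (hence in the definition of $\mathcal D_N$) and in $\sfH_{\HK}$, and it plays no role in the identification of $\sfH_N$ with $f_N$. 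The truncation step you propose is spurious and, worse, it masks the actual obstruction.

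Second, and this is the genuine gap: you do not address tuples with \emph{intermediate} vertices $\f{y}_i = \f{o}$, $1 \le i \le N-1$. For such a tuple the ``decoupling'' $\sfH_N(\f{y}_0,\dots,\f{y}_N) = f_N(r_0,r_N;r_1,\dots,r_{N-1},d_1,\dots,d_N)$ fails: when $r_{i}=0$ and $r_{i+1}>0$, the $(i+1)$-th summand of $\sfH_N$ is $r_{i+1}^2$, whereas the corresponding $f_N$-summand is $r_{i+1}^2(1 + d_{i+1}^2)$. Because of this, the inequality $\sfH_N(\text{orig}) \ge \min_{\Omega^N_{\ge d}} f_N$ — which is exactly what your definition of $G_N$ needs in order to yield $\sfH_N \circ G_N \le \sfH_N$ — is not at all immediate. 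The paper resolves it through the inductive claim \eqref{claim:positiveradii}, built on the local improvement \eqref{eq:betterstaypositive}, which shows that any tuple with an intermediate vertex can be strictly beaten by one with all positive intermediate radii (for which the decoupling \emph{does} hold, and the triangle inequality gives $\sum d_i \ge d$). Nothing in your write-up plays the role of that lemma. Note that without some constraint of this kind the bound can genuinely fail: for a tuple with an intermediate vertex and large $\sfd(\sfx(\f{y}_0),\sfx(\f{y}_N))$ one can have $\sfH_N < \min_{\Omega^N_{\ge d}} f_N$; it is precisely the $\mathcal D_N$-condition $\sfH_N < \tilde{\sfd}_{\f{C}}^2(\f{y}_0,\f{y}_N)$, propagated through \eqref{claim:positiveradii}, that puts you in the regime where Lemma~\ref{lem: properties-f}(2) applies. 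You need to supply that step (either via the paper's induction, or by an argument that exploits the freedom of choosing the ``dummy'' $d_j$ when $r_j=0$ to force $\sum d_j \ge d$ while keeping $\sfH_N = f_N$).
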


\begin{proof}
We start by noticing that for every $(\f{y}_0,\dots,\f{y}_N)\in \mathcal{D}_N$ we have $\f{y}_i=[x_i,r_i]\neq \mathfrak{o}$ for $i=0,N$. Indeed, by contradiction, if $(\f{y}_0,\dots,\f{y}_N)\in \mathcal{D}_N$ with $[x_N,r_N]=\f{o}$ (the case $[x_0,r_0]=\f{o}$ is similar), using \eqref{eq: Handtildedonvertex} and Jensen's inequality we have
\[
r_0^2 = \tilde{\sfd}^2_{\f{C}}(\f{y}_0,\f{o}) > N\sum_{i=1}^N\sfH_{\W\He}(\f{y}_{i-1},\f{y}_i) \ge N\sum_{i=1}^N|r_{i-1}-r_i|^2 \ge \left(\sum_{i=1}^N|r_{i-1}-r_i|\right)^2 \ge r^2_0\,,
\]
which yields the contradiction.

For $\f{C}[X]\ni \f{y}_0,\f{y}_N\neq \f{o}$ we define the function $\sfH_N(\f{y}_0,\f{y}_N;\cdot):\f{C}[X]^{N-1}\to [0,+\infty)$ as $\sfH_N(\f{y}_0,\f{y}_N;\f{y}_1,\dots,\f{y}_{N-1})=\sfH_N(\f{y}_0,\f{y}_1,\dots,\f{y}_{N})$, and the sets 
\begin{align*}
\f{C}[X]_{\f{o}}^{N-1} & \coloneqq \{(\f{y}_1,\dots,\f{y}_{N-1}) \in \f{C}[X]^{N-1} \ : \ \exists i=1,\dots,N-1 \text{ such that } \f{y}_i=\f{o}\}\,, \\
\f{C}[X]_{>0}^{N-1} & \coloneqq \{(\f{y}_1,\dots,\f{y}_{N-1}) \in \f{C}[X]^{N-1} \ : \ \f{y}_i\neq \f{o} \ \forall i=1,\dots,N-1\}\,.
\end{align*}
We claim that:
\begin{equation}\label{claim:positiveradii}
\begin{aligned}
\text{if $\f{C}[X]\ni \f{y}_0,\f{y}_N\neq \f{o}$, $\forall (\f{y}_1,\dots,\f{y}_{N-1})$} &\text{$\in \f{C}[X]_{\f{o}}^{N-1}$ $\exists(\tilde{\f{y}}_1,\dots,\tilde{\f{y}}_{N-1})\in \f{C}[X]_{>0}^{N-1}$ such that}\\
\sfH_N(\f{y}_0,\f{y}_N;\f{y}_1,\dots,\f{y}_{N-1})&>\sfH_N(\f{y}_0,\f{y}_N;\tilde{\f{y}}_1,\dots,\tilde{\f{y}}_{N-1}).
\end{aligned}
\end{equation}
To show this, we firstly notice that for every $[x, r], [y, s]\in \f{C}[X]$ with $r>0$, we can find a point $\f{o}\neq [z,t]\in \f{C}[X]$ such that 
\begin{equation}\label{eq:betterstaypositive}
r^2+s^2=\sfH_{\W\He}([x,r], \f{o}) + \sfH_{\W\He}(\f{o}, [y,s]) > \sfH_{\W\He}([x,r],[z,t])+ \sfH_{\W\He}([z,t], [y,s]).  
\end{equation}
Indeed, using the explicit expression of $\sfH_{\W\He}$ it is easy to see that if $s=0$ we can take $z=x$ and any $t$ satisfying $0<t<r$. 
If $s>0$ then we can take $z=y$ and any $t$ satisfying $0<t<2\frac{r+s}{2+\sfd^2(x,y)}$.

So let $\f{y}_0=[x_0, r_0]$ and $\f{y}_N=[x_N, r_N]$ with $r_0, r_N>0$, $(\f{y}_1,\dots,\f{y}_{N-1})\in \f{C}[X]_{\f{o}}^{N-1}$, and we now prove claim \eqref{claim:positiveradii} by induction on $N$. The case $N=2$ is a direct consequence of \eqref{eq:betterstaypositive}. Let us assume the result true for $N-1\in \mathbb{N}, N>2$, and prove the case $N$. 
If $r_{1}>0$ we can apply the induction assumption to the function $\sfH_{N-1}(\f{y}_1,\f{y}_{N}; \cdot)$ to infer the existence of $(\tilde{\f{y}}_2,\dots,\tilde{\f{y}}_{N-1})\in \f{C}[X]_{>0}^{N-1}$ such that
$$\sfH_{N-1}(\f{y}_1,\f{y}_N;\f{y}_2,\dots,\f{y}_{N-1})>\sfH_{N-1}(\f{y}_1,\f{y}_N;\tilde{\f{y}}_2,\dots,\tilde{\f{y}}_{N-1}).$$
It is then immediate to see that $(\tilde{\f{y}}_1,\dots,\tilde{\f{y}}_{N-1})=(\f{y}_1,\tilde{\f{y}}_2,\dots,\tilde{\f{y}}_{N-1})\in \f{C}[X]_{>0}^{N-1}$ satisfies the claim \eqref{claim:positiveradii}.
 
In the case $r_1=0$ we firstly apply \eqref{eq:betterstaypositive} to replace $\f{y}_1$ with a point $\tilde{\f{y}}_1=[z,t]\neq \f{o}$ such that 
\[
\sfH_{\W\He}(\f{y}_0, \f{o}) + \sfH_{\W\He}(\f{o}, \f{y}_2) > \sfH_{\W\He}(\f{y}_0,\tilde{\f{y}}_1) + \sfH_{\W\He}(\tilde{\f{y}}_1, \f{y}_2)\,,
\]
and then to conclude we reason as above using the induction assumption on $\sfH_{N-1}(\tilde{\f{y}}_1,\f{y}_{N}; \cdot)$.

Now, given $\f{y}_0=[x_0, r_0]$ and $\f{y}_N=[x_N, r_N]$ with $r_0, r_N>0$, we let $d\coloneqq \sfd(x_0,x_N)$ and we define a map $T_N(\f{y}_0, \f{y}_N; \cdot) : \Omega_d^N \to \f{C}[X]^{N-1}$ ($\Omega_d^N$ being defined in \eqref{def:Omega_dN}) as $T_N(\f{y}_0, \f{y}_N; r_1, \dots, r_{N-1}, d_1, \dots, d_N)\coloneqq (\f{y}_1,\dots,\f{y}_{N-1})$ where for $i=1,\dots,N-1$ we have set $\mathsf{r}(\f{y}_i)\coloneqq r_i$ and 
$$\mathsf{x}(\f{y}_i)\coloneqq \begin{cases} \mathsf{x}(\f{y}_0) \quad & \text{ if } d=0, \\ \Gamma^X(x_0, x_N) \left (\frac{\sum_{j=1}^i d_j}{d} \right ) \quad &\text{ if } d>0, \end{cases}$$
with $\Gamma^X$ being the $\mathcal{B}(X \times X)^*-\mathcal{B}(\rmC([0,1]; (X, \sfd)))$ measurable geodesic selection map constructed in Theorem \ref{th:measgeod}. Notice that $\sfd(\mathsf{x}(\f{y}_{i-1}),\mathsf{x}(\f{y}_i))=d_i$ for every $i=1,\dots,N.$ Let $U_N:Y\to \R^{2N-1}$ be the Borel measurable map defined in Lemma \ref{lem: properties-f}, with $Y=(0,+\infty)^2\times [0,+\infty)$, and we define $G_N: \f{C}[X]^{N+1}\to \f{C}[X]^{N+1}$ as
\begin{equation}\label{def:G_N}
 G_N(\f{y}_0,\dots,\f{y}_N)\coloneqq \begin{cases}
        (\f{y}_0,\dots,\f{y}_N) \quad &\textrm{on } \f{C}[X]^{N+1}\setminus\mathcal{D}_N,\\
        \left(\f{y}_0,T_N(\f{y}_0,\f{y}_N;U_N(\mathsf{r}(\f{y}_0),\mathsf{r}(\f{y}_N),\sfd(\mathsf{x}(\f{y}_0),\mathsf{x}(\f{y}_N)))),\f{y}_N\right) \ &\textrm{on } \mathcal{D}_N.
    \end{cases}
\end{equation}
Since $\f{y}_0,\f{y}_N\neq \f{o}$ in $\mathcal{D}_N$ and since $U_N(r_0,r_N,d)$ belongs to $\Omega^N_d$, we have that $G_N$ is well defined. By definition $\pi^0 \circ G_N = \pi^0$ and $\pi^N \circ G_N = \pi^N$. Let us prove that $\sfH_N \circ G_N \le \sfH_N$ on $\f{C}[X]^{N+1}$: this is clear on $\f{C}[X]^{N+1}\setminus\mathcal{D}_N$, so it remains to prove that for every $(\f{y}_0,\dots,\f{y}_N)\in \mathcal{D}_N$
\begin{equation}\label{eq:G_Ncontractive}
    \sfH_N\big(\f{y}_0,T_N(\f{y}_0,\f{y}_N;U_N(\mathsf{r}(\f{y}_0),\mathsf{r}(\f{y}_N),\sfd(\mathsf{x}(\f{y}_0),\mathsf{x}(\f{y}_N)))),\f{y}_N\big) \le \sfH_N(\f{y}_0,\dots,\f{y}_N).  
    \end{equation}
As a consequence of claim \eqref{claim:positiveradii} we already know that there exists $(\tilde{\f{y}}_1,\dots,\tilde{\f{y}}_{N-1})\in \f{C}[X]_{>0}^{N-1}$ such that 
\[
\sfH_N(\f{y}_0,\tilde{\f{y}}_1,\dots,\tilde{\f{y}}_{N-1},\f{y}_N) \le \sfH_N(\f{y}_0,\f{y}_1,\dots,\f{y}_{N-1},\f{y}_N)\,.
\]
Let $\tilde{r}_i\coloneqq \mathsf{r}(\tilde{\f{y}}_i)$ for $i=1,\dots,N-1$ and $\tilde{d}_i\coloneqq \sfd(\mathsf{x}(\tilde{\f{y}}_{i-1}),\mathsf{x}(\tilde{\f{y}}_{i}))$ for $i=1,\dots,N,$ where we have put $\tilde{\f{y}}_0\coloneqq \f{y}_0$ and $\tilde{\f{y}}_N\coloneqq \f{y}_N$. Notice that $(\tilde{r}_1,\dots,\tilde{r}_{N-1},\tilde{d}_1,\dots,\tilde{d}_N)\in \Omega^N_{\ge d}$ since $\sfd$ satisfies the triangle inequality. Using the notation $f_N(r_0, r_N; \cdot)$ for the function defined in \eqref{eq:deff_N}, by definition of $f_N(r_0, r_N; \cdot), \sfH_N$ and $U_N$, it holds
\[
\begin{split}
\sfH_N & \big(\f{y}_0,T_N(\f{y}_0,\f{y}_N ; U_N(\mathsf{r}(\f{y}_0),\mathsf{r}(\f{y}_N),\sfd(\mathsf{x}(\f{y}_0),\mathsf{x}(\f{y}_N)))),\f{y}_N\big) = f_N\big(r_0,r_N;U_N(r_0,r_N,d)\big) \\ 
& \le f_N(r_0,r_N;\tilde{r}_1,\dots,\tilde{r}_{N-1},\tilde{d}_1,\dots,\tilde{d}_N) \\
& = \sfH_N(\f{y}_0,\tilde{\f{y}}_1,\dots,\tilde{\f{y}}_{N-1},\f{y}_N) \le \sfH_N(\f{y}_0,\f{y}_1,\dots,\f{y}_{N-1},\f{y}_N) 
\end{split}
\]
which gives \eqref{eq:G_Ncontractive}.

The bounds \eqref{eq:radialboundG_N} and \eqref{eq:spatialboundG_N} are true by construction, as a consequence of the corresponding bounds contained in \eqref{eq:radialandspatialbounds}. 

It remains to show that $G_N$ is $\mathcal{B}(\f{C}[X]^{N+1})^*$-$\mathcal{B}(\f{C}[X]^{N+1})$ measurable. Notice that $\mathcal{D}_N$ is a Borel subset of $\f{C}[X]^{N+1}$, so that it is sufficient to show that the map
$$(\f{y}_0,\dots,\f{y}_N)\mapsto\left(\f{y}_0,T_N(\f{y}_0,\f{y}_N;U_N(\mathsf{r}(\f{y}_0),\mathsf{r}(\f{y}_N),\sfd(\mathsf{x}(\f{y}_0),\mathsf{x}(\f{y}_N)))),\f{y}_N\right)$$
is $\mathcal{B}(\f{C}[X]^{N+1})^*$-$\mathcal{B}(\f{C}[X]^{N-1})$ measurable, and this is a consequence of the $\mathcal{B}(\f{C}[X]^{2})^*$-$\mathcal{B}(\f{C}[X]^{N+1})$ measurability of 
$$(\f{y}_0,\f{y}_N)\mapsto T_N(\f{y}_0,\f{y}_N;U_N(\mathsf{r}(\f{y}_0),\mathsf{r}(\f{y}_N),\sfd(\mathsf{x}(\f{y}_0),\mathsf{x}(\f{y}_N)))).$$
The latter follows from the fact that $\mathcal{B}$-$\mathcal{B}$ measurable maps and $\mathcal{B}^*$-$\mathcal{B}$ measurable maps are all $\mathcal{B}^*$-$\mathcal{B}^*$ measurable (see \eqref{eq:universally}) and that all the involved maps posses such measurability, as a consequence of Theorem \ref{th:measgeod} and Lemma \ref{lem: properties-f}.
\end{proof}

\subsection{Auxiliary results}

This subsection contains some of the auxiliary results mentioned in the strategy outline at the beginning of the section. The first result allows us to reduce, without loss of generality, the proof of Theorem \ref{thm:main1} to the compact case. This additional assumption will contribute to ensure tightness of the projections $\ssigma^k \coloneqq \pi^{0,N_k}_\sharp \aalpha^k$ as well as of the liftings $\ppi^k$.

\begin{proposition}\label{prop:compred} Assume that, whenever $m \in \N$, $K_m \subset \R^m$ is a convex and compact subset and $\nu_0, \nu_1 \in \meas_+(K_m)$, we have
\[ (\He_{2,m} \nabla \W_{2, \sfd_m})(\nu_0, \nu_1) \ge \HK_{\sfd_m}(\nu_0, \nu_1),\]
being $\sfd_m$ the (restriction to $K_m$ of the) distance induced by the infinity norm in $\R^m$ and $\He_{2,m}$ the Hellinger distance in $\meas_+(K_m)$.
 Then, if $(X, \sfd)$ is a complete and separable metric space, it also holds
\[ (\He_2 \nabla \W_{2, \sfd})(\mu_0, \mu_1) \ge \HK_{\sfd}(\mu_0, \mu_1) \quad \text{ for every } \mu_0, \mu_1 \in \meas_+(X).\]
\end{proposition}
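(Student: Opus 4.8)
The plan is to reduce the general separable-complete case to the compact-convex Euclidean case by a two-step approximation: first replace $(X,\sfd)$ by a compact metric space, then isometrically embed that compact space into some $\R^m$ with the sup-norm, and finally enlarge to its convex hull. The key tools are the contraction property (Lemma~\ref{le:contraction}) and the isometric-invariance property (Proposition~\ref{prop:iso}) of all three distances, together with the monotonicity of the inf-convolution under pointwise inequalities of the costs (Proposition~\ref{prop: infconvprop}(ii)).

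First I would fix $\mu_0,\mu_1 \in \meas_+(X)$. Since $\mu_0,\mu_1$ are Radon measures on a complete separable metric space, they are concentrated on a $\sigma$-compact set; more precisely, the set $S \coloneqq \supp(\mu_0) \cup \supp(\mu_1)$ is separable, and one can find an increasing sequence of compact sets $K_j \subset S$ with $\mu_i(S \setminus K_j) \to 0$. Passing to the closure $Y_j \coloneqq \overline{\bigcup_{l \le j} K_l}$ (still compact) and restricting $\mu_i$ to $Y_j$, by weak lower semicontinuity of $\HK$ and weak upper semicontinuity considerations one reduces to proving the inequality on each compact piece; alternatively, and more cleanly, since $(\mcM_+(X),\HK)$ has a topology equal to the weak topology and $\HK$ is continuous along the approximation $\mu_i\mres Y_j \rightharpoonup \mu_i$ with converging masses, while $\He \nabla \W$ only decreases (by monotonicity, $\He_{2,Y_j}\le\He_2$ and $\W_{2,\sfd_{Y_j}}\le\W_{2,\sfd}$ restricted to measures on $Y_j$, hence $(\He_{2,Y_j}\nabla\W_{2,\sfd_{Y_j}})(\mu_0\mres Y_j, \mu_1\mres Y_j) \le (\He_2\nabla\W_2)(\mu_0\mres Y_j,\mu_1\mres Y_j)$; combined with Proposition~\ref{prop:iso} applied to the inclusion $Y_j\hookrightarrow X$ for the left side) — so it suffices to treat a compact metric space $(Y,\sfd)$.

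Next, a compact metric space $(Y,\sfd)$ embeds isometrically into $\rmC(Y)$ with the sup-norm via the Kuratowski embedding $y \mapsto \sfd(y,\cdot) - \sfd(y_0,\cdot)$, and since $Y$ is compact and separable its image spans a separable subspace; choosing a dense sequence and truncating coordinates gives, for each $\delta>0$, a map $\iota_\delta : Y \to (\R^m, \|\cdot\|_\infty)$ (with $m=m(\delta)$) that is a $(1+\delta)$-bi-Lipschitz embedding — or, with a little more care using that finite-dimensional $\ell^\infty$-spaces embed compact metric spaces up to arbitrarily small distortion, a genuine isometry onto its image for a well-chosen $m$. Using Proposition~\ref{prop:iso} (valid since $Y$ is complete, being compact) we have $\HK_{\sfd_m}((\iota_\delta)_\sharp\mu_0,(\iota_\delta)_\sharp\mu_1) = \HK_\sfd(\mu_0,\mu_1)$ and likewise for $\He$ and $\W$ when $\iota_\delta$ is an isometry; when it is only $(1+\delta)$-bi-Lipschitz one gets two-sided bounds with constants tending to $1$ as $\delta\downarrow0$ via Lemma~\ref{le:contraction} applied to $\iota_\delta$ and $\iota_\delta^{-1}$, together with the scaling $\W_{p,\lambda\sfd}=\lambda\W_{p,\sfd}$ and the analogous behaviour of $\HK$ and $\He$ under scaling of the distance. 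Finally, letting $K_m$ be the (convex, compact) convex hull of $\iota_\delta(Y)$ in $\R^m$, the measures $(\iota_\delta)_\sharp\mu_i$ live on $K_m$ and $\sfd_m$ is exactly the restriction of the $\ell^\infty$-distance; applying the hypothesis gives $(\He_{2,m}\nabla\W_{2,\sfd_m})((\iota_\delta)_\sharp\mu_0,(\iota_\delta)_\sharp\mu_1) \ge \HK_{\sfd_m}((\iota_\delta)_\sharp\mu_0,(\iota_\delta)_\sharp\mu_1)$. Chaining the (in)equalities and sending $\delta\downarrow0$ yields $(\He_2\nabla\W_{2,\sfd})(\mu_0,\mu_1)\ge\HK_\sfd(\mu_0,\mu_1)$.

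The main obstacle, and the step that needs the most care, is the embedding into a \emph{finite-dimensional} $\ell^\infty$-space with controlled distortion: the Kuratowski embedding lands in the infinite-dimensional $\rmC(Y)$, so one must truncate to finitely many coordinates, which only gives a bi-Lipschitz (not isometric) embedding, and then track the multiplicative errors through Lemma~\ref{le:contraction} (noting carefully that contraction under a $1$-Lipschitz map composed with the inverse being $(1+\delta)$-Lipschitz gives the two-sided estimate) and through the homogeneity of $\W$, $\He$, $\HK$ under rescaling of the underlying distance, before taking $\delta\downarrow0$. A secondary point requiring attention is the first reduction to the compact case: one must justify that it is enough to prove the inequality for the restricted measures $\mu_i\mres Y_j$ — this uses continuity of $\HK$ under the weak convergence $\mu_i\mres Y_j\rightharpoonup\mu_i$ with converging total masses (valid since the $\HK$-topology is the weak topology and the masses are bounded) on the right-hand side, and the monotonicity plus isometric-restriction argument bounding $(\He_2\nabla\W_2)(\mu_0\mres Y_j,\mu_1\mres Y_j)$ from above by $(\He_2\nabla\W_2)(\mu_0,\mu_1)$ on the left-hand side; here one uses that $\He$ and $\W$ computed in $\meas_+(Y_j)$ agree with their counterparts in $\meas_+(X)$ for measures supported on $Y_j$ (Proposition~\ref{prop:iso} for the closed embedding $Y_j\hookrightarrow X$), and that restricting a competitor path keeps it admissible.
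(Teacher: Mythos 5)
Your approach has two genuine gaps, one in each of the two reduction steps.

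\textbf{First gap (reduction to compact).} You propose replacing $\mu_i$ by $\mu_i\mres Y_j$ for compact $Y_j$ and argue that restricting a competitor path keeps it admissible while not increasing the energy. This fails for the Wasserstein part. Restriction contracts the Hellinger distance, but $\W_{2,\sfd}(\nu\mres Y_j,\mu\mres Y_j)$ can be strictly larger than $\W_{2,\sfd}(\nu,\mu)$, and indeed can jump to $+\infty$: if $\nu=\delta_a$ and $\mu=\delta_b$ with $a\in Y_j$ but $b\notin Y_j$, then $\nu\mres Y_j$ and $\mu\mres Y_j$ have different total masses, so $\W_2(\nu\mres Y_j,\mu\mres Y_j)=+\infty$. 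Restriction of measures is not induced by a $1$-Lipschitz map on the underlying space, so Lemma~\ref{le:contraction} does not apply, and there is no monotonicity of $\He\nabla\W$ under restriction.

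\textbf{Second gap (finite-dimensional embedding).} You claim that a compact metric space embeds isometrically, or at least $(1+\delta)$-bi-Lipschitz, into some $(\R^m,\ell^\infty)$. This is false in general: an infinite-dimensional compactum (e.g.\ the Hilbert cube, or the unit ball of $\ell^2$ with its weak topology metrized) has infinite Hausdorff dimension and therefore admits no bi-Lipschitz embedding into any finite-dimensional normed space, with any distortion bound. Truncating the Kuratowski embedding to $m$ coordinates is only $1$-Lipschitz, not bi-Lipschitz, and there is no way to choose $m$ so that the distortion tends to $1$ uniformly on the compact space.

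The paper's proof works precisely because it never asks for a bi-Lipschitz or isometric finite-dimensional embedding. It composes the Kuratowski embedding with a coordinate-wise truncation $A_m$, yielding a map $\iota_m\circ A_m\circ\iota:X\to K_m\subset\R^m$ that is merely $1$-Lipschitz. One-sided contraction is enough to bound $\He\nabla\W$ from below by its image (push-forward preserves total mass, so no coupling degenerates), while the fact that $A_m\circ\iota\to\iota$ pointwise gives weak convergence $(\iota_m\circ A_m\circ\iota)_\sharp\mu_i\rightharpoonup\iota_\sharp\mu_i$, and the weak lower semicontinuity of $\HK$ (Theorem~\ref{thm:omnibus}(2)) together with the isometric invariance of $\HK$ (Proposition~\ref{prop:iso}) recovers $\HK_\sfd(\mu_0,\mu_1)$ in the limit. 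The asymmetry of the argument --- contraction on one side, lower semicontinuity on the other --- is what allows one to sidestep both of the obstructions in your proposal.
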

\begin{proof}
    Let $\iota: (X, \sfd) \to (\ell^\infty(\N), \sfd_\infty)$ be the Kuratowski embedding \cite[Proposition 1.2.12]{pasqualetto}, where $\sfd_\infty$ is the distance induced by the $\|\cdot\|_\infty$ norm in $\ell^\infty(\N)$. For every $m\in \N$, we define the sets
    \begin{align*}
    C_m &\coloneqq \{ (a_i)_{i} \in \ell^\infty(\N) : a_i =0 \text{ for every } i >m \text{ and } a_i \in [-m, m] \text{ for every } i \le m \} \subset \ell^\infty(\N), \\
    K_m &\coloneqq [-m,m]^m \subset \R^m,
    \end{align*}
and the maps $\iota_m: C_m \to K_m$ and $A_m: \ell^\infty(\N) \to \ell^\infty(\N)$ as
\begin{align*}
    \iota_m((a_i)_{i \in\N}) &\coloneqq (a_1, \dots, a_m) , \quad &&(a_i)_i \in \ell^\infty(\N),\\
    A_m((a_i)_{i \in \N}) &\coloneqq (-m \vee a_1 \wedge m, -m \vee a_2 \wedge m, \dots, -m \vee a_m \wedge m, 0, 0, \dots), \quad &&(a_i)_i \in \ell^\infty(\N).
\end{align*}
Note that $\iota$ is an isometry from $(X, \sfd)$ to $(\ell^\infty, \sfd_\infty)$, $\iota_m$ is an isometry from $(C_m, \sfd_\infty)$ to $(K_m, \sfd_m)$ and  $A_m$ is a $1$-Lipschitz map w.r.t.~$\sfd_\infty$. We finally define $\mathsf{T}_m : \meas_+(X) \to \meas_+(K_m)$ as 
\[ \mathsf{T}_m(\mu) \coloneqq (\iota_m \circ A_m \circ \iota)_\sharp \mu, \quad \mu \in \meas_+(X).\]
By Lemma \ref{le:contraction}, $\mathsf{T}_m$ is a contraction, both w.r.t.~the Wasserstein and the Hellinger distances, in the sense that, for every $\mu_0, \mu_1 \in \meas_+(X)$, it holds
\begin{align*}
\W_{2, \sfd_m}(\mathsf{T}_m(\mu_0), \mathsf{T}_m(\mu_1)) &\le \W_{2, \sfd}(\mu_0, \mu_1), \\
\He_{2,m}(\mathsf{T}_m(\mu_0), \mathsf{T}_m(\mu_1)) &\le \He_2(\mu_0, \mu_1). 
\end{align*}
Let now $\mu_0, \mu_1 \in \meas_+(X)$ and let $P_k \in \mathscr{P}(\mu_0, \mu_1; N_k)$ with $N_k \to + \infty$ be a sequence of paths such that
\[ \mathcal{E}_{N_k}(P_k) \to (\He_2 \nabla \W_2)^2(\mu_0, \mu_1),\]
where $\mathcal{E}_{N_k}$ is as in \eqref{eq:energy} with $\sfc_1=\He_2$ and $\sfc_2=\W_{2,\sfd}$. If $P_k= (\mu_0^k, \mu_1^k, \dots, \mu_{N_k}^k; \nu_1^k, \dots, \nu_{N_k}^k)$, we define, for every $m,k \in \N$, the paths
\[ P_k^m\coloneqq (\mathsf{T}_m(\mu_0^k), \mathsf{T}_m(\mu_1^k), \dots, \mathsf{T}_m(\mu_{N_k}^k); \mathsf{T}_m(\nu_1^k), \dots, \mathsf{T}_m(\nu_{N_k}^k)) \in \mathscr{P}(\mathsf{T}_m(\mu_0), \mathsf{T}_m(\mu_1); N_k).\]
By the contractivity of $\mathsf{T}_m$, we deduce that
\[ \mathcal{E}^m_{N_k}(P_k^m) \le \mathcal{E}_{N_k}(P_k) \quad k,m \in \N,\]
where $\mathcal{E}_{N_k}^m$ is as in \eqref{eq:energy} with $\sfc_1=\He_{2,m}$ and $\sfc_2=\W_{2,\sfd_m}$. We thus have, for every $m \in \N$, that
\begin{align*}
(\He_2 \nabla \W_{2,\sfd})^2(\mu_0, \mu_1)&=\lim_k \mathcal{E}_{N_k}(P_k) \ge \liminf_k \mathcal{E}^m_{N_k}(P_k^m) \\
&\ge (\He_{2,m} \nabla \W_{2,\sfd_m})^2(\mathsf{T}_m(\mu_0), \mathsf{T}_m(\mu_1)) \ge \HK^2_{\sfd_m}(\mathsf{T}_m(\mu_0), \mathsf{T}_m(\mu_1)),
\end{align*}
where the last inequality holds by assumption.
Applying Proposition \ref{prop:iso} to $X\coloneqq C_m$, $\sfd_X=\sfd_\infty$, $Y\coloneqq K_m$, $\sfd_Y=\sfd_m$ and $\iota\coloneqq \iota_m$, and setting
\[ \mu_i^m \coloneqq (A_m \circ \iota)_\sharp \mu_i, \quad i=0,1,\]
we see that 
\[ \HK_{\sfd_m}(\mathsf{T}_m(\mu_0), \mathsf{T}_m(\mu_1))=\HK_{\sfd_m}((\iota_m)_\sharp \mu_0^m, (\iota_m)_\sharp \mu_1^m) = \HK_{\sfd_\infty}(\mu_0^m, \mu_1^m) \quad m \in \N.\]
We deduce that 
\begin{equation}\label{eq:almostthere}
    (\He_2 \nabla \W_{2,\sfd})(\mu_0, \mu_1) \ge \HK_{\sfd_\infty}(\mu_0^m, \mu_1^m) \quad \text{ for every } m \in \N.
\end{equation}
Now we note that $\mu_i^m \weakto \iota_\sharp \mu_i$, $i=0,1$, as a consequence of the pointwise convergence (and continuity) of $A_m$ to the identity in $\ell^\infty(\N)$. Passing to the $\liminf$ as $m\to + \infty$ the inequality \eqref{eq:almostthere} using the lower semicontinuity of $\HK$ coming from Theorem \ref{thm:omnibus}(2), we deduce that
\[ (\He_2 \nabla \W_{2,\sfd})(\mu_0, \mu_1) \ge \HK_{\sfd_\infty}(\iota_\sharp \mu_0, \iota_\sharp \mu_1).\]
A further application of Proposition \ref{prop:iso} with $X$, $\sfd_X=\sfd$, $Y=\ell^\infty(\N)$, $\sfd_Y=\sfd_\infty$ and $\iota$, gives the desired conclusion.
\end{proof}

In the second lemma we show that if we are given a weakly convergent sequence of measures $(\mu_n)_{n \in \N}$ and a splitting of the limit measure $\mu$, then we can find a `consistent' splitting for each $\mu_n$. This fact will be pivotal in the first step of the proof of Theorem \ref{thm:main1}, see p.\pageref{text:splitting}.

\begin{lemma}\label{lem:decomposition}
Let $(Y,\sfd)$ be a metric space and $\mu,\mu_n \in \meas_+(Y)$ be such that $\mu_n \rightharpoonup \mu$ as $n \to +\infty$. Assume that $\mu = \nu^1 + \nu^2$ with $\nu^1,\nu^2 \in \meas_+(Y)$. Then there exist a subsequence $(n_k)_{k \in \N}$, $n_k \uparrow +\infty$ as $k \to +\infty$, and $\nu_k^1,\nu_k^2 \in \meas_+(Y)$, $k \in \N$, such that
\[
\mu_{n_k} = \nu_k^1 + \nu_k^2 \qquad \textrm{and} \qquad \nu_k^i \rightharpoonup \nu^i \quad \textrm{as } k \to +\infty,\, \quad i=1,2\,.
\]
\end{lemma}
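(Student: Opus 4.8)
\textbf{Proof plan for Lemma \ref{lem:decomposition}.}
The plan is to use the Radon--Nikodym theorem to write the two pieces $\nu^1,\nu^2$ as densities against $\mu$, transport these densities to $\mu_n$ via a common dominating measure, and extract a convergent subsequence using weak compactness of bounded sequences of measures on a metric space. First I would set $\mu_n^{\mathrm{tot}} \coloneqq \mu + \sum_{n} 2^{-n}(1\wedge \mu_n(Y)^{-1})\mu_n$ (or any finite measure dominating $\mu$ and each $\mu_n$; since $(Y,\sfd)$ is merely a metric space one should be slightly careful and may want to pass to the subsequence along which $(\mu_n(Y))_n$ is bounded, which is automatic once $\mu_n\rightharpoonup\mu$ and one tests against the constant $1$ if $1 \in \LSC_b(Y)$, i.e.\ when $Y$ is e.g.\ compact; in the generality needed here $\mu$ has finite mass and the $\mu_n$ have uniformly bounded mass along a subsequence, which suffices). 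Writing $\vartheta^i \coloneqq \frac{\d\nu^i}{\d\mu} \in L^1_+(Y,\mu)$ for $i=1,2$, so that $\vartheta^1 + \vartheta^2 = 1$ $\mu$-a.e., the natural candidate is $\nu_n^i \coloneqq \vartheta^i \mu_n$ --- but this does not make sense since $\mu$ and $\mu_n$ need not be mutually absolutely continuous.

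Instead I would approximate: fix a sequence of bounded continuous functions $g^i_j : Y \to [0,1]$ with $g^1_j + g^2_j = 1$ and $g^i_j \to \vartheta^i$ in $L^1(Y,\mu)$ as $j\to\infty$ (possible since bounded continuous functions are dense in $L^1(Y,\mu)$ for $\mu$ a finite Radon measure on a metric space, and one can arrange the partition-of-unity constraint by normalizing). For each fixed $j$, the measures $g^i_j \mu_n$ satisfy $g^1_j\mu_n + g^2_j \mu_n = \mu_n$ and, by weak convergence $\mu_n\rightharpoonup\mu$ together with continuity and boundedness of $g^i_j$, we get $g^i_j \mu_n \rightharpoonup g^i_j \mu$ as $n\to\infty$. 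The point is then to let $j$ vary with $n$ (a diagonal argument): choose $j=j(n)\uparrow\infty$ slowly enough that $\nu_n^i \coloneqq g^i_{j(n)}\mu_n$ still converges weakly to $\lim_j g^i_j \mu = \vartheta^i \mu = \nu^i$. Concretely, for each fixed $j$ one has $g^i_j\mu_n \rightharpoonup g^i_j\mu$; since weak convergence on a metric space is metrizable on mass-bounded sets (testing against a countable dense subset of bounded Lipschitz functions), one can quantify closeness and run the standard diagonal extraction to produce the subsequence $(n_k)$ and the choice $j(k)$ so that $\nu_k^i \coloneqq g^i_{j(k)}\mu_{n_k} \rightharpoonup \nu^i$. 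The identity $\nu_k^1 + \nu_k^2 = \mu_{n_k}$ holds exactly, for every $k$, because $g^1_{j(k)} + g^2_{j(k)} \equiv 1$.

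The main obstacle I anticipate is the interplay between the diagonal argument and the requirement that the splitting be \emph{exact} ($\nu_k^1+\nu_k^2 = \mu_{n_k}$, not merely approximately): this forces us to keep the partition-of-unity constraint $g^1_j + g^2_j = 1$ at every stage, which is why one cannot simply mollify $\vartheta^1$ and $\vartheta^2$ independently but must mollify one of them, say $g^1_j \to \vartheta^1$ in $L^1(\mu)$ with $0\le g^1_j\le 1$, and then \emph{define} $g^2_j \coloneqq 1 - g^1_j$, automatically getting $g^2_j \to \vartheta^2$ in $L^1(\mu)$ and $0 \le g^2_j \le 1$. A secondary technical point is ensuring the $L^1(\mu)$-approximation can be taken with values in $[0,1]$: this is routine, e.g.\ by truncating a continuous approximant and using that $0\le\vartheta^1\le 1$. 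Once these are in place, the convergence $g^i_{j(k)}\mu_{n_k}\rightharpoonup\nu^i$ follows by splitting $|\int \varphi\, \d(g^i_{j(k)}\mu_{n_k}) - \int\varphi\,\d\nu^i|$ into the term $|\int\varphi g^i_{j(k)}\,\d\mu_{n_k} - \int \varphi g^i_{j(k)}\,\d\mu|$ (small for $n_k$ large, $j(k)$ fixed, by $\mu_{n_k}\rightharpoonup\mu$) plus $|\int\varphi g^i_{j(k)}\,\d\mu - \int\varphi\vartheta^i\,\d\mu| \le \|\varphi\|_\infty \|g^i_{j(k)} - \vartheta^i\|_{L^1(\mu)}$ (small for $j(k)$ large), and choosing $j(k)$ as a function of $n_k$ appropriately --- the standard diagonal recipe. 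I would also remark at the outset that passing to a subsequence is harmless since the statement only claims existence of \emph{a} subsequence.
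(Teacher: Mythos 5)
Your proposal is correct and takes essentially the same approach as the paper: Radon--Nikodym the first piece against $\mu$, approximate the density $\vartheta^1 = \frac{\d\nu^1}{\d\mu}$ by $[0,1]$-valued continuous functions $f_k \to \vartheta^1$ in $L^1(\mu)$, set $\nu_k^1 \coloneqq f_k\mu_{n_k}$ and $\nu_k^2 \coloneqq (1-f_k)\mu_{n_k}$, and extract the subsequence via a diagonal argument using metrizability of weak convergence. (Your opening digression about constructing a dominating measure $\mu_n^{\mathrm{tot}}$ is not needed and can be dropped; everything from the point where you introduce $\vartheta^i$ and the $[0,1]$-valued approximants onward is precisely the paper's argument.)
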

\begin{proof}
Let us denote by $f \coloneqq \frac{\d\nu^1}{\d\mu}$ the Radon--Nikod\'ym derivative of $\nu^1$ w.r.t.\ $\mu$ and let $(f_k)_{k \in \N} \subset \rmC_b(Y)$ be such that $f_k \to f$ in $L^1(\mu)$.  
Up to replacing $f_k$ with $f_k \wedge 1 \vee 0$, we can assume without loss of generality that $0 \leq f_k \leq 1$ for every $k \in \N$. By continuity and boundedness of $f_k$ it holds $f_k\mu_n \rightharpoonup f_k\mu$ as $n \to +\infty$ and by construction $f_k\mu \rightharpoonup f\mu = \nu^1$ as $k \to +\infty$, so that the metrizability of weak convergence on $\meas_+(Y)$ (cf.~\cite[Theorem 5.1.3]{bogachevwcm}) and a diagonal argument provide us with a subsequence $(n_k)_{k \in \N}$ such that $n_k \uparrow +\infty$ and $\nu_k^1 \coloneqq f_k \mu_{n_k} \rightharpoonup \nu^1$ as $k \to +\infty$. As for $\nu^2$, it is sufficient to consider $\nu_k^2 \coloneqq (1-f_k)\mu_{n_k}$. The fact that $0 \leq f_k \leq 1$ ensures $\nu_k^1,\nu_k^2 \in \meas_+(Y)$. Moreover, $\mu_{n_k} = \nu_k^1 + \nu_k^2$ holds by construction, and $\nu_k^2 = \mu_{n_k} - \nu_k^1 \rightharpoonup \mu - \nu^1 = \nu^2$ as $k \to +\infty$.
\end{proof}

Finally, in the next result we show that if we have probability measures $\ppi, \ppi_k$ concentrated on curves with radii uniformly bounded away from zero and $\ppi_k \rightharpoonup \ppi$, then a sort of discretized action (which is upper bounded by the energy $\mathcal{E}_{N_k}(P_k)$, see \eqref{eq:pinguino} and \eqref{eq:caimano}) controls from above in the limit the action $\mathcal{A}_2$, thus bridging between inf-convolution and $\HK$.

\begin{proposition}\label{prop:conva2} 
Let $(X, \sfd)$ be a complete and separable metric space. For some $\eps,R>0$ let $\ppi, \ppi_k \in \prob(\AC^2([0,1]; (\f{C}_{R,\eps}[X],\sfd_{\f{C}})))$, where $\f{C}_{R,\eps}[X]$ is as in \eqref{eq:cepsr}. Assume that $\ppi_k \weakto \ppi$ and $\sup_k \int \mathcal{A}_2 \de \ppi_k =: C < +\infty$. Then
\[ 
\liminf_{k \to + \infty} \int \int_0^1 u_k(\f{y},t)|\f{y}'(t)|^2_{\sfd_{\f{C}}} \de t \de \ppi_k(\f{y}) \ge \int \mathcal{A}_2 \de \ppi\,,
\]
where $\mathcal{A}_2$ is as in \eqref{eq:a2def} and $u_k$ is defined as
\begin{equation}\label{eq:uk}
u_k(\f{y},t) \coloneqq 1 \wedge \frac{r_{\f{y}}(\ceiling{t N_k}/N_k)}{r_{\f{y}}(\floor{t N_k}/N_k)}, \qquad \f{y} \in \rmC([0,1];(\f{C}_{\Theta, \eps}[X],\sfd_{\f{C}})), \, t \in [0,1] 
\end{equation}
for a sequence $\N \ni N_k \uparrow +\infty$.
\end{proposition}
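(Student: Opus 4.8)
The plan is to lower-bound $\int F_k\,\de\ppi_k$, where $F_k(\f{y}):=\int_0^1 u_k(\f{y},t)|\f{y}'(t)|^2_{\sfd_\f{C}}\,\de t$, by combining two ingredients: a \emph{quantitative} estimate showing that the weights $u_k$ approach $1$ at a rate controlled by $\mathcal{A}_2(\f{y})/N_k$, and the lower semicontinuity of the bounded truncations $\mathcal{A}_2\wedge L$ along the narrowly convergent sequence $(\ppi_k)$. The only subtlety — that $\sup_k\int\mathcal{A}_2\,\de\ppi_k\le C$ does \emph{not} give uniform integrability of $\mathcal{A}_2$ along $(\ppi_k)$, so one cannot simply replace $u_k$ by $1$ and pass to the limit — will be handled by truncating at a level $M_k\to+\infty$ that grows slowly with $N_k$, so that the tail is negligible while on $\{\mathcal{A}_2\le M_k\}$ the weights are uniformly close to $1$.

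First I would prove the pointwise bound on the weights. For $\ppi_k$-a.e.\ $\f{y}$ the curve lies in $\AC^2$ and takes values in $\f{C}_{R,\eps}[X]$, so $r_\f{y}\in[\eps,R]$ and $\int_0^1|r_\f{y}'|^2\,\de t\le \int_0^1|\f{y}'|^2_{\sfd_\f{C}}\,\de t=\mathcal{A}_2(\f{y})$ by \eqref{eq:speed-cone}. On each interval $\big((i-1)/N_k,i/N_k\big)$ the function $u_k(\f{y},\cdot)$ is the constant $1\wedge r_\f{y}(i/N_k)/r_\f{y}((i-1)/N_k)$, and a Cauchy--Schwarz estimate on an interval of length $1/N_k$ gives $\big|r_\f{y}(i/N_k)-r_\f{y}((i-1)/N_k)\big|\le N_k^{-1/2}\mathcal{A}_2(\f{y})^{1/2}$; dividing by $r_\f{y}((i-1)/N_k)\ge\eps$ yields
\[
0\le 1-u_k(\f{y},t)\le \frac{\mathcal{A}_2(\f{y})^{1/2}}{\eps\sqrt{N_k}}\qquad\text{for a.e.\ }t\in[0,1],\text{ for }\ppi_k\text{-a.e.\ }\f{y},
\]
hence $F_k(\f{y})\ge\big(1-\mathcal{A}_2(\f{y})^{1/2}/(\eps\sqrt{N_k})\big)\mathcal{A}_2(\f{y})$. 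Then, for fixed $L>0$ I would choose $M_k:=\sqrt{N_k}$, set $\eta_k:=1/(\eps N_k^{1/4})\to0$, restrict the integral to $E_k:=\{\mathcal{A}_2\le M_k\}$ (legitimate since $F_k\ge0$) and use the previous bound together with Chebyshev's inequality $\ppi_k(\mathcal{A}_2>M_k)\le C/M_k$ to get, for $k$ large enough that $M_k\ge L$,
\[
\int F_k\,\de\ppi_k\ \ge\ (1-\eta_k)\!\left(\int(\mathcal{A}_2\wedge L)\,\de\ppi_k-\frac{LC}{M_k}\right).
\]
Since $\mathcal{A}_2$ is lower semicontinuous (Theorem \ref{thm:dynhk}), $\mathcal{A}_2\wedge L$ is bounded and lower semicontinuous, so by the definition of the narrow topology and $\ppi_k\weakto\ppi$ one has $\liminf_k\int(\mathcal{A}_2\wedge L)\,\de\ppi_k\ge\int(\mathcal{A}_2\wedge L)\,\de\ppi$; taking $\liminf_{k\to+\infty}$ above (with $\eta_k\to0$, $L/M_k\to0$) gives $\liminf_k\int F_k\,\de\ppi_k\ge\int(\mathcal{A}_2\wedge L)\,\de\ppi$, and letting $L\uparrow+\infty$ with monotone convergence concludes the proof.

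The main obstacle is exactly the one highlighted above: without uniform integrability of $\mathcal{A}_2$ along $(\ppi_k)$, a fixed truncation threshold would force a loss proportional to the tail mass, which need not vanish. The resolution — the $k$-dependent level $M_k=\sqrt{N_k}$ — works because it is simultaneously large enough for Chebyshev to kill the tail $\ppi_k(\mathcal{A}_2>M_k)\le C/M_k$, and small enough \emph{relative to} $N_k$ that $\sqrt{M_k}/(\eps\sqrt{N_k})\to0$, so the weights $u_k$ are uniformly (in $\f{y}\in E_k$) close to $1$; after that the argument reduces to the standard portmanteau inequality for the bounded lower semicontinuous functional $\mathcal{A}_2\wedge L$, followed by a diagonal limit $L\to+\infty$. (As a byproduct, $\int(\mathcal{A}_2\wedge L)\,\de\ppi\le C$ for every $L$, so in fact $\int\mathcal{A}_2\,\de\ppi\le C<+\infty$, which makes the final monotone limit finite; this is not needed for the inequality itself.)
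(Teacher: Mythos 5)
Your proof is correct, and it takes a genuinely different route from the paper. The paper controls the weights $u_k$ via a \emph{compactness} argument: by Prokhorov's theorem (applicable since $\ppi_k \weakto \ppi$) it fixes a compact set $B_\gamma$ with $\ppi_k(B_\gamma^c) < \gamma$, uses the resulting uniform modulus of continuity of $t \mapsto r_{\f{y}}(t)$ on $B_\gamma$ to get $|1-u_k|\le\gamma$ for $k$ large on $B_\gamma$, and then runs a fixed truncation $\mathcal{A}_2\wedge m$, splitting the error over $B_\gamma$ and $B_\gamma^c$ and letting first $\gamma \downarrow 0$ and then $m \uparrow \infty$. You replace the compactness step by the Cauchy--Schwarz estimate
\[
0\le 1-u_k(\f{y},t)\le \frac{1}{\eps\sqrt{N_k}}\Big(\int_0^1 |r_\f{y}'|^2\,\d t\Big)^{1/2}\le \frac{\mathcal{A}_2(\f{y})^{1/2}}{\eps\sqrt{N_k}}\,,
\]
which is a pointwise \emph{quantitative} bound requiring no Prokhorov argument and no equicontinuity on compacta; the residual loss is then controlled by a $k$-dependent truncation $M_k=\sqrt{N_k}$ (chosen so that $\sqrt{M_k}/\sqrt{N_k}\to 0$) together with Chebyshev's bound $\ppi_k(\mathcal{A}_2>M_k)\le C/M_k$. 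Both proofs ultimately invoke the portmanteau inequality for the bounded lower semicontinuous truncations $\mathcal{A}_2\wedge L$ and a monotone limit $L\uparrow\infty$. Your approach buys explicit rates (the weights converge to $1$ like $N_k^{-1/4}$ uniformly on $\{\mathcal{A}_2\le\sqrt{N_k}\}$) and avoids the tightness machinery; the paper's approach is slightly more modular in that the claim ``$|\mathcal{A}_{2,k}-\mathcal{A}_2|\le\gamma\mathcal{A}_2$ on compacta for $k$ large'' is insensitive to the precise definition of $u_k$, whereas your estimate exploits the specific form of the weights and the identity $\int_0^1|r_\f{y}'|^2\le\mathcal{A}_2(\f{y})$ coming from \eqref{eq:speed-cone}. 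Both are valid; yours is arguably more elementary.
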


\begin{proof} 
We set
\[ 
\mathcal{A}_{2,k}(\f{y}) \coloneqq \int_0^1 u_k(\f{y},t)|\f{y}'(t)|^2_{\sfd_{\f{C}}} \de t, \qquad \f{y} \in \AC^2([0,1]; (\f{C}_{R, \eps}[X],\sfd_{\f{C}}))\,.
\]
We start by proving the following claim:
for every compact subset $B \subset \rmC([0,1]; (\f{C}[X],\sfd_{\f{C}}))$ and every $\gamma>0$ there exists $K = K(\eps, B, \gamma) \in \N$ such that
\[ 
\left |\mathcal{A}_{2, k}(\f{y}) - \mathcal{A}_2(\f{y})  \right | < \gamma \mathcal{A}_2(\f{y}) \,\text{ for every } \f{y} \in \AC^2([0,1]; (\f{C}_{R, \eps}[X],\sfd_{\f{C}})) \cap B, \, k \ge K. 
\]
To see this, we observe that, by compactness of $B$, for every $\eta>0$ there exists a $\delta=\delta(\eta, B)>0$ such that $|r_{\f{y}}(t)-r_{\f{y}}(t')| \le \eta$ for every $t,t' \in [0,1]$ with $|t-t'|<\delta$ and for every $\f{y} \in B$. Choosing $\eta\coloneqq \gamma/\eps$, we can find $K=K(\eps,B, \gamma) \in \N$ large enough such that $2/N_k \le \delta(\eta, B)$ whenever $k \ge K$. Hence, for $k \ge K$, $\f{y} \in \AC^2([0,1]; (\f{C}_{R, \eps}[X],\sfd_{\f{C}})) \cap B$ and $t \in [0,1]$, we have
\[ 
\left |1 - u_k(\f{y}, t) \right | \le \left | 1- \frac{r_{\f{y}}(\ceiling{t N_k} /N_k)}{r_{\f{y}}(\floor{t N_k}/N_k)}  \right | \le \frac{\left | r_{\f{y}}(\floor{t/N_k}N_k)-r_{\f{y}}(\ceiling{t/N_k}N_k)  \right |}{\eps} \le \gamma
\]
which also proves the claim.

Let us now conclude the proof of the proposition. Let $\gamma>0$ be fixed. By tightness of $(\ppi_k)_k$ we can find a compact set $B_\gamma \subset \rmC([0,1]; (\f{C}[X], \sfd_{\f{C}}))$ such that $\ppi_k(B^c_\gamma) < \gamma$ for every $k \in \N$. By the previous claim, we can find $K=K(\eps, B_\gamma, \gamma) \in N$ such that
\[ 
\left |\mathcal{A}_{2, k}(\f{y}) - \mathcal{A}_2(\f{y})  \right | < \gamma \mathcal{A}_2(\f{y}) \quad \text{for every } \f{y} \in \AC^2([0,1]; (\f{C}_{R, \eps}[X],\sfd_{\f{C}})) \cap B_\gamma, \, k \ge K \,. 
\]
We fix a threshold $m \in \N$ and we obtain for every $k \ge K$ that 
\begin{align*}
    \int \mathcal{A}_{2, k} \de \ppi_k &\ge \int \left (\mathcal{A}_{2, k} \wedge m \right ) \de \ppi_k = \int \left ( \mathcal{A}_{2, k} \wedge m - \mathcal{A}_2 \wedge m \right )\de \ppi_k + \int \left ( \mathcal{A}_2 \wedge m \right )\de \ppi_k \\
    & \ge - \int \left | \mathcal{A}_{2, k} \wedge m - \mathcal{A}_2 \wedge m  \right | \de \ppi_k + \int \left ( \mathcal{A}_2 \wedge m \right )\de \ppi_k \\
    & = - \int_{B_\gamma} \left | \mathcal{A}_{2, k} \wedge m - \mathcal{A}_2 \wedge m  \right | \de \ppi_k -\int_{B_\gamma^c} \left | \mathcal{A}_{2, k} \wedge m - \mathcal{A}_2 \wedge m  \right | \de \ppi_k + \int \left ( \mathcal{A}_2 \wedge m \right )\de \ppi_k \\
    & \ge - \int_{B_\gamma} \left | \mathcal{A}_{2, k} - \mathcal{A}_2 \right | \de \ppi_k - 2m\gamma  + \int \left ( \mathcal{A}_2 \wedge m \right )\de \ppi_k \\
    & \ge - \gamma \int_{B_\gamma} \mathcal{A}_2 \de \ppi_k - 2m\gamma  + \int \left ( \mathcal{A}_2 \wedge m \right )\de \ppi_k  \\
    & \ge - \gamma (C + 2m) + \int \left ( \mathcal{A}_2 \wedge m \right )\de \ppi_k \,.
\end{align*}
Passing to the limit inferior as $k \to +\infty$, using the weak convergence of $\ppi_k$ to $\ppi$ and the lower semicontinuity of $\mathcal{A}_2$, we get
\[ 
\liminf_{k \to + \infty} \int \mathcal{A}_{2, k} \de \ppi_k \ge - \gamma(C+2m) + \int \left ( \mathcal{A}_2 \wedge m \right )\de \ppi,
\]
so that passing first to the limit as $\gamma \downarrow 0$ and then to the limit as $m \uparrow + \infty$ conclude the proof.
\end{proof}

\subsection{Proof of Theorem \ref{thm:main1}}

Let us fix $\mu_0,\mu_1 \in \meas_+(X)$ and let us recall the definition of the measures $\aalpha^k$ provided in \eqref{eq:aalpha_k}. By Proposition \ref{prop:compred} it is not restrictive to assume $(X,\sfd)$ to be a \emph{compact} geodesic metric space. Moreover, from \eqref{eq:energy_to_HeW} and \eqref{eq:pinguino}, we only need to prove 
\[
\liminf_{k \to +\infty} \int \sfH_{N_k} \de\aalpha^k \geq \HK^2(\mu_0,\mu_1) \,.
\]
As anticipated, this will be achieved in four steps.

\medskip

\noindent\textbf{1st step}. Let us define $\ssigma^k \coloneqq \pi^{0,N_k}_\sharp \aalpha^k$ and assume, up to extracting a subsequence, that $\ssigma^k \rightharpoonup \ssigma$ as $k \to +\infty$. This is possible because all measures $\ssigma^k$ are supported on the compact set $\f{C}_\Theta[X]^2$ (recall that $\aalpha^k$ is supported in $\f{C}_\Theta[X]^{N_k+1}$ with $\Theta$ as in \eqref{eq:theta} and $X$ is compact) and are probabilities, hence are tight. Then, let us decompose $\ssigma$ into $\ssigma_>$ and $\ssigma_{\f{o}}$, i.e.\ $\ssigma = \ssigma_> + \ssigma_{\f{o}}$ where
\[
\ssigma_> \coloneqq \ssigma \mres (\f{C}[X] \setminus \{\f{o}\} \times \f{C}[X] \setminus \{\f{o}\}) \qquad \textrm{and} \qquad \ssigma_{\f{o}} \coloneqq (\f{C}[X] \times \{\f{o}\} \cup \{\f{o}\} \times \f{C}[X]) \,.
\]
Up to extracting a further subsequence, by Lemma \ref{lem:decomposition}\label{text:splitting} there exist $\ssigma_>^k,\ssigma_{\f{o}}^k \in \meas_+(\f{C}_\Theta[X]^2)$ such that $\ssigma^k = \ssigma_>^k + \ssigma_{\f{o}}^k$ for every $k \in \N$, $\ssigma_>^k \rightharpoonup \ssigma_>$ and $\ssigma_{\f{o}}^k \rightharpoonup \ssigma_{\f{o}}$ as $k \to +\infty$. In order to decompose each $\aalpha^k$ accordingly, if we disintegrate $\aalpha^k$ w.r.t.\ $\pi^{0,N_k}$ we obtain a family of probability measures $(\eeta^k_{\f{y}_0,\f{y}_{N_k}})_{\f{y}_0,\f{y}_{N_k} \in \f{C}[X]} \subset \prob(\f{C}[X]^{N_k+1})$ such that $\eeta^k_{\f{y}_0,\f{y}_{N_k}}$ is concentrated on $\{\f{y}_0\} \times \f{C}[X]^{N_k-1} \times \{\f{y}_{N_k}\}$ for $\ssigma^k$-a.e.\ $(\f{y}_0,\f{y}_{N_k}) \in \pc$ and
\[
\aalpha^k = \int_{\pc} \eeta^k_{\f{y}_0,\f{y}_{N_k}}\,\d\ssigma^k(\f{y}_0,\f{y}_{N_k}) \,.
\]
Hence, the natural way to decompose $\aalpha^k$ is given by $\aalpha^k = \aalpha_>^k + \aalpha_{\f{o}}^k$ with
\[
\aalpha_>^k \coloneqq \int_{\pc} \eeta^k_{\f{y}_0,\f{y}_{N_k}}\,\d\ssigma_>^k(\f{y}_0,\f{y}_{N_k}) \qquad \textrm{and} \qquad \aalpha_{\f{o}}^k \coloneqq \int_{\pc} \eeta^k_{\f{y}_0,\f{y}_{N_k}} \,\d\ssigma_{\f{o}}^k(\f{y}_0,\f{y}_{N_k}) \,,
\]
as in such a way $\pi^{0,N_k}_\sharp \aalpha_>^k = \ssigma_>^k$ and $\pi^{0,N_k}_\sharp \aalpha_{\f{o}}^k = \ssigma_{\f{o}}^k$. Let us further split the measure $\aalpha_>^k$ into
\[
\aalpha^k_g \coloneqq \aalpha_>^k \mres \mathcal{D}_{N_k}^c \qquad \textrm{and} \qquad \aalpha^k_b \coloneqq \aalpha_>^k \mres \mathcal{D}_{N_k}\,,
\]
where we recall
\[
\mathcal{D}_{N_k}= \left\{\sfH_{N_k} < \tilde{\sfd}_{\f{C}}^2 \circ \pi^{0,N_k} \right\} \subset \f{C}[X]^{N_k+1}\,,
\]
see also \eqref{eq:NsumH} and \eqref{eq: fakeconedistance}.
Let us observe, for later use, that if $(\f{y}_0, \dots, \f{y}_{N_k}) \in \mathcal{D}_{N_k}$, then $\sfr(\f{y}_0) > 0$ and $\sfr(\f{y}_{N_k}) > 0$, as shown at the beginning of the proof of Theorem \ref{thm:thmdinicolo}. This implies that $\aalpha_b^k$ is actually concentrated on
\begin{equation}\label{eq:supp_positive_radius}
\left \{ (\f{y}_0, \dots, \f{y}_{N_k}) \,:\, 
\begin{array}{c}
(\sfr(\f{y}_0), \dots, \sfr(\f{y}_{N_k})) \in (0,\Theta] \times [0,\Theta]^{N_k-1} \times (0,\Theta] \\[10pt]
\sfH_{N_k}(\f{y}_0, \dots, \f{y}_{N_k}) < \tilde\sfd_{\f{C}}^2 (\f{y}_0,\f{y}_{N_k})
\end{array}
\right\} \subset \f{C}[X]^{N_k+1}\,.
\end{equation}
As already remarked for $(\ssigma^k)_{k \in \N}$, also $\ssigma^k_g \coloneqq \pi^{0,N_k}_\sharp \aalpha^k_g$ and $\ssigma^k_b \coloneqq \pi^{0,N_k}_\sharp \aalpha^k_b$ are tight families of measures. We can thus assume that, up to extracting subsequences, $\ssigma^k_x$ and their 2-homogeneous marginals $\mu_{x,0}^k \coloneqq \f{h}_0^2(\ssigma^k_x)$, $\mu_{x,1}^k \coloneqq \f{h}_1^2(\ssigma^k_x)$ converge weakly to some $\ssigma_x \in \f{H}^2(\mu_{x,0}, \mu_{x,1})$, $\mu_{x,0}$, and $\mu_{x,1}$ respectively as $k \to + \infty$, where $x \in \{g,b,\f{o}\}$. As a consequence, 
\begin{equation}\label{eq:good_marginals}
\mu_{g,0}+\mu_{b,0}+\mu_{\f{o},0} = \mu_0 \qquad \textrm{and} \qquad \mu_{g,1}+\mu_{b,1}+\mu_{\f{o},1} = \mu_1\,.
\end{equation}
Indeed, by construction for all $k \in \N$ it holds
\[
\mu_{g,i}^k + \mu_{b,i}^k + \mu_{\f{o},i}^k = \f{h}^2_i(\ssigma^k_g + \ssigma^k_b + \ssigma_{\f{o}}^k) = \f{h}_i^2(\pi^{0,N_k}_\sharp \aalpha^k) = \mu_i, \qquad i=0,1\,.
\]
After this premise, note that by Jensen's inequality\label{text:caimano}
\[
\begin{split}
\int \sfH_{N_k} \de \aalpha^k_{\f{o}} & = N_k \int \sum_{i=1}^{N_k}\sfH_{\W\He}(\f{y}_{i-1},\f{y}_i) \de \aalpha^k_{\f{o}} \ge N_k\int \sum_{i=1}^{N_k}|r_{i-1}-r_i|^2 \de \aalpha^k_{\f{o}} \\
& \ge \int \left(\sum_{i=1}^{N_k}|r_{i-1}-r_i|\right)^2 \de \aalpha^k_{\f{o}} \ge \int |r_0 - r_{N_k}|^2 \de \aalpha^k_{\f{o}} \\
& = \int_{\pc} |r_0 - r_1|^2 \de \ssigma^k_{\f{o}} \,,
\end{split}
\]
so that
\[
\begin{split}
\liminf_{k \to +\infty} \int \sfH_{N_k} \de \aalpha^k_{\f{o}} & \geq \liminf_{k \to +\infty} \int_{\pc} |r_0 - r_1|^2 \de \ssigma^k_{\f{o}} = \int_{\pc} |r_0 - r_1|^2 \de \ssigma_{\f{o}} \\
& = \int_{\pc} \sfd_{ \pi/2,\f{C}}^2 \,\de \ssigma_{\f{o}} \ge \HK^2(\mu_{\f{o},0},\mu_{\f{o},1}) \,,
\end{split}
\]
where the last identity comes from the definition of the cone distance \eqref{ss22:eq:distcone} in the particular case when one of the two points is the vertex: note indeed that by construction $\ssigma_{\f{o}}$ is concentrated on points like $(\f{o},\f{y})$ or $(\f{y},\f{o})$, so that either $r_0$ or $r_1$ is equal to zero.

Moreover, on the `good' set $\mathcal{D}_{N_k}^c$ we easily get the following lower bound
\[
\int \sfH_{N_k} \de \aalpha^k_g \geq \int_{\f{C}[X,X]} \tilde{\sfd}_{\f{C}}^2 \de \ssigma^k_g \geq \int_{\f{C}[X,X]} \sfd_{\pi/2,\f{C}}^2 \de \ssigma^k_g \geq \HK^2(\mu_{g,0}^k,\mu_{g,1}^k)\,,
\]
where $\mathsf{d}_{\pi/2, \f{C}}$ was defined in \eqref{ss22:eq:distcone} and, as pointed out right after \eqref{eq: Handtildedonvertex}, it is dominated by $\tilde{\sfd}_{\f{C}}$. Therefore, if we are able to show that
\begin{equation}\label{eq:final_goal}
\liminf_{k \to +\infty} \int \sfH_{N_k} \de \aalpha^k_b \geq \HK^2(\mu_{b,0},\mu_{b,1})\,,
\end{equation}
then the conclusion immediately follows observing that
\begin{align*}
\liminf_{k \to +\infty} \int \sfH_{N_k} \de \aalpha^k & \geq \liminf_{k \to +\infty} \int \sfH_{N_k} \de \aalpha^k_g + \liminf_{k \to +\infty} \int \sfH_{N_k} \de \aalpha^k_b + \liminf_{k \to +\infty} \int \sfH_{N_k} \de \aalpha^k_{\f{o}} \\
& \geq \HK^2(\mu_{g,0},\mu_{g,1}) + \HK^2(\mu_{b,0},\mu_{b,1}) + \HK^2(\mu_{\f{o},0},\mu_{\f{o},1}) \geq \HK^2(\mu_{0},\mu_{1})\,,
\end{align*}
where the second inequality relies on the weak lower semicontinuity of $\HK$ together with $\mu_{g,i}^k \rightharpoonup \mu_{g,i}$ as $k \to +\infty$ for $i=0,1$; the last inequality exploits \eqref{eq:good_marginals} and the subadditivity of $\HK^2$ (see Theorem \ref{thm:omnibus}(2)). For this reason, let us now focus on \eqref{eq:final_goal}. 

\medskip

\noindent\textbf{2nd step.} For every $m \in \N_{\ge 1}$, let us consider an increasing function $f_m \in \rmC^\infty([0,+\infty))$ such that 
\[ 
f_m \equiv 0 \text{ in } [0, 1/m]\,, \qquad f_m \equiv 1 \text{ in } [2/m, +\infty)\,, 
\]
and let us set
\[ 
\aalpha^{k,m}_b \coloneqq (f_m \circ \sfr \circ \pi^0) (f_m \circ \sfr \circ \pi^{N_k})\aalpha_b^k \,. 
\]
As preliminary considerations, note that $\aalpha^k_b \geq \aalpha^{k,m}_b$, whence
\begin{equation}\label{eq:ineq_m}
\int \sfH_{N_k} \de \aalpha^k_b \ge \int \sfH_{N_k} \de \aalpha_b^{k,m}\,, \qquad \forall k,m \in \N_{\geq 1}\,.
\end{equation}
It is also easy to check that $\aalpha_b^{k,m}$ is concentrated on 
\begin{equation}\label{eq:bad_support}
\left \{ (\f{y}_0, \dots, \f{y}_{N_k}) \,:\, 
\begin{array}{c}
(\sfr(\f{y}_0), \dots, \sfr(\f{y}_{N_k})) \in [m^{-1}, \Theta] \times [0,\Theta]^{N_k-1} \times [m^{-1}, \Theta] \\[10pt]
\sfH_{N_k}(\f{y}_0, \dots, \f{y}_{N_k}) < \tilde\sfd_{\f{C}}^2 (\f{y}_0,\f{y}_{N_k})
\end{array}
\right\} \subset \f{C}[X]^{N_k+1}\,.
\end{equation}
Moreover, if we introduce $\ssigma_b^{k,m} \coloneqq \pi_\sharp^{0,N_k} \aalpha_b^{k,m}$ and its 2-homogneous marginals 
\[ 
\nu_0^{k,m} \coloneqq \f{h}^2(\pi^0_\sharp \aalpha_b^{k,m}), \qquad \nu_1^{k,m} \coloneqq \f{h}^2(\pi^{N_k}_\sharp \aalpha_b^{k,m})\,,
\]
it is not difficult to see that, as $k \to +\infty$, they converge weakly to measures
\[
\bar{\ssigma}_b^m \coloneqq (f_m \circ \sfr \circ \pi^0) (f_m \circ \sfr \circ \pi^{1})\ssigma_b
\]
and
\[ 
\nu_0^m \coloneqq \f{h}^2 ( \pi^0_\sharp \bar{\ssigma}_b^m)\,, \qquad \nu_1^m \coloneqq \f{h}^2 ( \pi^1_\sharp \bar{\ssigma}_b^m)  
\]
respectively, satisfying $\bar{\ssigma}_b^m \weakto \ssigma_b$ and $\nu_i^m \weakto \mu_{b,i}$, $i=0,1$, as $m \to + \infty$. Indeed, by construction $\ssigma_b$ does not charge the set $\f{C}[X] \times \{\f{o}\} \cup \{\f{o}\} \times \f{C}[X]$, so that $(f_m \circ \sfr \circ \pi^0) (f_m \circ \sfr \circ \pi^{1}) \to 1$ $\ssigma_b$-a.e.\ as $m \to +\infty$. Since $|f_m| \leq 1$ for all $m \in \N_{\geq 1}$, by Lebesgue's dominated convergence theorem we deduce that, for every $\varphi \in \rmC_b(\pc)$,
\[
\lim_{m \to +\infty}\int_{\pc} \varphi \,\de\bar{\ssigma}_b^m = \lim_{m \to +\infty}\int_{\pc} \varphi \,(f_m \circ \sfr \circ \pi^0) (f_m \circ \sfr \circ \pi^{1}) \,\de\ssigma_b = \int_{\pc} \varphi \,\de\ssigma_b \,,
\]
as desired.

We now claim that it is sufficient to prove that
\begin{equation}\label{eq:final_goal_2}
\liminf_{k \to +\infty} \int \sfH_{N_k} \de \aalpha_b^{k,m} \ge \HK^2(\nu_0^m, \nu_1^m)\,, \qquad \forall m \in \N_{\geq 1}\,.
\end{equation}
Indeed, by \eqref{eq:ineq_m} and \eqref{eq:final_goal_2} we have
\[ 
\liminf_{k \to +\infty} \int \sfH_{N_k} \de \aalpha^k_b \ge \liminf_{k \to +\infty} \int \sfH_{N_k} \de \aalpha_b^{k,m} \ge \HK^2(\nu_0^m, \nu_1^m)\,, \qquad \forall m \in \N_{\geq 1}\,,
\]
so that passing to the limit as $m \to +\infty$ and using the convergence of $\nu_i^m$ to $\mu_{b,i}$, for $i=0,1$, together with the joint weak lower semicontinuity of $\HK^2$ lead to \eqref{eq:final_goal}.

\medskip

\noindent\textbf{3rd step.} Thus, we fix $m \in \N$ and we devote the remaining part of the proof to show \eqref{eq:final_goal_2}, passing first of all to a non-relabeled subsequence such that the limit inferior in \eqref{eq:final_goal_2} is achieved as a limit. Second, we observe that the weak convergence of $\ssigma_b^{k,m}$ towards $\bar{\ssigma}_b^m$ implies the convergence of the masses:
\[
\ssigma_b^{k,m}(\f{C}[X,X]) \to \bar{\ssigma}_b^m(\f{C}[X,X]) \qquad \textrm{as } k \to +\infty\,.
\]
If $\bar{\ssigma}_b^m(\f{C}[X,X])=0$, then $\nu_0^m$ and $\nu_1^m$ are both the null measure and in this case \eqref{eq:final_goal_2} trivially holds. We can thus assume $\bar{\ssigma}_b^m(\f{C}[X,X])>0$. This implies that (up to passing to a subsequence) the mass of all $\ssigma_b^{k,m}$ is bounded from below by a strictly positive constant, uniformly in $k$. As on the other hand $\ssigma_b^{k,m}(\f{C}[X,X]) \leq 1$ (because $\ssigma_b^{k,m}$ is obtained via restriction/cut-off of the probability measure $\aalpha^k$), we conclude that there exists a constant $c_m \in (1,+\infty)$ such that
\begin{equation}\label{eq:uniform_mass}
1 \leq c_{k,m} \coloneqq \left(\ssigma_b^{k,m}(\f{C}[X,X])\right)^{-1/2} \leq c_m\,, \qquad \forall k \in \N\,.
\end{equation}
We now make use of Theorem \ref{thm:thmdinicolo} to modify $\aalpha_b^{k,m}$: leveraging the map $G_{N_k}$ introduced therein we define a new plan
\[
\tilde\aalpha_b^{k,m} \coloneqq (G_{N_k})_\sharp \aalpha_b^{k,m}
\]
and note that $\pi^{0,N_k}_\sharp \tilde\aalpha_b^{k,m} \in \f{H}^2(\nu_0^{k,m}, \nu_1^{k,m})$, because of $\pi^0 \circ G_{N_k} = \pi^0$ and $\pi^{N_k} \circ G_{N_k} = \pi^{N_k}$, see \eqref{eq:fermo_ai_bordi}. Using the fact that $H_{N_k} \circ G_{N_k} \leq H_{N_k}$, again by \eqref{eq:fermo_ai_bordi}, we also remark that
\begin{equation}\label{eq:intermediate}
\int \sfH_{N_k} \de \aalpha_b^{k,m} \geq \int \sfH_{N_k} \de \tilde\aalpha_b^{k,m}\,.
\end{equation} 
Finally, observe that $\tilde\aalpha_b^{k,m}$ is concentrated on
\[ 
\left \{ (\f{y}_0, \dots, \f{y}_{N_k}) \,:\, 
\begin{array}{c}
(\sfr(\f{y}_0), \dots, \sfr(\f{y}_{N_k})) \in \left[(5m)^{-1}, \Theta\right]^{N_k+1} \\[10pt]
\sfd(\sfx(\f{y}_{i-1}),\sfx(\f{y}_i)) \in \left[0,\frac{8\Theta m}{\sqrt{N_k}}\right]\,, \quad i=1,\dots,N_k
\end{array}
\right\}\,.
\]
Indeed, the fact that $\aalpha_b^{k,m}$ is concentrated on \eqref{eq:bad_support} allows to apply the radial and spatial quantitative bounds \eqref{eq:radialboundG_N} and \eqref{eq:spatialboundG_N}. For ease of computations, we slightly weaken them, observing that $\frac15 < 1-\frac{\pi}{4}$ and $\frac{\pi}{2-\pi/2} < 8$. Then, since $\Theta$ and $m$ are fixed, up to choosing $k$ sufficiently large we can assume that $\sfd(\sfx(\f{y}_{i-1}),\sfx(\f{y}_i)) \in [0,1]$ for every $i=1,\dots,N_k$. 

As next step, we apply to $\tilde\aalpha_b^{k,m}$ a dilation of parameter $c_{k,m}$ (with $c_{k,m}$ defined in \eqref{eq:uniform_mass}, see also \eqref{eq:dilation} for the definition of $\text{dil}_{c_{k,m}, 2}$):
\[
\hat\aalpha_b^{k,m} \coloneqq \text{dil}_{c_{k,m}, 2}(\tilde\aalpha_b^{k,m}).
\]
This further measure is concentrated on 
\[ 
\left \{ (\f{y}_0, \dots, \f{y}_{N_k}) \,:\, 
\begin{array}{c}
(\sfr(\f{y}_0), \dots, \sfr(\f{y}_{N_k})) \in \left[(5mc_m)^{-1}, \Theta\right]^{N_k+1} \\[10pt]
\sfd(\sfx(\f{y}_{i-1}),\sfx(\f{y}_i)) \in \left[0,1\right]\,, \quad i=1,\dots,N_k
\end{array}
\right\}\,,
\]
since $1 \leq c_{k,m} \leq c_m$ by \eqref{eq:uniform_mass} and dilations only act radially: this implies that the conditions $\sfd(\sfx(\f{y}_{i-1}),\sfx(\f{y}_i)) \in [0,1]$, $i=1,\dots,N_k$, valid in the support of $\tilde\aalpha_b^{k,m}$ still hold in the support of $\hat\aalpha_b^{k,m}$. Moreover, let us observe that $\hat\aalpha_b^{k,m} \in \prob(\f{C}[X]^{N_k+1})$, because
\[
\hat{\aalpha}_b^{k,m}(\f{C}[X]^{N_k+1}) = \int c_{k,m}^2 \de\tilde{\aalpha}_b^{k,m} = \int_{\pc} c_{k,m}^2 \de\ssigma_b^{k,m} = 1\,,
\]
and $\pi^{0,N_k}_\sharp \hat\aalpha_b^{k,m} \in \f{H}^2(\nu_0^{k,m}, \nu_1^{k,m})$, since homogeneous marginals are invariant w.r.t.\ dilations, see \eqref{eq:dil_invariance}. By \eqref{eq:hqhom} we also have
\[
\int \sfH_{N_k} \de \tilde\aalpha_b^{k,m} = \int \sfH_{N_k} \de \hat\aalpha_b^{k,m}
\]
and combining this identity with \eqref{eq:intermediate} yields
\begin{equation}\label{eq:chain_HNk}
\int \sfH_{N_k} \de \aalpha_b^{k,m} \geq \int \sfH_{N_k} \de \tilde\aalpha_b^{k,m} = \int \sfH_{N_k} \de \hat\aalpha_b^{k,m}\,.
\end{equation}
In view of \eqref{eq:final_goal_2}, it is thus sufficient to prove
\begin{equation}\label{eq:final_goal_3}
\liminf_{k \to +\infty} \int \sfH_{N_k} \de \hat\aalpha_b^{k,m} \geq \HK^2(\nu_0^m,\nu_1^m), \qquad \forall m \in \N_{\geq 1},
\end{equation}
and this is the purpose of the fourth and final step of the proof.

\medskip

\noindent\textbf{4th step.} Let us now lift $\hat\aalpha_b^{k,m}$ to the path space. To this end, let us define the geodesic interpolation map $\Gamma^{N_k} : \f{C}[X]^{N_k+1} \to \AC^2([0,1]; (\f{C}[X], \sfd_{\f{C}}))$ as
\begin{equation}\label{eq:geo_interpolation}
\begin{split}
\Gamma^{N_k}(\f{y}_0,\dots,\f{y}_{N_k})(t) \coloneqq \Gamma^X(\f{y}_{i-1},\f{y}_i)(N_k t - (i-1)), \qquad \textrm{if} \quad & \frac{i-1}{N_k} \leq t \leq \frac{i}{N_k} \\[5pt]
& i=1,\dots,N_k,
\end{split}
\end{equation}
where $\Gamma^X$ is the $\mathcal{B}(X \times X)^*$-$\mathcal{B}(\rmC([0,1]; (\f{C}[X], \sfd_{\f{C}})))$ measurable geodesic selection map constructed in Theorem \ref{th:measgeod}. Then
\[
\ppi^{k,m} \coloneqq \Gamma^{N_k}_\sharp \hat\aalpha_b^{k,m} \in \prob(\AC^2([0,1]; (\f{C}[X], \sfd_{\f{C}})))\,,
\]
and note that by construction and by Remark \ref{rem:geocone} the plan $\ppi^{k,m}$ is concentrated on piecewise geodesic curves taking values in $\f{C}_{\Theta,\eps_m}[X]$ (cf.~\eqref{eq:cepsr}), i.e.\ $\ppi^{k,m} \in \prob(\AC^2([0,1]; (\f{C}_{\Theta,\eps_m}[X],\sfd_{\f{C}})))$, where 
\[ 
\eps_m \coloneqq \frac{1}{5\sqrt{2}m c_m}\,.
\]
Indeed, the fact that $\hat\aalpha_b^{k,m}$ is concentrated on points $(\f{y}_0, \dots, \f{y}_{N_k}) \in \f{C}[X]^{N_k+1}$ with $\sfr(\f{y}_i) \geq (5mc_m)^{-1} > 0$ for all $i=0,\dots,N_k$ and $\sfd(\sfx(\f{y}_{i-1}),\sfx(\f{y}_i)) \leq 1 \leq \pi/2$ for all $i=1,\dots,N_k$ ensures, by \eqref{eq:rmin_nicer}, that the minimal radius of any constant speed geodesic connecting $\f{y}_{i-1}$ and $\f{y}_i$ is larger than $\frac{1}{\sqrt{2}} \sfr(\f{y}_{i-1}) \wedge \sfr(\f{y}_i) \geq \frac{1}{\sqrt{2}}(5mc_m)^{-1} = \eps_m$.

Recalling the definition \eqref{eq:a2def} of $\mathcal{A}_2$, we also have
\begin{equation}\label{eq:A2_bound}
\begin{split}
    \int \mathcal{A}_2 \de \ppi^{k,m} & = \int N_k \sum_{i=1}^{N_k} \sfd_{\f{C}}^2(\sfe_{(i-1)/N_k}, \sfe_{i/N_k}) \de \ppi^{k,m} \\ 
    & = \int N_k \sum_{i=1}^{N_k} \sfd_{\f{C}}^2(\pi^{i-1},\pi^i) \de \hat\aalpha_b^{k,m} \le 2 \int \sfH_{N_k} \de \hat\aalpha_b^{k,m} \le 2 \int \sfH_{N_k} \de \aalpha_b^{k,m} \\
    & \le 2 \int \sfH_{N_k} \de \aalpha_b^k \le 2\int \sfH_{N_k} \de \aalpha^k = 2\mathcal{E}_{N_k}(P_k) \le 2C < +\infty\,,
\end{split}
\end{equation}
where we have used, in order, the definition of $\ppi^{k,m}$, Lemma \ref{le:double}, \eqref{eq:chain_HNk}, \eqref{eq:ineq_m}, \eqref{eq:pinguino}, and \eqref{eq:cbound}. This fact together with the tightness of $\{(\sfe_t)_\sharp \ppi^{k,m} \,:\, t \in [0,1],\,k \in \N\} \subset \prob(\f{C}_\Theta[X])$ (immediate consequence of the compactness of $\f{C}_\Theta[X]$) entail, by \cite[Theorem 10.4]{ABS21}, that $(\ppi^{k,m})_k$ is tight. Hence, up to passing to a subsequence, $(\ppi^{k,m})_k$ weakly converges as $k \to +\infty$ to some $\ppi^m \in \prob(\rmC([0,1]; (\f{C}[X], \sfd_{\f{C}})))$, but a more accurate statement actually holds. 

Indeed, the lower semicontinuity of $\mathcal{A}_2$ w.r.t.\ weak convergence together with \eqref{eq:A2_bound} implies that $\int \mathcal{A}_2 \de\ppi^m < +\infty$, namely $\ppi^m$ is concentrated on $\AC^2([0,1]; (\f{C}[X], \sfd_{\f{C}}))$. Moreover, recalling that $\pi^{0,N_k}_\sharp \hat\aalpha_b^{k,m} \in \f{H}^2(\nu_0^{k,m}, \nu_1^{k,m})$, we see that $(\sfe_0, \sfe_1)_\sharp \ppi^{k,m} \in \f{H}^2(\nu_0^{k,m}, \nu_1^{k,m})$ as well, so that in conclusion
\[
\ppi^{k,m} \rightharpoonup \ppi^m \quad \textrm{with} \quad \ppi^m \in \prob(\AC^2([0,1]; (\f{C}[X], \sfd_{\f{C}}))) \quad \textrm{and} \quad (\sfe_0, \sfe_1)_\sharp \ppi^m \in \f{H}^2(\nu_0^m,\nu_1^m)\,. 
\]
We are now ready to prove \eqref{eq:final_goal_3}. Let us start observing that for all $\f{y}_i = [x_i,r_i] \in \f{C}_{\Theta,\eps_m}[X]$, $i=0,1$, it holds

\begin{equation}\label{eq:caimano}
\begin{split}
\sfH_{\W\He}(\f{y}_0,\f{y}_1) & = |r_0-r_1|^2 + \frac{r_1}{r_0} r_0 r_1 \sfd^2(x_0,x_1) \\
& \geq \left(1 \wedge \frac{r_1}{r_0}\right)\left(|r_0-r_1|^2 + r_0r_1 \sfd^2(x_0,x_1)\right) \\
& \geq \left(1 \wedge \frac{r_1}{r_0}\right) \left(|r_0-r_1|^2 + 4r_0r_1 \left(\frac{\sfd(x_0,x_1) \wedge \pi}{2}\right)^2 \right) \\
& \geq \left(1 \wedge \frac{r_1}{r_0}\right) \left(|r_0-r_1|^2 + 4r_0r_1 \sin^2\left(\frac{\sfd(x_0,x_1) \wedge \pi}{2}\right) \right) \\
& = \left(1 \wedge \frac{r_1}{r_0}\right) \sfd_{\f{C}}^2(\f{y}_0,\f{y}_1) \,,
\end{split}
\end{equation}

so that
\begin{align*}
\int \sfH_{N_k} \de \hat\aalpha_b^{k,m} & = \int N_k \sum_{i=1}^{N_k} \sfH_{\W\He} \circ \pi^{i-1,i} \de  \hat\aalpha_b^{k,m} \ge \int \left [ N_k \sum_{i=1}^{N_k}\left ( 1 \wedge \frac{\sfr \circ \pi^i}{\sfr \circ \pi^{i-1}}\right ) \sfd_{\f{C}}^2 \circ \pi^{i-1,i} \right ]\de \hat\aalpha_b^{k,m} \\
& = \int \sum_{i=1}^{N_k}\int_{(i-1)/N_k}^{i/N_k} u_k(\f{y},t)|\f{y}'(t)|^2_{\sfd_{\f{C}}} \de t \de \ppi^{k,m}(\f{y})  \\
& = \int \int_0^1 u_k(\f{y},t)|\f{y}'(t)|^2_{\sfd_{\f{C}}} \de t \de \ppi^{k,m}(\f{y}) \, ,
\end{align*}
where we have used the definition of $\sfH_{N_k}$, the fact that $\ppi^{k,m}$ is concentrated on piecewise geodesic curves, and we have introduced, for ease of notation, the function $u_k$ defined as in \eqref{eq:uk} with $\eps=\eps_m$. An application of Proposition \ref{prop:conva2} then gives
\[ 
\begin{split}
\liminf_{k \to + \infty} \int \sfH_{N_k} \de \hat\aalpha_b^{k,m} & \ge \liminf_{k \to +\infty} \int \int_0^1 u_k(\f{y},t)|\f{y}'(t)|^2_{\sfd_{\f{C}}} \de t \de \ppi^{k,m}(\f{y}) \\ 
& \ge \int \mathcal{A}_2 \de \ppi^m \ge \HK^2(\nu_0^m,\nu_1^m)\,, 
\end{split}
\]
where the last inequality comes from \eqref{eq:dynamic_HK}. This is precisely \eqref{eq:final_goal_3}, so that the proof is complete.

\appendix

\section{Measurability of the geodesic selection map}\label{sec:measappendix}

In some of the constructions involved in the proof of Theorem \ref{thm:thmdinicolo}, we need to consider a map associating to a pair of points a geodesic connecting them.
If for every pair of points there exists a unique geodesic connecting them, such map is Borel measurable (see \cite[Theorem 6.9.3]{Bogachev07}). However, in general, this requires to provide a selection of the possibly multi-valued map sending a pair of points to the set of geodesics connecting them. This cannot in general be required to be Borel measurable, so that we have to introduce the notion of universally measurable set. This is only a technical complication, since the push-forward of a Borel measure via a map which is only universally measurable is still a Borel measure, as discussed below.

Let $(X, \sfd)$ be a complete and separable metric space. Recall that $\mathcal{B}(X)$ denotes the Borel sigma-algebra in $(X, \sfd)$. If $\mu \in \meas_+(X)$, we say that a set $A \subset X$ is $\mu$-measurable if there exist $E,F \in \mathcal{B}(X)$ such that $E \subset A$, $A \setminus E \subset F$ and $\mu(F)=0$. The sigma-algebra of $\mu$-measurable subsets of $X$ is denoted by $\mathcal{B}(X)_\mu$.

\begin{definition} Let $(X, \sfd)$ be a complete and separable metric space. The sigma-algebra of \emph{universally measurable} subsets of $X$ is denoted by $\mathcal{B}(X)^*$ and it is defined as
\[ \mathcal{B}(X)^* \coloneqq \bigcap \left \{ \mathcal{B}(X)_\mu \mid \mu \in \prob(X) \right \}.\]  
\end{definition}

Given a measure $\mu \in \meas_+(X)$, it clearly admits a (non-relabeled) extension to $\mathcal{B}(X)^*$: if $A \in \mathcal{B}(X)^*$, it belongs in particular to $\mathcal{B}(X)_\mu$ so that it is enough to set $\mu(A)\coloneqq \mu(E)$, where $E,F \in \mathcal{B}(X)$ are such that $E \subset A$, $A \setminus E \subset F$ and $\mu(F)=0$. Notice that if $A \subset B \in \mathcal{B}(X)^*$ and $\mu(B)=0$, then $A \in \mathcal{B}(X)^*$ and $\mu(A)=0$.

Therefore, if $(X, \sfd_X)$ and $(Y, \sfd_Y)$ are complete and separable metric spaces and $f: X \to Y$ is $\mathcal{B}(X)^*-\mathcal{B}(Y)$ measurable, the push-forward $f_\sharp \mu$ defines an element of $\meas_+(Y)$ for any measure $\mu \in \meas_+(X)$.

This fact can be used to prove that 
\begin{equation}\label{eq:universally}
    \text{any $\mathcal{B}(X)^*-\mathcal{B}(Y)$ measurable map $f:X \to Y$ is also $\mathcal{B}(X)^*-\mathcal{B}(Y)^*$ measurable. }
\end{equation}
Indeed, if $B \subset \mathcal{B}(Y)^*$ and $\mu \in \meas_+(X)$, then $B$ belongs in particular to $\mathcal{B}(Y)_\nu$, where $\nu\coloneqq f_\sharp \mu \in \meas_+(Y)$, so that we can find $E,F \in \mathcal{B}(Y)$ such that $E \subset B$, $B \setminus E \subset F$ and $\mu(f^{-1}(F))=\nu(F)=0$. Thus $f^{-1}(B)= f^{-1}(E) \cup f^{-1}(B \setminus E)$ with $f^{-1}(E) \in \mathcal{B}(X)^*$. Since $f^{-1}(B \setminus E) \subset f^{-1}(F) \in \mathcal{B}(X)^*$ and $\mu(f^{-1}(F))=0$, we deduce that  $f^{-1}(B \setminus E) \in \mathcal{B}(X)^*$, hence $f^{-1}(B) \in \mathcal{B}(X)^*$.

Before stating the following measurable selection theorem, let us recall that we denote by $\geo((X, \sfd)) \subset \rmC([0,1];(X, \sfd))$ the space of (constant speed, length-minimizing) geodesics in $(X, \sfd)$, whenever $(X, \sfd)$ is a complete, separable and geodesic metric space.

\begin{theorem}[Measurable selection of geodesics] \label{th:measgeod}
Let $(X, \sfd)$ be a complete, separable and geodesic metric space. Then there exists a $\mathcal{B}(X \times X)^*$-$\mathcal{B}(\rmC([0,1]; (X, \sfd)))$ measurable map $\Gamma^X: X^2 \to \geo((X, \sfd))$ such that, for every $(x_0, x_1) \in X^2$, $\Gamma(x_0, x_1)$ is a geodesic connecting $x_0$ to $x_1$.
\end{theorem}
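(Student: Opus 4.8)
The statement is a measurable selection theorem for the multivalued map $(x_0,x_1) \mapsto \{\gamma \in \geo((X,\sfd)) : \gamma(0)=x_0,\, \gamma(1)=x_1\}$, and the natural approach is to invoke the Kuratowski--Ryll-Nardzewski / Jankov--von Neumann selection machinery. First I would set up the target space: since $(X,\sfd)$ is complete and separable, $\rmC([0,1];(X,\sfd))$ is a Polish space (with the sup distance), and $\geo((X,\sfd))$ is a \emph{closed} subset of it, as recalled in the Preliminaries, hence itself Polish. The evaluation map $(\sfe_0,\sfe_1): \rmC([0,1];(X,\sfd)) \to X^2$, $\gamma \mapsto (\gamma(0),\gamma(1))$, is continuous; restricting it to $\geo((X,\sfd))$ gives a continuous, and in particular Borel, surjection $\sfe \coloneqq (\sfe_0,\sfe_1)|_{\geo} : \geo((X,\sfd)) \to X^2$ (surjective precisely because $(X,\sfd)$ is a geodesic space). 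The graph
\[
R \coloneqq \{((x_0,x_1),\gamma) \in X^2 \times \geo((X,\sfd)) : \sfe(\gamma) = (x_0,x_1)\}
\]
is a Borel (indeed closed) subset of the Polish space $X^2 \times \geo((X,\sfd))$, and its sections $R_{(x_0,x_1)}$ are all non-empty.

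The key step is then to apply the Jankov--von Neumann uniformization theorem (see e.g.\ Kechris, \emph{Classical Descriptive Set Theory}, Theorem 18.1, or Bogachev \cite{Bogachev07}): any subset of a product of Polish spaces that is analytic (a fortiori Borel) and has non-empty sections admits a uniformizing section that is measurable with respect to the $\sigma$-algebra generated by the analytic sets, hence in particular $\mathcal{B}(X^2)^*$-$\mathcal{B}(\geo((X,\sfd)))$ measurable (universally measurable maps arise exactly at this level of the projective hierarchy). Concretely, this yields a map $\Gamma^X : X^2 \to \geo((X,\sfd))$ with $\sfe(\Gamma^X(x_0,x_1)) = (x_0,x_1)$ for every $(x_0,x_1)$, i.e.\ $\Gamma^X(x_0,x_1)$ is a geodesic from $x_0$ to $x_1$, and with the required universal measurability. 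Composing with the inclusion $\geo((X,\sfd)) \hookrightarrow \rmC([0,1];(X,\sfd))$ (which is continuous, hence Borel) gives the map with codomain $\rmC([0,1];(X,\sfd))$ as stated, and measurability is preserved since the composition of a $\mathcal{B}^*$-$\mathcal{B}$ map with a Borel map is again $\mathcal{B}^*$-$\mathcal{B}$.

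The only point requiring a little care — and the main (mild) obstacle — is verifying that the Jankov--von Neumann theorem is genuinely applicable: one must check that $R$ is a \emph{standard Borel} set inside a product of standard Borel (Polish) spaces, which reduces to confirming that $\geo((X,\sfd))$ is Polish (true, being closed in a Polish space) and that $R$ is Borel (true, being the preimage of the diagonal-type closed condition $\sfe(\gamma)=(x_0,x_1)$ under a continuous map, hence closed). Once these are in place the selection is immediate. An alternative, if one wishes to avoid invoking the full analytic uniformization theorem, is to note that $\geo((X,\sfd))$ is a Polish space and use the Kuratowski--Ryll-Nardzewski theorem on the closed-valued measurable multifunction $(x_0,x_1) \mapsto R_{(x_0,x_1)}$; however measurability of this multifunction in the Borel sense is not clear in general (the sets of geodesics need not vary Borel-measurably), which is precisely why one is forced to pass to the universally measurable $\sigma$-algebra, so the Jankov--von Neumann route is the cleanest. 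I would also briefly remark, as the excerpt already notes, that universal measurability of $\Gamma^X$ is all that is needed downstream, since push-forwards of Radon measures along $\mathcal{B}^*$-$\mathcal{B}$ maps remain Radon.
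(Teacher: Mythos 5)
Your proposal is correct and takes essentially the same approach as the paper: both recognize the relevant graph set as closed (hence Borel) in a product of Polish spaces and apply a Jankov--von Neumann type selection theorem — the paper cites Bogachev's Theorem~6.9.12, which is a form of the same uniformization result you invoke via Kechris — to obtain a $\sigma(S(\mathcal{B}))$-measurable (hence universally measurable) selector. The only cosmetic difference is that you phrase the selection as producing a right inverse for the continuous surjection $(\sfe_0,\sfe_1)\colon\geo((X,\sfd))\to X^2$, whereas the paper works directly with the closed set $A\subset X^2\times\rmC([0,1];(X,\sfd))$; your added remark explaining why Kuratowski--Ryll-Nardzewski is not directly applicable is a helpful clarification but does not change the argument.
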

\begin{proof} This result is an application of \cite[Theorem 6.9.12]{Bogachev07}. In the notation of such theorem, we are taking $(\Omega, \mathcal{B})\coloneqq (X^2, \mathcal{B}(X \times X))$ which is clearly a measurable space, $X\coloneqq \rmC([0,1]; (X, \sfd))$ which is a Polish space (hence Souslin, see \cite[Definition 6.6.1]{Bogachev07}) and as $A$ the set
\[ 
A\coloneqq \left \{ (x_0, x_1, \gamma) : \gamma \text{ is a geodesic connecting $x_0$ to $x_1$}\right \} \subset X^2 \times \rmC([0,1]; (X, \sfd)). 
\]
It is not difficult to see that $A$ is a closed, hence Borel, subset of $X^2 \times \rmC([0,1]; (X, \sfd))$ and, in particular, it belongs to the class $S\left ( \mathcal{B}(X \times X) \otimes \mathcal{B}(\rmC([0,1]; (X, \sfd))) \right )$ (see \cite[Definition 1.10.1]{Bogachev07}). Thus \cite[Theorem 6.9.12]{Bogachev07} gives the existence of a map $\Gamma^X: \pi_{\Omega}(A)=X^2 \to \rmC([0,1]; (X, \sfd))$ whose graph is contained in $A$ and which is $\sigma(S(\mathcal{B}(X\times X)))-\mathcal{B}(\rmC([0,1]; (X, \sfd)))$ measurable. To conclude, it is enough to observe that $\sigma(S(\mathcal{B}(X\times X))) \subset \mathcal{B}(X \times X)^*$ which is equivalent to say that $\sigma(S(\mathcal{B}(X\times X))) \subset \mathcal{B}(X \times X)_\mu$ for every $\mu \in \meas_+(X\times X)$. This is in turn a consequence of e.g.~\cite[Theorem 1.10.5]{Bogachev07}.
\end{proof}

\section{Minimal radius of cone geodesics}\label{app:radius}

Aim of this appendix is to prove the statements on the minimal radius of geodesics on the metric cone $(\f{C}[X],\sfd_\f{C})$ contained in Remark \ref{rem:geocone}. Thus, given two points $\f{y}_0=[x_0,r_0], \f{y}_1=[x_1,r_1]$, $\f{y}_0 \neq \f{y}_1$, both different from $\f{o}$ with $0 \leq d \coloneqq \sfd(\sfx(\f{y}_0), \sfx(\f{y}_1)) < \pi$, recall that
\[
r^2(t) = (1-t)^2 r_0^2 + t^2 r_1^2 + 2t(1-t) r_0 r_1\cos(d)
\]
and start observing that
\[
\begin{split}
(r^2(t))' & = -2(1-t) r_0^2 + 2t r_1^2 + 2(1-2t) r_0 r_1\cos(d) \\
& = 2t(r_0^2 + r_1^2 - 2r_0 r_1\cos(d)) + 2r_0 r_1\cos(d) - 2r_0^2.
\end{split}
\]
We deduce that
\[
(r^2(t))' \geq 0 \qquad \Longleftrightarrow \qquad t \geq \frac{r_0^2 - r_0 r_1\cos(d)}{r_0^2 + r_1^2 - 2r_0 r_1\cos(d)}
\]
and the only critical point for $r^2(t)$ is thus given by
\[
t^* = \frac{r_0^2 - r_0 r_1\cos(d)}{r_0^2 + r_1^2 - 2r_0 r_1\cos(d)} = 1 - \frac{r_1^2 - r_0 r_1\cos(d)}{r_0^2 + r_1^2 - 2r_0 r_1\cos(d)}.
\]
If $t^* \in (0,1)$, this implies that $r^2(t)$ attains its minimum on $[0,1]$ at $t=t^*$; if instead $t^* \notin (0,1)$, then $r^2(t)$ is monotone on $(0,1)$ and the minimum is therefore attained in one of the interval endpoints. To determine whether $t^* \in (0,1)$ or not, it is sufficient to observe that 
\[
t^* > 0 \quad \Longleftrightarrow \quad \cos(d) < \frac{r_0}{r_1} \qquad \textrm{and} \qquad
t^* < 1 \quad \Longleftrightarrow \quad \cos(d) < \frac{r_1}{r_0}
\]
since $r_0^2 + r_1^2 - 2r_0 r_1\cos(d) = \sfd^2_\f{C}(\f{y}_0,\f{y}_1) > 0$ and $r_0,r_1 > 0$ too. As a consequence,
\begin{itemize}
    \item $t^*<0$ if and only if $\cos(d) \geq \frac{r_0}{r_1}$: in this case, $r^2(t)$ is monotone increasing on $(0,1)$ and the minimum is attained at $t=0$;
    \item $t^*>1$ if and only if $\cos(d) \geq \frac{r_1}{r_0}$: in this case, $r^2(t)$ is monotone decreasing on $(0,1)$ and the minimum is attained at $t=1$.
\end{itemize}
This proves \eqref{eq:tmin} as well as the fact that $r_{\min}(\f{y}_0, \f{y}_1) \coloneqq r_{\f{y}}(t_{\min}(\f{y}_0, \f{y}_1)) = r_0\wedge r_1$ if $\cos(d) \ge \frac{r_0}{r_1}\wedge\frac{r_1}{r_0}$. To conclude the proof of \eqref{eq:rmin}, we are only left to evaluate $r^2(t^*)$. Thus observe that
\[
\begin{split}
r^2(t^*) & = \left(\frac{r_1^2 - r_0 r_1\cos(d)}{r_0^2 + r_1^2 - 2r_0 r_1\cos(d)}\right)^2 r_0^2 + \left(\frac{r_0^2 - r_0 r_1\cos(d)}{r_0^2 + r_1^2 - 2r_0 r_1\cos(d)}\right)^2 r_1^2 \\
& \quad + 2\frac{r_0^2 - r_0 r_1\cos(d)}{r_0^2 + r_1^2 - 2r_0 r_1\cos(d)} \cdot \frac{r_1^2 - r_0 r_1\cos(d)}{r_0^2 + r_1^2 - 2r_0 r_1\cos(d)} r_0 r_1 \cos(d) \\
& = \frac{r_0^2 r_1^2}{(r_0^2 + r_1^2 - 2r_0 r_1\cos(d))^2}R,
\end{split}
\]
where 
\[
\begin{split}
R & = (r_1 - r_0\cos(d))^2 + (r_0 - r_1\cos(d))^2 + 2(r_0 - r_1\cos(d))(r_1 - r_0\cos(d))\cos(d) \\
& = r_0^2 - r_0^2\cos^2(d) + r_1^2 - r_1^2\cos^2(d) - 2r_0 r_1\cos(d) + 2r_0 r_1\cos^3(d) \\
& = \sin^2(d)(r_0^2 + r_1^2 - 2r_0 r_1 \cos(d)) 
\end{split}
\]
Combining this expression with the previous one finally yields
\[
r^2(t^*) = \frac{r_0^2 r_1^2 \sin^2(d)}{r_0^2 + r_1^2 - 2r_0 r_1 \cos(d)} = \frac{r_0^2 r_1^2 \sin^2(d)}{\sfd_\f{C}^2(\f{y}_0,\f{y}_1)}.
\]
As for \eqref{eq:rmin_nicer}, this inequality is trivially true when $\cos(d)\ge \frac{r_0}{r_1} \wedge \frac{r_1}{r_0}$. When $0\le \cos(d)< \frac{r_0}{r_1} \wedge \frac{r_1}{r_0}$, the bound follows since
\[
\frac{2r^2_0r^2_1\sin^2(d)}{r^2_0+r^2_1-2r_0r_1\cos(d)}\ge \frac{2r^2_0r^2_1}{r^2_0+r^2_1}\ge (r_0\wedge r_1)^2.
\]

\bibliographystyle{abbrv}
\bibliography{biblio}

\begin{thebibliography}{10}

\bibitem{ABS21}
L.~Ambrosio, E.~Bru{\'e}, and D.~Semola.
\newblock {\em Lectures on optimal transport}, volume 130.
\newblock Springer, 2021.

\bibitem{AGS08}
L.~Ambrosio, N.~Gigli, and G.~Savar{\'e}.
\newblock {\em Gradient flows in metric spaces and in the space of probability measures}.
\newblock Lectures in Mathematics ETH Z\"urich. Birkh\"auser Verlag, Basel, second edition, 2008.

\bibitem{AndDuf69}
W.~N. Anderson~Jr and R.~J. Duffin.
\newblock Series and parallel addition of matrices.
\newblock {\em Journal of Mathematical Analysis and Applications}, 26(3):576--594, 1969.

\bibitem{BenamouBrenier00}
J.-D. Benamou and Y.~Brenier.
\newblock A computational fluid mechanics solution to the {M}onge-{K}antorovich mass transfer problem.
\newblock {\em Numer. Math.}, 84:375--393, 2000.

\bibitem{Bogachev07}
V.~I. Bogachev.
\newblock {\em Measure theory. {V}ol. {I}, {II}}.
\newblock Springer-Verlag, Berlin, 2007.

\bibitem{bogachevwcm}
V.~I. Bogachev.
\newblock {\em Weak convergence of measures}, volume 234 of {\em Mathematical Surveys and Monographs}.
\newblock American Mathematical Society, Providence, RI, 2018.

\bibitem{Burago-Burago-Ivanov01}
D.~Burago, Y.~Burago, and S.~Ivanov.
\newblock {\em A course in metric geometry}, volume~33 of {\em Graduate Studies in Mathematics}.
\newblock American Mathematical Society, Providence, RI, 2001.

\bibitem{CaffarelliMcCann06TR}
L.~A. Caffarelli and R.~J. McCann.
\newblock Free boundaries in optimal transport and {M}onge-{A}mp\`ere obstacle problems.
\newblock {\em Ann. of Math. (2)}, 171(2):673--730, 2010.

\bibitem{Chizat17}
L.~Chizat.
\newblock {\em Unbalanced optimal transport: Models, numerical methods, applications}.
\newblock PhD thesis, Universit{\'e} Paris sciences et lettres, 2017.

\bibitem{ChiPeySchVia16}
L.~Chizat, G.~Peyr{\'{e}}, B.~Schmitzer, and F.-X. Vialard.
\newblock {An Interpolating Distance Between Optimal Transport and Fisher--Rao Metrics}.
\newblock {\em {Found.\ Computat.\ Math.}}, 18(1):1--44, October 2016.

\bibitem{CPSV18}
L.~Chizat, G.~Peyr\'{e}, B.~Schmitzer, and F.-X. Vialard.
\newblock Unbalanced optimal transport: dynamic and {K}antorovich formulations.
\newblock {\em J. Funct. Anal.}, 274(11):3090--3123, 2018.

\bibitem{DePonti20}
N.~De~Ponti.
\newblock Metric properties of homogeneous and spatially inhomogeneous {$F$}-divergences.
\newblock {\em IEEE Trans. Inform. Theory}, 66(5):2872--2890, 2020.

\bibitem{EriLeu86}
S.-L. Eriksson and H.~Leutwiler.
\newblock A potential-theoretic approach to parallel addition.
\newblock {\em Mathematische Annalen}, 274:301--317, 1986.

\bibitem{11uu}
A.~Figalli.
\newblock The optimal partial transport problem.
\newblock {\em Arch. Ration. Mech. Anal.}, 195(2):533--560, 2010.

\bibitem{FigalliGigli10}
A.~Figalli and N.~Gigli.
\newblock A new transportation distance between non-negative measures, with applications to gradients flows with {D}irichlet boundary conditions.
\newblock {\em J. Math. Pures Appl. (9)}, 94(2):107--130, 2010.

\bibitem{fg21}
A.~Figalli and F.~Glaudo.
\newblock {\em An invitation to optimal transport, {W}asserstein distances, and gradient flows}.
\newblock EMS Textbooks in Mathematics. EMS Press, Berlin, 2021.

\bibitem{cit8}
F.~C. Fleissner.
\newblock A minimizing movement approach to a class of scalar reaction-diffusion equations.
\newblock {\em ESAIM Control Optim. Calc. Var.}, 27:Paper No. 18, 29, 2021.

\bibitem{cit9}
F.~C. Fleissner.
\newblock A note on the differentiability of the {H}ellinger-{K}antorovich distances.
\newblock {\em Appl. Math. Optim.}, 87(3):Paper No. 37, 24, 2023.

\bibitem{cit3}
T.~Gallou\"et, M.~Laborde, and L.~Monsaingeon.
\newblock An unbalanced optimal transport splitting scheme for general advection-reaction-diffusion problems.
\newblock {\em ESAIM Control Optim. Calc. Var.}, 25:Paper No. 8, 32, 2019.

\bibitem{cit4}
T.~Gallou{\"e}t and F.-X. Vialard.
\newblock {From unbalanced optimal transport to the Camassa-Holm equation}.
\newblock \url{https://hal.science/hal-01363647}, Sept. 2016.

\bibitem{GM17}
T.~O. Gallou{\"e}t and L.~Monsaingeon.
\newblock A {JKO} splitting scheme for {K}antorovich--{F}isher--{R}ao gradient flows.
\newblock {\em SIAM Journal on Mathematical Analysis}, 49(2):1100--1130, 2017.

\bibitem{pasqualetto}
N.~Gigli and E.~Pasqualetto.
\newblock {\em Lectures on nonsmooth differential geometry}, volume~2 of {\em SISSA Springer Series}.
\newblock Springer, Cham, 2020.

\bibitem{cit6}
E.~Gladin, P.~Dvurechensky, A.~Mielke, and J.-J. Zhu.
\newblock Interaction-force transport gradient flows.
\newblock In {\em The Thirty-eighth Annual Conference on Neural Information Processing Systems}, 2024.

\bibitem{Jordan-Kinderlehrer-Otto98}
R.~Jordan, D.~Kinderlehrer, and F.~Otto.
\newblock The variational formulation of the {F}okker-{P}lanck equation.
\newblock {\em SIAM J. Math. Anal.}, 29(1):1--17 (electronic), 1998.

\bibitem{msg}
S.~Kondratyev, L.~Monsaingeon, and D.~Vorotnikov.
\newblock A new optimal transport distance on the space of finite {R}adon measures.
\newblock {\em Adv. Differential Equations}, 21(11-12):1117--1164, 2016.

\bibitem{cit2}
M.~Laborde.
\newblock {\em {Interacting particles systems, Wasserstein gradient flow approach}}.
\newblock Theses, {Universit{\'e} Paris sciences et lettres}, Dec. 2016.
\newblock \url{https://theses.hal.science/tel-01510949}.

\bibitem{cit7}
M.~Liero, A.~Mielke, and G.~Savar\'e.
\newblock Optimal transport in competition with reaction: the {H}ellinger-{K}antorovich distance and geodesic curves.
\newblock {\em SIAM J. Math. Anal.}, 48(4):2869--2911, 2016.

\bibitem{LMS18}
M.~Liero, A.~Mielke, and G.~Savar\'{e}.
\newblock Optimal entropy-transport problems and a new {H}ellinger-{K}antorovich distance between positive measures.
\newblock {\em Invent. Math.}, 211(3):969--1117, 2018.

\bibitem{LMS23}
M.~Liero, A.~Mielke, and G.~Savar\'{e}.
\newblock Fine properties of geodesics and geodesic {$\lambda$}-convexity for the {H}ellinger-{K}antorovich distance.
\newblock {\em Arch. Ration. Mech. Anal.}, 247(6):Paper No. 112, 73, 2023.

\bibitem{Luise-Savare19}
G.~Luise and G.~Savar\'e.
\newblock Contraction and regularizing properties of heat flows in metric measure spaces.
\newblock {\em Discrete Contin. Dyn. Syst. Ser. S}, 14(1):273--297, 2021.

\bibitem{49eee}
J.~Maas, M.~Rumpf, C.~Sch\"{o}nlieb, and S.~Simon.
\newblock A generalized model for optimal transport of images including dissipation and density modulation.
\newblock {\em ESAIM Math. Model. Numer. Anal.}, 49(6):1745--1769, 2015.

\bibitem{cit5}
A.~Mielke and J.-J. Zhu.
\newblock Hellinger--{K}antorovich {G}radient {F}lows: {G}lobal {E}xponential {D}ecay of {E}ntropy {F}unctionals, 2025.
\newblock \url{https://arxiv.org/abs/2501.17049}.

\bibitem{PekSmu76}
{\`E}.~L. Pekarev and J.~L. {\v{S}}mul'jan.
\newblock Parallel addition and parallel subtraction of operators.
\newblock {\em Mathematics of the USSR-Izvestiya}, 10(2):351, 1976.

\bibitem{RosPic}
B.~Piccoli and F.~Rossi.
\newblock Generalized {W}asserstein distance and its application to transport equations with source.
\newblock {\em Arch. Ration. Mech. Anal.}, 211(1):335--358, 2014.

\bibitem{Rospicprop}
B.~Piccoli and F.~Rossi.
\newblock On properties of the generalized {W}asserstein distance.
\newblock {\em Arch. Ration. Mech. Anal.}, 222(3):1339--1365, 2016.

\bibitem{Rachev-Ruschendorf98I}
S.~T. Rachev and L.~R{\"u}schendorf.
\newblock {\em Mass transportation problems. {V}ol. {I}}.
\newblock Probability and its Applications. Springer-Verlag, New York, 1998.
\newblock Theory.

\bibitem{Rachev-Ruschendorf98II}
S.~T. Rachev and L.~R{\"u}schendorf.
\newblock {\em Mass transportation problems. {V}ol. {II}}.
\newblock Probability and its Applications. Springer-Verlag, New York, 1998.
\newblock Applications.

\bibitem{Rockafellar70}
R.~T. Rockafellar.
\newblock {\em Convex Analysis}.
\newblock Princeton University Press, Princeton, 1970.

\bibitem{santambrogio}
F.~Santambrogio.
\newblock {\em Optimal transport for applied mathematicians}, volume~87 of {\em Progress in Nonlinear Differential Equations and their Applications}.
\newblock Birkh\"{a}user/Springer, Cham, 2015.
\newblock Calculus of variations, PDEs, and modeling.

\bibitem{SS20}
G.~Savar\'e and G.~E. Sodini.
\newblock A simple relaxation approach to duality for optimal transport problems in completely regular spaces.
\newblock {\em J. Convex Anal.}, 29(1):1--12, 2022.

\bibitem{SS24}
G.~Savar\'e and G.~E. Sodini.
\newblock A relaxation viewpoint to unbalanced optimal transport: duality, optimality and {M}onge formulation.
\newblock {\em J. Math. Pures Appl. (9)}, 188:114--178, 2024.

\bibitem{Villani03}
C.~Villani.
\newblock {\em Topics in optimal transportation}, volume~58 of {\em Graduate Studies in Mathematics}.
\newblock American Mathematical Society, Providence, RI, 2003.

\bibitem{Villani09}
C.~Villani.
\newblock {\em Optimal transport. Old and new}, volume 338 of {\em Grundlehren der Mathematischen Wissenschaften}.
\newblock Springer-Verlag, Berlin, 2009.

\bibitem{MOcommWen}
J.~Wengenroth.
\newblock Mathoverflow answer to “{U}nit ball of the sum space”, 2022.
\newblock \url{https://mathoverflow.net/questions/429176/unit-ball-of-the-sum-space/429306#429306}, Last accessed on March 2025.

\bibitem{cit1}
J.-J. Zhu.
\newblock Inclusive {KL} {M}inimization: {A} {W}asserstein--{F}isher--{R}ao {G}radient {F}low {P}erspective, 2024.
\newblock \url{https://arxiv.org/abs/2411.00214}.

\end{thebibliography}
\end{document}